\newtheorem{theorem}{Theorem}[section]
\newtheorem{lem}[theorem]{Lemma}
\newtheorem{cor}[theorem]{Corollary}
\newtheorem{prop}[theorem]{Proposition}
\theoremstyle{definition}
\newtheorem{defi}[theorem]{Definition}
\newtheorem{example}[theorem]{Example}
\theoremstyle{remark}
\newtheorem{rem}[theorem]{Remark}
\newcommand{\BC}{\mathbb{C}}            
\newcommand{\BZ}{\mathbb{Z}}             
\newcommand{\BN}{\mathbb{N}}            
\newcommand{\Glie}{\mathfrak{g}}        
\newcommand{\Hlie}{\mathfrak{h}}          
\newcommand{\BGG}{\mathcal{O}}      
\newcommand{\CW}{\mathcal{W}}     
\newcommand{\SL}{\mathscr{L}}     
\newcommand{\BQ}{\mathbf{Q}}                
\newcommand{\BP}{\mathbf{P}}            
\newcommand{\wt}{\mathrm{wt}}         
\newcommand{\lwt}{\mathrm{wt}_{\ell}}   
\newcommand{\CEl}{\mathcal{E}_{\ell}}        
\newcommand{\qc}{\chi_{\mathrm{q}}}            
\newcommand{\nqc}{\widetilde{\qc}}            
\newcommand{\CE}{\mathcal{E}}         
\newcommand{\CL}{\mathcal{L}}       
\newcommand{\CR}{\mathcal{R}}    
\newcommand{\CD}{\mathcal{D}}    
\newcommand{\Be}{\mathbf{e}}         
\newcommand{\Bf}{\mathbf{f}}  
\newcommand{\Bm}{\mathbf{m}} 
\newcommand{\Bn}{\mathbf{n}}  
\newcommand{\Br}{\mathbf{r}}
\newcommand{\Bs}{\mathbf{s}}
\title[Shifted Yangians and R-matrices]{Shifted Yangians and polynomial R-matrices}
\author{David Hernandez}
\address{DH: Universit\'e de Paris and Sorbonne Universit\'e, CNRS, IMJ-PRG, IUF, F-75006, Paris, France}
\email{david.hernandez@u-paris.fr}
\author{Huafeng Zhang}
\address{HZ: CNRS, UMR 8524-Laboratoire Paul Painlev\'e, Univ. Lille, F-59000 Lille, France}
\email{huafeng.zhang@univ-lille.fr}
\begin{document}

\begin{abstract}
We study the category $\BGG^{sh}$ of representations over a shifted 
Yangian. This category has a tensor product structure and contains 
distinguished modules, the positive prefundamental 
modules and the negative prefundamental modules. Motivated by the 
representation theory of the Borel subalgebra of a quantum affine 
algebra and by the relevance of quantum integrable systems in this 
context, we prove that tensor products of prefundamental 
modules with irreducible modules are either cyclic or co-cyclic. This implies the existence and 
uniqueness of morphisms, the R-matrices, for such tensor products. We prove the 
R-matrices are polynomial in the spectral parameter, and we establish 
functional relations for the R-matrices. As applications, we prove the Jordan--H\"older property in the category $\BGG^{sh}$. We also obtain a proof, uniform for any finite type, that any irreducible module 
factorizes through a truncated shifted Yangian.

\bigskip 

\noindent {\bf 2020 Mathematics Subject Classification:} 20G42 (16T25, 81R50).

\noindent {\bf Keywords:} Quantum groups, Yang--Baxter equation, tensor product. 

\end{abstract}
\maketitle
\tableofcontents

\section{Introduction}

Shifted Yangians, and their truncations, appeared for type $A$ in the context of 
the representation theory of finite $W$-algebras in the work of Brundan-Kleshchev \cite{BK}, 
then in the study of quantized affine Grassmannian slices by Kamnitzer-Webster-Weekes-Yacobi \cite{KWWY} for general types 
and in the study of quantized Coulomb branches of 3d $N = 4$ SUSY quiver gauge 
theories by Braverman-Finkelberg-Nakajima \cite{BFN} for simply-laced types and by Nakajima-Weekes \cite{NW} for non simply-laced types.

Fix a finite-dimensional complex simple Lie algebra $\Glie$.
The shifted Yangians $Y_\mu(\Glie)$ can be seen as variations of the ordinary Yangian 
$Y(\Glie)$ in its Drinfeld presentation, but depending on a coweight 
$\mu$ in the coweight lattice, denoted by $\BP^{\vee}$, of the underlying simple Lie algebra $\mathfrak{g}$. 
In the particular case $\mu = 0$, we recover the Yangian $Y_0(\Glie) = Y(\Glie)$. The representations of Yangians, and their trigonometric analogs the quantum affine algebras $U_q(\hat{\mathfrak{g}})$, have been under 
intense study since several decades. The truncated shifted Yangians, certain remarkable quotient of shifted Yangians,  depend on additional parameters, including a dominant coweight $\lambda$. 
These parameters $\lambda$ and 
$\mu$ can be interpreted as parameters for generalized slices of the affine Grassmannian $\overline{\mathcal{W}}_\mu^\lambda$ (usual slices when $\mu$
is dominant). These varieties are also Coulomb 
branches, symplectic dual to Nakajima quiver varieties, and the truncated shifted Yangians can be
seen as quantizations of these symplectic varieties.

For simply-laced types, representations of shifted Yangians and related Coulomb branches have been 
intensively studied in this context, see \cite{BK, KTWWY0, KTWWY} and references therein. 
For non simply-laced types, representations of quantizations of Coulomb branches have been
studied by Nakajima and Weekes \cite{NW} by using the method originally developed in \cite{Nak2} for simply-laced types (the reader may refer to the discussion in the Introduction of \cite{H0}).

One crucial property of shifted Yangians is the 
existence \cite{coproduct} of a family of algebra homomorphisms indexed by a pair of coweights $\mu$ and $\nu$,
$$\Delta_{\mu,\nu}: Y_{\mu+\nu}(\Glie) \longrightarrow Y_{\mu}(\Glie) \otimes Y_{\nu}(\Glie).$$ 
This is analog to the Drinfeld-Jimbo coproduct for
ordinary Yangians. 
These coproducts $\Delta_{\mu,\nu}$ induce a tensor product structure on a category
$$\BGG^{sh} = \bigoplus_{\mu\in \BP^{\vee}} \BGG_\mu$$ 
which is a sum of categories $\BGG_\mu$ of representations over the shifted Yangians $Y_\mu(\mathfrak{g})$ for various coweights $\mu$.

By \cite{BK, KTWWY0, KTWWY}, an irreducible module in category $\BGG^{sh}$ is determined by its {\it highest weight}, which is a tuple of ratios of monic polynomials in $u$, one ratio for each Dynkin node of $\Glie$. There is a natural $\BC$-action on the shifted Yangians by algebra automorphisms. Each module $V$ induces on the same underlying vector space a family of module structures $V(a)$, for $a \in \BC$ referred to as {\it spectral parameter}, such that $V(0) = V$. If $V$ is irreducible, then $V(a)$ remains irreducible and its highest weight is obtained from that of $V$ by the substitution $u \mapsto u-a$.

\smallskip

In a seemingly different direction, the category $\mathcal{O}$ of representations of the Borel subalgebra $U_q(\hat{\mathfrak{b}})$ of a quantum affine algebra $U_q(\hat{\mathfrak{g}})$ was 
introduced and studied by Jimbo and the first author in \cite{HJ}. One crucial point for the approach therein is an asymptotical procedure to construct certain remarkable simple representations, the prefundamental representations, as limits of finite-dimensional representations of $U_q(\hat{\mathfrak{g}})$. It was observed by the second author in \cite{Z} that category $\BGG^{sh}$ for shifted Yangians provides a Yangian counterpart of the prefundamental representations and their asymptotical procedure. The prefundamental representations play a similar role in category $\BGG^{sh}$ as the fundamental representations do in the subcategory of finite-dimensional representations of the ordinary Yangian \cite{CP0}, hence the terminology. 

\smallskip

The trigonometric analogs of shifted Yangians, namely the shifted quantum affine algebras, are other examples of shifted quantum groups. These algebras $U_q^\mu(\hat{\mathfrak{g}})$ were introduced by Finkelberg-Tsymbaliuk \cite{FT} as variations of the quantum affine algebras $U_q(\hat{\mathfrak{g}})$, for a quantization parameter $q\in\mathbb{C}^*$ which is not a root of unity. The shifted quantum affine algebras also admit remarkable truncations which are closely related to $K$-theoretical Coulomb branches. The approach to the representation theory of $U_q^\mu(\hat{\mathfrak{g}})$ developed by the first author in \cite{H0} is based on the relations with representations in the category $\mathcal{O}$ of the Borel algebra $U_q(\hat{\mathfrak{b}})$ and on associated quantum integrable systems. For instance, the study of $R$-matrices and transfer-matrices of $U_q(\hat{\mathfrak{g}})$ allow to give a proof, uniform for any finite type, that simple finite-dimensional representations of shifted quantum affine algebras $U_q^\mu(\hat{\mathfrak{g}})$ descend to a truncation.  

\medskip

In this spirit, it is natural to raise the question of the construction of $R$-matrices for representations of shifted quantum groups. It is the problem we address in this paper and from which we obtain several applications. The Drinfeld-Jimbo coproduct is only conjecturally known for shifted quantum affine algebras, that is why we work with shifted Yangians.

\medskip

More precisely, we study morphisms in category $\BGG^{sh}$ of the form 
$$\check{R}_{V,W}: V\otimes W \rightarrow W\otimes V$$
for a pair $(V, W)$ of irreducible representations of shifted Yangians. 

To state our main results in a neat way, call an irreducible module {\it positive} (respectively, {\it negative}) if its highest weight is a tuple of (respectively, inverses of) monic polynomials. When the total degree of these polynomials is one, these are the prefundamental modules \cite{Z} mentionned above. Positive modules are one-dimensional, while negative modules are infinite-dimensional except in the trivial case. 

\medskip

\noindent {\bf Construction of R-matrices.} Let $P$ be a positive module and $N$ be a negative module. Let $V$ be an {\it arbitrary} irreducible module in category $\BGG^{sh}$. As our first main result, we prove the following cyclicity and cocyclicity properties (Theorem \ref{thm: cyclicity  cocyclicity}):
\begin{itemize}
\item[(i)] The modules $P \otimes V$ and $V \otimes N$ are generated by tensor products of highest weight vectors. The modules $V \otimes P$ and $N \otimes V$ are cogenerated by tensor products of highest weight vectors.
\end{itemize} 
Here a module is cogenerated by a vector if this vector is contained in all nonzero submodules.  As a consequence, we obtain unique module morphisms sending a tensor product of highest weight vectors to the opposite tensor product, 
$$ \check{R}_{P,V}(a): P(a) \otimes V \longrightarrow V \otimes P(a)\quad \text{and}\quad \check{R}_{V,N}(a): V(a) \otimes N \longrightarrow N \otimes V(a).  $$
Another consequence of cyclicity property is that the $R$-matrices $\check{R}_{P,V}(a)$ and $\check{R}_{V,N}(a)$, viewed as vector-valued functions of $a$, are polynomial. 

Our cyclicity property differs from the case of finite-dimensional irreducible representations of ordinary quantum affine algebras and Yangians where cyclicity holds true for {\it generic} spectral parameters. Indeed, in the non-shifted case the failure of cyclicity is controlled by the poles of normalized $R$-matrices viewed as rational functions \cite{AK,FM,Kashiwara,H2,H5,GTLW} of the spectral parameter, which are rarely polynomial. 

\medskip

\noindent {\bf Properties of R-matrices.} The $R$-matrices being module morphisms, we are able to compute them for positive modules, much in the spirit of Jimbo \cite{Jimbo1}. 
It turns out that they are connected to the Gerasimov--Kharchev--Lebedev--Oblezin truncation series \cite{GKLO}, GKLO series for short, which are certain generating series of the shifted Yangians appearing in the definition of truncated shifted Yangians \cite{BK, KWWY, BFN}. Note that in the trigonometric case, the truncation series defining truncated shifted quantum 
affine algebras were related to limits of transfer-matrices associated to positive 
prefundamental representations (the $Q$-operators) in \cite{H0}.

Since a positive module $P$ is one-dimensional, we view $\check{R}_{P,V}(a)$ as a linear operator on $V$. We establish the following property in the \lq\lq positive case" :
\begin{itemize}
\item[(ii)] For $P$ a positive prefundamental module, the vector-valued polynomial function $a \mapsto \check{R}_{P,V}(a)$ from $\BC$ to $\mathrm{End} (V)$ satisfies an additive difference equation determined by the action 
of a GKLO series; Equation \eqref{equ: difference}.
\end{itemize}
In the case of finite-dimensional representations of the ordinary Yangian, there is a general construction of $R$-matrices by solving additive difference equations \cite{GTL,GTL1,GTLW}. The point (ii) can be seen as a reverse statement : first the $R$-matrices are shown to exist and then we find difference equations for them.

\smallskip

In the \lq\lq negative case", the following is the key technical result of this paper. 
\begin{itemize}
\item[(iii)] Let $V$ be a fundamental module equipped with a weight basis, and view $\check{R}_{V,N}(a)$ as a matrix whose entries are vector-valued polynomial functions from $\BC$ to $\mathrm{End} (N)$. Then the diagonal entry associated to the lowest weight basis vector of $V$ is the action of a GKLO series; Equation \eqref{equ: GKLO vs lowest diagonal entry}.
\end{itemize}
Here a fundamental module \cite{CP0} is a finite-dimensional irreducible module over the ordinary Yangian whose associated Drinfeld polynomials are of total degree one (as reminded above, it should not be confused with a prefundamental representation).

\medskip

\noindent {\bf Application I: truncation of irreducible modules.} We obtain a proof, uniform for any finite type, that any irreducible module in $\mathcal{O}^{sh}$ factorizes through a truncated shifted Yangian.
For $\Glie$ of simply-laced type, this can be derived from \cite{KTWWY0,KTWWY}, and then extended to non simply-laced types by \cite{NW, Nak2} where the classification for non simply-laced truncated shifted Yangians is reduced to the known classification in simply-laced types via geometric arguments. 
In the case of shifted quantum affine algebras, the result was established for finite-dimensional irreducible modules \cite[Theorem 12.9]{H0} by a method involving transfer-matrices of quantum integrable systems. 

We prove furthermore that if $\Glie$ is not of type $E_8$, then any highest $\ell$-weight module in category $\BGG^{sh}$ descends to a truncation, by realizing such a module as a quotient of the tensor product of a positive module with a negative module (Theorem \ref{thm: Weyl standard}).
 
\smallskip 
 
\noindent {\bf Application II: Jordan--H\"older property.} As another application, we prove that in category $\BGG^{sh}$ the tensor product of two (and hence finitely many) irreducible modules admits a finite Jordan--H\"older filtration. In other words, the full subcategory of $\mathcal{O}^{sh}$ consisting of modules with a finite Jordan--H\"older filtration is closed under tensor product. This seems surprising, at least to us, as the analog  category $\mathcal{O}$ for the Borel subalgebra \cite{HJ} does not satisfy this property.
  
\medskip

We expect the $R$-matrices introduced in this paper for shifted quantum groups will be further studied in the future, keeping in mind their importance for ordinary quantum groups. Also, an approach to $R$-matrices using algebraic versions of Maulik-Okounkov stable maps for the category $\mathcal{O}$ of Borel subalgebras was proposed in \cite{H1}. As this category $\mathcal{O}$ is closely related to the category $\mathcal{O}^{sh}$ for shifted quantum affine algebras, we expect such algebraic stable maps also to exist for shifted quantum groups. We also expect more applications of these $R$-matrices for the representation theory of shifted quantum groups, in particular to get advances on the Conjecture in \cite{H0} which states a parameterization of irreducible representations of non-simply laced truncated shifted quantum affine algebras in terms of Langlands dual $q$-characters.

 \medskip

The paper is organized as follows.

\smallskip

In Section \ref{secdeux}, we recall the basic properties of shifted Yangians and of the truncated shifted Yangians.  We also give a first estimation of the coproduct (Lemma \ref{lem: coproduct estimation}). 

In Section \ref{sec: rep shif}, we recall basic properties of representations of shifted Yangians, including the existence of Verma modules, the parameterization of irreducible modules in the category $\BGG^{sh}$, finite-dimensional irreducible modules, $q$-characters. 

In Section \ref{secquatre} we prove cyclicity and co-cyclicity properties for tensor products of prefundamental representations in category $\BGG^{sh}$
(Theorem \ref{thm: cyclicity  cocyclicity}),  which motivate our definitions of Weyl modules and standard modules (Definition \ref{defi: Weyl standard modules}). We describe these  modules when $\Glie$ is not of type $E_8$ (Theorem \ref{thm: Weyl standard}). 

In Section \ref{sec: one-dim R-matrix} we construct the R-matrices for suitable highest $\ell$-weight modules and establish their first properties in Theorem \ref{thm: R-matrix}, Propositions \ref{prop: polynomiality R-} and \ref{prop: one-dim R+}. We also get several results on the eigenvalues of certain of these $R$-matrices (Proposition \ref{prop: l-weight R-matrix+}).

In Section \ref{sec: sl2}, we focus on the case $\Glie = sl_2$ for which we prove the existence and uniqueness of factorization for all irreducible modules in category $\BGG^{sh}$ into tensor products of prefundamental modules and Kirillov-Reshetikhin modules (Theorem \ref{rem: uniqueness}). 

In Section \ref{sec: pre R} we compute diagonal entries of certain remarkable R-matrices, by relating it to one-dimensional $R$-matrices (Proposition \ref{prop: highest diagonal entry} and Theorem \ref{thm: TQ pre R}). The proof uses a refined estimation of the coproduct that we establish (Lemma \ref{lem: coproduct estimation refine}).

In Section \ref{sectrunc} we prove uniformly that any standard module (and so any irreducible module) factorizes through a truncated shifted Yangian (Theorem \ref{thm: truncation standard}). 

In Section  \ref{sec: truncation}, we establish the Jordan--H\"older property 
of the category $\BGG^{sh}$ (Theorem \ref{thm: Jordan-Holder}). We also get a uniform proof that a truncated shifted Yangian has only a finite number of irreducible representations (Theorem \ref{thm: finite truncation}).

\section{Shifted Yangians}\label{secdeux}
In this section we recall the basic properties of shifted Yangians from \cite{BFN, coproduct, Kn}. 
We review their definition, their standard gradings and their triangular decomposition. We recall the 
shift homomorphism and the coproduct for which we give an estimation (Lemma \ref{lem: coproduct estimation}). We also discuss the particular case of the ordinary Yangian as well as 
certain remarkable quotients, the truncated shifted Yangians.

\subsection{Definition and structure}
Fix $\Glie$ to be a complex finite-dimensional simple Lie algebra. Set $\BN := \BZ_{\geq 0}$. Let $\Hlie$ be a Cartan subalgebra of $\Glie$, and $I := \{1,2,\cdots,r\}$ be the set of Dynkin nodes. The dual space $\Hlie^*$ admits a basis of {\it simple roots} $(\alpha_i)_{i \in I}$ and a non-degenerate symmetric bilinear form $(,): \Hlie^* \times \Hlie^* \longrightarrow \BC$. For $i,j \in I$ set 
$$c_{ij} := \frac{2(\alpha_i,\alpha_j)}{(\alpha_i,\alpha_i)} \in \BZ,\quad d_{ij} := \frac{(\alpha_i,\alpha_j)}{2},\quad d_i := d_{ii}. $$
We assume that the $d_i \in \BZ_{>0}$ are coprime. For $i \in I$, the {\it fundamental weight} $\varpi_i \in \Hlie^*$, the {\it fundamental coweight} $\varpi_i^{\vee} \in \Hlie$, and the {\it simple coroot} $\alpha_i^{\vee} \in \Hlie$ are determined by the following equations for $j \in I$:
$$(\varpi_i, \alpha_j) = d_i\delta_{ij},\quad \langle \varpi_i^{\vee}, \alpha_j\rangle = \delta_{ij},\quad \langle \alpha_i^{\vee}, \alpha_j\rangle = c_{ij}, $$
where $\langle, \rangle: \Hlie \times \Hlie^* \longrightarrow \BC$ denotes the evaluation map.
 We shall need the coweight lattice, the root lattice, and some of their subsets defined as follows:
 \begin{align*}
 \mathrm{coweight\ lattice}\quad \BP^{\vee} &:= \bigoplus_{i\in I} \BZ \varpi_i^{\vee} \subset \Hlie,\quad \BQ_+^{\vee} := \bigoplus_{i\in I} \BN \alpha_i^{\vee};  \\
 \mathrm{root\ lattice}\quad \BQ &:= \bigoplus_{i\in I} \BZ \alpha_i \subset \Hlie^*,\quad \BQ_+ := \bigoplus_{i\in I} \BN \alpha_i,\quad \BQ_- := - \BQ_+.
 \end{align*}
A coweight means an element of the coweight lattice $\BP^{\vee}$. It is dominant (respectively antidominant) if all the coefficients of $\varpi_i^{\vee}$ belong to $\BN$ (respectively $- \BN$). On the other hand, a weight means an element of the dual space $\Hlie^*$. The weight lattice does not play any role in this paper. We let $\varpi$ denote fundamental weights, and reserve $\omega$ for highest $\ell$-weight vectors.
 
 For a coweight $\mu \in \BP^{\vee}$, the shifted Yangian $Y_{\mu}(\Glie)$ is the algebra with generators
 $$ x_{i,n}^{\pm},\quad \xi_{i,p} \quad \mathrm{for}\ (i, n, p) \in I \times \BN \times \BZ $$
 called Drinfeld generators, subject to the following relations: 
 \begin{gather}
[\xi_{i,p}, \xi_{j,q}] = 0, \quad [x_{i,m}^+,x_{j,n}^-] = \delta_{ij} \xi_{i,m+n}, \label{rel: Cartan}  \\
[\xi_{i,p+1}, x_{j,n}^{\pm}] - [\xi_{i,p}, x_{j,n+1}^{\pm}] = \pm d_{ij} (\xi_{i,p}x_{j,n}^{\pm} + x_{j,n}^{\pm} \xi_{i,p}), \label{rel: Cartan-Drinfeld}  \\
[x_{i,m+1}^{\pm}, x_{j,n}^{\pm}] - [x_{i,m}^{\pm}, x_{j,n+1}^{\pm}] = \pm d_{ij} (x_{i,m}^{\pm}x_{j,n}^{\pm} + x_{j,n}^{\pm} x_{i,m}^{\pm}), \label{rel: Drinfeld} \\
\mathrm{ad}_{x_{i,0}^{\pm}}^{1-c_{ij}} (x_{j,0}^{\pm}) = 0 \quad \mathrm{if}\ i \neq j,  \label{rel: Serre} \\
\xi_{i,-\langle \mu, \alpha_i \rangle - 1} = 1,\quad \xi_{i,p} = 0 \quad \mathrm{for}\ p < -\langle \mu, \alpha_i \rangle - 1.  \label{rel: shift}
\end{gather}
Here $\mathrm{ad}_x(y) := xy - yx$. Define the generating series for $i \in I$:
 \begin{equation}  \label{def: generators currents}
x_i^{\pm}(u) :=  \sum_{n\in \BN} x_{i,n}^{\pm} u^{-n-1},\quad \xi_i(u) :=  \sum_{p\in \BZ} \xi_{i,p} u^{-p-1} \in Y_{\mu}(\Glie)((u^{-1})).
\end{equation}
These are Laurent series in $u^{-1}$, with leading terms $x_{i,0}^{\pm} u^{-1}$ and $u^{\langle \mu, \alpha_i \rangle}$. 

\begin{rem}
Our generators $x_{i,n}^+, x_{i,n}^-$ and $\xi_{i,p}$ correspond to $E_i^{(n+1)}, F_i^{(n+1)}$ and $H_i^{(n+1)}$ in \cite{BFN, coproduct}. The zero-shifted Yangian $Y_0(\Glie)$ is the ordinary Yangian \cite{Dr} with deformation parameter $\hbar = 1$, which will also be denoted by $Y(\Glie)$.
\end{rem}

The algebra $Y_{\mu}(\Glie)$ is $\BQ$-graded, called {\it weight grading}, by declaring the weights of the generators $x_{i,n}^+, x_{i,n}^-$ and $\xi_{i,p}$ to be $\alpha_i, -\alpha_i$ and $0$. For $\beta \in \BQ$, let $Y_{\mu}(\Glie)_{\beta}$ denote the subspace of elements of weight $\beta$.
Setting $p = -\langle \mu, \alpha_i\rangle - 1$ in Eq.\eqref{rel: Cartan-Drinfeld} and noticing $\xi_{i,p} = 1$, one obtains the following Cartan relation
\begin{equation}  \label{rel: weight grading}
 [\xi_{i,-\langle \mu, \alpha_i\rangle}, x_{j,n}^{\pm}] = \pm (\alpha_i,\alpha_j) x_{j,n}^{\pm}.  
\end{equation}
So the weight grading is characterized alternatively: $x \in Y_{\mu}(\Glie)$ is of weight $\beta$ if and only if $[\xi_{i,-\langle \mu, \alpha_i\rangle}, x] = (\alpha_i,\beta) x$ for all $i \in I$. 

As in the case of ordinary Yangian (see for example \cite[\S 2.8]{GTL}), for $a \in \BC$ there is an algebra automorphism $\tau_a: Y_{\mu}(\Glie) \longrightarrow Y_{\mu}(\Glie)$ defined by
\begin{gather}  \label{rel: spectral shift}
 \tau_a: Y_{\mu}(\Glie) \longrightarrow Y_{\mu}(\Glie),\quad  x_j^{\pm}(u) \mapsto x_j^{\pm}(u-a),\quad \xi_j(u) \mapsto \xi_j(u-a).
\end{gather}
Note that $\tau_a \circ \tau_b = \tau_{a+b}$ for $a,b \in \BC$ and $\tau_0 = \mathrm{Id}$. Indeed $\tau_a$ can be obtained from the evaluation at $z=a$ of the following algebra homomorphism:
\begin{gather}  \label{rel: spectral shift formal}
    \tau_z: Y_{\mu}(\Glie) \longrightarrow Y_{\mu}(\Glie) \otimes \BC[z],\quad X_p \mapsto \sum_{n\in \BN} \binom{p}{n}  X_{p-n} \otimes z^n
\end{gather}
where $X \in \{x_i^{\pm}, \xi_i\}$ and $p \in \BZ$. It is understood that $x_{i,k}^{\pm} = 0$ for $k < 0$.

For $\zeta, \eta$ antidominant coweights, the following map extends uniquely to an algebra morphism $\iota_{\mu,\zeta,\eta}: Y_{\mu}(\Glie) \longrightarrow Y_{\mu+\zeta+\eta}(\Glie)$, called the {\it shift homomorphism}:
\begin{gather} \label{rel: shift map}
x_{i,n}^+ \mapsto x_{i,n-\langle\zeta,\alpha_i\rangle}^+, \quad x_{i,n}^- \mapsto x_{i,n-\langle\eta,\alpha_i\rangle}^-,\quad \xi_{i,p} \mapsto \xi_{i,p-\langle\zeta+\eta,\alpha_i\rangle}.
\end{gather}

The shifted Yangian $Y_{\mu}(\Glie)$ admits a {\it triangular decomposition}. Let us define three subalgebras by generating subsets. The first $Y_{\mu}^<(\Glie)$ is generated by the $x_{i,n}^-$, the second $Y_{\mu}^>(\Glie)$ generated by the $x_{i,n}^+$, and the third $Y_{\mu}^=(\Glie)$ generated by the $\xi_{i,p}$.
These subalgebras inherit from $Y_{\mu}(\Glie)$ the weight grading, the first two are graded by $\BQ_{\mp}$, and the third $\{0\}$. Set $$Y_{\mu}^+(\Glie) := Y_{\mu}^=(\Glie) Y_{\mu}^>(\Glie)\text{ and }Y_{\mu}^-(\Glie) := Y_{\mu}^<(\Glie) Y_{\mu}^=(\Glie);$$ these are subalgebras. The following result is a consequence of the PBW basis theorem \cite[Corollary 3.15]{coproduct}. 

\begin{theorem}\cite{coproduct} \label{thm: triangular decomposition}
All shift homomorphisms are injective. The multiplication map $Y_{\mu}^<(\Glie) \otimes Y_{\mu}^=(\Glie) \otimes Y_{\mu}^>(\Glie)  \longrightarrow Y_{\mu}(\Glie)$
is an isomorphism of vector spaces. $Y_{\mu}^{\pm}(\Glie)$ is the algebra generated by $x_{i,n}^{\pm}$ and $\xi_{i,p}$ for $(i, n,p) \in I \times \BN \times \BZ $ subject to Eq.\eqref{rel: shift}, the first half of Eq.\eqref{rel: Cartan}, and the $\pm$ part of Eqs.\eqref{rel: Cartan-Drinfeld}--\eqref{rel: Serre}.
\end{theorem}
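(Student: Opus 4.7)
The plan is to deduce all three statements from the PBW basis theorem \cite[Corollary 3.15]{coproduct} for $Y_\mu(\Glie)$, which produces an explicit ordered basis of the shifted Yangian consisting of ordered monomials in the Drinfeld generators $x_{i,n}^-$, $\xi_{i,p}$, $x_{i,n}^+$, organized so that the negative factors come first, then the Cartan factors, then the positive factors.

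First I would address the triangular decomposition. The one-sided (resp. Cartan) PBW monomials span the corresponding subalgebras $Y_\mu^{<}(\Glie)$, $Y_\mu^{=}(\Glie)$, $Y_\mu^{>}(\Glie)$ by construction, and their linear independence inside the ambient $Y_\mu(\Glie)$ forces linear independence inside each subalgebra. Consequently the multiplication map sends the tensor product of these three partial PBW bases bijectively onto the full PBW basis of $Y_\mu(\Glie)$, which gives the desired vector space isomorphism.

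The injectivity of a shift homomorphism $\iota_{\mu,\zeta,\eta}$ follows from the same PBW data: by antidominance of $\zeta$ and $\eta$ one has $-\langle\zeta,\alpha_i\rangle,\, -\langle\eta,\alpha_i\rangle \geq 0$, so \eqref{rel: shift map} sends each Drinfeld generator of $Y_\mu(\Glie)$ to a Drinfeld generator of $Y_{\mu+\zeta+\eta}(\Glie)$ with an admissible shifted index. Hence $\iota_{\mu,\zeta,\eta}$ maps each PBW monomial of $Y_\mu(\Glie)$ to a PBW monomial of $Y_{\mu+\zeta+\eta}(\Glie)$ of the same combinatorial type. These images remain linearly independent, proving injectivity.

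For the presentation of $Y_\mu^\pm(\Glie)$, let $\widetilde{Y}_\mu^\pm$ denote the abstract algebra defined by the listed generators and relations. These relations all hold in $Y_\mu^\pm(\Glie)$, so there is a canonical surjection $\widetilde{Y}_\mu^\pm \twoheadrightarrow Y_\mu^\pm(\Glie)$, and the goal is to exhibit a spanning set of $\widetilde{Y}_\mu^\pm$ that maps bijectively onto the one-sided PBW basis obtained above. The main obstacle is the combinatorial reordering: one must show that the $\pm$ half of \eqref{rel: Cartan-Drinfeld}--\eqref{rel: Serre}, together with \eqref{rel: shift}, suffice to rewrite any word in the generators into the required normal form; this is delicate because the Cartan generators $\xi_{i,p}$ (indexed by arbitrary $p\in\BZ$) interact nontrivially with the raising and lowering generators. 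This step can be carried out following the strategy of \cite[Section 3]{coproduct}, where analogous PBW bases are constructed for the half-algebras and then transferred to $Y_\mu^\pm(\Glie)$ via the above surjection.
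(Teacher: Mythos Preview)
Your proposal is correct and follows exactly the approach indicated by the paper, which does not give a proof but simply states that the theorem is a consequence of the PBW basis theorem \cite[Corollary 3.15]{coproduct}. One minor inaccuracy: the PBW basis in \cite{coproduct} is built from ordered monomials in the PBW variables $x_{\gamma,n}^{\pm}$ indexed by positive roots $\gamma\in R$ (iterated commutators of the Drinfeld generators, as recalled in Remark~\ref{rem: PBW}), not in the simple-root generators $x_{i,n}^{\pm}$ alone; this affects the bookkeeping in your injectivity argument for $\iota_{\mu,\zeta,\eta}$, since the image of a single PBW variable is no longer a single PBW variable, but the conclusion still follows by comparing leading terms with respect to the degree filtration used in \cite[\S 3]{coproduct}.
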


It follows that the assignments $x_i^{\pm}(u) \mapsto x_i^{\pm}(u)$ and $\xi_i(u) \mapsto u^{-\langle \mu, \alpha_i \rangle} \xi_i(u)$ extend uniquely to four algebra isomorphisms 
\begin{gather}  \label{equ: isomorphism}
Y_0^>(\Glie) \cong Y_{\mu}^>(\Glie),\quad Y_0^<(\Glie) \cong Y_{\mu}^<(\Glie), \quad Y_0^{\pm}(\Glie) \cong Y_{\mu}^{\pm}(\Glie).
\end{gather}
The first two isomorphisms being independent of $\mu$, we omit $\mu$ from $Y_{\mu}^>(\Glie)$ and $Y_{\mu}^<(\Glie)$ when no confusion arises.

The next property of shifted Yangians is the {\it coproduct} of Drinfeld--Jimbo, which plays a key role in our study of representations.
We rephrase \cite[Theorem 4.8, Theorem 4.12, Proposition 4.14]{coproduct}. While \cite{coproduct} considered simply-laced types, the proofs apply in general as commented in \cite[Remark 3.2]{coproduct}.

\begin{theorem}\cite{coproduct}  \label{thm: coproduct}
There exists a unique family of algebra homomorphisms $$\Delta_{\mu,\nu}: Y_{\mu+\nu}(\Glie) \longrightarrow Y_{\mu}(\Glie) \otimes Y_{\nu}(\Glie)$$ 
for all coweights $\mu, \nu$ such that $\Delta_{0,0}$ is the coproduct of the ordinary Yangian and properties (i)--(ii) hold true.
\begin{itemize}
\item[(i)] For $\mu$ and $\nu$ antidominant, $i \in I,\ n < - \langle \mu,\alpha_i\rangle$ and $m < -\langle \nu,\alpha_i\rangle$:
\begin{gather} \label{rel: special coproduct}
\Delta_{\mu,\nu}(x_{i,n}^+) = x_{i,n}^+ \otimes 1,\quad \Delta_{\mu,\nu}(x_{i,m}^-) = 1 \otimes x_{i,m}^-.
\end{gather}
\item[(ii)] If $\zeta$ and $\eta$ are antidominant, then the following diagram commutes:
\begin{gather} \label{rel: coproduct vs shift}
 \xymatrixcolsep{6pc} \xymatrix{
Y_{\mu+\nu}(\Glie) \ar[d]^{\iota_{\mu+\nu,\zeta,\eta}} \ar[r]^{\Delta_{\mu,\nu}} & Y_{\mu}(\Glie) \otimes Y_{\nu}(\Glie) \ar[d]^{\iota_{\mu,\zeta,0} \otimes \iota_{\nu,0,\eta}} \\
Y_{\mu+\nu+\zeta+\eta}(\Glie) \ar[r]^{\Delta_{\mu+\zeta,\nu+\eta}}          & Y_{\mu+\zeta}(\Glie) \otimes Y_{\nu+\eta}(\Glie) } 
\end{gather}
\end{itemize}
Furthermore, if $\nu$ is antidominant, then the following diagram commutes:
\begin{gather} \label{rel: coproduct coass}
 \xymatrixcolsep{6pc} \xymatrix{
Y_{\mu+\nu+\rho}(\Glie) \ar[d]^{\Delta_{\mu,\nu+\rho}} \ar[r]^{\Delta_{\mu+\nu,\rho}} & Y_{\mu+\nu}(\Glie) \otimes Y_{\rho}(\Glie) \ar[d]^{\Delta_{\mu,\nu} \otimes \mathrm{Id}} \\
Y_{\mu}(\Glie) \otimes Y_{\nu+\rho}(\Glie) \ar[r]^{\mathrm{Id} \otimes \Delta_{\nu,\rho}}          & Y_{\mu}(\Glie) \otimes Y_{\nu}(\Glie) \otimes Y_{\rho}(\Glie) } 
\end{gather}
\end{theorem}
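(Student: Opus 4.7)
The plan is to first establish uniqueness by reducing to the antidominant case, then construct $\Delta_{\mu,\nu}$ for antidominant $\mu,\nu$, extend to arbitrary coweights, and finally check coassociativity.

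\textbf{Uniqueness and reduction.} By Theorem \ref{thm: triangular decomposition} all shift homomorphisms are injective. For any $\mu,\nu$, pick antidominant $\zeta,\eta$ such that $\mu+\zeta$ and $\nu+\eta$ are antidominant; property (ii) combined with the injectivity of $\iota_{\mu,\zeta,0}\otimes\iota_{\nu,0,\eta}$ then forces $\Delta_{\mu,\nu}$ to be determined by $\Delta_{\mu+\zeta,\nu+\eta}$, so it suffices to treat $\mu,\nu$ antidominant. In that case, specializing (ii) with left coweights $(0,0)$ and shifts $(\mu,\nu)$ yields
$$\Delta_{\mu,\nu}\circ\iota_{0,\mu,\nu}=(\iota_{0,\mu,0}\otimes\iota_{0,0,\nu})\circ\Delta_{0,0},$$
and the hypothesis that $\Delta_{0,0}$ is the Drinfeld coproduct fixes $\Delta_{\mu,\nu}$ on the image of $\iota_{0,\mu,\nu}$. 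That image covers $x_{i,k}^+$ for $k\geq-\langle\mu,\alpha_i\rangle$, $x_{i,k}^-$ for $k\geq-\langle\nu,\alpha_i\rangle$, and $\xi_{i,q}$ for $q\geq-\langle\mu+\nu,\alpha_i\rangle$, while property (i) fixes the remaining low generators; together they exhaust the defining generating set of $Y_{\mu+\nu}(\Glie)$.

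\textbf{Existence.} In the antidominant case, define $\Delta_{\mu,\nu}$ by the preceding formulas and verify that this assignment respects the Drinfeld relations \eqref{rel: Cartan}--\eqref{rel: shift}. Relations among only shifted generators pull back through $\iota_{0,\mu,\nu}$ to relations in the ordinary Yangian and are preserved because $\Delta_{0,0}$ is an algebra homomorphism. Relations among only low generators reduce to direct matches: images of low $x^+$ and low $x^-$ live in complementary tensor factors and hence commute, while the corresponding right-hand side $\xi_{i,n+m}$ of \eqref{rel: Cartan} vanishes by \eqref{rel: shift} since $n+m\le-\langle\mu+\nu,\alpha_i\rangle-2$ when both indices are low. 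For arbitrary $\mu,\nu$, define $\Delta_{\mu,\nu}$ as the pull-back of $\Delta_{\mu+\zeta,\nu+\eta}$ through $\iota_{\mu+\nu,\zeta,\eta}$ for any antidominant $\zeta,\eta$ with $\mu+\zeta,\nu+\eta$ antidominant; a generator-wise check using (i) in the antidominant case shows the composition lands in the image of $\iota_{\mu,\zeta,0}\otimes\iota_{\nu,0,\eta}$, and independence of $\zeta,\eta$ follows by a further application of (ii). Coassociativity for antidominant $\nu$ is verified on generators, reducing on low generators to an immediate matching of tensor legs and on the image of a suitable shift homomorphism to coassociativity of $\Delta_{0,0}$.

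\textbf{Main obstacle.} The technical heart of the argument is verifying the mixed Drinfeld relations in the antidominant existence step -- the Cartan-Drinfeld relations \eqref{rel: Cartan-Drinfeld} and Serre relations \eqref{rel: Serre} when one generator is low and the other is shifted. This requires explicit expansion of $\Delta_{0,0}(x_{i,n}^\pm)$ and $\Delta_{0,0}(\xi_{i,p})$ in the Drinfeld presentation, where the formulas are notoriously intricate and require careful bookkeeping. This is carried out in \cite[Theorem 4.8]{coproduct} in the simply-laced case and extends to arbitrary type by \cite[Remark 3.2]{coproduct}.
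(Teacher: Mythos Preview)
The paper does not provide its own proof of this theorem; it is quoted from \cite{coproduct} (specifically \cite[Theorem 4.8, Theorem 4.12, Proposition 4.14]{coproduct}), with the remark that the proofs carry over to non simply-laced types by \cite[Remark 3.2]{coproduct}. So there is no in-paper argument to compare against.

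That said, your sketch is a faithful outline of the strategy actually used in \cite{coproduct}: uniqueness via the shift maps and injectivity, construction first in the antidominant case by pushing the ordinary coproduct through $\iota_{0,\mu,\nu}$ and supplementing with the explicit formulas \eqref{rel: special coproduct} on the low generators, extension to general coweights by the commutative square \eqref{rel: coproduct vs shift}, and reduction of coassociativity to $\Delta_{0,0}$. You have also correctly identified the genuine obstacle, namely the verification of the mixed relations (low generator against shifted generator), and correctly deferred it to \cite[Theorem 4.8]{coproduct}. One small caution: in your existence paragraph you only explicitly treat the low/low case of relation \eqref{rel: Cartan}, but the mixed cases of \eqref{rel: Cartan-Drinfeld}, \eqref{rel: Drinfeld} and \eqref{rel: Serre} are exactly where the work lies, and they do not reduce as cleanly as you suggest for \eqref{rel: Cartan}; this is precisely what your final paragraph defers to the reference, so the sketch is consistent, but the phrase ``verify that this assignment respects the Drinfeld relations'' glosses over the real content.
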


\subsection{The ordinary Yangian}  \label{ss: ordinary Yangian}
The ordinary Yangian $Y(\Glie)$ endowed with the coproduct $\Delta_{0,0} =: \Delta$ is a Hopf algebra, which contains the universal enveloping algebra $U(\Glie)$ as a Hopf subalgebra by identifying the $x_{i,0}^{\pm}$ with root vectors in the Lie algebra $\Glie$ associated to the roots $\pm \alpha_i$ so that $\alpha_i^{\vee} = \frac{1}{d_i} \xi_{i,0} \in \Hlie$. Let $R$ denote the set of positive roots of $\Glie$. One can extend $x_{i,0}^{\pm} =: x_{\alpha_i}^{\pm}$ to root vectors $x_{\gamma}^{\pm} \in \Glie_{\pm \gamma}$ for $\gamma \in R$ suitably normalized with respect to an invariant bilinear form of $\Glie$. Then the coproduct is determined by \cite{Dr} (see \cite[\S 4.2]{GNW} for a proof)
\begin{gather}  \label{eq: coproduct Yangian}
\Delta(\xi_{i,1}) = \xi_{i,1} \otimes 1 + 1 \otimes \xi_{i,1} + \xi_{i,0} \otimes \xi_{i,0} - \sum_{\gamma \in R} (\alpha_i,\gamma) x_{\gamma}^- \otimes x_{\gamma}^+.
\end{gather}
For $(\gamma, n) \in R \times \BN$, the root vector $x_{\gamma}^- \in \Glie_{-\gamma}$ proportional to an iterated commutator of the $x_{i,0}^-$, we choose exactly one of the $x_{i,0}^-$ in the commutator and replace it with $x_{i,n}^-$. It depends on the choice of $i$ and the position of $x_{i,0}^-$ in the commutator. We fix such a choice for all $(\gamma, n) \in R \times \BN$, and let $x_{\gamma,n}^-$ denote the resulting element in $Y^<(\Glie)$, called a {\it PBW variable} as in \cite[\S 3.12]{coproduct}.

\begin{rem}\label{rem: PBW}
For $\mu$ a coweight let us identify the subalgebra $Y_{\mu}^<(\Glie)$ with $Y^<(\Glie)$ via \eqref{equ: isomorphism} so that the PBW variables make sense for $Y_{\mu}^<(\Glie)$. By \cite[\S 3.12]{coproduct}, with respect to a total order on the set of PBW variables which is in natural bijection with $R \times \BN$, the ordered monomials in the PBW variables form a basis of $Y_{\mu}^<(\Glie)$. 
\end{rem}

We shall need the current algebra $\Glie[t]$; the Lie algebra $\Glie \otimes \BC[t]$ with bracket $$[x \otimes t^m, y \otimes t^n] = [x,y] \otimes t^{m+n}\quad \mathrm{for}\ x, y \in \Glie\quad \mathrm{and}\ m,n \in \BN. $$
It is bigraded by $\BN \times \BQ$: the weight grading comes from the adjoint action of $\Hlie \subset \Glie \subset \Glie[t]$; the $\BN$-grading is defined by declaring $x \otimes t^m$, for $x \in \Glie$ and $m \in \BN$, to be of degree $m$. Its universal enveloping algebra  $U(\Glie[t])$ is bi-graded by $\BN \times \BQ$.

The algebra $Y(\Glie)$ is $\BN$-filtrated, by declaring $Y(\Glie)^{\leq n}$, for $n \in \BN$, to be the linear subspace spanned by the monomials in the generators $x_{i,m}^{\pm}, \xi_{i,m}$ for which the sum of the indexes $m$ is at most $n$. The $\BN$-filtration on $Y(\Glie)$ is compatible with the weight grading, so that the associated grading $\mathrm{gr}_{\BN} Y(\Glie)$ is an algebra bigraded by $\BN \times \BQ$. Here by definition $Y(\Glie)^{\leq -1} = \{0\}$ and
$$ \mathrm{gr}_{\BN} Y(\Glie) := \bigoplus_{n\in \BN} Y(\Glie)^{\leq n}/Y(\Glie)^{\leq n-1}. $$
We have an isomorphism of $(\BN, \BQ)$-bigraded algebras 
\begin{gather}  \label{iso: filtration}
U(\Glie[t]) \longrightarrow \mathrm{gr}_{\BN} Y(\Glie),  \quad x_{\alpha_i}^{\pm} \otimes t^m \mapsto \overline{x_{i,m}^{\pm}}.
\end{gather}
Here $\overline{x_{i,m}^{\pm}}$  denote the images of $x_{i,m}^{\pm}$ under the projection 
$$Y(\Glie)^{\leq m} \longrightarrow Y(\Glie)^{\leq m} /Y(\Glie)^{\leq m-1} \subset \mathrm{gr}_{\BN} Y(\Glie).$$
This isomorphism appeared in \cite{PBW}. For a complete proof, see \cite[Theorem B.2]{FT1}.

\subsection{First coproduct estimation} 
A compact formula for the coproduct of the Drinfeld generators is unknown beyond $sl_2$ in \cite[\S 6.3]{coproduct}. Still some partial information on the weight space projection of coproduct is sufficient. 

Let us define the {\it height function} $h: \BQ_+ \longrightarrow \BN$ to be the additive function such that $h(\alpha_i) = 1$ for $i \in I$. In the following, when we write $h(\beta)$ or speak of height of a weight $\beta$, it is understood that $\beta \in \BQ_+$.

We shall need the notion of {\it principal part}. For $V$ a vector space, let $V[[u,u^{-1}]]$ denote the space of formal power series with coefficients in $V$, and $V[[u^{-1}]]$ its subspace of power series in $u^{-1}$. The principal part of a formal power series $f(u)$ in $V[[u,u^{-1}]]$, denoted by $\langle f(u) \rangle_+$, is a power series in $V[[u^{-1}]]$ defined by
$$ \left\langle \sum_{p\in \BZ} f_p u^{-p-1} \right\rangle_+ := \sum_{p\in \BN} f_p u^{-p-1}. $$
 This was denoted by $\underline{f(u)}$ in \cite[Lemma 5.13]{KTWWY0}. As an example,
$$ \left\langle \frac{g(u)}{u-a}\right\rangle_+ = \frac{g(a)}{u-a} \quad \mathrm{for}\ g(u) \in \BC[u]\ \mathrm{and}\ a \in \BC. $$  
As another example, the shift homomorphism of \eqref{rel: shift map} acts on $x_i^{\pm}(u)$ as:
$$ x_i^+(u) \mapsto \langle u^{-\langle \zeta,\alpha_i \rangle} x_i^+(u) \rangle_+,\quad x_i^-(u)  \mapsto \langle u^{-\langle \eta,\alpha_i \rangle} x_i^-(u) \rangle_+.  $$
Multiplication endows $V[[u,u^{-1}]]$ with a module structure over the polynomial algebra $\BC[u]$. It is compatible with taking principal part:
\begin{equation}  \label{asso: truncation}
\langle g \langle h f \rangle_+ \rangle_+ = \langle g h f\rangle_+ \quad \mathrm{for}\ g, h \in \BC[u]\ \mathrm{and}\ f \in V[[u,u^{-1}]].
\end{equation}

\begin{lem} \label{lem: coproduct estimation}
For all coweights $\mu$ and $\nu$, the coproduct $\Delta_{\mu,\nu}$ satisfies:
\begin{gather*}
\Delta_{\mu,\nu}(x_i^+(u)) \equiv  x_i^+(u) \otimes 1 +  \langle\xi_i(u) \otimes x_i^+(u) \rangle_+\ \mathrm{mod}.\sum_{h(\beta) > 0} Y_{\mu}^-(\Glie)_{-\beta} \otimes Y_{\nu}^+(\Glie)_{\beta+\alpha_i},  \\
\Delta_{\mu,\nu}(x_i^-(u)) \equiv 1 \otimes x_i^-(u) +  \langle x_i^-(u) \otimes \xi_i(u) \rangle_+\ \mathrm{mod}. \sum_{h(\beta) > 0} Y_{\mu}^-(\Glie)_{-\beta-\alpha_i} \otimes Y_{\nu}^+(\Glie)_{\beta}, \\
\Delta_{\mu,\nu}(\xi_i(u)) \equiv \xi_i(u) \otimes \xi_i(u) \ \mathrm{mod}. \sum_{h(\beta) > 0} Y_{\mu}^-(\Glie)_{-\beta} \otimes Y_{\nu}^+(\Glie)_{\beta}, \\
\Delta_{\mu,\nu}(\xi_{i,-\langle\mu+\nu, \alpha_i\rangle}) = \xi_{i,-\langle\mu, \alpha_i\rangle} \otimes 1 + 1 \otimes \xi_{i,-\langle\nu, \alpha_i\rangle}.
\end{gather*}
\end{lem}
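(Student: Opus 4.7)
The plan is to reduce the lemma to the case $\mu=\nu=0$ by two successive applications of the shift-compatibility diagram (\ref{rel: coproduct vs shift}) of Theorem \ref{thm: coproduct}(ii), combined with the injectivity of shift homomorphisms from Theorem \ref{thm: triangular decomposition}; the base case then follows from the known expansion of the Drinfeld--Jimbo coproduct on the ordinary Yangian $Y_0(\Glie)$.

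First I would reduce to antidominant $\mu,\nu$. Given arbitrary $\mu,\nu$, pick antidominant $\zeta,\eta\in\BP^{\vee}$ so that $\mu':=\mu+\zeta$ and $\nu':=\nu+\eta$ are antidominant. Diagram (\ref{rel: coproduct vs shift}) gives $(\iota_{\mu,\zeta,0}\otimes\iota_{\nu,0,\eta})\circ\Delta_{\mu,\nu}=\Delta_{\mu',\nu'}\circ\iota_{\mu+\nu,\zeta,\eta}$. Using the description of the shift map on currents recalled just before the statement of the lemma (so $\iota_{\mu,\zeta,0}$ acts by $x_i^+(u)\mapsto\langle u^{-\langle\zeta,\alpha_i\rangle}x_i^+(u)\rangle_+$, $\xi_i(u)\mapsto u^{-\langle\zeta,\alpha_i\rangle}\xi_i(u)$, identity on $x_i^-(u)$, and symmetrically for $\iota_{\nu,0,\eta}$), together with the principal-part associativity (\ref{asso: truncation}), a coefficient-by-coefficient check shows that applying $\iota_{\mu,\zeta,0}\otimes\iota_{\nu,0,\eta}$ to each of the four claims for $(\mu,\nu)$ yields exactly the corresponding claim for $(\mu',\nu')$. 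Since shift homomorphisms preserve weight grading and send $Y_\mu^-(\Glie)$, $Y_\nu^+(\Glie)$ into $Y_{\mu'}^-(\Glie)$, $Y_{\nu'}^+(\Glie)$ respectively, the error ideals are transported as well; by injectivity of $\iota_{\mu,\zeta,0}\otimes\iota_{\nu,0,\eta}$, the lemma for $(\mu,\nu)$ follows from its antidominant counterpart.

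For the antidominant case I would apply the same diagram with $\mu=\nu=0$ on top: $\Delta_{\mu,\nu}\circ\iota_{0,\mu,\nu}=(\iota_{0,\mu,0}\otimes\iota_{0,0,\nu})\circ\Delta_{0,0}$. This determines $\Delta_{\mu,\nu}$ on the image of $\iota_{0,\mu,\nu}$, which contains all $\xi_{i,p}$ and all $x_{i,n}^{+}$ (resp.\ $x_{i,n}^{-}$) with $n\geq-\langle\mu,\alpha_i\rangle$ (resp.\ $n\geq-\langle\nu,\alpha_i\rangle$); the remaining Drinfeld generators $x_{i,n}^{\pm}$ of small index are handled directly by Theorem \ref{thm: coproduct}(i). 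A direct index-shift bookkeeping, using that $\iota_{0,\mu,0}$ shifts $x^+$ and $\xi$ indices by $-\langle\mu,\alpha_i\rangle$ and is identity on $x^-$ while $\iota_{0,0,\nu}$ shifts $x^-$ and $\xi$ indices by $-\langle\nu,\alpha_i\rangle$ and is identity on $x^+$, assembles the two contributions into the claimed principal-part formulas. The ultimate base case is the Drinfeld--Jimbo coproduct on $Y_0(\Glie)$: one proves the first three claims for $\mu=\nu=0$ by an induction on index via the Cartan--Drinfeld relation (\ref{rel: Cartan-Drinfeld}), starting from $\Delta_{0,0}(x_{i,0}^{\pm})=x_{i,0}^{\pm}\otimes1+1\otimes x_{i,0}^{\pm}$ and $\Delta_{0,0}(\xi_{i,0})=\xi_{i,0}\otimes1+1\otimes\xi_{i,0}$, with error terms landing in $\sum_{h(\beta)>0}Y^{-}(\Glie)_{-\beta}\otimes Y^{+}(\Glie)_{\beta+\star}$ by the shape of (\ref{eq: coproduct Yangian}).

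The fourth equality will follow immediately from the second reduction applied to $\xi_{i,0}\in Y_0$: since $\iota_{0,\mu,\nu}(\xi_{i,0})=\xi_{i,-\langle\mu+\nu,\alpha_i\rangle}$ and $\Delta_{0,0}(\xi_{i,0})=\xi_{i,0}\otimes1+1\otimes\xi_{i,0}$ carries no error term, the diagram yields $\Delta_{\mu,\nu}(\xi_{i,-\langle\mu+\nu,\alpha_i\rangle})=\xi_{i,-\langle\mu,\alpha_i\rangle}\otimes1+1\otimes\xi_{i,-\langle\nu,\alpha_i\rangle}$ on the nose for antidominant $\mu,\nu$, and the first reduction then lifts this to arbitrary $\mu,\nu$. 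The hard part will be the first reduction, where the interplay between the shift homomorphisms and the principal-part operator $\langle\cdot\rangle_+$ on the current series must be matched carefully modulo the error ideal; this relies crucially on (\ref{asso: truncation}) and on preservation of weight grading by the shift homomorphisms.
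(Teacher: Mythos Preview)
Your proposal is correct and follows essentially the same zigzag reduction as the paper: push down to antidominant coweights via diagram (\ref{rel: coproduct vs shift}) and injectivity of shifts, then pull back from the zero case via the same diagram together with (\ref{rel: special coproduct}) for the low-index generators. The only cosmetic difference is that the paper carries out the equivalence coefficient by coefficient (as statements $P(n,\mu,\nu)$) and cites \cite[Lemma 1]{Kn} for the base case $\mu=\nu=0$, whereas you propose to redo Knight's induction via (\ref{rel: Cartan-Drinfeld}) and (\ref{eq: coproduct Yangian}); this is the same argument at a different level of detail.
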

In the first three relations, the notation \lq\lq$\mathrm{mod}. V$\rq\rq\ for $V \subset Y_{\mu}^-(\Glie) \otimes Y_{\nu}^+(\Glie)$ should be understood as \lq\lq modulo $V((u^{-1}))$\rq\rq. 
\begin{proof}
The case $\mu = \nu = 0$ follows from \cite[Lemma 1]{Kn}; while the statement of \cite{Kn} is weaker, its proof works in our situation; see also \cite[Prop.2.8]{CP0} and \cite[Prop.2.10]{GW}. For arbitrary coweights we use the zigzag arguments as in the proof of \cite[Theorem 4.12]{coproduct}. We shall treat only the first assertion, as the other three are parallel. Our goal is to prove the following relations, denoted by $P(n,\mu,\nu)$, for $n \in \BN$:
$$\Delta_{\mu,\nu}(x_{i,n}^+) \equiv x_{i,n}^+ \otimes 1 + \sum_{m\geq 0} \xi_{i,n-1-m} \otimes x_{i,m}^+\ \mathrm{mod}.\sum_{h(\beta) > 0} Y_{\mu}^-(\Glie)_{-\beta} \otimes Y_{\nu}^+(\Glie)_{\beta+\alpha_i}. $$
For $\zeta, \eta$ antidominant coweights, applying $\iota_{\mu,\zeta,0} \otimes \iota_{\nu,0,\eta}$ to $\Delta_{\mu,\nu}(x_{i,n}^+)$, from the commutative diagram \eqref{rel: coproduct vs shift} and injectivity of shift homomorphisms we deduce
$$P(n,\mu,\nu) \Longleftrightarrow P(n-\langle \zeta,\alpha_i\rangle, \mu+\zeta, \nu+\eta).$$
Let us choose $\zeta$ and $\eta$ such that $\mu + \zeta$ and $\nu + \eta$ are antidominant. 

If $n \geq -\langle \mu,\alpha_i\rangle$, then the above equivalence applied to $(n+\langle \mu,\alpha_i\rangle, 0, 0)$ and the antidominant coweights $\mu+\zeta, \nu + \eta$ gives 
$$P(n+\langle \mu,\alpha_i\rangle, 0, 0) \Longleftrightarrow P(n-\langle \zeta,\alpha_i\rangle, \mu+\zeta, \nu+\eta).$$
So $P(n,\mu,\nu) \Longleftrightarrow P(n+\langle \mu,\alpha_i\rangle, 0, 0)$, and the latter is true by \cite{Kn}. 

If $n < -\langle \mu, \alpha_i\rangle$, then $n  -\langle \zeta,\alpha_i\rangle < - \langle \mu+\zeta, \alpha_i\rangle$ and by Equation \eqref{rel: special coproduct}, 
$$ \Delta_{\mu+\zeta, \nu+\eta}(x_{i,n-\langle \zeta, \alpha_i\rangle}^+) =  x_{i,n-\langle \zeta, \alpha_i\rangle}^+ \otimes 1. $$
In the commutative diagram \eqref{rel: coproduct vs shift} put $x_{i,n}^+$ at the top-left corner. Then the element at the bottom-right corner is $x_{i,n-\langle \zeta, \alpha_i\rangle}^+ \otimes 1$. From the injectivity of the vertical maps we obtain that the element at the top-right corner is $x_{i,n}^+ \otimes 1$, namely, 
 $\Delta_{\mu,\nu}(x_{i,n}^+) = x_{i,n}^+ \otimes 1$.  For $m \geq 0$, we have $\xi_{i,n-m-1} = 0$ in $Y_{\mu}(\Glie)$ because 
 $$n-m-1 < -\langle \mu, \alpha_i\rangle - m -1 \leq -\langle \mu, \alpha_i\rangle - 1.$$
So the summation $\sum_m$ in $P(n,\mu,\nu)$ vanishes. This proves $P(n,\mu,\nu)$, with the second summation $\sum_{\beta}$ being zero.
\end{proof}
The zigzag arguments will reappear in this paper at the level of representations.

\subsection{Truncated shifted Yangians} These are quotients of shifted Yangians appearing first in type A \cite{BK} as finite W-algebras, in dominant case \cite{KWWY} as quantizations of slices in affine Grassmannians, and in the most general case \cite{BFN} as quantized Coulomb branches. Their definition involves the notion of $\ell$-weight.

To motivate Eq.\eqref{def: weight coweight} below, let $\mu$ be a coweight and $V$ be a $Y_{\mu}(\Glie)$-module. Then the actions of the $\xi_{i,p}$ on $V$ mutually commute. Suppose that $0 \neq v \in V$ is a common eigenvector with $e_{i,p}$ being the eigenvalue of $\xi_{i,p}$. We have
$$ \xi_i(u) v = e_i(u) v, \quad e_i(u) := \sum_{p\in \BZ} e_{i,p} u^{-p-1}. $$
From Eq.\eqref{rel: shift} we see that $e_i(u)$ is a Laurent series in $u^{-1}$ whose leading term is fixed to be $u^{\langle \mu, \alpha_i\rangle}$. So the {\it coweight} $\mu$ can be recovered from the $I$-tuple of Laurent series $(e_i(u))_{i\in I}$. The actions of the $\xi_{i,-\langle \mu, \alpha_i\rangle}$ on $v$ are encoded in the {\it weight} $\beta \in \Hlie^*$ defined below in the same way as Eq.\eqref{rel: weight grading}:
$$  \xi_{i,-\langle \mu, \alpha_i\rangle} v = (\alpha_i, \beta) v \quad \mathrm{where}\ \beta :=  \sum_{i\in I} e_{i,-\langle\mu,\alpha_i\rangle}  \frac{1}{d_i} \varpi_i.  $$
In notations of Subsection \ref{ss: Verma}, the vector $v$ is of $\ell$-weight $(e_i(u))_{i\in I}$ and weight $\beta$.

Consider the multiplicative group $\BC((u^{-1}))^{\times}$ of the field $\BC((u^{-1}))$ of Laurent series in $u^{-1}$. The set of $\ell$-weights, denoted by $\CL$, is the subset of the $I$-fold product group $\prod_{i \in I} \BC((u^{-1}))^{\times}$ consisting of $I$-tuples of Laurent series in $u^{-1}$ whose leading terms are of the form $u^k$ for $k \in \BZ$; it is clearly a subgroup. For $\Be \in \CL$ and $i \in I$, let $\Be_i(u)$ be the $i$-th component of $\Be$, and let $\Be_{i,p} \in \BC$, for $p \in \BZ$, be the coefficient of $u^{-p-1}$ in $\Be_i(u)$. By definition there exists a unique $k_i \in \BZ$ such that $$\Be_i(u) = \sum_{p\in \BZ} \Be_{i,p} u^{-p-1}\text{ with }\Be_{i,p} = 0\text { for $p<-k_i-1$} \text{ and }\Be_{i,-k_i-1} = 1.$$
Define the {\it weight} and the {\it coweight} of $\Be$ by
\begin{gather} \label{def: weight coweight}
\varpi(\Be) := \sum_{i\in I} \frac{\Be_{i,-k_i}}{d_i} \varpi_i \in \Hlie^*,\quad \varpi^{\vee}(\Be) := \sum_{i\in I} k_i \varpi_i^{\vee} \in \BP^{\vee}.
\end{gather}
This defines two morphisms of abelian groups
$\varpi: \CL \longrightarrow \Hlie^*$ and $\varpi^{\vee}: \CL  \longrightarrow \BP^{\vee}$.

\begin{defi}  \label{def: truncatable pair}
A pair $(\mu, \Br)$ of coweight $\mu$ and $\ell$-weight $\Br$ {\it truncatable} if
\begin{equation}  \label{equ: integrality}
\varpi^{\vee}(\Br) - \mu \in \BQ_+^{\vee}.
\end{equation}
\end{defi}
In this situation, let $m_i \in \BN$, for $i\in I$, be the coefficient of $\alpha_i^{\vee}$ in $\varpi^{\vee}(\Br) - \mu$. We have the Gerasimov--Kharchev--Lebedev--Oblezin (GKLO for short) series $A_i(u)$, a $Y_{\mu}^=(\Glie)$-valued Laurent series in $u^{-1}$ of leading term $u^{m_i}$ for $i\in I$, uniquely determined by the following equations  \cite[Lemma 2.1 with $\iota \hbar = 1$]{GKLO} (see also \cite[\S 4A]{KWWY} and \cite[(B.14)]{BFN}):
\begin{equation}  \label{equ: truncation}
\begin{split}
\xi_i(u) &=   \frac{\Br_i(u)}{A_i(u)A_i(u-d_i)} \prod_{j: c_{ji} < 0} \prod_{t=1}^{-c_{ji}} A_j(u-d_{ij}-t d_j) \\
           & = \frac{\Br_i(u)}{A_i(u) A_i(u-d_i)}  \prod_{j: c_{ji} = -1} A_j(u-\frac{1}{2} d_j)\prod_{j: c_{ji} = -2} A_j(u) A_j(u-1)  \\
 & \quad\quad \times  \prod_{j: c_{ji} = -3} A_j(u+\frac{1}{2}) A_j(u-\frac{1}{2}) A_j(u-\frac{3}{2}). 
\end{split}
\end{equation}
The second equation comes from the fact that $c_{ji} = -1$ implies $d_{ij} = -\frac{1}{2}d_j$, while $c_{ji} < -1$ implies $d_j = 1$ and $d_{ij} = \frac{1}{2} c_{ji}$. 

\begin{lem}\cite{GKLO}  \label{lem: affine coweight}
Let $(\mu, \Br)$ be truncatable. In the shifted Yangian $Y_{\mu}(\Glie)$ we have
\begin{equation*} 
A_i(u) x_{j,n}^- A_i(u)^{-1} =  x_{j,n}^- + d_i \delta_{ij} \sum_{k\geq 0} x_{i,n+k}^- u^{-k-1}.
\end{equation*}
\end{lem}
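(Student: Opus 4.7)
The plan is to derive the commutation relation from the minus-sign case of the Drinfeld relation \eqref{rel: Cartan-Drinfeld} combined with the defining equation \eqref{equ: truncation} of the GKLO series.

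First, I would package the relevant Drinfeld relations into a generating-series identity. Multiplying \eqref{rel: Cartan-Drinfeld} (with $-$ sign) by $u^{-p-1} v^{-n-1}$ and summing over $p \in \BZ$ and $n \in \BN$, one obtains, after a straightforward index shift, an identity of the form
\[ (u-v+d_{ij})\, \xi_i(u)\, x_j^-(v) - (u-v-d_{ij})\, x_j^-(v)\, \xi_i(u) \;=\; -[\xi_i(u), x_{j,0}^-], \]
which is the Yangian counterpart of the standard commutation in the Drinfeld new realization. Since $\xi_i(u)$ has leading term $u^{\langle \mu, \alpha_i \rangle}$, it is invertible as a Laurent series in $u^{-1}$, so extracting the coefficient of $v^{-n-1}$ yields an explicit formula for the conjugate $\xi_i(u)\, x_{j,n}^-\, \xi_i(u)^{-1}$ as a principal part in $u^{-1}$ involving the $x_{j,k}^-$ with $k \geq n$ together with low-degree corrections.

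Next, I would substitute the factorization \eqref{equ: truncation} of $\xi_i(u)$ as a product of GKLO factors $A_k(u - \text{shift})$. Assuming provisionally that each $A_k(u)$ satisfies the lemma's commutation formula, the multiplicativity of conjugation and the additivity of principal-part series reduce the Drinfeld commutation obtained in Step 1 to a term-by-term check: the diagonal $j = i$ part comes from the pair $A_i(u) A_i(u-d_i)$, while for $j$ adjacent to $i$ (i.e.\ $c_{ji} < 0$) the off-diagonal commutation of $\xi_i(u)$ with $x_{j,n}^-$ is produced entirely by the cross-factors $A_j(u - d_{ij} - t d_j)$, consistently with the identity $d_{ij} = \frac{1}{2}(\alpha_i,\alpha_j)$. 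When $j$ is not adjacent to $i$, no $A_j$ factor appears in \eqref{equ: truncation}, matching the vanishing of $[\xi_i(u), x_{j,n}^-]$ in that case.

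The main obstacle is the uniqueness step: passing from \emph{consistency} of the candidate product formula to the \emph{individual} commutation relation for each $A_k(u)$. To overcome this, I would argue by induction on the coefficient degree of $A_i(u)$ as a Laurent series in $u^{-1}$: since the $A_k(u)$'s mutually commute and since the leading-term condition $A_i(u) = u^{m_i} + \ldots$ together with \eqref{equ: truncation} determines them recursively from the $\xi_j$'s, the commutator $[A_i(u), x_{j,n}^-]$ can be computed term by term from the known commutation of the $\xi_j$'s with $x_{j,n}^-$ and the inductive hypothesis on lower-degree coefficients of the remaining $A_k(u)$'s. A cleaner alternative would be to first establish the claim in rank one ($\Glie = \mathfrak{sl}_2$), where \eqref{equ: truncation} reduces to $\xi(u) = \Br(u)/(A(u)A(u-1))$ and a direct extraction of coefficients yields the commutation, and then to lift to general $\Glie$ by combining the rank-one cases attached to each Dynkin node via the multiplicative structure of \eqref{equ: truncation}.
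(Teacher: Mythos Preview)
Your proposal diverges substantially from the paper's proof, and the gap you yourself flag is not closed by either of the fixes you suggest.

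The paper does not attempt to extract the commutation of $A_i(u)$ with $x_{j,n}^-$ from the factorization \eqref{equ: truncation}. Instead it quotes directly from \cite{GKLO} the coefficient-level relation
\[
[a_{i,p+1}, x_{j,n}^-] - [a_{i,p}, x_{j,n+1}^-] \;=\; \delta_{ij}\, d_i\, x_{i,n}^-\, a_{i,p},
\]
(where $A_i(u) = \sum_p a_{i,p} u^{-p-1}$), telescopes it using $a_{i,p}=0$ for $p \ll 0$ to obtain $[a_{i,p}, x_{j,n}^-] = d_i\delta_{ij}\sum_{k\ge 0} x_{i,n+k}^- a_{i,p-k-1}$, and then resums and right-multiplies by $A_i(u)^{-1}$. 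The whole argument is three lines once the GKLO input is granted.

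Your consistency check in Step 2 is, as you note, circular: verifying that the candidate commutation for each $A_k$ reproduces the known commutation of $\xi_i$ does not rule out other solutions. Neither proposed repair works as stated. The rank-one reduction fails because in \eqref{equ: truncation} the series $\xi_i(u)$ involves $A_j$ for every $j$ adjacent to $i$; you cannot decouple the Dynkin nodes and lift node-by-node, since the very commutator $[\xi_i(u), x_{j,n}^-]$ for adjacent $j$ is produced by the cross-factors $A_j(u-d_{ij}-td_j)$, whose commutation with $x_{j,n}^-$ is exactly what you are trying to establish. The induction on the Laurent degree of $A_i(u)$ is in principle a viable route---the coefficients $a_{i,p}$ are indeed determined recursively from the $\xi_{j,q}$---but you have not carried out the recursion, and the bookkeeping is nontrivial: each step mixes all Dynkin nodes through the coupled system \eqref{equ: truncation}, and it is not evident a priori that the recursion closes to the clean form $\delta_{ij} d_i x_{i,n}^- a_{i,p}$ rather than a more complicated expression.

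If you want a self-contained argument, the efficient path is the one GKLO takes: prove the displayed recursion for $[a_{i,p+1}, x_{j,n}^-]$ directly by induction on $p$, using that $a_{i,p}$ is expressed polynomially in the $\xi_{k,q}$ and lower $a_{k,q'}$ together with \eqref{rel: Cartan-Drinfeld}. That computation does go through, but it is precisely what \cite{GKLO} already records, and it is not the ``rank-one plus lifting'' shortcut you propose.
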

\begin{proof}
Write $A_i(u) = \sum_p a_{i,p} u^{-p-1}$. The following relation is a consequence of \cite[(2.12),(2.14)]{GKLO}; see \cite[Definition 4.1]{KTWWY} in simply-laced types:
\begin{gather*} 
  [a_{i,p+1}, x_{j,n}^-] - [a_{i,p}, x_{j,n+1}^-] = \delta_{ij} d_i x_{i,n}^- a_{i,p}.
\end{gather*}
Since $a_{i,p} = 0$ for $p <<0$, the above relation can be rewritten as
$$  [a_{i,p}, x_{j,n}^-] = d_i\delta_{ij}  \sum_{k\geq 0} x_{i,n+k}^- a_{i,p-k-1}. $$
Multiplying the above equality by $u^{-p-1}$ and summing over $p \in \BZ$, we get
$$ A_i(u) x_{j,n}^- - x_{j,n}^- A_i(u) =  d_i\delta_{ij} \sum_{k\geq 0} x_{i,n+k}^- u^{-k-1} A_i(u). $$
Right multiplying by $A_i(u)^{-1}$ gives the desired identity of the lemma. 
\end{proof}

\begin{defi}\cite{KWWY, BFN}  \label{def: truncated shifted Yangians}
For $(\mu,\Br) \in \BP^{\vee} \times \CL$ a truncatable pair, the {\it truncated shifted Yangian} $Y_{\mu}^{\Br}(\Glie)$ is the algebra defined as the quotient of the shifted Yangian $Y_{\mu}(\Glie)$ by the two-sided ideal generated by the coefficients of $\langle A_i(u)\rangle_+$ for $i \in I$.
\end{defi}

\begin{rem}  \label{rem: truncated shifted Yangian}
Assume each $\Br_i(u)$ is a monic polynomial of $u$. Our algebra $Y_{\mu}^{\Br}(\Glie)$ and series $A_i(u)$ correspond to $\widetilde{Y}_{\mu}^{\nu}(\Br)$ and $u^{m_i} A_i(u)$ in \cite[\S 3.4]{KPW} with $\nu = \varpi^{\vee}(\Br)$. The original truncated shifted Yangian, denoted by $Y_{\mu}^{\nu}(\Br)$, is defined to be the image of $Y_{\mu}^{\Br}(\Glie)$ under the so-called GKLO representation by difference operators; see \cite[Theorem 4.5]{KWWY} and \cite[Theorem B.15]{BFN}. Conjecturally \cite{KWWY} the quotient map is an isomorphism, and a proof in type A is available in \cite[Theorem 1.6]{KMW} and \cite[Theorem A.5]{KPW}. 
The reason why we drop the polynomiality of $\Br$ will be given in Section \ref{sectrunc}.
\end{rem}
\section{Representations of shifted Yangians}  \label{sec: rep shif}
We recall basic properties of representations of shifted Yangians: Verma modules, classification of irreducible modules in the category $\BGG^{sh}$, $q$-characters, finite-dimensional irreducible modules, and prefundamental modules. 

 Most of the definitions and results in this section are well-known, and were also known for representation theory of three classes of algebras: the ordinary Yangian \cite{Dr,CP0, Kn, GTL}; the upper Borel subalgebra of the quantum affine algebra $U_q(\hat{\Glie})$ for $q \in \BC^{\times}$ generic \cite{HJ,FH,H1}, which we refer to as {\it Borel algebra}; the shifted quantum affine algebras recently developed in \cite{FT, H0}. 
Their proofs work for shifted Yangians as well because the algebraic structures are common for these quantum groups.

\subsection{Verma modules} \label{ss: Verma}
We begin with some general remarks on the notions of weights and $\ell$-weights for modules over shifted Yangians. 
Fix $\mu$ a coweight. Let $V$ be a module over $Y_{\mu}(\Glie)$. For $\beta \in \Hlie^*$ and $\Bf \in \CL$, define
\begin{gather*}
 V_{\beta} := \{v \in V \ |\ \forall\ i \in I,\ \xi_{i,-\langle\mu,\alpha_i \rangle} v = (\alpha_i,\beta) v \}, \\
 V_{\Bf} :=   \{v \in V \ |\ \forall\ (i,p) \in I \times \BZ,\ \exists\ m \in \BN\ \mathrm{such\ that}\ (\xi_{i,p} - \Bf_{i,p})^m v = 0  \}.
\end{gather*}
If $V_{\beta}$ is nonzero, then it is called a {\it weight space} of weight $\beta$, and a nonzero vector $v \in V_{\beta}$ is called a {\it weight vector} of weight $\wt(v) := \beta$. If $V_{\Bf}$ is nonzero, then it is an {\it $\ell$-weight space} of $\ell$-weight $\Bf$ and similar conventions for $\ell$-weight vector and $\lwt(v)$.  Let $\wt(V) \subseteq \Hlie^*$ be the set of weights of $V$, and $\lwt(V) \subseteq \CL$ the set of $\ell$-weights.  
By Eqs.\eqref{rel: shift} and \eqref{rel: weight grading}, for $\Bf \in \lwt(V),\ \beta \in \wt(V)$ and $\alpha \in \BQ$, we have
$$\mu = \varpi^{\vee}(\Bf),\quad Y_{\mu}(\Glie)_{\alpha} V_{\beta} \subseteq V_{\alpha+\beta}.  $$ 

\begin{rem} \label{rem: spectral weight}
While the automorphism $\tau_a$ from \eqref{rel: spectral shift} preserves the weight grading on $Y_{\mu}(\Glie)$, this is not the case for modules. Define the weight $\tilde{\mu} \in \Hlie^*$ associated to $\mu$ and, by abuse of language, the group automorphism $\tau_a: \CL \longrightarrow \CL$ by
$$ \tilde{\mu} := \sum_{i\in I} \langle \mu, \alpha_i\rangle \frac{1}{d_i} \varpi_i,\quad \tau_a(\Bf) := (\Bf_i(u-a))_{i\in I} \quad \mathrm{for}\ \Bf \in \CL.  $$
 For $V$ a $Y_{\mu}(\Glie)$-module, the pullback module $\tau_a^*V$ is denoted by $V(a)$, with $a$ referred to as the {\it spectral parameter}.  We have 
\begin{align*}
V_{\beta} = V(a)_{\beta-a \tilde{\mu}} \quad \mathrm{for}\ \beta \in \wt(V),\quad V_{\Bf} = V(a)_{\tau_a(\Bf)} \quad \mathrm{for}\ \Bf \in \lwt(V).
\end{align*}
\end{rem}

Call $V$ a {\it weight module} if it is a direct sum of weight spaces. In such a module, any $\ell$-weight space $V_{\Bf}$ is contained in the weight space $V_{\varpi(\Bf)}$. We shall say that $V$ is weight graded by a subset $X \subset \Hlie^*$ if $V$ is a weight module and $\wt(V) \subset X$.

Call $V$ {\it top graded} if there exists $\lambda \in \Hlie^*$ such that $V$ is weigh graded by $\lambda+\BQ_-$ and $V_{\lambda}$ is one-dimensional. Clearly $\lambda$ is unique and $V_{\lambda}$ equals an $\ell$-weight space $V_{\Be}$ for a unique $\Be \in \CL$. We refer to $\lambda, V_{\lambda}, \Be$ and $V_{\Be}$ as the {\it top weight}, {\it top weight space}, {\it top $\ell$-weight} and {\it top $\ell$-weight space}.

Let $\Be \in \CL$ be of coweight $\mu$. 
The {\it Verma module} $M(\Be)$ is the $Y_{\mu}(\Glie)$-module defined by parabolic induction \cite[\S 3.3]{KTWWY0}
$$ M(\Be) := Y_{\mu}(\Glie) \otimes_{Y_{\mu}^+(\Glie)} \BC. $$
Here $\BC = \BC 1$ is viewed as a $Y_{\mu}^+(\Glie)$ by setting $x_i^+(u)1 = 0$ and $\xi_i(u)1 = \Be_i(u)1$.
The vector $\omega_{\Be} := 1 \otimes 1 \in M(\Be)$ is of weight $\varpi(\Be)$ and $\ell$-weight $\Be$. From the triangular decomposition of Theorem \ref{thm: triangular decomposition} and the weight grading on $Y_{\mu}^<(\Glie)$ we obtain that $M(\Be)$ is top graded with $\Be$ being the top $\ell$-weight. Moreover, the linear map $Y_{\mu}^<(\Glie) \longrightarrow M(\Be)$ sending $x \in Y_{\mu}^<(\Glie)$ to $x \omega_{\Be}$ is bijective.

By standard argument, the Verma module has a unique maximal submodule, the quotient by which is irreducible and denoted by $L(\Be)$. By abuse of language, still let $\omega_{\Be} \in L(\Be)$ denote the image of $\omega_{\Be} \in M(\Be)$ under the quotient.

Let $V$ be a $Y_{\mu}(\Glie)$-module and let $v$ be a nonzero vector of $V$. Call $v$ a {\it vector of highest $\ell$-weight $\Be$} if there exists a module morphism $M(\Be) \longrightarrow V$ sending $\omega_{\Be}$ to $v$. Namely, $\xi_i(u) v = \Be_i(u) v$ and $x_i^+(u) v = 0$ for $i \in I$.

\begin{defi}
Call $V$ a {\it module of highest $\ell$-weight $\Be$} if there exists a nonzero surjective module morphism $M(\Be) \longrightarrow V$.\end{defi}

Equivalently, $V$ is generated by a vector $v$ of highest $\ell$-weight $\Be$. It follows that $V$ is top graded with $\Be$ being the top $\ell$-weight.
In particular, $v$ is unique up to homothety, and there is a unique surjective module morphism $V \longrightarrow L(\Be)$ sending $v$ to $\omega_{\Be}$.

Recall the coproduct for $\mu, \nu$ coweights
$$\Delta_{\mu,\nu}: Y_{\mu+\nu}(\Glie) \longrightarrow Y_{\mu}(\Glie) \otimes Y_{\nu}(\Glie)$$ 
from Theorem \ref{thm: coproduct}. If $W$ and $V$ are modules over $Y_{\mu}(\Glie)$ and $Y_{\nu}(\Glie)$ respectively, then their tensor product $W \otimes V$ is naturally a module over $Y_{\mu+\nu}(\Glie)$. Since the coproduct respects the weight grading, we have 
$$W_{\alpha} \otimes V_{\beta} \subset (W \otimes V)_{\alpha+\beta}\quad\text{ for }\alpha, \beta \in \Hlie^*.$$ 
So, a tensor product of weight modules is still weight graded.

\begin{example}  \label{example: tensor product highest weight}
Let $\Be, \Bf \in \CL$. Consider the tensor product module $M(\Be) \otimes M(\Bf)$. From the coproduct of $\xi_i(u)$ and $x_i^+(u)$ in Lemma \ref{lem: coproduct estimation} we see that $\omega_{\Be} \otimes \omega_{\Bf}$ is of highest $\ell$-weight $\Be\Bf$. This implies that $L(\Be\Bf)$ is a subquotient of $L(\Be) \otimes L(\Bf)$.
\end{example}

Lowest $\ell$-weight vectors/modules can be defined by replacing $x_i^+(u)$ with $x_i^-(u)$.

\begin{example}  \label{example: V W}
Let $V$ be a $Y_{\nu}(\Glie)$-module containing a lowest $\ell$-weight vector $v_-$ and let $W$ be a $Y_{\mu}(\Glie)$-module  containing a highest $\ell$-weight vector $\omega$. Then for $j \in I,\ v \in V$ and $w \in W$, we have the following relations in the module $V\otimes W$ based on the coproduct estimation of Lemma \ref{lem: coproduct estimation}:
\begin{gather*}
 \xi_j(u) (v_- \otimes w) = \xi_j(u) v_- \otimes \xi_j(u) w, \quad x_j^-(u) (v_- \otimes w) = v_- \otimes x_j^-(u) w, \\
 x_j^-(u) (v \otimes \omega) = v \otimes x_j^-(u) \omega + \langle x_j^-(u) v \otimes \xi_j(u) \omega\rangle_+. 
\end{gather*}
In particular, if $w$ is an $\ell$-weight vector, then so is $v_- \otimes w$.
\end{example}

\subsection{Finite-dimensional irreducible modules} In this subsection we recall the classification of finite-dimensional irreducible modules over shifted Yangians from \cite[Theorem 3.5]{KTWWY0}. The result was proved in simply-laced types by reduction to $sl_2$ and applying \cite[\S 7.2]{BK}, so it works in general types. See \cite[Theorem 6.4]{H0} for a similar classification for shifted quantum affine algebras.

\begin{example}\cite[Remark 24]{Z} \label{ex: + prefund}
Let $(i,a) \in I \times \BC$. The {\it positive prefundamental module} $L_{i,a}^+$ is the one-dimensional $Y_{\varpi_i^{\vee}}(\Glie)$-module of highest $\ell$-weight (our sign convention is opposite to \cite[(10)]{Z} and agrees with \cite[Definition 3.7]{HJ})
\begin{equation}
\Psi_{i,a} := (\underbrace{1,\cdots,1}_{i-1}, u - a, \underbrace{1,\cdots,1}_{r-i}) \quad \quad \mathrm{prefundamental\ weight}.  \label{def: pre-fund}
\end{equation}
Our terminology follows \cite[Definition 3.4]{FH}. In the framework of representations of the Borel algebra \cite{HJ}, the positive prefundamental module is an infinite-dimensional irreducible module whose $\ell$-weights are rather simple, and it has important applications in quantum integrable systems (construction of Baxter's Q-operators \cite{Baxter} as transfer matrices of this modules, polynomiality of Q-operators). In another framework of representations of shifted quantum affine algebras, which is closer to our situation, the positive prefundamental modules are one-dimensional \cite[Example 4.12]{H0}.
\end{example}

For $(i, a) \in I \times \BC$ define the {\it fundamental $\ell$-weight} by
\begin{equation}
  Y_{i,a} := \frac{\Psi_{i,a-\frac{1}{2}d_i}}{\Psi_{i,a+\frac{1}{2}d_i}} \in \CL.  \label{def: fund l-weights}
\end{equation}
In the notations of \cite[\S 2.13]{CP0}, $L(Y_{i,a})$ is the finite-dimensional irreducible module over $Y(\Glie)$ with Drinfeld polynomials $P_i^+(u) = u - a - \frac{1}{2}d_i$ and $P_j^+(u) = 1$ for $j \neq i$. This justifies its name {\it fundamental module}.

\begin{theorem}\cite{BK,KTWWY0} \label{thm: finite-dim}
For  $\Be \in \CL$, the irreducible module $L(\Be)$ is finite-dimensional if and only if $\Be$ is a monomial of the $\Psi_{i,a}$ and $Y_{i,a}$ for $i \in I$ and $a \in \BC$. Furthermore, all finite-dimensional irreducible modules over shifted Yangians arise in this way.
\end{theorem}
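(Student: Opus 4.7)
The plan is to combine two classical inputs: the Chari--Pressley--Drinfeld classification of finite-dimensional irreducible $Y(\Glie)$-modules in terms of Drinfeld polynomials, phrased as monomials in the $Y_{i,a}$, and the one-dimensional positive prefundamental modules $L(\Psi_{i,a}) = L_{i,a}^+$ from Example~\ref{ex: + prefund}. The tensor product realization via the coproducts $\Delta_{\mu,\nu}$ handles the sufficiency direction, while reduction to rank-one subalgebras handles necessity.

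\emph{Sufficiency.} Let $\Be$ be a monomial in the $\Psi_{i,a}$ and $Y_{i,a}$. Since $Y_{i,a}$ has coweight $0$ by \eqref{def: fund l-weights}, one may factor $\Be = \Psi \cdot \Bm$, where $\Psi$ is a product of $\Psi_{i,a}$'s carrying the full coweight $\mu := \varpi^{\vee}(\Be) \in \BP^{\vee}$ and $\Bm$ is a monomial in the $Y_{j,b}$ (and in particular has coweight $0$). Iterating the coproducts via the coassociativity diagram \eqref{rel: coproduct coass}, the tensor product of the one-dimensional modules $L_{i,a}^+$ is a one-dimensional module $P$ of highest $\ell$-weight $\Psi$. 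By Chari--Pressley, $L(\Bm)$ is a finite-dimensional irreducible $Y(\Glie)$-module. Applying Example~\ref{example: tensor product highest weight} to $P \otimes L(\Bm)$, the irreducible module $L(\Be)$ appears as a subquotient, and $\dim(P \otimes L(\Bm)) = \dim L(\Bm) < \infty$, so $L(\Be)$ is finite-dimensional.

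\emph{Necessity.} Suppose $L(\Be)$ is finite-dimensional, with highest $\ell$-weight vector $\omega_\Be$ of coweight $\mu = \varpi^{\vee}(\Be)$. Fix $i \in I$. Inspecting the defining relations \eqref{rel: Cartan}--\eqref{rel: shift} restricted to the $i$-th Drinfeld generators $x_{i,n}^{\pm}, \xi_{i,p}$, these generate a subalgebra that is a homomorphic image of a shifted $sl_2$-Yangian $Y_{k_i \varpi^{\vee}}(sl_2)$ (with parameter $d = d_i$), where $k_i = \langle \mu, \alpha_i \rangle$. The cyclic submodule of $L(\Be)$ generated by $\omega_\Be$ under this subalgebra is finite-dimensional and admits an irreducible subquotient with top $\ell$-weight $\Be_i(u)$. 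By the rank-one classification \cite[\S 7.2]{BK}, any such finite-dimensional irreducible has top $\ell$-weight a monomial in $\Psi_a$ and $Y_a$; translating back node by node shows that $\Be_i(u)$ is a monomial in $\Psi_{i,a}$ and $Y_{i,a}$. Since this holds for every $i \in I$, $\Be$ itself is globally a monomial in the $\Psi_{i,a}$ and $Y_{i,a}$.

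\emph{Expected obstacle.} The subtle point is the necessity direction: one must confirm that the subalgebra generated by the $i$-th Drinfeld generators is genuinely a homomorphic image of a shifted $sl_2$-Yangian, so that the $sl_2$-classification applies without spurious constraints coming from interaction with the other nodes. The cleanest route, as in \cite{BK, KTWWY0}, is to appeal to the GKLO realization by difference operators of the relevant truncated quotients; this packages the rank-one reduction directly and bypasses the need to construct an $sl_2$-subalgebra by hand. A secondary technical point is ensuring the factorization $\Be = \Psi \cdot \Bm$ in the sufficiency step yields a $\Bm$ with non-negative $Y$-exponents, which is automatic from the assumption that $\Be$ was a monomial (not a Laurent monomial) in the $\Psi_{i,a}$ and $Y_{i,a}$.
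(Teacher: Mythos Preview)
The paper does not give its own proof of this theorem; it is cited from \cite{BK,KTWWY0}, with the single comment that ``the result was proved in simply-laced types by reduction to $sl_2$ and applying \cite[\S 7.2]{BK}, so it works in general types.'' Your proposal follows exactly this strategy: necessity by passing to the rank-one subalgebra at each node and invoking the $sl_2$ classification, sufficiency by a tensor-product argument using the one-dimensional modules $L(\Psi)$ and the Chari--Pressley classification for the ordinary Yangian. So the approach is correct and matches what the paper indicates.

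Two minor remarks. First, the $sl_2$ embedding you need is entirely elementary---it is written out later in the paper as Lemma~\ref{lem: sl2 triple} (the map $f_{\mu,i}: Y_{k_i}(sl_2) \to Y_{\mu}(\Glie)$, with the rescaling $u \mapsto u d_i$)---so your ``expected obstacle'' paragraph overstates the difficulty: there is no need to invoke GKLO difference operators for this step. Second, you do not address the ``Furthermore'' clause, that every finite-dimensional irreducible is of the form $L(\Be)$; this requires the standard argument that a finite-dimensional irreducible module contains a highest $\ell$-weight vector, which is immediate from finite-dimensionality of weight spaces and the triangular decomposition.
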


\begin{example}  \label{ex: two-dim}
Fix $(i, a) \in I \times \BC$. Let $N_{i,a}$ be the irreducible module of highest $\ell$-weight 
$$Y_{i,a-\frac{1}{2}d_i} \prod_{j: c_{ij}<0} \Psi_{j,a-d_{ij}}.$$
It is realized on the vector space $\BC^2$ with basis $(e_1, e_2)$ such that the only nonzero actions of the generating series on the basis are
\begin{gather*}
\xi_j(u) e_1 = e_1 \begin{cases}
\frac{u-a+d_{ij}}{u-a} & \mathrm{if}\ c_{ij}\geq 0,\\
u-a+d_{ij} & \mathrm{if}\ c_{ij} < 0,
\end{cases} \quad  \xi_j(u) e_2 = e_2\begin{cases}
\frac{u-a-d_{ij}}{u-a} & \mathrm{if}\ c_{ij} \geq 0,\\
u-a-d_{ij} & \mathrm{if}\ c_{ij} < 0, 
\end{cases} \\
x_i^+(u) e_2 = \frac{1}{u-a} e_1, \quad   x_i^-(u) e_1 = \frac{d_i}{u-a} e_2.
\end{gather*}
Over the Borel algebra there is an infinite-dimensional irreducible module of similar highest $\ell$-weight \cite[\S 6.1.3]{HL}, denoted by $N_{i,a}^+$ in \cite[(6.2)]{Jimbo}, which gives rise to cluster mutations \cite{HL0,HL} and three-term Baxter's TQ relations for transfer matrices \cite[Prop.6.8]{Jimbo}. Over shifted quantum affine algebras, the irreducible module is two-dimensional \cite[Example 6.6]{H0}. 
\end{example}
The ratio of the $\ell$-weights of $N_{i,a}$ is a {\it generalized simple root}: 
\begin{equation}  
      A_{i,a} := \prod_{j \in I} \frac{\Psi_{j, a - d_{ij}}}{\Psi_{j, a+d_{ij}} } \in \CL. \label{def: simple root} 
\end{equation}
Notice that the $A_{i,a}$ for $(i,a) \in I \times \BC$ generate a free abelian subgroup of $\CL$.
Originally generalized simple roots were defined in \cite[(3.11),(4.8)]{FR} as certain evaluations of the universal R-matrix of $U_q(\hat{\Glie})$, and they were linked to $\ell$-weights therein. Similar formulas hold \cite[\S 5.5, Theorem 6.1]{H0} for shifted quantum affine algebras.  

A finite-dimensional irreducible $Y(\Glie)$-module is necessarily weight graded, as an integrable $\Glie$-module, and it is both of highest $\ell$-weight and of lowest $\ell$-weight. 

\begin{theorem}\cite{TG,Tan,Guay}  \label{thm: normalized R fin dim}
Let $U$ and $V$ be finite-dimensional irreducible $Y(\Glie)$-modules generated by highest $\ell$-weight vectors $\omega_1$ and $\omega_2$ respectively. Let $a, b \in \BC$.
\begin{itemize}
\item[(i)] There exist a tensor product of fundamental modules $T$ and an injective morphism from
$V$ to $T$ whose image contains a tensor product of highest 
$\ell$-weight vectors as well as a tensor product of lowest 
$\ell$-weight vectors.
\item[(ii)] There exists a finite subset $X$ of $\BC$ such that the module $U(a) \otimes V(b)$ is irreducible if $a-b \notin X$.
\item[(iii)] The assignment $\omega_1 \otimes \omega_2 \mapsto \omega_2 \otimes \omega_1$ extends uniquely to a linear map
$$ \check{R}_{U,V}(u): U \otimes V \longrightarrow V \otimes U \otimes \BC(u) $$
such that the evaluation at $u = a-b$ of the vector-valued rational function is a module morphism from $U(a) \otimes V(b)$ to $V(b) \otimes U(a)$, if $a-b$ is not a pole.
 \end{itemize}
\end{theorem}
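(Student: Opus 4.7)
The plan is to establish parts (i), (ii), (iii) in this order, with each leveraging the earlier ones; the main obstacle will be upgrading the pointwise uniqueness of the R-matrix in (iii) to rationality in the spectral parameter.

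For (i), the starting point is the Drinfeld polynomial classification, so the highest $\ell$-weight of $V$ can be written as $\prod_{k=1}^n Y_{i_k,a_k}$; one then considers the standard tensor product $T = L(Y_{i_1,a_1}) \otimes \cdots \otimes L(Y_{i_n,a_n})$. The Chari--Pressley-type cocyclicity theorem, in the form extended to arbitrary finite type in the references to the theorem, asserts that for an appropriate (``antidominant'') ordering of the factors, $T$ has irreducible socle isomorphic to $V$ and is cocyclic with respect to the tensor of lowest $\ell$-weight vectors. Hence $V$ embeds into $T$, and since every nonzero submodule of $T$ contains the lowest $\ell$-weight tensor, so does the image of $V$. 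For the highest $\ell$-weight tensor, note that the $\Glie$-highest weight of $T$ equals $\sum_k \varpi_{i_k}$, which also equals the $\Glie$-highest weight of $V$; the highest weight space of $T$ is one-dimensional, spanned by the tensor of highest $\ell$-weight vectors (because each factor $L(Y_{i_k,a_k})$ has a one-dimensional $\varpi_{i_k}$-weight space), so by dimension count $V_{\sum_k \varpi_{i_k}} = T_{\sum_k \varpi_{i_k}}$, whence the highest $\ell$-weight tensor lies in the image of $V$.

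For (ii), the strategy is to show that the locus $X \subset \BC$ where $U(a) \otimes V(b)$ fails to be irreducible is proper. The key observation is that the submodule $W_{a,b} \subseteq U(a)\otimes V(b)$ generated by the tensor of highest $\ell$-weight vectors has weight-space dimensions which are lower-semicontinuous in $a-b$ (by rank considerations for linear maps depending algebraically on the parameters), attaining the maximum $\dim U \cdot \dim V$ on a Zariski-open set. On this set $W_{a,b} = U(a)\otimes V(b)$ is a highest $\ell$-weight module whose highest $\ell$-weight is the product of those of $U(a)$ and $V(b)$ (by the coproduct formula in Example \ref{example: tensor product highest weight}). A symmetric argument with lowest $\ell$-weight tensors shows that $U(a)\otimes V(b)$ is also generically a lowest $\ell$-weight module with the matching lowest weight, forcing irreducibility. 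Nonemptiness of the open set is verified by reducing via the embeddings of (i) to tensor products of fundamental modules, for which Jimbo's explicit construction provides nonzero rational R-matrices with finite pole sets.

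For (iii), pointwise uniqueness of $\check{R}_{U,V}(u)$ at a generic $u=a-b$ follows from Schur's lemma applied to the irreducible modules $U(a)\otimes V(b)$ and $V(b)\otimes U(a)$ of (ii), which share the same highest $\ell$-weight; the isomorphism is then pinned down by the normalization $\omega_1\otimes\omega_2 \mapsto \omega_2\otimes\omega_1$. To upgrade this to rationality in $u$, the plan is to use the formal shift homomorphism $\tau_z$ of \eqref{rel: spectral shift formal} and view the matrix entries of $\check{R}_{U,V}$, in fixed weight bases of $U$ and $V$, as unknowns in the intertwining linear system coming from finitely many generators of $Y(\Glie)$; the coefficients of this system are polynomial in $u$. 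The principal obstacle is to show this linear system admits a nonzero rational solution; this is bootstrapped from the fundamental case via (i), since an explicit Jimbo-type rational R-matrix on $T_U \otimes T_V$ restricts, via the embeddings of (i), to a nonzero rational intertwiner $U \otimes V \to V \otimes U$. Cramer's rule applied to the resulting system then delivers the rationality of $\check{R}_{U,V}(u)$.
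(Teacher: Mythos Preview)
The paper does not prove this theorem: it is stated as a result from \cite{TG,Tan,Guay}, and immediately after the statement the authors write ``We refer to \cite[Theorem 3.10]{Guay} for a proof of the theorem and for a discussion of relevant results for the quantum affine algebra.'' The only additional comment is that part (i) is a weaker form of the main results of \cite{TG,Tan}, namely that $T$ can be chosen to be of co-highest $\ell$-weight in the sense of Definition~\ref{def: cocyclicity}. So there is no in-paper argument to compare your sketch against.

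That said, a few points about your sketch are worth flagging. In (i), you assert that $T$ is ``cocyclic with respect to the tensor of lowest $\ell$-weight vectors''; the cited results actually give that for a suitable ordering $T$ is of \emph{co-highest} $\ell$-weight (every nonzero submodule contains the \emph{top} weight space), which immediately places the highest $\ell$-weight tensor in the image of $V$. Your dimension-count argument then handles the \emph{lowest} $\ell$-weight tensor (one-dimensional extremal weight spaces agree), not the highest one --- so the two halves of your argument are swapped relative to what the literature provides. In (ii) and (iii), the appeals to ``Jimbo's explicit construction'' are too loose: Jimbo's computations are type-by-type for specific fundamental modules, and the references \cite{TG,Tan,Guay} instead build the rational R-matrix uniformly from Drinfeld's formal universal R-matrix (or its abelianized/meromorphic avatars), with the finite pole set coming from the denominators that arise in solving the associated difference equations. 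Your Cramer's-rule outline is a valid alternative route to rationality once generic irreducibility is in hand, but the nonemptiness input should come from that universal construction rather than from Jimbo.
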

We refer to \cite[Theorem 3.10]{Guay} for a proof of the theorem and for a discussion of relevant results for the quantum affine algebra.
Part (i) is a weaker version of the main results of \cite{TG,Tan}: such a tensor product can be chosen to have a unique irreducible submodule (of co-highest $\ell$-weight in the sense of Definition \ref{def: cocyclicity}). The vector-valued rational function $\check{R}_{U,V}(u)$ in Part (iii) is called {\it normalized R-matrix}. It is rarely polynomial, contrary to our R-matrices constructed later in Section \ref{sec: one-dim R-matrix}. 

As in \cite[\S 2.13]{GW}, set $\kappa := \frac{1}{2} \max(d_i: i\in I) h^{\vee}$ where $h^{\vee}$ is the dual Coxeter number of $\Glie$. One has the involution $i \mapsto \overline{i}$ of the set $I$ of Dynkin nodes of $\Glie$ induced by $w_0(\alpha_i) = -\alpha_{\overline{i}}$ where $w_0$ is the longest element of the Weyl group of $\Glie$. Define 
\begin{equation} \label{def: fund module}
V_i := L(Y_{\overline{i}, \frac{1}{2} d_i - \kappa}) \quad \mathrm{for}\ i \in I.
\end{equation}

\begin{lem} \cite[Prop.3.2]{CP0} \label{lem: lowest weight fund}
For $i \in I$, the lowest $\ell$-weight of $V_i$ is $Y_{i, \frac{1}{2}d_i}^{-1}$.
\end{lem}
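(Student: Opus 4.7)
The plan is to show this by translating the classical Chari--Pressley result on fundamental modules into the $\ell$-weight notation of \eqref{def: fund l-weights}. Concretely, I will prove that for every fundamental module $L(Y_{j,a})$ over the ordinary Yangian $Y(\Glie)$, the lowest $\ell$-weight equals $Y_{\overline{j}, a+\kappa}^{-1}$, and then specialize $(j,a) = (\overline{i}, \frac{1}{2}d_i - \kappa)$ to recover $Y_{\overline{\overline{i}}, \frac{1}{2}d_i}^{-1} = Y_{i, \frac{1}{2}d_i}^{-1}$ as required.

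First I would observe that $V_i$ is finite-dimensional and irreducible, so by standard theory it is simultaneously of highest and of lowest $\ell$-weight. Viewed as a module over $U(\Glie) \subset Y(\Glie)$, $V_i$ is a direct sum of finite-dimensional irreducibles whose top is the fundamental $\Glie$-module of highest weight $\varpi_{\overline{i}}$, extracted from the leading coefficient of the series $Y_{\overline{i}, \frac{1}{2}d_i - \kappa}$. The classical weight of any lowest $\ell$-weight vector must therefore be an extremal weight of the $\Glie$-decomposition; since extremal weights of $V(\varpi_{\overline{i}})$ are Weyl-conjugate to $\varpi_{\overline{i}}$, the unique antidominant one is $w_0\varpi_{\overline{i}} = -\varpi_i$. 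Combining this with Theorem \ref{thm: finite-dim} and the fact that $\varpi^{\vee}$ vanishes on $V_i$ (zero shift), the lowest $\ell$-weight must be a monomial in the $Y_{k,c}^{\pm 1}$ whose weight under $\varpi$ equals $-\varpi_i$.

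To pin down the monomial, I would use Proposition \ref{prop: generalized simple root}: every $\ell$-weight of $V_i$ is of the form $Y_{\overline{i}, \frac{1}{2}d_i - \kappa} \prod_s A_{i_s,a_s}^{-1}$ for a multiset of generalized simple roots. The lowest weight vector is obtained by applying a lowering operator corresponding to a reduced expression for $w_0$ to the highest weight vector; equivalently, the multiset $\{A_{i_s,a_s}\}$ is forced by the requirement that the resulting $\ell$-weight has classical weight $-\varpi_i$, i.e.\ that $\sum_s \alpha_{i_s} = \varpi_{\overline{i}} + \varpi_i$. Chari--Pressley \cite[Prop.3.2]{CP0} identifies exactly which spectral parameters $a_s$ occur: telescoping the products $\prod_s A_{i_s, a_s}^{-1}$ using \eqref{def: simple root} yields a ratio of $Y$'s with a single surviving denominator factor $Y_{i, b}^{-1}$, and the bookkeeping of the shifts $\pm d_{i_sj}$ along a reduced expression of $w_0$ accumulates precisely to the Coxeter-type shift $\kappa$ from the highest parameter. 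This gives $b = a + \kappa$.

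The main obstacle is this last bookkeeping: controlling the telescoping of the generalized simple roots along a reduced word of $w_0$ and verifying that the total shift is $\kappa = \frac{1}{2} \max(d_j)\, h^{\vee}$. Rather than redo this non-simply-laced calculation by hand, my plan is to invoke \cite[Prop.3.2]{CP0} at this point (which is precisely the statement that determines the lowest Drinfeld polynomial of a fundamental module in terms of the highest one, with the shift $\kappa$), and then merely translate the Drinfeld polynomial description into the $Y_{i,a}$ notation of \eqref{def: fund l-weights}: a fundamental module with lowest Drinfeld polynomial $u - c$ at node $i$ (and trivial elsewhere) has lowest $\ell$-weight $Y_{i, c - \frac{1}{2}d_i}^{-1}$. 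Applying this with the Chari--Pressley shift gives $c = a + \kappa + \frac{1}{2}d_i = \frac{1}{2}d_i$ for our choice of $a$, completing the proof.
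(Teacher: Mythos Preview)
The paper does not give an independent proof of this lemma; it simply records the statement with a citation to \cite[Prop.3.2]{CP0} and moves on. Your proposal does exactly the same thing at its core: you defer the actual computation (the accumulation of the $\kappa$-shift along a reduced word for $w_0$) to \cite[Prop.3.2]{CP0}, and the surrounding discussion is just the dictionary between Chari--Pressley's Drinfeld-polynomial language and the $Y_{i,a}$ notation of \eqref{def: fund l-weights}. So your approach and the paper's agree; the only difference is that you spell out the translation $(j,a)=(\overline{i},\tfrac{1}{2}d_i-\kappa)\mapsto Y_{i,\frac{1}{2}d_i}^{-1}$ explicitly, which is a reasonable thing to do but not something the paper bothers with.

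One small comment: your paragraph about extremal weights and Proposition~\ref{prop: generalized simple root} is largely decorative, since in the end you do not use the telescoping argument to pin down the shift but rather invoke \cite{CP0} directly. If you wanted a genuinely self-contained proof you would have to carry out that root-by-root bookkeeping for each type (or give a uniform argument via the braid group action on $\CR$ as in \cite{C1,Tan}), but neither you nor the paper does so, and for a lemma attributed to \cite{CP0} that is perfectly acceptable.
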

The ordinary Yangian $Y(\Glie)$ is a Hopf algebra with antipode $S$. For $V$ be a $Y(\Glie)$-module, its {\it Hopf dual} is the $Y(\Glie)$-module structure on the linear dual $V^*$ defined by 
$$ (a f) (v) = f(S(a) v) \quad \mathrm{for}\ a \in Y(\Glie),\ f \in V^* \quad \mathrm{and}\ v \in V. $$
By the coproduct estimation of Lemma \ref{lem: coproduct estimation}, the dual $L(Y_{i,a})^*$ of a fundamental module $L(Y_{i,a})$ is of lowest $\ell$-weight $Y_{i,a}^{-1}$. The above lemma implies that $L(Y_{i,a})^*$ is the fundamental module $L(Y_{\overline{i}, a-\kappa})$; see \cite[Corollary 6.10]{FM} for similar arguments.

\subsection{Category $\BGG^{sh}$ and rationality} 
In this subsection we study of a category of representations of shifted Yangians, which appeared in \cite[\S 5]{KTWWY} in simply-laced types. 

For $\mu$ a coweight, define $\BGG_{\mu}$ to be the full subcategory of the category of $Y_{\mu}(\Glie)$-modules. An object of $\BGG_{\mu}$ is a $Y_{\mu}(\Glie)$-module $V$ such that: 
\begin{itemize}
\item[(O1)] it is a direct sum of finite-dimensional weight spaces;
\item[(O2)] there exist $\lambda_1,\lambda_2,\cdots,\lambda_n \in \Hlie^*$ such that 
$$\wt(V) \subseteq \bigcup_{j=1}^n (\lambda_j + \BQ_-).$$
\end{itemize}

\begin{rem}  \label{rem: derived algebra}
Assume $\mu$ dominant. The quotient of $Y_{\mu}(\Glie)$ by the ideal generated by the $[x,y]$ for $x, y \in Y_{\mu}(\Glie)$ is isomorphic to the polynomial algebra in finitely many variables $\xi_{i,p}$ for $i\in I$ and $-\langle \mu, \alpha_i\rangle \leq p<0$. A finite-dimensional module over the quotient algebra is in category $\BGG_{\mu}$ if and only if it is semi-simple. If $\mu \neq 0$, then the quotient algebra, as a polynomial algebra in at least one variable, admits finite-dimensional modules which are non semi-simple and therefore do not belong to category $\BGG_{\mu}$; see \cite[\S 5.1]{BK} for similar arguments. If $\mu = 0$, then a finite-dimensional $Y(\Glie)$-module is necessarily in category $\BGG_0$ viewed as an integrable $\Glie$-module.
\end{rem}

Category $\BGG_{\mu}$ is abelian. Let us describe its irreducible objects. The following rationality is well-known for quantum affine algebras \cite{H3} and Yangians \cite{GTL}.

\begin{lem} \label{lem: rationality Drinfeld}
Let $V$ be a $Y_{\mu}(\Glie)$-module which is a direct sum of finite-dimensional weight spaces. The generating series $x_i^{\pm}(u)$ and $\xi_i(u)$ restricted to each weight space of $V$ are rational in the sense that they are expansions at $\infty$ of rational functions of $u$ with values in finite-dimensional vector spaces.
\end{lem}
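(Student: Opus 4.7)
The aim is to show, for each $\lambda \in \wt(V)$, that the restrictions to the finite-dimensional weight space $V_\lambda$ of the three generating series are expansions at $\infty$ of rational functions. Since $x_i^+(u), x_i^-(u), \xi_i(u)$ have weights $\alpha_i, -\alpha_i, 0$, their restrictions to $V_\lambda$ take values in the finite-dimensional spaces $\mathrm{Hom}(V_\lambda, V_{\lambda+\alpha_i})$, $\mathrm{Hom}(V_\lambda, V_{\lambda-\alpha_i})$, and $\mathrm{End}(V_\lambda)$. Rationality is equivalent to the coefficient sequences satisfying finite-order linear recurrences; finite-dimensionality of the target is not by itself sufficient, and the Drinfeld relations \eqref{rel: Cartan}--\eqref{rel: Serre} must be leveraged to produce the recurrences.

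I would start with the Cartan series. By \eqref{rel: Cartan} the operators $\xi_{i,p}|_{V_\lambda}$ pairwise commute and generate a finite-dimensional commutative subalgebra $A \subset \mathrm{End}(V_\lambda)$. Decomposing $V_\lambda = \bigoplus_\chi V_\lambda^\chi$ into generalized joint eigenspaces indexed by characters $\chi$ of $A$, one has $\xi_i(u)|_{V_\lambda^\chi} = \chi(\xi_i(u))\,\mathrm{Id} + N_i^\chi(u)$ with $N_i^\chi(u)$ valued in the nilpotent radical. The problem reduces to proving rationality of each scalar series $\chi(\xi_i(u))$ and of the remainder $N_i^\chi(u)$, and then of $x_i^\pm(u)|_{V_\lambda}$.

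The key ingredient is the quadratic identity
\begin{equation*}
(u-v)\,[x_i^+(u), x_i^-(v)] \;=\; \xi_i^{\geq 0}(v) - \xi_i^{\geq 0}(u),
\end{equation*}
where $\xi_i^{\geq 0}(u) := \sum_{p \geq 0}\xi_{i,p} u^{-p-1}$, obtained by repackaging $[x_{i,m}^+, x_{i,n}^-] = \xi_{i,m+n}$ into generating series. Pairing both sides with a joint eigenvector $v_0 \in V_\lambda^\chi$ and an appropriate functional, the right-hand side becomes the scalar $\chi(\xi_i^{\geq 0}(v)) - \chi(\xi_i^{\geq 0}(u))$, while the left-hand side is a bilinear expression in the coefficients of $x_i^\pm|_{V_\lambda}$, which take values in the finite-dimensional spaces $V_{\lambda\pm\alpha_i}$. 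Combined with the commutation relation \eqref{rel: Cartan-Drinfeld} rewritten in series form --- a functional equation relating $\xi_i(u) x_j^\pm(v)$ and $x_j^\pm(v)\xi_i(u)$ up to polynomial boundary terms --- this forces a finite recurrence on the scalars $\chi(\xi_{i,p})$ for large $p$, and together with the finitely many negative-index values fixed by \eqref{rel: shift}, yields the rationality of $\chi(\xi_i(u))$. A parallel argument then propagates rationality to $N_i^\chi(u)$ and $x_i^\pm(u)|_{V_\lambda}$.

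The principal obstacle is the interlocking nature of these three rationality statements on $V_\lambda$ and of their analogs on the adjacent weight spaces $V_{\lambda\pm\alpha_i}$. I would disentangle them by restricting to the cyclic submodule $Y_\mu(\Glie)\cdot v_0$ generated by a single weight vector --- which inherits from $V$ the property that all weight spaces are finite-dimensional --- and inducting on the height of the weight difference. The overall strategy is the shifted-Yangian analog of the proof for the ordinary Yangian (see \cite{Dr,CP0,Kn,GTL}) and for the Borel subalgebra of $U_q(\hat{\Glie})$ (see \cite{HJ}); it transfers essentially verbatim because the relations \eqref{rel: Cartan}--\eqref{rel: Serre} among $x_i^\pm$ and $\xi_i$ are unchanged in the shifted setting, and only the truncation condition \eqref{rel: shift}, which affects only the finitely many negative-index coefficients of $\xi_i(u)$, distinguishes it from the unshifted case.
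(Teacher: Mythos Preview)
Your closing claim---that the proof transfers verbatim from the unshifted case in \cite{GTL,H3}---is correct, and is exactly what the paper asserts. But the specific outline you give runs in the opposite order from those references and from the paper, and the ``principal obstacle'' you identify is an artifact of that inversion.

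The paper's route is: first prove rationality of $x_i^\pm(u)|_{V_\lambda}$ directly, then deduce rationality of $\langle\xi_i(u)\rangle_+$ from the relation $[x_{i,m}^+,x_{i,n}^-]=\xi_{i,m+n}$, and finally note that $\xi_i(u)-\langle\xi_i(u)\rangle_+$ is a polynomial of $u$ by \eqref{rel: shift}. The first step is elementary and self-contained: set $p=-\langle\mu,\alpha_i\rangle$ in \eqref{rel: Cartan-Drinfeld} with $j=i$. Since $[\xi_{i,p},x_{i,n+1}^\pm]=\pm 2d_i\,x_{i,n+1}^\pm$ by \eqref{rel: weight grading}, one solves for $x_{i,n+1}^\pm$ as a fixed linear combination (built from left and right multiplication by $\xi_{i,p}$ and $\xi_{i,p+1}$) of $x_{i,n}^\pm$. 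Restricting to $V_\lambda$, this exhibits the sequence $(x_{i,n}^\pm|_{V_\lambda})_{n\ge 0}$ as the orbit of a single linear endomorphism of the finite-dimensional space $\mathrm{Hom}(V_\lambda,V_{\lambda\pm\alpha_i})$, and Cayley--Hamilton supplies the recurrence. No eigenspace decomposition, no quadratic identity, no induction on height is needed.

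Your proposed route, starting from the eigenvalues $\chi(\xi_i(u))$, does not close as written. The quadratic identity $(u-v)[x_i^+(u),x_i^-(v)]=\xi_i^{\ge 0}(v)-\xi_i^{\ge 0}(u)$ expresses the right-hand side through the coefficients of $x_i^\pm$, but absent prior control on those coefficients it yields no recurrence for the scalars $\chi(\xi_{i,p})$; invoking \eqref{rel: Cartan-Drinfeld} at this stage only reintroduces the unknown $x_i^\pm$-series. The interlocking you describe is genuine if one insists on establishing $\xi_i$-rationality first; it dissolves once $x_i^\pm$ is handled directly as above.
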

\begin{proof}
The rationality of the Laurent series $\xi_i(u)$ and $x_i^{\pm}(u)$ is proved in the same way as \cite[Prop.3.8]{H3}, \cite[Prop.3.6(i)]{GTL}: first one shows explicitly the rationality of the $x_i^{\pm}(u)$, which implies that of $\langle \xi_i(u)\rangle_+$; then $\xi_i(u)$ is $\langle \xi_i(u)\rangle_+$ plus a polynomial of $u$.
\end{proof}

Define $\CR$ to be the subgroup of $\CL$ generated by the $\Psi_{i,a}$. An element $\Be \in \CL$ belongs to $\CR$ if and only if all the components $\Be_i(u)$ are ratios of monic polynomials of $u$. Let $\CR_{\mu}$ be the set of $\Be \in \CR$ of coweight $\mu$.
\begin{theorem}   \label{thm: classification}
For $\mu$ a coweight, the $L(\Be)$ for $\Be \in \CR_{\mu}$ form the set of mutually non-isomorphic irreducible modules in category $\BGG_{\mu}$.  
\end{theorem}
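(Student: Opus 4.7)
The plan is to verify three things in succession: mutual non-isomorphism of the $L(\Be)$'s, membership of each $L(\Be)$ with $\Be\in\CR_\mu$ in $\BGG_\mu$, and the fact that any irreducible $V\in\BGG_\mu$ arises this way. The first point is immediate: the highest $\ell$-weight of $L(\Be)$ is a module invariant equal to $\Be$, so distinct $\Be$ produce non-isomorphic modules.

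For the second, the triangular decomposition (Theorem \ref{thm: triangular decomposition}) identifies the Verma module $M(\Be)$ with $Y_\mu^<(\Glie)\cong Y^<(\Glie)$ as a weight-graded vector space. The PBW basis of Remark \ref{rem: PBW} exhibits the component of weight $\varpi(\Be)-\beta$ as the span of ordered monomials in the $x_{\gamma,n}^-$ whose weights sum to $-\beta$; since each PBW variable has weight $-\gamma\in -R$, only finitely many such monomials exist for a fixed $\beta\in\BQ_+$. Hence $M(\Be)$, and therefore its quotient $L(\Be)$, is weight graded by the single cone $\varpi(\Be)+\BQ_-$ with finite-dimensional weight spaces, fulfilling (O1) and (O2).

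For the third point, let $V$ be irreducible in $\BGG_\mu$. Thanks to (O2) every increasing chain of weights inside a single cone $\lambda_j+\BQ_-$ terminates, so $\wt(V)$ admits a maximal element $\lambda$. The commuting family $\{\xi_{i,p}\}$ preserves the finite-dimensional space $V_\lambda$ and therefore shares a common generalized eigenvector $v$; by maximality of $\lambda$ the elements $x_{i,n}^+$ annihilate $v$, making it a highest $\ell$-weight vector of some $\ell$-weight $\Be$. By Lemma \ref{lem: rationality Drinfeld} the series $\xi_i(u)$ acts on $V_\lambda$ as the expansion at $\infty$ of a rational $\End(V_\lambda)$-valued function, so its generalized eigenvalue $\Be_i(u)$ on $v$ is itself rational; combined with the prescribed leading term $u^{\langle\mu,\alpha_i\rangle}$ forced by \eqref{rel: shift}, this means $\Be_i(u)$ is a ratio of monic polynomials, so $\Be\in\CR$ and $\varpi^\vee(\Be)=\mu$. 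Irreducibility of $V$ now yields a surjection $M(\Be)\twoheadrightarrow V$ which factors through $L(\Be)$, and simplicity of $V$ forces $V\cong L(\Be)$.

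The delicate step is the extraction of the $\ell$-weight vector $v$: the subalgebra $Y_\mu^=(\Glie)$ is generated by infinitely many commuting elements $\xi_{i,p}$, so one cannot \emph{a priori} expect a common eigenvector on a finite-dimensional subspace, nor can one expect the eigenvalues to look like ratios of monic polynomials rather than arbitrary formal Laurent series. What rescues the argument is precisely Lemma \ref{lem: rationality Drinfeld}: it collapses the action of the whole family $\xi_i(u)$ on $V_\lambda$ into a rational matrix-valued function of $u$, reducing the infinite commuting family to finitely many effective operators and simultaneously pinning the eigenvalue $\Be_i(u)$ inside $\CR_\mu$.
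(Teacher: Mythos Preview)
Your argument for the second step contains a genuine gap. You claim that the Verma module $M(\Be)$ has finite-dimensional weight spaces because ``only finitely many [PBW] monomials exist for a fixed $\beta\in\BQ_+$''. This is false: the PBW variables $x_{\gamma,n}^-$ are indexed by $R\times\BN$, and for a fixed root $\gamma$ there are infinitely many of them, all of the same weight $-\gamma$. Concretely, the weight space $M(\Be)_{\varpi(\Be)-\alpha_i}$ is spanned by the vectors $x_{i,n}^-\omega_{\Be}$ for $n\in\BN$ and is infinite-dimensional. So the Verma module is \emph{not} in category $\BGG_\mu$, and one cannot deduce (O1) for $L(\Be)$ merely by passing to a quotient of $M(\Be)$.

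The paper handles this by exploiting both the rationality of $\Be$ and the irreducibility of $L(\Be)$. Writing $\Be_i(u)=P(u)/Q(u)$ with $P,Q$ monic, one checks directly that the vector-valued series $\langle Q(u)\,x_i^-(u)\,\omega_{\Be}\rangle_+$ is annihilated by every $x_{j,m}^+$; irreducibility of $L(\Be)$ then forces it to vanish, which is a linear recurrence of length $\deg Q$ among the $x_{i,n}^-\omega_{\Be}$ and bounds $\dim L(\Be)_{\varpi(\Be)-\alpha_i}$ by $\deg Q$. One then invokes the standard Chari--Pressley argument (iterated use of relation~\eqref{rel: Drinfeld}) to propagate this finiteness to all weight spaces. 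Your first and third steps are correct and agree with what the paper calls ``standard arguments'', but the membership $L(\Be)\in\BGG_\mu$ genuinely requires the hypothesis $\Be\in\CR_\mu$; for a general $\Be\in\CL$ of coweight $\mu$ it can fail.
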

\begin{proof}
Standard arguments based on the triangular decomposition and rationality of Lemma \ref{lem: rationality Drinfeld} show that any irreducible module in category $\BGG_{\mu}$ is of the form $L(\Be)$ for $\Be \in \CR_{\mu}$. It suffices to prove that $L(\Be)$ is in category $\BGG_{\mu}$ for $\Be \in \CR_{\mu}$.
Using repeatedly Eq.\eqref{rel: Drinfeld} as in \cite[\S 5, PROOF OF (b)]{CP}, we are reduced to show that for fixed $i \in I$ the vectors $x_{i,n}^- \omega_{\Be}$ with $n \in \BN$ span a finite-dimensional subspace of $L(\Be)$. Write $$\Be_i(u) = \frac{P(u)}{Q(u)}\text{ with $P(u)$ and $Q(u)$ monic polynomials}.$$ 
It suffices to prove the recurrence relation $\langle Q(u) x_i^-(u)\omega_{\Be} \rangle_+ = 0$. Indeed,
\begin{align*}
&x_{j,m}^+x_i^-(u) = x_i^-(u) x_{j,m}^+ + \delta_{ij} \sum_{n\geq 0} \xi_{i,m+n} u^{-n-1} = x_i^-(u) x_{j,m}^+ + \delta_{ij} \langle u^m \xi_i(u)\rangle_+, \\
&x_{j,m}^+\langle Q(u) x_i^-(u) \omega_{\Be} \rangle_+   =  \langle Q(u) x_{j,m}^+ x_i^-(u) \omega_{\Be} \rangle_+= \delta_{ij} \langle Q(u) \langle u^m \xi_i(u) \omega_{\Be} \rangle_+ \rangle_+ \\
& \quad = \delta_{ij} \langle u^m Q(u) \xi_i(u) \omega_{\Be} \rangle_+ = \delta_{ij} \left\langle u^m Q(u) \frac{P(u)}{Q(u)} \omega_{\Be} \right\rangle_+ = 0.
\end{align*} 
The power series $\langle Q(u) x_i^-(u) \omega_{\Be} \rangle_+$ is annihilated by all the $x_{j,m}^+$. If it is nonzero, then by applying the triangular decomposition to its coefficients we obtain a nonzero submodule of $L(\Be)$ weight graded by $\varpi(\Be)-\alpha_i+\BQ_-$, contradicting the irreducibility of $L(\Be)$.
\end{proof}

We define the {\it completed Grothendieck group} $K_0(\BGG_{\mu})$ as in \cite[\S 3.2]{HL}: its elements are formal sums $\sum_{\Be \in \CR_{\mu}} n_{\Be} [L(\Be)]$ of the symbols $[L(\Be)]$, for $\Be \in \CR_{\mu}$ and $n_{\Be} \in \BZ$ such that the direct sum of $Y_{\mu}(\Glie)$-modules $\oplus_{\Be \in \CR_{\mu}} L(\Be)^{\oplus |n_{\Be}|}$ is in category $\BGG_{\mu}$; addition is the usual one of formal sums. 
Let $V$ be in category $\BGG_{\mu}$. As in the case of Kac--Moody algebras \cite[\S 9.3]{Kac}, for $\Be \in \CR_{\mu}$ the multiplicity $m_{L(\Be),V} \in \BN$ of the irreducible module $L(\Be)$ in  $V$ makes sense, and we get a well-defined isomorphism class of $V$,
 $$[V] := \sum_{\Be \in \CR_{\mu}} m_{L(\Be),V} [L(\Be)] \in K_0(\BGG_{\mu}).$$ 

The coproduct $\Delta_{\mu,\nu}$ of Theorem \ref{thm: coproduct} induces a functor 
$$ \BGG_{\mu} \times \BGG_{\nu} \longrightarrow \BGG_{\mu+\nu},\quad (W, V) \mapsto W \otimes V. $$
Define the direct sum of abelian categories and its Grothendieck group
$$ \BGG^{sh} := \bigoplus_{\mu \in \BP^{\vee}} \BGG_{\mu},\quad K_0(\BGG^{sh}) := \bigoplus_{\mu \in \BP^{\vee}} K_0(\BGG_{\mu}). $$
Then the above functor extends to a tensor product functor
$$\otimes: \BGG^{sh} \times \BGG^{sh} \longrightarrow \BGG^{sh}.$$ 
The exactness of tensor product induces a group homomorphism
$$K_0(\BGG^{sh}) \times K_0(\BGG^{sh}) \longrightarrow K_0(\BGG^{sh}),\quad ([W],[V]) \mapsto [W \otimes V].$$ 
\begin{rem} \label{rem: monoidal}
Let $\BGG_-^{sh}$ denote the direct sum of the categories $\BGG_{\mu}$ for $\mu$ antidominant. Then the commutative diagram \eqref{rel: coproduct coass} implies that $(\BGG^{sh}_-, \otimes)$ is a monoidal category with trivial associators. It is unclear to us whether category $(\BGG^{sh}, \otimes)$ is monoidal because the coproducts fail to be co-associative for general coweights \cite[Remark 4.15]{coproduct}. 
\end{rem}

If $V$ is in category $\BGG_{\mu}$, then each weight space $V_{\beta}$ is a direct sum of $\ell$-weight spaces and each $\ell$-weight belongs to $\CR_{\mu}$ by Lemma \ref{lem: rationality Drinfeld}. 
Following Knight \cite{Kn}, we define the {\it q-character} of $V$ to be (we adopt the terminology of \cite{FR})
$$ \qc(V) := \sum_{\Bf \in \lwt(V)}  \dim(V_{\Bf}) \Bf \in \CEl. $$
 The target $\CEl$ is the set of formal sums $\sum_{\Bf \in \CR} n_{\Bf} \Bf$ of $\Bf \in \CR$ with integer coefficients $n_{\Bf}$ subject to the following conditions \cite[\S 3.4]{HJ}
\begin{itemize}
\item[(E1)] for each $\beta \in \Hlie^*$ the set $\{\Bf \in \CR\ |\ n_{\Bf} \neq 0,\ \varpi(\Bf) = \beta \}$ is finite;
\item[(E2)] there exist $\lambda_1,\lambda_2,\cdots,\lambda_m \in \Hlie^*$ such that 
$$\varpi(\Bf) \in \bigcup_{j=1}^m (\lambda_j + \BQ_-)\text{ if }n_{\Bf} \neq 0.$$
\end{itemize}
It is a ring: addition is the usual one of formal sums; multiplication is induced by that of $\CR$. One views $\CEl$ as a completion of the group ring $\BZ[\CR]$. 

 Since $\qc$ respects exact sequences, the assignment $[V] \mapsto \qc(V)$ extends uniquely to a group homomorphism  
 $$\qc: K_0(\BGG^{sh}) \longrightarrow \CEl$$ called the q-character map. The next result is proved in the same way as \cite[Theorem 2]{Kn} and \cite[Proposition 1]{FR}, based on the coproduct estimation of Lemma \ref{lem: coproduct estimation}.

\begin{theorem}\cite{Kn,FR}  \label{thm: q-char ring morphism}
The q-character map is an injective group homomorphism. Furthermore, for $W$ and $V$ in category $\BGG^{sh}$, we have 
$$\qc(W \otimes V) = \qc(W) \qc(V).$$
\end{theorem}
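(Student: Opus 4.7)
The plan is to prove the two assertions independently, following the templates of Knight and Frenkel--Reshetikhin: a leading-term argument for injectivity, and a block-triangularization of $\Delta_{\mu,\nu}(\xi_i(u))$ for multiplicativity. Additivity of $\qc$ along short exact sequences is immediate from its definition via Jordan--H\"older multiplicities, so only these two points require work.

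For injectivity, I would suppose $S = \sum_{\Be \in \CR} n_{\Be}[L(\Be)] \in \ker(\qc)$ and denote its support by $\Sigma$. Conditions (E1) and (E2) applied to $\qc(S)$, inherited from (O1)--(O2) for the module $\bigoplus_{\Be \in \Sigma} L(\Be)^{\oplus |n_\Be|}$, imply that $\varpi(\Sigma)$ is contained in a finite union of cones $\lambda_j + \BQ_-$, so every nonempty subset admits a $\preceq$-maximal element by Dickson's lemma applied to $\BQ_+$. Fix such a maximal $\Be_0 \in \Sigma$ and read off the coefficient of $\Be_0$ in $\qc(S)$: if $\Bf \in \Sigma$ contributes, then $\dim L(\Bf)_{\Be_0} \neq 0$, hence $\varpi(\Be_0) \preceq \varpi(\Bf)$, and maximality forces $\varpi(\Bf) = \varpi(\Be_0)$; since the top weight space of $L(\Bf)$ is one-dimensional with $\ell$-weight $\Bf$, this gives $\Bf = \Be_0$. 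The coefficient therefore reduces to $n_{\Be_0} \cdot \dim L(\Be_0)_{\Be_0} = n_{\Be_0}$, which must vanish, contradicting $\Be_0 \in \Sigma$.

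For multiplicativity, fix $W \in \BGG_\mu$ and $V \in \BGG_\nu$. By Lemma \ref{lem: rationality Drinfeld} combined with (O1), both admit generalized $\ell$-weight decompositions $W = \bigoplus_{\Bf} W_{\Bf}$ and $V = \bigoplus_{\mathbf{g}} V_{\mathbf{g}}$, and the goal is the dimension formula $\dim(W \otimes V)_{\Be} = \sum_{\Bf \cdot \mathbf{g} = \Be} \dim W_{\Bf} \cdot \dim V_{\mathbf{g}}$. Fixing a target weight $\lambda$, by (O2) the decomposition $(W \otimes V)_\lambda = \bigoplus_{\alpha + \gamma = \lambda} W_\alpha \otimes V_\gamma$ is a finite direct sum. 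Choose a total order on this finite index set refining the partial order $\alpha \succ \alpha'$ whenever $\alpha - \alpha' \in \BQ_+ \setminus \{0\}$. The $\xi_i(u)$-part of Lemma \ref{lem: coproduct estimation} reads
\begin{equation*}
\Delta_{\mu,\nu}(\xi_i(u)) - \xi_i(u) \otimes \xi_i(u) \;\in\; \sum_{h(\beta) > 0} Y_\mu^-(\Glie)_{-\beta} \otimes Y_\nu^+(\Glie)_\beta,
\end{equation*}
so the correction sends $W_\alpha \otimes V_\gamma$ into blocks with strictly smaller first index $\alpha$. Consequently the commuting family $\{\Delta_{\mu,\nu}(\xi_{i,p})\}_{(i,p)}$ is simultaneously block-strictly-lower-triangular, with diagonal blocks acting as $\xi_i(u) \otimes \xi_i(u)$ on $W_\alpha \otimes V_{\lambda - \alpha}$; its generalized eigenvalue on $W_{\alpha, \Bf} \otimes V_{\lambda - \alpha, \mathbf{g}}$ is therefore $(\Bf \mathbf{g})_i(u)$. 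Summing over $\alpha$ yields the dimension formula, and biexactness of $\otimes$ in $\BGG^{sh}$ lifts this to the multiplicative identity in $\CEl$.

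The main obstacle I anticipate is Step 2: one must verify that the strict block-lower-triangularity established at the level of $\xi_i(u)$ controls the generalized eigenvalues of the full commuting family $\{\Delta_{\mu,\nu}(\xi_{i,p})\}_{(i,p)}$ uniformly, rather than only for a finite number of coefficients. This is precisely why Lemma \ref{lem: coproduct estimation} is formulated modulo the single bilinearly-graded subspace $\sum_{h(\beta)>0} Y_\mu^-(\Glie)_{-\beta} \otimes Y_\nu^+(\Glie)_\beta$, with the same $\beta$-cone valid for all $(i,p)$: the resulting block-triangularization is carried out in a common basis, and generalized eigenvalues are then read off the diagonal blocks irrespective of further refinements of the $\ell$-weight decomposition. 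The infinite-sum bookkeeping required for Step 1 is comparatively routine given (E1)--(E2).
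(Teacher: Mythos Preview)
Your proposal is correct and follows exactly the approach the paper indicates: the paper does not spell out a proof but defers to \cite{Kn} and \cite{FR}, stating that the result is proved as in those references using the coproduct estimation of Lemma~\ref{lem: coproduct estimation}. Your injectivity argument via a maximal highest $\ell$-weight and your multiplicativity argument via block-triangularization of $\Delta_{\mu,\nu}(\xi_i(u))$ are precisely the Knight and Frenkel--Reshetikhin templates the paper is invoking.
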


As an important consequence, the Grothendieck group $K_0(\BGG^{sh})$ endowed with the multiplication is a commutative ring: the associativity follows from that of the target ring $\CEl$, so does the commutativity as in \cite[Remark 3.13]{HJ}. In the case of shifted quantum affine algebras, the ring structure of the Grothendieck group is given by the fusion product of highest $\ell$-weight modules \cite[Theorem 5.4]{H0}.

For $V$ a top graded module in category $\BGG^{sh}$, we define its {\it normalized q-character} by
$$ \nqc(V) := \qc(V) \times \Be^{-1} \in \CEl$$
where $\Be$ is the top $\ell$-weight of $V$.  
In Example \ref{ex: two-dim} we have $\nqc(N_{i,a}) = 1 + A_{i,a}^{-1}$. A tensor product of top graded modules is still top graded, and the normalized q-characters are multiplicative with respect to tensor product as in Theorem \ref{thm: q-char ring morphism}.

We shall also need the notion of character which is defined in a standard way. Let $\CE$ denote the set of formal sums $\sum_{\lambda \in \Hlie^*} n_{\lambda}e^{\lambda}$ of the symbols $e^{\lambda}$ with integer coefficients $n_{\lambda}$ under the condition: there exist $\lambda_1, \lambda_2, \cdots, \lambda_m \in \Hlie^*$ such that $n_{\lambda} \neq 0$ implies $\lambda \in \cup_{j=1}^m (\lambda_j + \BQ_-)$. This is again a ring: addition is the usual one of formal sums; multiplication is induced by $e^{\lambda}e^{\mu} = e^{\lambda+\mu}$ for $\lambda, \mu \in \Hlie^*$. In particular, the weight map $\varpi: \CL \longrightarrow \Hlie^*$ induces a ring morphism 
$$ \varpi: \CEl \longrightarrow \CE,\quad \sum_{\Bf \in \CR} n_{\Bf}  \Bf \mapsto \sum_{\Bf \in \CR} n_{\Bf}  e^{\varpi(\Bf)}. $$
The character of a module $V$ in category $\BGG^{sh}$ is defined as 
$$ \chi(V) := \varpi(\qc(V)) = \sum_{\lambda \in \wt(V)} \dim (V_{\lambda} ) e^{\lambda}  \in \CE. $$
In Example \ref{ex: + prefund}, we have $\chi(L_{i,a}^+) = e^{-ad_i^{-1}\varpi_i}$ by Eq.\eqref{def: weight coweight}.

For $(i,a,k) \in I \times \BC \times \BN$, the Kirillov--Reshetikhin (KR for short) module  $W_{k,a}^{(i)}$ is the finite-dimensional irreducible $Y(\Glie)$-module of highest $\ell$-weight 
$$\frac{\Psi_{i,a-kd_i}}{\Psi_{i,a}} = Y_{i,a-\frac{1}{2}d_i} Y_{i,a-\frac{3}{2}d_i} \cdots Y_{i,a-\frac{2k-1}{2}d_i}. $$
Following \cite[Definition 3.4]{FH}, define the negative prefundamental module $L_{i,a}^-$ to be $L(\Psi_{i,a}^{-1})$ in category $\BGG_{-\varpi_i^{\vee}}$ for $(i,a) \in I \times \BC$. As in the case of the Borel algebra \cite{HJ}, it can be realized as a limit of KR modules \cite{Z}. 

\begin{prop}\cite{Z} \label{prop: existence - pref}
Fix $(i,a) \in I\times \BC$. As the integer $k \in \BN$ tends to infinity, the normalized q-character of the KR module $W_{k,a}^{(i)}$ converges to the normalized q-character of $L_{i,a}^-$ as a power series in $\BN[[A_{j,b}^{-1}]]_{j \in I, b \in \BC}$.
\end{prop}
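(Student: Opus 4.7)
The plan is to compare normalized $q$-characters on both sides as formal power series in the commuting indeterminates $A_{j,b}^{-1}$, relying only on Proposition~\ref{prop: generalized simple root} and the structural properties of Kirillov--Reshetikhin modules. Write $F_k := \nqc(W_{k,a}^{(i)})$ and $F_\infty := \nqc(L_{i,a}^-)$. The highest $\ell$-weight of $W_{k,a}^{(i)}$ is $\Psi_{i,a-kd_i}/\Psi_{i,a}$, and every $\ell$-weight of a finite-dimensional $Y(\Glie)$-module is obtained from the highest one by successive applications of $A_{j,b}^{-1}$ by Proposition~\ref{prop: generalized simple root}, so $F_k \in \mathbb{N}[A_{j,b}^{-1}]_{(j,b) \in I \times \BC}$; only finitely many monomials contribute in each fixed weight of $\BQ_-$. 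The same property transfers to $F_\infty$ via Theorem~\ref{thm: classification} and Lemma~\ref{lem: rationality Drinfeld}.

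First, I would show that $F_k$ stabilizes coefficient-wise as $k \to \infty$: for every fixed $\beta \in \BQ_+$, the coefficient of any monomial $M \in \mathbb{N}[A_{j,b}^{-1}]$ of total weight $-\beta$ in $F_k$ becomes independent of $k$ once $k$ is large enough. The point is that the spectral parameters appearing in such an $M$ are drawn from a finite set depending only on $a$ and $\beta$ (they sit in $a + \tfrac{1}{2}\BZ$ with denominators bounded in terms of $\beta$), and cannot involve the parameter $a-kd_i$ unless $\beta$ is large enough to ``see'' the lowest $\ell$-weight. Concretely, one uses the realization of $W_{k,a}^{(i)}$ inside a tensor product of fundamental modules (Theorem~\ref{thm: normalized R fin dim}(i)), or equivalently the Frenkel--Mukhin style algorithm for computing $q$-characters of $W_{k,a}^{(i)}$: the recursion only involves $A_{j,b}^{-1}$ with $b$ lying in an interval that grows with $k$, and starting from the top the first $h(\beta)$ steps of the recursion do not feel the bottom. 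This is exactly the asymptotic stabilization carried out in \cite{Z} via KR $T$-systems, which give exact sequences relating $W_{k,a}^{(i)}$ and $W_{k+1,a}^{(i)}$ that inductively propagate stability from low weights to higher weights.

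The second step is to identify the stable limit with $F_\infty$. Here I would invoke the asymptotical construction of $L_{i,a}^-$ as in Hernandez--Jimbo transposed to the Yangian setting: on the vector space $\lim_k W_{k,a}^{(i)}$ (taken along stabilized weight spaces with a suitable renormalization of the highest $\ell$-weight) one defines an action of $Y_{-\varpi_i^{\vee}}(\Glie)$ whose highest $\ell$-weight is $\Psi_{i,a}^{-1}$, extracts the irreducible quotient, and checks it is $L_{i,a}^-$ by Theorem~\ref{thm: classification}. By construction the normalized $q$-character of this limit equals $\lim_k F_k$, so $F_\infty = \lim_k F_k$. The main obstacle is the stabilization argument in the first step, because one has to control both the set of spectral parameters that can occur at a given weight depth and the multiplicities; once the finite-support statement and the FM/$T$-system recursion are in place, the identification with $F_\infty$ is essentially formal, and the full statement coincides with Proposition~6 in \cite{Z}.
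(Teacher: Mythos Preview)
Your first step (coefficient-wise stabilization of $F_k$) is fine and matches what the paper imports from \cite{Z}: a module $L$ in $\BGG_{-\varpi_i^{\vee}}$ with $\nqc(L)=\lim_k F_k$ is constructed there, and $\Psi_{i,a}^{-1}$ is its highest $\ell$-weight.

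The gap is in your second step. From the limit construction you obtain a highest $\ell$-weight module $L$ with $\nqc(L)=\lim_k F_k$ and irreducible quotient $L_{i,a}^-$. This gives only the inequality $F_\infty\le\lim_k F_k$; your sentence ``so $F_\infty=\lim_k F_k$'' is a non sequitur because nothing you wrote rules out that $L$ is strictly larger than $L_{i,a}^-$. You need the reverse bound $\lim_k F_k\le F_\infty$, i.e.\ that the limit module is already irreducible.

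The paper supplies this missing inequality by a short but essential trick that does not appear in your outline: for each $k$, the KR module $W_{k,a}^{(i)}$ is an irreducible subquotient of $L_{i,a-kd_i}^+\otimes L_{i,a}^-$ (compare highest $\ell$-weights and use Example~\ref{example: tensor product highest weight}). Since $L_{i,a-kd_i}^+$ is one-dimensional with normalized $q$-character $1$, this yields $F_k=\nqc(W_{k,a}^{(i)})\le\nqc(L_{i,a}^-)=F_\infty$ for every $k$, hence $\lim_k F_k\le F_\infty$. Combined with $F_\infty\le\lim_k F_k$ this forces $L\cong L_{i,a}^-$ and the claimed equality of normalized $q$-characters. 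Without this sandwich argument (or an equivalent device) your identification of the limit with $F_\infty$ is incomplete.

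A minor side remark: your claim that $F_\infty$ lies in $\BN[[A_{j,b}^{-1}]]$ via Proposition~\ref{prop: generalized simple root} is not quite justified at this point, since that proposition is stated for $Y(\Glie)$-modules rather than $Y_{-\varpi_i^{\vee}}(\Glie)$-modules; in the paper this property for general highest $\ell$-weight modules is only established later (Proposition~\ref{prop: l-weight R-matrix+}). With the sandwich argument above, however, you do not need this a priori.
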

\begin{proof}
We have constructed in \cite[Prop.23]{Z} a module $L$ in category $\BGG_{-\varpi_i^{\vee}}$ with q-character $\Psi_{i,a}^{-1} \lim_{k\rightarrow \infty} \nqc(W_{k,a}^{(i)})$. In particular $\Psi_{i,a}^{-1}$ is a highest $\ell$-weight of $L$ and $L_{i,a}^-$ is an irreducible subquotient of $L$. It suffices to show that $\Psi_{i,a} \qc(L)$ is bounded above by $\nqc(L_{i,a}^-)$, so that $L_{i,a}^- \cong L$. Since the former is the limit of $\nqc(W_{k,a}^{(i)})$, we are led to prove that $\nqc(W_{k,a}^{(i)})$ is bounded above by $\nqc(L_{i,a}^-)$ for $k \in \BN$. This follows by viewing $W_{k,a}^{(i)}$ as an irreducible subquotient of $L_{i,a-kd_i}^+ \otimes L_{i,a}^-$ and taking normalized q-characters. (Since $L_{i,a-kd_i}^+$ is one-dimensional, its normalized q-character is 1.)
\end{proof}

The character of a negative prefundamental module has a fermionic form \cite[Theorem 6.4]{HJ}. We shall need its product form, conjectured in \cite{MY} and partly proved recently in \cite{Lee}. While \cite{Lee} is about KR-modules over $U_q(\hat{\Glie})$, its main result holds true in the Yangian case by the functor of \cite{GTL} relating finite-dimensional modules over $U_q(\hat{\Glie})$ and $Y(\Glie)$. Recall that $R$ is the set of positive roots of $\Glie$. For $\gamma$ a positive root and for $i \in I$, by definition $\langle \varpi_i^{\vee}, \gamma\rangle$ is the coefficient of $\alpha_i$ in $\gamma$.

\begin{theorem}\cite{Lee} \label{thm: character formula}
Assume $\Glie$ is not of type $E_8$. For $(i,a) \in I \times \BC$ we have
$$ \chi(L_{i,a}^-) = e^{ad_i^{-1}\varpi_i} \prod_{\gamma \in R} \left( \frac{1}{1-e^{-\gamma}}\right)^{\langle \varpi_i^{\vee}, \gamma\rangle}. $$
\end{theorem}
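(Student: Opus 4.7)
The plan is to deduce the character formula from the asymptotic realization of the negative prefundamental module as a limit of Kirillov--Reshetikhin modules, combined with a known product formula for characters of KR modules in the quantum affine setting.

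First I would recall Proposition \ref{prop: existence - pref}: as $k \to \infty$, the normalized q-character $\nqc(W_{k,a}^{(i)})$ converges to $\nqc(L_{i,a}^-)$ in $\BN[[A_{j,b}^{-1}]]_{j \in I,\, b \in \BC}$. To pass to ordinary characters, I would apply the ring homomorphism $\varpi: \CEl \to \CE$. A short computation using the identity $\alpha_j = \sum_k c_{kj}\varpi_k$ and the definition \eqref{def: simple root} of $A_{j,b}$ shows that $\varpi(A_{j,b}) = \alpha_j$, so the image series live in $\BN[[e^{-\alpha_j}]]_{j \in I}$ and the convergence of normalized q-characters descends to an honest convergence of normalized characters.

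Next I would invoke Lee's theorem \cite{Lee}: for every finite type other than $E_8$, the character of the KR module $W_{k,a}^{(i)}$ over $U_q(\hat{\Glie})$ admits a product formula which is the natural finite-dimensional truncation of $\prod_{\gamma \in R} (1-e^{-\gamma})^{-\langle \varpi_i^{\vee}, \gamma\rangle}$. By the Gautam--Toledano Laredo functor \cite{GTL} relating finite-dimensional modules over $U_q(\hat{\Glie})$ and $Y(\Glie)$, which respects the $\Hlie^*$-weight decomposition, the same truncated formula describes $\chi(W_{k,a}^{(i)})$ for the Yangian KR module. Passing to the limit $k \to \infty$ in this truncation yields the full infinite product. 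Finally, the prefactor $e^{ad_i^{-1}\varpi_i}$ is identified by tracking the top weights: the top $\ell$-weight of $L_{i,a}^-$ is $\Psi_{i,a}^{-1}$, whose image under $\varpi$ is exactly $ad_i^{-1}\varpi_i$.

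The main subtlety I anticipate is the compatibility of two limiting processes. One must check coefficient-by-coefficient in $\BN[[e^{-\alpha_j}]]_{j \in I}$ that the $k \to \infty$ limit of Lee's truncated character formulas agrees with the infinite product in the theorem; concretely, that each fixed term on the right-hand side is stabilised in the truncated formula once $k$ is large enough. One must also track the weight shift between the KR top weight $k\varpi_i$ (in the $\Glie$-picture) and the limit object's top $\ell$-weight $\Psi_{i,a}^{-1}$ when normalising characters. The exclusion of type $E_8$ is inherited directly from Lee's hypothesis.
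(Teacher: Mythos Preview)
Your proposal is correct and follows the same route as the paper. In fact the paper does not supply a proof of this theorem at all: it simply attributes the product formula to \cite{Lee} (as a result about KR modules over $U_q(\hat{\Glie})$) and remarks that the Gautam--Toledano Laredo functor of \cite{GTL} carries it over to Yangian KR modules; the passage to $L_{i,a}^-$ via the limit of Proposition~\ref{prop: existence - pref} is left implicit. Your sketch makes these implicit steps explicit and checks the weight bookkeeping, so it is a faithful expansion of the paper's one-line justification rather than a different argument.
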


\subsection{Examples in the $sl_2$-case} \label{ss: sl2}
For the simple Lie algebra $sl_2$, we omit the Dynkin node $1$ everywhere: $x_n^{\pm} = x_{1,n}^{\pm}$ and $\xi_p = \xi_{1,p}$ as generators; $N_a = N_{1,a}$ and $L_a^{\pm} = L_{1,a}^{\pm}$ as modules: $\Psi_a = u-a$  and $A_a = \frac{u-a+1}{u-a-1}$ as $\ell$-weights. We identify the coweight lattice with $\BZ$, so that $1$ is the fundamental coweight and $2$ is the simple coroot. Similarly, the set of weights is $\BC$, so that 1 is the fundamental weight and 2 the simple root. 

\begin{example}\cite[Proposition 2.6]{CP1} \label{ex: asym sl2}
Let $a, b \in \BC$. On the vector space with basis $(v_i)_{i \in \BN}$ there is a $Y(sl_2)$-module structure, denoted by $\SL_b^a$:
\begin{gather*}
x^+(u) v_i = \frac{1}{u-b+i-1}  v_{i-1}, \quad x^-(u) v_i = \frac{(b-a-i)(i+1)}{u-b+i} v_{i+1}, \\
\xi(u) v_i = \frac{(u-b-1)(u-a)}{(u-b+i-1)(u-b+i)} v_i.
\end{gather*}
Its normalized q-character is
$$ \nqc(\SL_b^a) = 1 + A_{b}^{-1} + A_{b}^{-1} A_{b-1}^{-1} +  A_{b}^{-1} A_{b-1}^{-1} A_{b-2}^{-1} +\cdots.$$
The vector $v_0$ generates an irreducible submodule, denoted by $L_b^a$, of highest $\ell$-weight $\frac{u-a}{u-b}$. We have $\SL_b^a = L_b^a$ if and only if $b-a \notin \BN$. When $b - a \in \BN$, 
$$ \nqc(L_b^a) = 1 + A_{b}^{-1} + A_{b}^{-1} A_{b-1}^{-1} +  A_{b}^{-1} A_{b-1}^{-1} A_{b-2}^{-1} +\cdots +  A_{b}^{-1} A_{b-1}^{-1} \cdots A_{a+1}^{-1}. $$
Let us define $ \Delta_b^a := \{k \in \BN\ |\ k < b - a \}$ if $b-a \in \BN$, and $\Delta_b^a := \BN$ otherwise. Then $k \in \Delta_b^a$ if and only if $A_{b-k}^{-1}$ is a factor of an $\ell$-weight in $\nqc(L_b^a)$.
\end{example}
For $a, b \in \BC$, the $Y(sl_2)$-module $\SL_b^a$ can be obtained as the pullback by an evaluation morphism $Y(sl_2) \longrightarrow U(sl_2)$ of an $sl_2$-module $M$ of co-highest weight $b - a$; more precisely, $M$ is the graded Hopf dual of the Verma module of lowest weight $a-b$.  
 
In the special case $a = b-1$, comparing with Example \ref{ex: two-dim} we get a module isomorphism $N_b \cong L_b^{b-1}$ sending $e_1$ to $v_0$ and $e_2$ to $v_1$.

We recall the following result of Tarasov \cite{Tarasov0, Tarasov} on $Y(sl_2)$-modules with detailed proof in \cite[Prop.3.6]{M}. It was stated for the larger Yangian $Y(gl_2)$ which contains  $Y(sl_2)$ as a Hopf subalgebra. Irreducible highest $\ell$-weight modules over $Y(gl_2)$ remain irreducible when restricted to $Y(sl_2)$. 

\begin{theorem}\cite{Tarasov0,Tarasov,M} \label{thm: Tarasov}
The $Y(sl_2)$-module $L_{b_1}^{a_1} \otimes L_{b_2}^{a_2} \otimes \cdots \otimes L_{b_n}^{a_n}$
is irreducible if and only if $b_i - a_j \notin \Delta_{b_i}^{a_i} \cap \Delta_{b_j}^{a_j}$ for any $1\leq i, j \leq n$.
\end{theorem}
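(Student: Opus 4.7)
The plan is to prove the statement by induction on $n$, with the base case $n=2$ reduced to a direct analysis of singular vectors.

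For $n=2$, let $V := L_{b_1}^{a_1} \otimes L_{b_2}^{a_2}$. Using the asymptotic realization of Example \ref{ex: asym sl2}, one has an explicit basis $(v_i \otimes v_j)$ of the ambient module $\SL_{b_1}^{a_1} \otimes \SL_{b_2}^{a_2}$, with $L_{b_k}^{a_k}$ cut out by the condition $i \in \Delta_{b_k}^{a_k}$. A closed-form coproduct for $Y(sl_2)$ in the Drinfeld generators is available (cf.\ \cite[\S 6.3]{coproduct}), so one can write down the action of $x_m^\pm, \xi_p$ on the basis $v_i \otimes v_j$ explicitly. A non-trivial submodule of $V$ is equivalent to a singular vector $w = \sum c_{i,j}\, v_i \otimes v_j$ at some weight below the top, annihilated by every $x^+_m$. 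By property (O1) of category $\BGG$, the equations $x_m^+ w = 0$ give a \emph{finite-dimensional} linear system at each weight, and a direct computation shows it admits a nontrivial solution precisely when $b_1 - a_2 \in \Delta_{b_1}^{a_1} \cap \Delta_{b_2}^{a_2}$ or $b_2 - a_1 \in \Delta_{b_1}^{a_1} \cap \Delta_{b_2}^{a_2}$. (As a consistency check, one may also compute $\nqc(V)=\nqc(L_{b_1}^{a_1})\cdot \nqc(L_{b_2}^{a_2})$ via Theorem \ref{thm: q-char ring morphism} and match $\ell$-weight multiplicities with Proposition \ref{prop: generalized simple root}.)

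For the induction step with $n\ge 3$, the forward direction is obtained by producing, from a singular vector in a reducible pair $L_{b_i}^{a_i}\otimes L_{b_j}^{a_j}$, a singular vector of the full tensor product by tensoring with the top vectors of the remaining factors; after permuting factors to make $i,j$ adjacent (the symmetry of the pairwise criterion allows us to relabel indices and avoid invoking $R$-matrices on possibly infinite-dimensional factors). For the reverse direction, write $V = L_{b_1}^{a_1}\otimes W$ with $W := L_{b_2}^{a_2}\otimes \cdots \otimes L_{b_n}^{a_n}$ irreducible by the inductive hypothesis. Applying the coproduct analysis of the pair case iteratively along an $\ell$-weight basis of $W$, one checks that no nontrivial singular vector of $V$ can exist: any such vector would project nontrivially to some $\ell$-weight component of $W$, producing, by the explicit formula from Example \ref{example: V W} and the $sl_2$ coproduct, a singular vector of a pairwise tensor product $L_{b_1}^{a_1}\otimes L_{b_j}^{a_j}$ — contradicting the hypothesis.

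The main technical obstacle is to handle the cases where some $L_{b_i}^{a_i}$ is infinite-dimensional (i.e.\ $b_i - a_i \notin \BN$), since then the usual $R$-matrix arguments of Theorem \ref{thm: normalized R fin dim} are not available and one cannot directly permute factors or argue on generic spectral parameters. The remedy is to work \emph{one weight space at a time}: by (O1), every weight space of the tensor product is finite-dimensional, so the singular-vector condition becomes a finite linear system whose solvability reduces to the overlap criterion already established. One must also take care that the induction step does not tacitly invoke the Jordan--Hölder property (Theorem \ref{thm: Jordan-Holder}), which is proved only later in the paper; fortunately all of the arguments above manipulate explicit singular vectors rather than composition series.
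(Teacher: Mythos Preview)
The paper does not supply its own proof of this statement; it is quoted from \cite{Tarasov0,Tarasov} with a pointer to the detailed argument in \cite[Prop.~3.6]{M} (stated there for $Y(gl_2)$), so there is nothing in the paper to compare your attempt against directly.

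Your sketch for the base case $n=2$ and for the forward direction of the induction is plausible; in particular the relabeling of factors can be justified without $R$-matrices, since by the multiplicativity and injectivity of $q$-characters in Theorem~\ref{thm: q-char ring morphism} any two orderings of the tensor product have the same class in $K_0(\BGG^{sh})$, hence the same irreducible subquotients, so irreducibility is order-independent. The reverse direction of your induction, however, has a genuine gap. You claim that a nontrivial singular vector in $L_{b_1}^{a_1}\otimes W$ would, after projecting to an $\ell$-weight component of $W$, yield a singular vector in some pair $L_{b_1}^{a_1}\otimes L_{b_j}^{a_j}$. But $W = L_{b_2}^{a_2}\otimes\cdots\otimes L_{b_n}^{a_n}$ is an irreducible module whose $\ell$-weight spaces bear no module-theoretic relation to the individual factors $L_{b_j}^{a_j}$: there is no module map between $W$ and a single $L_{b_j}^{a_j}$ along which to transport the problem, and Example~\ref{example: V W} (which concerns a \emph{lowest} $\ell$-weight vector in the left tensor factor, not the situation here) does not supply one either. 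Working ``one weight space at a time'' makes each linear system finite but does nothing to effect this reduction to pairs. The proof in \cite{M} proceeds differently, via the RTT presentation of $Y(gl_2)$, and the passage to pairs there goes through cyclicity and cocyclicity of the full tensor product on the tensor of extremal vectors, not through decomposing a singular vector along the $\ell$-weight spaces of $W$.
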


The tensor product factorization in category $\BGG_0$ of $Y(sl_2)$-modules is not unique:
$$ L_0^9 \otimes L_2^3 \cong L\left(\frac{(u-9)(u-3)}{u(u-2)}\right) \cong L_0^3 \otimes L_2^9.  $$
Similar example appeared for $U_q(\widehat{sl}_2)$ in \cite[Remark 4.3]{MY}. The non-uniqueness issue will be resolved in the larger category $\BGG^{sh}$; see Theorem \ref{rem: uniqueness}.

\begin{example} \cite[Prop.23]{Z} \label{ex: - prefund sl2}
In Example \ref{ex: asym sl2}, fix $b$, divide the right-hand sides of $x^-(u) v_i$ and $\xi(u)v_i$ by $b-a$, and take the limit as $a$ goes to infinity. In this way, we obtain the negative prefundamental module $L_b^-$ over $Y_{-1}(sl_2)$:
\begin{gather*}
x^+(u) v_i = \frac{1}{u-b+i-1}  v_{i-1}, \quad x^-(u) v_i = \frac{i+1}{u-b+i} v_{i+1}, \\
\xi(u) v_i = \frac{u-b-1}{(u-b+i-1)(u-b+i)} v_i.
\end{gather*}
\end{example}

\section{Tensor products of prefundamental modules}\label{secquatre}
In this section we study two distinguished families of irreducible modules in category $\BGG^{sh}$, the one-dimensional positive prefundamental modules and the infinite-dimensional negative prefundamental modules. We prove cyclicity and co-cyclicity properties for tensor products of these modules 
(Theorem \ref{thm: cyclicity  cocyclicity}),  which motivate our definitions of Weyl modules and standard modules (Definition \ref{defi: Weyl standard modules}). In the end we identify these two modules when $\Glie$ is not of type $E_8$ (Theorem \ref{thm: Weyl standard}).

\subsection{One-dimensional modules}
Let $\CD$ be the submonoid of $\CR$ generated by the $\Psi_{i,a}$ for $(i,a) \in I \times \BC$. This is indeed the classifying set for one-dimensional modules in category $\BGG^{sh}$ in the following sense.
 
\begin{lem}  \label{lem: one-dim modules}
Let $\Be \in \CL$. Then $\dim L(\Be) = 1$ if and only if $\Be \in \CD$.
\end{lem}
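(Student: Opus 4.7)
The plan is to prove both directions by exploiting the compatibility of the coproduct with highest $\ell$-weight vectors, together with the Cartan relation $[x_{i,m}^+, x_{j,n}^-] = \delta_{ij}\xi_{i,m+n}$ from \eqref{rel: Cartan} and the shift relation \eqref{rel: shift}. For the implication $\Be \in \CD \Rightarrow \dim L(\Be) = 1$, I would write $\Be = \prod_{k=1}^n \Psi_{i_k,a_k}$. Each positive prefundamental module $L_{i_k,a_k}^+$ is one-dimensional by Example \ref{ex: + prefund}, so the tensor product $T := L_{i_1,a_1}^+ \otimes \cdots \otimes L_{i_n,a_n}^+$ is one-dimensional over $\BC$. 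By Example \ref{example: tensor product highest weight}, itself a consequence of the coproduct estimation in Lemma \ref{lem: coproduct estimation}, the pure tensor of the highest weight generators is of highest $\ell$-weight $\Be$. Being one-dimensional, $T$ is irreducible of highest $\ell$-weight $\Be$, so $T \cong L(\Be)$ and $\dim L(\Be) = 1$.

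For the converse, assume $\dim L(\Be) = 1$, so that $L(\Be) = \BC \omega_{\Be}$ sits entirely in the top weight space of weight $\varpi(\Be)$. Since $x_{i,n}^{\pm}$ carries the non-zero weight $\pm \alpha_i$ and the neighbouring weight spaces of $L(\Be)$ vanish (by top-gradedness combined with one-dimensionality), every $x_{i,n}^{\pm}$ with $(i,n) \in I \times \BN$ acts as zero on $\omega_{\Be}$. The Cartan relation then yields $\xi_{i,p}\omega_{\Be} = 0$ for all $i \in I$ and all $p \geq 0$, so that the scalar-valued series $\Be_i(u)$ by which $\xi_i(u)$ acts on $\omega_{\Be}$ reduces to $\sum_{p \leq -1} \Be_{i,p}\, u^{-p-1}$, i.e. a polynomial in $u$ for each $i$.

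To finish, the shift relation \eqref{rel: shift} says $\xi_{i,-\langle \mu,\alpha_i\rangle - 1} = 1$ in $Y_{\mu}(\Glie)$ where $\mu := \varpi^{\vee}(\Be)$ is the coweight of $\Be$. Acting on $\omega_{\Be}$ this forces $\Be_{i,-\langle \mu,\alpha_i\rangle - 1} = 1$, which together with the vanishing $\Be_{i,p} = 0$ for $p \geq 0$ just established imposes $\langle \mu,\alpha_i\rangle \geq 0$ for every $i \in I$. Hence each $\Be_i(u)$ is a monic polynomial in $u$ of degree $\langle \mu, \alpha_i \rangle \geq 0$, and factoring over $\BC$ realizes $\Be$ as a product of the $\Psi_{i,a}$, whence $\Be \in \CD$.

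I do not anticipate a serious obstacle: both directions are direct applications of the defining relations of $Y_{\mu}(\Glie)$ together with the basic properties of top graded highest $\ell$-weight modules recalled in Section \ref{sec: rep shif}. The only mild subtlety is the last step, namely ensuring that the polynomiality of each $\Be_i(u)$ is compatible with the shift relation; this is exactly what forces the coweight $\mu = \varpi^{\vee}(\Be)$ to be dominant, so that $\Be$ actually lies in the submonoid $\CD$ rather than merely belonging to $\CR$ as a ratio of monic polynomials.
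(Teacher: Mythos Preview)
Your proof is correct and follows essentially the same line as the paper's. Both arguments for the direction $\dim L(\Be)=1 \Rightarrow \Be \in \CD$ rest on the observation that $\xi_{i,p}=[x_{i,0}^+,x_{i,p}^-]$ must vanish on any one-dimensional module for $p\geq 0$; you reach this via weight-grading while the paper phrases it as factoring through the abelianization, and you supply the easy converse via tensor products of positive prefundamental modules, which the paper leaves implicit.
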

\begin{proof}
One-dimensional $Y_{\mu}(\Glie)$-modules are necessarily irreducible in category $\BGG_{\mu}$, and they factorize through the quotient of $Y_{\mu}(\Glie)$ by the ideal generated by the $[x,y]$ for $x, y \in Y_{\mu}(\Glie)$. Since such an ideal contains $\xi_{i,n}$ for $(i,n) \in I \times \BN$, in the quotient each $\xi_i(u)$ is a monic polynomial.
\end{proof}
Category $\BGG^{sh}$ does not admit non-trivial invertible object: if $D$ and $E$ are modules such that $D \otimes E \cong L(1)$, then both $D$ and $E$ are isomorphic to $L(1)$. In the case of the Borel algebra \cite{HJ} or shifted quantum affine algebras \cite{H0}, there are infinitely-many invertible objects, the one-dimensional weight modules. 

By definition (compare with \cite[Theorem 4.1]{FH})
\begin{equation*}   
\qc(\Bs) =  \Bs \quad \mathrm{for}\ \Bs \in \CD.
\end{equation*}
The generalized Baxter's relations for representations of the Borel algebra \cite[Theorem 4.8]{FH} and its proof hold true in category $\BGG^{sh}$. Recall from Lemma \ref{lem: rationality Drinfeld} that the q-character of a finite-dimensional module in category $\BGG^{sh}$ lies in $\BZ[\CR]$, which is the ring of Laurent polynomials in the $\Psi_{i,a}$. 
\begin{cor} 
Let $V$ be a finite-dimensional module in category $\BGG^{sh}$. Replace in $\qc(V)$ each variable $\Psi_{i,a}$ by $[L_{i,a}^+]$ and $\qc(V)$ by $[V]$. Then multiplying by denominators, we get a relation in the Grothendieck ring of $\BGG^{sh}$.
\end{cor}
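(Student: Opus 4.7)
The plan is to reduce the statement to the injectivity of the $q$-character ring homomorphism $\qc: K_0(\BGG^{sh}) \longrightarrow \CEl$ of Theorem \ref{thm: q-char ring morphism}, once both sides of the putative Baxter-type identity have been lifted from $\CEl$ to $K_0(\BGG^{sh})$.

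First, I would collect the ingredients. By Lemma \ref{lem: one-dim modules} each $L_{i,a}^+$ is one-dimensional, so by construction $\qc(L_{i,a}^+) = \Psi_{i,a}$ in $\CEl$. Since $V$ is finite-dimensional and lies in $\BGG^{sh}$, Lemma \ref{lem: rationality Drinfeld} and Theorem \ref{thm: finite-dim} imply that every $\ell$-weight appearing in $\qc(V)$ is a Laurent monomial in the $\Psi_{i,a}$'s; in particular $\qc(V)$ lives in the Laurent polynomial subring $\BZ[\Psi_{i,a}^{\pm 1}] \subset \CEl$.

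Next, I would clear denominators. Choose a monomial $D = \prod_{(i,a)} \Psi_{i,a}^{n_{i,a}}$, with only finitely many $n_{i,a} \in \BN$ nonzero, such that $P := D \cdot \qc(V)$ lies in the honest polynomial ring $\BZ[\Psi_{i,a}]$. This is possible because $\qc(V)$ is a Laurent polynomial in finitely many $\Psi_{i,a}$. Then $D \cdot \qc(V) = P(\Psi_{i,a})$ holds in $\CEl$, and we interpret $P([L_{j,b}^+])$ as the corresponding $\BZ$-linear combination of tensor products of the classes $[L_{j,b}^+]$ in $K_0(\BGG^{sh})$. Multiplicativity of $\qc$ (Theorem \ref{thm: q-char ring morphism}) together with $\qc(L_{i,a}^+) = \Psi_{i,a}$ yields
$$
\qc\Bigl( [V] \cdot \prod_{(i,a)} [L_{i,a}^+]^{n_{i,a}} \Bigr) \;=\; D \cdot \qc(V) \;=\; P(\Psi_{i,a}) \;=\; \qc\bigl( P([L_{i,a}^+]) \bigr).
$$
By injectivity of $\qc$ (Theorem \ref{thm: q-char ring morphism} again), we conclude
$$
[V] \cdot \prod_{(i,a)} [L_{i,a}^+]^{n_{i,a}} \;=\; P\bigl([L_{i,a}^+]\bigr)
$$
in $K_0(\BGG^{sh})$, which is precisely the claimed relation after multiplying by the denominator $D$.

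I do not expect a real obstacle here: the two substantive inputs, namely the injective ring homomorphism property of $\qc$ and the identification $\qc(L_{i,a}^+) = \Psi_{i,a}$, are already in hand. The only point requiring mild care is the bookkeeping of the substitution: one must match the commutative substitution $\Psi_{i,a} \mapsto [L_{i,a}^+]$ in $P$ with an actual product of classes in the Grothendieck ring, but this is legitimate because $K_0(\BGG^{sh})$ is commutative (as remarked immediately after Theorem \ref{thm: q-char ring morphism}) and because each $L_{i,a}^+$ is one-dimensional, so tensoring by it is exact.
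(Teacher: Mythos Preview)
Your argument is correct and is essentially the standard proof: the paper does not give its own proof of this corollary but simply remarks that the generalized Baxter's relations of \cite[Theorem 4.8]{FH} and their proof carry over to $\BGG^{sh}$, and that proof is exactly the injectivity-of-$\qc$ argument you wrote out. The one minor point is that the Laurent-polynomial property of $\qc(V)$ is stated in the paper as a consequence of Lemma~\ref{lem: rationality Drinfeld} alone (see the sentence just before the corollary), so invoking Theorem~\ref{thm: finite-dim} there is unnecessary.
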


Let us apply the corollary to the module $N_{i,a}$ of Example \ref{ex: two-dim}:
\begin{gather*}  
\qc(N_{i,a}) = \frac{\Psi_{i,a-d_i}}{\Psi_{i,a}} \prod_{j: c_{ij} < 0} \Psi_{j,a-d_{ij}} + \frac{\Psi_{i,a+d_i}}{\Psi_{i,a}} \prod_{j: c_{ij} < 0} \Psi_{j,a+d_{ij}}, \\
[L_{i,a}^+][N_{i,a}] = \prod_{j: c_{ij} \neq 0}[L_{j,a-d_{ij}}^+] + \prod_{j: c_{ij} \neq 0}[L_{j,a+d_{ij}}^+].
\end{gather*}
 
For $\Bs \in \CD$ of coweight $\mu$, let $\rho_{\Bs}: Y_{\mu}(\Glie) \longrightarrow \BC$ denote the representation of the one-dimensional module $L(\Bs)$. Let $\nu$ be another coweight. Define the algebra morphisms $\iota_1^{\Bs}$ and $\iota_2^{\Bs}$, both from $Y_{\mu+\nu}(\Glie)$ to $Y_{\nu}(\Glie)$, as follows (we omit the dependence of these morphisms on $\nu$ which will always be clear from the context):
\begin{align*}
& \iota_1^{\Bs} := (\rho_{\Bs} \otimes 1) \Delta_{\mu,\nu},\quad \iota_2^{\Bs} := (1\otimes \rho_{\Bs}) \Delta_{\nu,\mu}.
\end{align*}
Then for $V$ a $Y_{\nu}(\Glie)$-module, the tensor product modules $L(\Bs) \otimes V$ and $V\otimes L(\Bs)$ are pullbacks of $V$ by $\iota_1^{\Bs}$ and $\iota_2^{\Bs}$ respectively. Based on Lemma \ref{lem: coproduct estimation} we have the following precise formulas for the algebra morphisms:
\begin{equation}  \label{equ: tensor product one-dim}
\begin{split}
 &\iota_1^{\Bs}:\quad x_i^+(u) \mapsto \langle \Bs_i(u) x_i^+(u) \rangle_+, \quad x_i^-(u) \mapsto x_i^-(u),\quad \xi_i(u) \mapsto \Bs_i(u) \xi_i(u), \\ 
& \iota_2^{\Bs}:\quad x_i^+(u) \mapsto x_i^+(u), \quad x_i^-(u) \mapsto \langle \Bs_i(u)x_i^-(u)\rangle_+,\quad \xi_i(u) \mapsto \Bs_i(u) \xi_i(u).
\end{split}
\end{equation}

\begin{rem}  \label{rem: truncation one-dim}
Let $(\nu, \Br) \in \BP^{\vee} \times \CL$ be truncatable and let $\Bs \in \CD$ be of coweight $\mu$. Then a $Y_{\nu}(\Glie)$-module $V$ factorizes through the truncated shifted Yangian $Y_{\nu}^{\Br}(\Glie)$ if and only if $L(\Bs) \otimes V$ factorizes through $Y_{\nu+\mu}^{\Br\Bs}(\Glie)$. Indeed, the uniqueness of factorization in Eq.\eqref{equ: truncation} shows that $\iota_1^{\Bs}(A_i(u)) = A_i(u)$ for $i \in I$, where the first GKLO series is taken in the shifted Yangian $Y_{\nu+\mu}(\Glie)$ with respect to the truncatable pair $(\nu+\mu, \Br\Bs)$ and the second in $Y_{\nu}(\Glie)$ with respect to $(\nu, \Br)$. 
 \end{rem} 

\subsection{Cyclicity and cocyclicity}
The main result of this subsection is the cyclicity and cocyclicity properties of tensor product modules. By cyclicity we mean the module is generated by a highest $\ell$-weight vector. Let us explain cocyclicity.

\begin{defi}  \label{def: cocyclicity}
Call a $Y_{\mu}(\Glie)$-module $V$ {\it of co-highest $\ell$-weight} if it is top graded and its top weight space is contained in all nonzero submodules of $V$.
\end{defi}

It follows that the submodule of $V$ generated by the top weight space is isomorphic to $L(\Be)$, where $\Be \in \CL$ is the top $\ell$-weight of $V$. We will also say that $V$ is co-generated by a vector of highest $\ell$-weight $\Be$.

\begin{rem}  \label{rem: cyclicity to cocyclicty}
Suppose that $V$ is module of highest $\ell$-weight $\Be$ and $W$ of co-highest $\ell$-weight $\Be$. Then there exists a nonzero module morphism $V \longrightarrow W$ which factorizes through $L(\Be)$. Such a map is unique up to homothety. It is surjective if and only if $W$ is irreducible, injective if and only if $V$ is irreducible.
\end{rem}

\begin{lem} \label{lem: cocyclicity}
Let $V$ be a top graded $Y_{\mu}(\Glie)$-module. Then $V$ is of co-highest $\ell$-weight if and only if its top weight space equals the subspace of vectors in $V$ annihilated by the $x_{i,n}^+$ for all $(i, n) \in I \times \mathbb{N}$.
\end{lem}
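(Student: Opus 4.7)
The inclusion $V_\lambda \subset V^+$, where $V^+$ denotes the joint kernel of the operators $x_{i,n}^+$ on $V$, is automatic: for $v \in V_\lambda$ and $(i,n) \in I \times \BN$ the vector $x_{i,n}^+ v$ lies in $V_{\lambda+\alpha_i} = 0$, since $\lambda+\alpha_i \notin \lambda+\BQ_-$. Hence the lemma reduces to showing that the reverse inclusion $V^+ \subset V_\lambda$ is equivalent to co-cyclicity. I would also record that $V^+$ is itself weight-graded: decomposing $v = \sum_\beta v_\beta$ and comparing weight components in the identity $0 = x_{i,n}^+ v$, one sees that each $v_\beta$ belongs to $V^+$.

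For the forward direction, suppose $V$ is of co-highest $\ell$-weight and pick a nonzero weight vector $v \in V^+$ of weight $\beta$. Since the $x_{i,n}^+$ all annihilate $v$, every product $x_{i_1,n_1}^+ \cdots x_{i_k,n_k}^+ v$ with $k \geq 1$ vanishes, so $Y_\mu^>(\Glie) v = \BC v$. The triangular decomposition of Theorem \ref{thm: triangular decomposition} then yields $Y_\mu(\Glie) v = Y_\mu^<(\Glie) Y_\mu^=(\Glie) v$, and all weights of this submodule lie in $\beta + \BQ_-$. Co-cyclicity forces $V_\lambda \subset Y_\mu(\Glie) v$, so $\lambda \in \beta + \BQ_-$; combined with $\beta \in \lambda + \BQ_-$ this gives $\beta = \lambda$, proving $V^+ \subset V_\lambda$.

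For the converse, assume $V^+ = V_\lambda$ and let $W$ be a nonzero submodule of $V$. The weight grading on $V$ is recovered from the commuting semisimple action of the operators $\xi_{i,-\langle\mu,\alpha_i\rangle}$ via Eq.\eqref{rel: weight grading}, so Vandermonde-type projectors built from these operators show that $W = \bigoplus_\beta (W \cap V_\beta)$ is itself weight-graded. Fix any $\gamma_0 \in \wt(W)$; the set $\{\gamma \in \wt(W) : \gamma \geq \gamma_0\}$ is contained in the finite interval $\{\lambda-\alpha : 0 \leq \alpha \leq \lambda-\gamma_0\}$ of the root lattice and therefore admits a maximal element $\gamma_{\max}$. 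Any nonzero $w' \in W \cap V_{\gamma_{\max}}$ must be annihilated by every $x_{i,n}^+$: otherwise $x_{i,n}^+ w' \neq 0$ would produce an element of $W$ of weight $\gamma_{\max}+\alpha_i > \gamma_{\max}$, contradicting maximality. Hence $w' \in V^+ = V_\lambda$, and the one-dimensionality of $V_\lambda$ forces $V_\lambda = \BC w' \subset W$. The argument is essentially structural; the only point requiring any care is the weight-gradedness of arbitrary submodules, with everything else resting on the triangular decomposition and on the finiteness of intervals in $\BQ_+$.
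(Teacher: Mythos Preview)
Your proof is correct and follows essentially the same argument as the paper's: both use the triangular decomposition to show that a nonzero vector in $V^+$ of weight $\beta$ generates a submodule with weights in $\beta+\BQ_-$, and both handle the converse by producing, inside an arbitrary nonzero submodule, a weight vector that is maximal with respect to the root order and hence annihilated by all $x_{i,n}^+$. The only difference is cosmetic: you justify weight-gradedness of submodules via Vandermonde projectors and phrase the maximality step through a finite interval above a chosen $\gamma_0$, whereas the paper simply asserts that the submodule $T$ is weight-graded and picks $\beta\in\wt(T)$ with $\beta+\alpha_i\notin\wt(T)$ for all $i$.
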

\begin{proof}
Let $\lambda \in \Hlie^*$ be the top weight of $V$. Let $S \subset V$ be the subspace of vectors annihilated by all the $x_{i,n}^+$. Then $V_{\lambda} \subset S$ because $\lambda+\alpha_i$ is not a weight of $V$ by assumption and $x_{i,n}^+ V_{\lambda} \subset V_{\lambda+\alpha_i}$. Moreover $S$ is weight graded.

Assume $V$ is of co-highest $\ell$-weight. Let $\beta \in \lambda+\BQ_-$ be any weight of $S$. The nonzero submodule $S' = Y_{\mu}(\Glie) S_{\beta}$ contains $V_{\lambda}$. In particular, $\lambda \in \wt(S')$. Applying the triangular decomposition to $S_{\beta}$ gives $\wt(S') \subset \beta + \BQ_-$ and $\lambda \in \beta + \BQ_-$. So $\beta  = \lambda$. This proves $\wt(S) = \{\lambda\}$ and $S = V_{\lambda}$.

Assume $S = V_{\lambda}$. Let $T$ be a nonzero submodule of $V$. Since $T$ is weight graded by $\lambda + \BQ_-$, there exists $\beta\in \wt(T)$, such that $\beta + \alpha_i \notin \wt(T)$ for all $i \in I$. This implies $x_{i,n}^+ T_{\beta} = \{0\}$ for all $(i,n) \in I \times \BN$ and therefore $\{0\} \neq T_{\beta}  \subset S = V_{\lambda}$. Since $V_{\lambda}$ is one-dimensional, $V_{\lambda} = T_{\beta}  \subset T$.
\end{proof}

Recall from \eqref{rel: spectral shift formal} the algebra homomorphism for $\mu$ a coweight
$$ \tau_z: Y_{\mu}(\Glie) \longrightarrow Y_{\mu}(\Glie) \otimes \BC[z]. $$
Let $V$ be a $Y_{\nu}(\Glie)$-module and $W$ be a $Y_{\mu}(\Glie)$-module. The vector space $W \otimes V \otimes \BC[z]$ is a module over the tensor product algebra $Y_{\mu+\nu}(\Glie) \otimes \BC[z]$: the tensor factor $\BC[z]$ acts by polynomial multiplication; the tensor factor $Y_{\mu+\nu}(\Glie)$ acts by 
$$(1\otimes \tau_z) \Delta_{\mu,\nu} : Y_{\mu+\nu}(\Glie) \longrightarrow Y_{\mu}(\Glie) \otimes Y_{\nu}(\Glie) \longrightarrow Y_{\mu}(\Glie) \otimes Y_{\nu}(\Glie) \otimes \BC[z]. $$  
Similarly, the $Y_{\mu+\nu}(\Glie)\otimes \BC[z]$-module $V \otimes \BC[z] \otimes W$ is defined using $(\tau_z\otimes 1) \Delta_{\nu,\mu}$.
\begin{rem} \label{rem: polynomiality Yangian}
For $x \in Y_{\mu+\nu}(\Glie)$ and $w \otimes v \in W \otimes V$, the action of $x$ on $w\otimes v$ in the $Y_{\mu+\nu}(\Glie)$-module $W\otimes V(a)$ is the evaluation at $z=a$ of the vector-valued polynomial $x(w\otimes v)$ computed in the $Y_{\mu+\nu}(\Glie) \otimes \BC[z]$-module $W \otimes V \otimes \BC[z]$. Similar statement holds for the $Y_{\mu+\nu}(\Glie)\otimes \BC[z]$-module $V \otimes \BC[z] \otimes W$.
\end{rem} 

\begin{theorem}  \label{thm: cyclicity cocyclicity}
 Let $\Bs \in \CD$ be of coweight $\mu$ and $\Be \in \CL$ be of coweight $\nu$. 
\begin{itemize}
\item[(i)] If $V$ is a $Y_{\nu}(\Glie)$-module of co-highest $\ell$-weight, then so are the $Y_{\nu+\mu}(\Glie)$-module $V \otimes L(\Bs)$ and the $Y_{\nu-\mu}(\Glie)$-module $L(\Bs^{-1}) \otimes V$.
\item[(ii)] The assignment $\omega_{\Bs} \otimes \omega_{\Be} \mapsto \omega_{\Bs\Be}$ extends uniquely to a module isomorphism 
$$L(\Bs) \otimes M(\Be) \cong M(\Bs\Be).$$ 
\item[(iii)] The $Y_{\nu-\mu}(\Glie) \otimes \BC[z]$-module $M(\Be) \otimes \BC[z] \otimes L(\Bs^{-1})$ is generated by
$\omega_{\Be} \otimes \omega_{\Bs^{-1}}$.
\end{itemize}
 Therefore, if a $Y_{\nu}(\Glie)$-module $V$ is of highest $\ell$-weight, then so are the $Y_{\nu+\mu}(\Glie)$-module $L(\Bs) \otimes V$ and the $Y_{\nu-\mu}(\Glie)$-module $V\otimes L(\Bs^{-1})$.
\end{theorem}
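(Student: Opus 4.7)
The strategy rests on the observation that since $L(\Bs^{\pm 1})$ is one-dimensional, tensoring with it is the pullback along the algebra morphism $\iota_1^{\Bs^{\pm 1}}$ or $\iota_2^{\Bs^{\pm 1}}$ described in \eqref{equ: tensor product one-dim}. The recurring technical point is that $\Bs_i^{\pm 1}(u) = u^{\pm\langle\mu,\alpha_i\rangle}(1 + O(u^{-1}))$ induces an upper triangular, invertible linear transformation on the family $(x_{i,k}^\pm)_{k \geq 0}$ of Drinfeld generators. I would establish the three parts in the order (ii), (i), (iii).

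For (ii), Example \ref{example: tensor product highest weight} (based on Lemma \ref{lem: coproduct estimation}) shows that $\omega_\Bs \otimes \omega_\Be$ is a vector of highest $\ell$-weight $\Bs\Be$ in $L(\Bs) \otimes M(\Be)$, giving a canonical morphism $M(\Bs\Be) \to L(\Bs) \otimes M(\Be)$ sending $\omega_{\Bs\Be}$ to $\omega_\Bs \otimes \omega_\Be$. Since $\iota_1^\Bs$ leaves $x_i^-(u)$ invariant, its restriction $\iota_1^\Bs\colon Y^<_{\nu+\mu}(\Glie) \to Y^<(\Glie)$ coincides with the canonical isomorphism of \eqref{equ: isomorphism}. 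Under this identification $M(\Bs\Be)$ and the underlying vector space of $L(\Bs) \otimes M(\Be)$ are free of rank one over $Y^<(\Glie)$ on the respective distinguished vectors, so the morphism is bijective.

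For (i), I apply Lemma \ref{lem: cocyclicity}. The top weight space of $V \otimes L(\Bs)$ is $V_\lambda \otimes \omega_\Bs$ (one-dimensional), and that of $L(\Bs^{-1}) \otimes V$ is $\omega_{\Bs^{-1}} \otimes V_\lambda$. The morphism $\iota_2^\Bs$ leaves each $x_{i,n}^+$ unchanged, so the joint annihilator of the $x_{i,n}^+$ on the underlying space of $V \otimes L(\Bs)$ is the same as that on $V$, namely $V_\lambda$ by Lemma \ref{lem: cocyclicity} applied to $V$. On the other hand $\iota_1^{\Bs^{-1}}(x_{i,n}^+)$ vanishes for $n < \langle\mu,\alpha_i\rangle$ and equals $x_{i,n-\langle\mu,\alpha_i\rangle}^+$ plus a linear combination of $x_{i,k}^+$ with $k < n - \langle\mu,\alpha_i\rangle$ otherwise; the linear span of these images as $n$ varies in $\BN$ coincides with the linear span of the $x_{i,k}^+$, so again the joint annihilators agree and Lemma \ref{lem: cocyclicity} yields cocyclicity of $L(\Bs^{-1}) \otimes V$.

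For (iii), I split into two steps. First, for every $a \in \BC$ the specialization $M(\Be)(a) \otimes L(\Bs^{-1}) \cong M(\tau_a\Be) \otimes L(\Bs^{-1})$ is generated as a $Y_{\nu-\mu}(\Glie)$-module by $\omega_{\tau_a\Be} \otimes \omega_{\Bs^{-1}}$: the same upper triangular computation shows that the image of $Y^<_{\nu-\mu}(\Glie)$ under $\iota_2^{\Bs^{-1}}$ already contains every $x_{i,k}^-$, and hence acts cyclically on $M(\tau_a\Be) = Y^<(\Glie) \cdot \omega_{\tau_a\Be}$. Second, a Nakayama-type argument over $\BC[z]$ upgrades this to the polynomial statement. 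Denoting the polynomial module by $\CW := M(\Be) \otimes \BC[z] \otimes L(\Bs^{-1})$ and by $\CW'$ the submodule generated by $\omega_\Be \otimes 1 \otimes \omega_{\Bs^{-1}}$, each weight space $\CW_\beta$ is a finitely generated free $\BC[z]$-module and $\CW'_\beta$ is a $\BC[z]$-submodule; by Remark \ref{rem: polynomiality Yangian} and the first step, $\CW'_\beta + (z-a)\CW_\beta = \CW_\beta$ for every $a \in \BC$, so the quotient $\CW_\beta/\CW'_\beta$ is a finitely generated $\BC[z]$-module on which multiplication by every $(z-a)$ is surjective, forcing $\CW_\beta = \CW'_\beta$ by the structure theorem for $\BC[z]$-modules. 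The concluding statement is immediate: if $V$ is a quotient of $M(\Bf)$, then $L(\Bs) \otimes V$ is a quotient of $L(\Bs) \otimes M(\Bf) \cong M(\Bs\Bf)$ by (ii), and $V \otimes L(\Bs^{-1})$ is a quotient of $M(\Bf) \otimes L(\Bs^{-1})$, which is of highest $\ell$-weight by the first step of (iii). The main obstacle is this Nakayama step, which requires careful tracking of the $\BC[z]$-module structure on individual weight spaces.
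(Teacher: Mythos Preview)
Your proof rests on the premise that $L(\Bs^{-1})$ is one-dimensional, but this is false: by Lemma~\ref{lem: one-dim modules}, $\dim L(\Be)=1$ if and only if $\Be\in\CD$, and $\Bs^{-1}\in\CD$ forces $\Bs=1$. For $\Bs\neq 1$ the module $L(\Bs^{-1})$ is a \emph{negative} module, infinite-dimensional (see Example~\ref{ex: - prefund sl2} or Proposition~\ref{prop: existence - pref}). Consequently the morphisms $\iota_1^{\Bs^{-1}}$ and $\iota_2^{\Bs^{-1}}$ that you invoke do not exist: the definition \eqref{equ: tensor product one-dim} of $\iota_k^{\Bs}$ requires $\Bs\in\CD$. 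Your arguments for part (ii) and for the $V\otimes L(\Bs)$ half of part (i) are correct and coincide with the paper's; the $L(\Bs^{-1})\otimes V$ half of (i), all of (iii), and the final clause about $V\otimes L(\Bs^{-1})$ collapse entirely.

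What the paper does instead is work directly with the tensor product $W\otimes V$ where $W=L(\Bs^{-1})$ is genuinely infinite-dimensional. The substitute for your triangularity observation is the relation $\langle \Bs_i(u)\, x_i^-(u)\,\omega_{\Bs^{-1}}\rangle_+ = 0$ in $W$ (Eq.~\eqref{equ: recurrent relation irreducible}), which is the analogue of a finite recursion near the top weight. For (iii) the paper combines this with the coproduct estimate (Lemma~\ref{lem: coproduct estimation}) to derive the key identity \eqref{rel: key}, and then runs a two-layer induction: first $M(\Be)_n\otimes\omega_{\Bs^{-1}}$ lies in the generated submodule for all $n$, then $M(\Be)\otimes W_n$ for all $n$. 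For the $L(\Bs^{-1})\otimes V$ half of (i), the paper takes a vector $g$ annihilated by all $x_i^+(u)$, expands it over a weight basis of $V$, and uses a maximal-weight argument together with \eqref{equ: recurrent negative} to force $g\in\omega_{\Bs^{-1}}\otimes V_\lambda$. None of this reduces to a simple pullback along an algebra morphism, because the second tensor factor in $M(\Be)\otimes L(\Bs^{-1})$ (resp.\ the first in $L(\Bs^{-1})\otimes V$) has nontrivial structure that must be controlled.
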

\begin{proof}
Assume (ii)--(iii). If $V$ is of highest $\ell$-weight, then $V$ is a quotient of a Verma module $M(\Be)$. So $L(\Bs) \otimes V$ is a quotient of $L(\Bs) \otimes M(\Be)$, which is another Verma module $M(\Bs \Be)$ by (ii). Therefore $L(\Bs) \otimes V$ is of highest $\ell$-weight. Similarly, by evaluating the $Y_{\nu-\mu}(\Glie) \otimes \BC[z]$-module $M(\Be) \otimes \BC[z] \otimes L(\Bs^{-1})$ at $z = 0$, which makes sense because of Remark \ref{rem: polynomiality Yangian}, we obtain from (iii) that the $Y_{\nu-\mu}(\Glie)$-module $M(\Be) \otimes L(\Bs^{-1})$ is of highest $\ell$-weight, so is its quotient $V \otimes L(\Bs^{-1})$.

We shall prove (i)--(iii) for $L(\Bs)$ and $L(\Bs^{-1})$ separately.

\medskip

\noindent {\bf First half of part (i).} Let $\lambda$ be the top weight of $V$.  The tensor product $V \otimes L(\Bs)$ is top graded with $V_{\lambda} \otimes \omega_{\Bs}$ being the top weight space. From the formula $\iota_2^{\Bs}(x_i^+(u)) = x_i^+(u)$ of Eq.\eqref{equ: tensor product one-dim} we get $x(v \otimes \omega_{\Bs}) = xv \otimes \omega_{\Bs}$ for $v \in V$ and $x \in Y^>(\Glie) \cong Y_{\nu+\mu}^>(\Glie) \cong Y_{\nu}^>(\Glie)$. By Lemma \ref{lem: cocyclicity}, the module $V$ is of co-highest $\ell$-weight if and only if the module $V \otimes L(\Bs)$ is of co-highest $\ell$-weight. 

\medskip

\noindent {\bf Part (ii).} By Example \ref{example: tensor product highest weight}, $\omega_{\Bs} \otimes \omega_{\Be}\in L(\Bs) \otimes M(\Be)$ is a vector of highest $\ell$-weight $\Bs\Be$. This induces a module morphism $F: M(\Bs\Be) \longrightarrow  L(\Bs) \otimes M(\Be)$ sending $\omega_{\Bs\Be}$ to $\omega_{\Bs} \otimes \omega_{\Be}$. From the formula $\iota_1^{\Bs}(x_i^-(u)) = x_i^-(u)$ of Eq.\eqref{equ: tensor product one-dim} we get $F(x \omega_{\Bs\Be}) = \omega_{\Bs} \otimes x \omega_{\Be}$ for $x \in Y^{<}(\Glie) \cong Y_{\nu+\mu}^<(\Glie) \cong Y_{\nu}^<(\Glie)$. Identifying the underlying space of Verma modules with  $Y^<(\Glie)$, we see that $F$ is an isomorphism.

\medskip

From now on fix $W := L(\Bs^{-1})$ and $\omega := \omega_{\Bs^{-1}}$.  For $i \in I$ setting $P(u) = 1$ and $Q(u) = \Bs_i(u) \in \BC[u]$ in the proof of Theorem \ref{thm: classification}, we get
\begin{equation} \label{equ: recurrent relation irreducible}
 \langle \Bs_i(u) x_i^-(u) \omega \rangle_+ = 0 \quad \mathrm{for}\ i \in I.
\end{equation}
Applying $x_i^+(u) x_{i,n}^- = x_{i,n}^- x_i^+(u) + \langle u^n \xi_i(u) \rangle_+$ to the highest $\ell$-weight vector $\omega$ gives
\begin{equation}  \label{equ: recurrent negative}
\langle \Bs_i(u) x_i^+(u) x_{i,n}^-\omega \rangle_+ = 0 \quad \mathrm{for}\ (i,n) \in I \times \BN.
\end{equation}
Indeed, the term $x_{i,n}^- x_i^+(u)$ annihilates $\omega$ and
$$ \langle\Bs_i(u) \langle u^n \xi_i(u) \omega \rangle_+ \rangle_+  = \langle u^n \Bs_i(u) \Bs_i(u)^{-1}  \omega \rangle_+ = \langle u^n\rangle_+ \omega = 0. $$

\medskip

\noindent {\bf Part (iii).} The Verma module $M(\Be)$ is $\BN$-graded $M(\Be) = \oplus_{n\in \BN} M(\Be)_n$ by declaring $M(\Be)_n$ to be the subspace spanned by the weight vectors $x_{i_1,m_1}^- x_{i_2,m_2}^- \cdots x_{i_n,m_n}^- \omega_{\Be}$ where $(i_k,m_k) \in I \times \BN$ for $1\leq k \leq n$. Similarly the $\BN$-grading on the highest $\ell$-weight module $W$ is defined.
Let $S$ be the $Y_{\nu-\mu}(\Glie) \otimes \BC[z]$-submodule  of $M(\Be) \otimes \BC[z] \otimes W$ generated by $\omega_{\Be} \otimes \omega$. It suffices to show that $M(\Be) \otimes W \subset S$.

\medskip

\noindent {\it Step 1.} Prove $M(\Be)_n \otimes \omega \subset S$ by induction on $n \in \BN$.
 
 The initial case $n = 0$ is trivial because  $M(\Be)_0 = \BC \omega_{\Be}$ and $\omega_{\Be} \otimes \omega \in S$ by definition. Let $n > 0$ and $v \in M(\Be)_n$. By linearity one may assume $v = x_{i,m}^- v'$ for certain $v' \in M(\Be)_{n-1}$ and $(i,m) \in I \times \BN$. By the induction hypothesis we have $v' \otimes \omega \in S$.

In the module $M(\Be) \otimes \BC[z] \otimes W$ we have by Example \ref{example: V W} and Eqs.\eqref{rel: spectral shift}--\eqref{rel: spectral shift formal}:
$$ x_i^-(u)(v' \otimes \omega) = x_i^-(u-z) v' \otimes \Bs_i(u)^{-1} \omega + v' \otimes x_i^-(u) \omega.  $$
Here one views $x_i^-(u-z)$ as the Laurent series $\sum_{k\geq 0} \tau_z(x_{i,k}^-) u^{-k-1}$ whose coefficients belong to $Y_{\nu}(\Glie) \otimes \BC[z]$ and act as linear maps $M(\Be) \longrightarrow M(\Be) \otimes \BC[z]$. The principal part at the right-hand side is unnecessary because $\Bs_i(u)^{-1} x_i^-(u-z)$ is a power series of $u^{-1}$. Multiply the above equation by the polynomial $\Bs_i(u)$ and then take the principal part. We obtain from Eq.\eqref{equ: recurrent relation irreducible} that
\begin{equation}  \label{rel: key}
\langle\Bs_i(u) x_i^-(u)\rangle_+ (v' \otimes \omega) = x_i^-(u-z) v' \otimes \omega \in S[[u^{-1}]].
\end{equation}
For $p \in \BN$, let $C_p$ be the coefficient of $u^{-p-1}$ at the right-hand side. We have 
$$ C_p = \sum_{k=0}^p \binom{p}{k} x_{i,p-k}^- v' \otimes z^k \otimes w \in S. $$
It follows from the Newton formula $\tau_{-a}\tau_a = \mathrm{Id}$ that
$$v \otimes w = x_{i,m}^- v' \otimes w = \sum_{k=0}^m \binom{m}{k} (-z)^{m-k} C_k \in S.   $$
Therefore $M(\Be)_n \otimes \omega \subset S$ for all $n \in \BN$ and $M(\Be) \otimes \omega \subset S$.

\medskip
 
\noindent {\it Step 2.} Prove $M(\Be) \otimes W_n \subset S$ by induction on $n \in \BN$.

The initial case $n = 0$ follows from Step 1 because $W_0 = \BC \omega$. Let $n > 0$ and $w \in W_n$. By linearity assume $w = x_{i,m}^- w'$ for certain weight vector $w' \in W_{n-1}$ and $(i,m) \in I \times \BN$. By the induction hypothesis we have $v \otimes w' \in S$. In the module $M(\Be)\otimes \BC[z] \otimes W$, take an arbitrary vector $v \in M(\Be)$ and apply $x_{i,m}^- \in Y_{\nu-\mu}^<(\Glie)$ to $v \otimes w'$. From the coproduct formula $\Delta_{\nu,-\mu}(x_{i,m}^-)$ of Lemma \ref{lem: coproduct estimation} we get
 \begin{align*}
 S &\ni x_{i,m}^- (v \otimes w') = (\tau_z \otimes 1) \Delta_{\nu,-\mu}(x_{i,m}^-) (v \otimes \omega) \\
& \equiv v \otimes x_{i,m}^- w'  \ \mathrm{mod}. \sum_{w'' \in W: \wt(w'') - \wt(w') \in \BQ_+}  M(\Be) \otimes \BC[z] \otimes w''.  
 \end{align*}
Since $\wt(w) = \wt(w') - \alpha_i$, any $w''$ in the summation belongs to $W_{n'}$ for certain $0 \leq n' < n$. The induction hypothesis applied to $w''$ together with the fact that $S$ is stable by $\BC[z]$ gives $M(\Be) \otimes \BC[z] \otimes w'' \subset S$. So $v \otimes w \in S$ for all $v \in M(\Be)$ and $w \in W_n$. Therefore, $M(\Be) \otimes W_n \subset S$ for all $n \in \BN$ and $M(\Be) \otimes W \subset S$.

\medskip

\noindent {\bf Second half of part (i).}  As in the first half of part (i), it suffices to show that if $g \in W \otimes V$ is annihilated by the $x_i^+(u)$, then $g \in \omega \otimes V_{\lambda}$. Assume $0 \neq g$ is a weight vector of weight $\beta$. Choose a weight basis $\mathcal{B}_V$ of $V$ and write $g = \sum_{v\in \mathcal{B}_V} g_v \otimes v$. Then each $g_v \in W$ is a weight vector and $g_v \neq 0$ only if $\wt(g_v) + \wt(v) = \beta$. Moreover, $g_v = 0$ for all but finitely many $v$. Choose $\gamma \in \wt(V)$ such that: 
\begin{itemize}
\item[(A)] there exists $v_1 \in \mathcal{B}_V$ of weight $\gamma$ such that $g_{v_1} \neq 0$;
\item[(B)] if $g_v \neq 0$ and $\wt(v) \neq \gamma$ then $\wt(v) \notin \gamma + \BQ_-$.
\end{itemize}
Let $i \in I$. By Lemma \ref{lem: coproduct estimation}, $x_i^+(u) (g_v \otimes v)$ is $x_i^+(u) g_v \otimes v$ plus a linear combination of vectors in $W \otimes v'$ where $v' \in \mathcal{B}_V$ satisfies $\wt(v') \in \wt(v) + \alpha_i + \BQ_+$. We get from assumption (B) that the component of $W_{\beta-\gamma+\alpha_i} \otimes V_{\gamma}$ in $x_i^+(u) g = 0$ is 
$$ \sum_{v\in \mathcal{B}_V: \wt(v) = \gamma} x_i^+(u) g_v \otimes v = 0. $$
Since the second tensor factors are linearly independent, for each $v$ in the summation, we have $x_i^+(u) g_v = 0$ for $i \in I$. Since $W$ is co-generated by the highest $\ell$-weight vector $\omega$, there exists $c_v \in \BC$ with $g_v = c_v \omega$. By assumption (A), $c_{v_1} \neq 0$ and so $\beta-\gamma = \wt(g_{v_1}) = \wt(\omega)$. 

Next we consider the component $W_{\wt(\omega)} \otimes V_{\gamma+\alpha_i}$ of in $x_i^+(u) g$. This comes from two parts by the coproduct estimation of Lemma \ref{lem: coproduct estimation} and assumption (B):
$$ 0 = \sum_{v\in \mathcal{B}_V: \wt(v) = \gamma} \xi_i(u) c_v \omega \otimes x_i^+(u) v  + \sum_{v'\in \mathcal{B}_V: \wt(v') = \gamma+\alpha_i} x_i^+(u) g_{v'} \otimes v'. $$
In the first summation, $\xi_i(u) \omega = \Bs_i(u)^{-1} \omega$. Multiply the above equality by $\Bs_i(u)$ and take the principal part. We obtain 
$$ 0 = \omega \otimes  x_i^+(u) \sum_{v\in \mathcal{B}_V: \wt(v) = \gamma}  c_v v  + \sum_{v'\in \mathcal{B}_V: \wt(v') = \gamma+\alpha_i} \langle \Bs_i(u) x_i^+(u) g_{v'} \rangle_+ \otimes v'. $$
For each $v'$ in the second summation, $\wt(g_{v'}) = \beta-\gamma-\alpha_i = \wt(\omega)-\alpha_i$. So $g_{v'}$ is a linear combination of the $x_{i,n}^- \omega$ for $n \in \BN$ and $\langle \Bs_i(u) x_i^+(u) g_{v'} \rangle_+ = 0$ by Eq.\eqref{equ: recurrent negative}. The vector $g':= \sum_{v \in \mathcal{B}_V: \wt(v) = \gamma} c_v v \in V_{\gamma}$ is annihilated by all the $x_i^+(u)$. Since $V$ is of co-highest $\ell$-weight and since $c_{v_1} \neq 0$, we obtain $0 \neq g' \in V_{\lambda}$ and so $\gamma = \lambda$. In particular, $V$ is weight graded by $\gamma + \BQ_-$. Assumption (B) forces $g_v = 0$ if $\wt(v) \neq \gamma$. So $g =  \omega \otimes g' \in \omega \otimes V_{\lambda}$. 
\end{proof}

Part (i) of Theorem \ref{thm: cyclicity  cocyclicity} was known for $V \otimes L$ where $V$ is an irreducible $U_q(\hat{\Glie})$-module in category $\BGG$ of \cite[\S 4.3]{H4}  and $L$ is a tensor product of positive prefundamental modules over the Borel algebra \cite[Lemma 5.7]{Jimbo}. Part (iii) can be seen as an integral version of cyclicity results in the fusion constructions, over the field $\BC(z)$, of representations of current algebras \cite[Prop.1.1]{FL} and quantum affinizations \cite[Theorem 6.2]{H3}. In the non-shifted case the field $\BC(z)$ is necessary because cyclicity holds true only for generic spectral parameters; see \cite{AK,FM,VV,Kashiwara,C1} for $U_q(\hat{\Glie})$ and \cite{TG,Tan,GW} for $Y(\Glie)$.

\begin{cor} \label{cor: tensor product - pre}
Let $\Br, \Bs \in \CD$. The assignment $\omega_{\Br^{-1}} \otimes \omega_{\Bs^{-1}} \mapsto \omega_{\Br^{-1}\Bs^{-1}}$ extends uniquely to a module isomorphism 
$$L(\Br^{-1}) \otimes L(\Bs^{-1}) \cong L(\Br^{-1}\Bs^{-1}).$$
\end{cor}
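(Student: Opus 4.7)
The plan is to show that $L(\Br^{-1}) \otimes L(\Bs^{-1})$ is simultaneously of highest $\ell$-weight and of co-highest $\ell$-weight, with top $\ell$-weight $\Br^{-1}\Bs^{-1}$ in both cases. Once this is established, any proper nonzero submodule would have to contain the one-dimensional top weight space by co-cyclicity, but that space already generates the whole module by cyclicity, a contradiction. Hence the tensor product is irreducible and isomorphic to $L(\Br^{-1}\Bs^{-1})$, which together with Example \ref{example: tensor product highest weight} (providing the highest $\ell$-weight vector $\omega_{\Br^{-1}} \otimes \omega_{\Bs^{-1}}$ of $\ell$-weight $\Br^{-1}\Bs^{-1}$) gives both existence and uniqueness of the claimed morphism.

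For the first property, apply the final sentence of Theorem \ref{thm: cyclicity cocyclicity} to the highest $\ell$-weight module $V := L(\Br^{-1})$, with the theorem's one-dimensional factor taken to be $L(\Bs)$: this yields that $V \otimes L(\Bs^{-1}) = L(\Br^{-1}) \otimes L(\Bs^{-1})$ is of highest $\ell$-weight. For the second property, apply part (i) of the same theorem to the irreducible (hence co-cyclic) module $V := L(\Bs^{-1})$, with the theorem's one-dimensional factor now taken to be $L(\Br)$: the conclusion is that $L(\Br^{-1}) \otimes V = L(\Br^{-1}) \otimes L(\Bs^{-1})$ is of co-highest $\ell$-weight. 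In both cases the top weight space is spanned by $\omega_{\Br^{-1}} \otimes \omega_{\Bs^{-1}}$, again by Example \ref{example: tensor product highest weight}.

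There is no real obstacle here, as the result is essentially an immediate consequence of Theorem \ref{thm: cyclicity cocyclicity}; the only subtlety is the bookkeeping of the two halves of that theorem. The cyclicity clause places the one-dimensional (positive) factor on the left and the negative factor on the right, whereas part (i) places the negative factor on the left of the co-cyclic module. To land on the same tensor ordering $L(\Br^{-1}) \otimes L(\Bs^{-1})$ in both applications, one must swap the roles of $\Br$ and $\Bs$ between the two uses of the theorem, as indicated above.
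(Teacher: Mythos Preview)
Your proof is correct and follows the same approach as the paper: establish that $L(\Br^{-1}) \otimes L(\Bs^{-1})$ is simultaneously of highest $\ell$-weight and of co-highest $\ell$-weight via Theorem \ref{thm: cyclicity cocyclicity}, and conclude irreducibility. Your careful tracking of which role $\Br$ and $\Bs$ play in each invocation is exactly right, though the paper states this more tersely.
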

\begin{proof}
It suffices to prove the irreducibility of the tensor product $ L(\Br^{-1}) \otimes L(\Bs^{-1})$. By Theorem \ref{thm: cyclicity  cocyclicity}, the tensor product is at the same time of highest $\ell$-weight and of co-highest $\ell$-weight. So it must be irreducible. 
\end{proof}

For the Borel algebra and shifted quantum affine algebras, a tensor product of negative prefundamental modules is shown to be irreducible by realizing it as a limit of an inductive system of irreducible tensor products of KR modules over $U_q(\hat{\Glie})$; see \cite[Theorem 4.11]{FH}, \cite[Theorem 7.6]{HL} and \cite[Theorem 5.5]{H0}.
A similar limit procedure was carried out in \cite{Bittmann} with KR-modules replaced by finite-dimensional standard modules \cite[\S 13.2]{Nak}, resulting in modules outside of the category $\BGG$ for the Borel algebra \cite{HJ}.

\subsection{Weyl modules and standard modules} We introduce two families of highest $\ell$-weight modules in category $\BGG^{sh}$ based on the properties of tensor product modules in the previous subsection. Their definitions resemble those in the category of finite-dimensional modules over the quantum affine algebra $U_q(\hat{\Glie})$ \cite{CP3, Nak, VV}.

\begin{defi} \label{defi: Weyl standard modules}
For $\Br, \Bs \in \CD$, define the {\it standard module} $\CW(\Br, \Bs)$ to be the tensor product of irreducible modules $L(\Br) \otimes L(\Bs^{-1})$. Define the {\it Weyl module} $W(\Br,\Bs)$ to be the quotient of the Verma module $M( \Bs^{-1} \Br)$ by the relations 
$$ \langle \Bs_i(u) x_i^-(u) \rangle_+ \omega_{\Bs^{-1}\Br} = 0\quad \mathrm{for}\ i \in I. $$
\end{defi}
For $\Br = \Psi_{i_1,a_1} \Psi_{i_2,a_2} \cdots \Psi_{i_M,a_M}$ and $\Bs= \Psi_{j_1,b_1} \Psi_{j_2,b_2} \cdots \Psi_{j_N,b_N}$ we have the following factorization of the standard module (one need not care about non-associativity because each tensor product in the parentheses is irreducible)
$$ \CW(\Br,\Bs) \cong (L_{i_1,a_1}^+ \otimes L_{i_2,a_2}^+ \otimes \cdots \otimes L_{i_M,a_M}^+) \otimes (L_{j_1,b_1}^-\otimes L_{j_2,b_2}^- \otimes \cdots \otimes L_{j_N,b_N}^-). $$
This resembles the case of finite-dimensional standard modules over $U_q(\hat{\Glie})$ in \cite[Corollary 7.17]{VV} and \cite[Corollary 6.13]{Nak4}. It implies the following equation in $K_0(\BGG^{sh})$:
\begin{equation}  \label{rel: standard multiplicative}
 [\CW(\Br,\Bs) \otimes \CW(\Bm,\Bn)]  = [\CW(\Br\Bm, \Bs\Bn)]\quad \mathrm{for}\ \Br, \Bs, \Bm, \Bn \in \CD.
\end{equation}

Our definition of Weyl module is similar to the one  in the categories of finite-dimensional modules over $U_q(\hat{\Glie})$ \cite[\S 4]{CP3} and over the quantum affine superalgebra $U_q(\widehat{sl}(m,n))$  \cite[\S 4.1]{Z3}. The difference from \cite{CP3} is that apart from the highest $\ell$-weight $\Bs^{-1}\Br$ we have to introduce a new parameter $\Bs$. This is because in category $\BGG^{sh}$ and related category for the quantum affine superalgebra among the highest $\ell$-weight modules of a fixed highest $\ell$-weight there is no universal one.

\begin{rem}
For dominantly shifted Yangians of type A, there is another definition of a standard module in \cite[(7.1)]{BK} as an ordered tensor product of irreducible modules.
These modules are parametrized by their highest $\ell$-weight. We do not know whether they are particular cases of our standard modules.
\end{rem}

\begin{prop} \label{prop: Weyl vs standard}
Let $\Bm, \Br, \Bs \in \CD$.
\begin{itemize}
\item[(i)] There exists a unique surjective module morphism $W(\Br,\Bs) \longrightarrow \CW(\Br,\Bs)$ sending $\omega_{\Bs^{-1}\Br}$ to $\omega_{\Br} \otimes \omega_{\Bs^{-1}}$.
\item[(ii)] The map $\omega_{\Bm} \otimes \omega_{\Bs^{-1}\Br} \mapsto \omega_{\Bs^{-1}\Bm\Br}$ extends uniquely to a module isomorphism $$L(\Bm) \otimes W(\Br, \Bs) \longrightarrow W(\Bm\Br, \Bs).$$
\item[(iii)] Weyl modules are in category $\BGG^{sh}$. A highest $\ell$-weight module in category $\BGG^{sh}$ is necessarily a quotient of a Weyl module.
\end{itemize}
\end{prop}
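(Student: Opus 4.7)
For part~(i), I would start from Example~\ref{example: tensor product highest weight}, which identifies $\omega_\Br \otimes \omega_{\Bs^{-1}} \in \CW(\Br,\Bs)$ as a vector of highest $\ell$-weight $\Bs^{-1}\Br$, and invoke Theorem~\ref{thm: cyclicity cocyclicity} for cyclicity of $\CW(\Br,\Bs)$ at this vector. The key computation is to verify the Weyl-defining relations there: using $\iota_1^\Br(x_i^-(u)) = x_i^-(u)$ from~\eqref{equ: tensor product one-dim},
\[
\langle \Bs_i(u) x_i^-(u)\rangle_+(\omega_\Br \otimes \omega_{\Bs^{-1}}) = \omega_\Br \otimes \langle \Bs_i(u) x_i^-(u)\rangle_+ \omega_{\Bs^{-1}} = 0
\]
by Eq.~\eqref{equ: recurrent relation irreducible}, and the universal property of $W(\Br,\Bs)$ as a quotient of the Verma module then produces the surjective morphism.

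For part~(ii), my plan is to promote Theorem~\ref{thm: cyclicity cocyclicity}(ii), which already supplies a Verma isomorphism $L(\Bm) \otimes M(\Bs^{-1}\Br) \cong M(\Bs^{-1}\Bm\Br)$ sending $\omega_\Bm \otimes \omega_{\Bs^{-1}\Br} \mapsto \omega_{\Bs^{-1}\Bm\Br}$. Under $\iota_1^\Bm(x_i^-(u)) = x_i^-(u)$, the Weyl-defining vectors on the two sides correspond as $\omega_\Bm \otimes \langle\Bs_i(u)x_i^-(u)\rangle_+\omega_{\Bs^{-1}\Br} \leftrightarrow \langle\Bs_i(u)x_i^-(u)\rangle_+\omega_{\Bs^{-1}\Bm\Br}$. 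To conclude, I would verify that the submodules generated by these vectors are matched by the Verma isomorphism (checking that both generators are annihilated by $x_j^+(u)$, so each generates a $Y^<(\Glie)$-cyclic submodule that is transported correctly via~\eqref{equ: isomorphism}); the isomorphism then descends to $L(\Bm) \otimes W(\Br,\Bs) \cong W(\Bm\Br,\Bs)$.

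For the first half of part~(iii), I would adapt the closing argument of the proof of Theorem~\ref{thm: classification}. Since $W(\Br,\Bs)$ is top graded by $\varpi(\Bs^{-1}\Br) + \BQ_-$, condition (O2) is immediate. For (O1), the Weyl relation $\langle \Bs_i(u) x_i^-(u)\rangle_+\omega = 0$ yields an explicit linear recurrence of depth $\deg \Bs_i$ on the vectors $x_{i,n}^-\omega$, so $\mathrm{span}\{x_{i,n}^-\omega : n \in \BN\}$ has dimension at most $\deg \Bs_i$; iterated application of the Drinfeld relation~\eqref{rel: Drinfeld}, following the straightening argument of \cite[\S 5, Proof of (b)]{CP}, then propagates the bound to arbitrary PBW monomials in $Y^<(\Glie)$ and hence to every weight space. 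This propagation is the expected main obstacle of the proposition: the Weyl recurrences a priori only constrain products with a fixed root index $i$, and one must exploit the iterated commutator structure of~\eqref{rel: Drinfeld} to leverage them into bounds on mixed PBW monomials.

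For the second half of part~(iii), let $V$ be a highest $\ell$-weight module in $\BGG^{sh}$ with generator $\omega_V$ of highest $\ell$-weight $\Be$; by Theorem~\ref{thm: classification}, $\Be \in \CR$, so write $\Be_i(u) = P_i(u)/Q_i(u)$ with $P_i, Q_i$ monic and coprime. The plan is to apply Lemma~\ref{lem: rationality Drinfeld} to the finite-dimensional weight space $V_{\varpi(\Be) - \alpha_i}$ to obtain a monic $D_i \in \BC[u]$ with $\langle D_i(u) x_i^-(u)\rangle_+ \omega_V = 0$, then apply $x_{i,m}^+$ to this identity: combined with $x_{i,m}^+ x_i^-(u) - x_i^-(u) x_{i,m}^+ = \langle u^m \xi_i(u)\rangle_+$ and the associativity~\eqref{asso: truncation} of principal parts, this yields $\langle u^m D_i(u) P_i(u)/Q_i(u)\rangle_+ = 0$ for all $m \geq 0$, forcing $D_i P_i/Q_i \in \BC[u]$ and hence $Q_i \mid D_i$ by coprimality. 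Setting $\Bs := (D_i)_{i \in I}$ and $\Br := (D_i P_i/Q_i)_{i \in I}$ in $\CD$ gives $\Bs^{-1}\Br = \Be$ with the Weyl relations automatically satisfied in $V$, exhibiting $V$ as a quotient of $W(\Br,\Bs)$.
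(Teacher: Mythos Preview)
Your approach matches the paper's closely. Two remarks:

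In part~(ii), the implication ``annihilated by $x_j^+(u)$, so each generates a $Y^<(\Glie)$-cyclic submodule'' is incomplete: to conclude that the submodule generated by the span $V(\Br,\Bs)$ of the Weyl-defining vectors equals $Y^<(\Glie)V(\Br,\Bs)$ via triangular decomposition, you also need $V(\Br,\Bs)$ to be stable under the Cartan generators $\xi_{j,p}$. The paper supplies this by an explicit induction on $p$, using the commutation relation obtained from~\eqref{rel: Cartan-Drinfeld} with $\langle\Bs_i(u)x_i^-(u)\rangle_+$ in place of $x_{i,n}^-$. Without this, you only get one inclusion between the two kernels under the Verma isomorphism.

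In the second half of part~(iii), the paper does not invoke Lemma~\ref{lem: rationality Drinfeld} but instead obtains a single linear dependence $\sum_k c_k x_{i,k}^-\omega_V=0$ from finite-dimensionality of the weight space, and then propagates it to the full recurrence $\langle Q_i(u)x_i^-(u)\rangle_+\omega_V=0$ by repeatedly applying $\xi_{i,-\langle\mu,\alpha_i\rangle+1}-\tfrac12\xi_{i,-\langle\mu,\alpha_i\rangle}^2$, whose commutator with $x_{i,m}^-$ is $-2d_i x_{i,m+1}^-$. Your route via rationality is slightly cleaner and yields the same conclusion; the paper's route is more self-contained. Either way the divisibility $Q_i\mid D_i$ step is the same.
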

\begin{proof}
{\bf Part (i).} By Theorem \ref{thm: cyclicity  cocyclicity} the module $\CW(\Br,\Bs)$ is of highest $\ell$-weight $\Bs^{-1}\Br$. Combining Eq.\eqref{equ: recurrent relation irreducible} in $L(\Bs^{-1})$ with Eq.\eqref{equ: tensor product one-dim}, we get for $j \in I$:
 $$\langle \Bs_j(u) x_j^-(u)\rangle_+ (\omega_{\Br} \otimes \omega_{\Bs^{-1}}) = \omega_{\Br} \otimes \langle \Bs_j(u) x_j^-(u) \rangle_+ \omega_{\Bs^{-1}} = 0.$$
All the defining relations of the Weyl module $W(\Br,\Bs)$ are realized in $\CW(\Br,\Bs)$.

\medskip 
 
\noindent {\bf Part (ii).} Let $\mu = \sum_{j\in I} k_j \varpi_j^{\vee}$ be the coweight of $\Bs^{-1}\Br$. Set $\tilde{x}_i^-(u) := \langle \Bs_i(u) x_i^-(u)\rangle_+$. 
 Let $V(\Br,\Bs)$ be the subspace of the Verma module $M(\Bs^{-1}\Br)$ spanned by the coefficients of $\tilde{x}_i^-(u) \omega_{\Bs^{-1}\Br}$ for $i \in I$, and let $K(\Br,\Bs)$ be the submodule generated by this subspace. Then $W(\Br,\Bs) = M(\Bs^{-1}\Br)/K(\Br,\Bs)$. For $(j,m) \in I \times \BN$, as in the proof of Theorem \ref{thm: classification} we have $x_{j,m}^+ V(\Br,\Bs) = \{0\}$. Next we prove by induction on $p \geq -k_j -1$ that $\xi_{j,p} V(\Br,\Bs) \subset V(\Br,\Bs)$. The initial case is trivial because $\xi_{j,-k_j-1} = 1$. Applying the following relation to the highest $\ell$-weight vector $\omega_{\Bs^{-1}\Br}$, which is a common eigenvector of the $\xi_{j,q}$ for $q \in \BZ$, 
\begin{align*}
\xi_{j,p+1} \tilde{x}_i^-(u) &= \tilde{x}_i^-(u) \xi_{j,p+1} + \xi_{j,p}\langle u  \tilde{x}_i^-(u) \rangle_+ \\
&\quad - \langle u \tilde{x}_i^-(u)\rangle_+ \xi_{j,p} - d_{ij} \xi_{j,p} \tilde{x}_i^-(u) - d_{ij} \tilde{x}_i^-(u) \xi_{j,p},
\end{align*}
we derive the case of $p+1$ from the case of $p$.
From the triangular decomposition of Theorem \ref{thm: triangular decomposition} we get $K(\Br,\Bs) = Y^<(\Glie) V(\Br,\Bs)$. 

Theorem \ref{thm: cyclicity  cocyclicity} (ii) affords a module isomorphism $F: M(\Bs^{-1}\Bm\Br) \longrightarrow L(\Bm) \otimes M(\Bs^{-1}\Br)$ which sends $x \omega_{\Bs^{-1}\Bm\Br}$ to $\omega_{\Bm} \otimes x \omega_{\Bs^{-1}\Br}$ for $x \in Y^<(\Glie)$. In particular $F$ maps the subspace $V(\Bm\Br,\Bs)$ of  the Verma module $M(\Bs^{-1}\Bm\Br)$ onto $\omega_{\Bm} \otimes V(\Br,\Bs)$. Furthermore,
$$ F(K(\Bm\Br,\Bs)) = F(Y^<(\Glie) V(\Bm\Br,\Bs)) = \omega_{\Bm} \otimes Y^<(\Glie) V(\Br,\Bs) = \omega_{\Bm} \otimes K(\Br,\Bs). $$
This induces the desired module isomorphism $W(\Bm\Br,\Bs) \cong L(\Bm) \otimes W(\Br,\Bs)$.

\medskip

\noindent {\bf Part (iii).} Let $W$ be a Weyl module generated by a highest $\ell$-weight vector $\omega$, so that $\wt(W) \subset \wt(\omega) + \BQ_-$. By definition  $W_{\wt(\omega)-\alpha_i}$ is finite-dimensional for $i \in I$. One can copy \cite[\S 5, PROOF OF (b)]{CP} to show that all weight spaces of $W$ are finite-dimensional. Therefore $W$ is in category $\BGG^{sh}$.

Let $V$ be a module in category $\BGG^{sh}$ generated by a highest $\ell$-weight vector $v$ of $\ell$-weight $\Bn$. Fix $i \in I$. The $x_{i,m}^- v$ for $m \in \BN$ span a finite-dimensional weight space. We get a monic polynomial $Q_i(u) = \sum_{k=0}^n c_k u^k$ with $\sum_{k=0}^n c_k x_{i,k}^-  v = 0$. Applying $x_{i,m}^+$ for $m \in \BN$ to this equality we get that $Q_i(u) \Bn_i(u)$ is a monic polynomial of $u$. Apply repeatedly $\xi_{i,-k_i+1} - \frac{1}{2} \xi_{i,-k_i}^2$ to the equality. From
\begin{equation*}  
 [\xi_{i,-\langle \mu, \alpha_i \rangle+1} - \frac{1}{2} \xi_{i,-\langle \mu, \alpha_i \rangle}^2, x_{i,m}^-] = - 2d_i x_{i,m+1}^-,
\end{equation*}
we get $\sum_{k=0}^n c_k x_{i,k+m}^-v = 0$ for all $m \in \BN$, namely, $\langle Q_i(u) x_i^-(u) \rangle_+v = 0$. Let us set $\Bs := (Q_i(u))_{i\in I}$ and $\Br := \Bn \Bs$. Then $\Br, \Bs \in \CD$ and $V$ is a quotient of $W(\Br,\Bs)$.
\end{proof}

In the rest of this subsection we show that Weyl modules are standard modules when $\Glie$ is not of type $E_8$. Recall from Subsection \ref{ss: ordinary Yangian} the root vectors $x_{\gamma}^{\pm} \in \Glie_{\pm\gamma}$ and the PBW variables $x_{\gamma, n}^- \in Y(\Glie)^{\leq n}$  for $(\gamma,n) \in R \times \BN$. Identify the associated grading $\mathrm{gr}_{\BN} Y(\Glie)$ with $U(\Glie[t])$ via the isomorphism of \eqref{iso: filtration}. Then for $i \in I$ and $n \in \BN$,
$$ x_{i,n}^- + Y(\Glie)^{\leq n-1} = x_{\alpha_i,n}^- + Y(\Glie)^{\leq n-1} = x_{\alpha_i}^- \otimes t^n\ \text{and}\ d_i \alpha_i^{\vee} = \xi_{i,0}. $$
In general, $x_{\gamma,n}^- + Y(\Glie)^{\leq n-1}$ is proportional to $x_{\gamma}^- \otimes t^n \in \Glie[t]$ for $\gamma \in R$.

Let $\Bs \in \CD$. Consider the Weyl module $W(\Bs, \Bs)$ over $Y(\Glie)$. Its zero weight space $W(\Bs,\Bs)_0$ is spanned by a highest $\ell$-weight vector $\omega$. The $\BN$-filtration of $Y(\Glie)$ descends to the module $W(\Bs, \Bs)$ by setting $$W(\Bs, \Bs)^{\leq m} := Y(\Glie)^{\leq m} \omega \text{ for $m \in \BN$.}$$
The associated grading $\mathrm{gr}_{\BN} W(\Bs,\Bs)$ is then naturally an $\BN$-graded $U(\Glie[t])$-module.  This is referred to as the {\it classical limit} of $W(\Bs, \Bs)$, denoted by $\overline{W(\Bs,\Bs)}$. 

\begin{lem} \label{lem: classical limit}
Let $\Bs \in \CD$. As a module over $\Hlie \subset \Glie[t]$, the classical limit $\overline{W(\Bs,\Bs)}$ is semi-simple whose character is equal to $\chi(W(\Bs,\Bs))$. Moreover, the $\Glie[t]$-module $\overline{W(\Bs,\Bs)}$ is generated by $\omega$ and satisfies the relations
$$ (x_{\alpha_i}^+ \otimes t^n) \omega = 0 = (\alpha_i^{\vee} \otimes t^n) \omega = (x_{\alpha_i}^- \otimes t^{\langle \varpi^{\vee}(\Bs), \alpha_i \rangle}) \omega \quad \mathrm{for}\ (i,n) \in I \times \BN. $$
\end{lem}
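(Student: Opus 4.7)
The plan is as follows. First, I would exploit the crucial simplification that the highest $\ell$-weight of $W(\Bs,\Bs)$ is $\Bs^{-1}\Bs = \mathbf{1}$: hence $\omega$ lies in weight $0$, satisfies $\xi_i(u)\omega = \omega$, and in particular $\xi_{i,n}\omega = 0$ for every $n \in \BN$. Since the $\BN$-filtration on $Y(\Glie)$ respects the weight grading and $\omega$ is a weight vector, the induced filtration $W(\Bs,\Bs)^{\leq m} := Y(\Glie)^{\leq m}\omega$ is itself weight graded. Passing to the associated graded then immediately gives both the semisimplicity of the $\Hlie$-action on $\overline{W(\Bs,\Bs)}$ (with $\alpha_i^\vee = \tfrac{1}{d_i}\xi_{i,0} \in Y(\Glie)^{\leq 0}$ acting in degree $0$) and the character identity $\chi(\overline{W(\Bs,\Bs)}) = \chi(W(\Bs,\Bs))$. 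Generation of $\overline{W(\Bs,\Bs)}$ by $\overline{\omega}$ over $\mathrm{gr}_{\BN} Y(\Glie) \cong U(\Glie[t])$ is immediate from the construction.

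Next I would verify the three relations on $\overline{\omega}$. Using the isomorphism \eqref{iso: filtration} together with the Cartan relation \eqref{rel: Cartan} in the form $\xi_{i,n} = [x_{i,0}^+, x_{i,n}^-]$, the element $\alpha_i^\vee \otimes t^n \in \Glie[t]$ is identified with $\overline{\xi_{i,n}}$ sitting in degree $n$ of the associated graded. Consequently $(x_{\alpha_i}^+ \otimes t^n)\overline{\omega}$ is the class of $x_{i,n}^+\omega$, which vanishes because $\omega$ is of highest $\ell$-weight; and $(\alpha_i^\vee \otimes t^n)\overline{\omega}$ is the class of $\xi_{i,n}\omega$, which vanishes by the $\ell$-weight computation above (the case $n=0$ merely records that $\omega$ has weight $0$).

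For the last relation, I would write $\Bs_i(u) = \sum_{j=0}^{k_i} s_j u^{k_i - j}$ with $k_i := \langle \varpi^\vee(\Bs), \alpha_i\rangle = \deg \Bs_i(u)$ and $s_0 = 1$. Extracting the coefficient of $u^{-1}$ from the defining Weyl-module relation $\langle \Bs_i(u) x_i^-(u)\rangle_+\omega = 0$ yields
\[
x_{i,k_i}^-\omega \;=\; -\sum_{j=1}^{k_i} s_j\, x_{i, k_i - j}^-\omega \;\in\; Y(\Glie)^{\leq k_i - 1}\omega \;=\; W(\Bs,\Bs)^{\leq k_i - 1},
\]
so the image of $x_{i,k_i}^-\omega$ in degree $k_i$ of $\overline{W(\Bs,\Bs)}$ vanishes, i.e.\ $(x_{\alpha_i}^- \otimes t^{k_i})\overline{\omega} = 0$. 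I expect the main (if mild) obstacle to lie precisely in this filtration-degree bookkeeping: one must check that exactly the $p=0$ coefficient captures the leading degree $k_i$, while the coefficients at $p \geq 1$ only yield the weaker statements $x_{i, k_i+p}^-\omega \in W(\Bs,\Bs)^{\leq k_i + p - 1}$, which are in any case consequences of the $p = 0$ relation together with the annihilations already established.
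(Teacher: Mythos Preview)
Your proof is correct and follows essentially the same approach as the paper's: compatibility of the $\BN$-filtration with the weight grading gives the first part, and the key relation $(x_{\alpha_i}^-\otimes t^{k_i})\overline{\omega}=0$ is obtained by reading off the top coefficient of the Weyl-module relation $\langle \Bs_i(u)x_i^-(u)\rangle_+\omega=0$ to force $x_{i,k_i}^-\omega\in W(\Bs,\Bs)^{\leq k_i-1}$. One tiny imprecision: under \eqref{iso: filtration} the element $\alpha_i^\vee\otimes t^n$ is identified with $\tfrac{1}{d_i}\overline{\xi_{i,n}}$ rather than $\overline{\xi_{i,n}}$ itself, but this is harmless for the vanishing statement.
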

\begin{proof}
The first part comes from the compatibility of $\BN$-filtration and weight grading on $Y(\Glie)$. For the second part, fix $i \in I$ and set $N := \langle \varpi^{\vee}(\Bs), \alpha_i \rangle$. By definition $\Bs_i(u) = \sum_{k=0}^N c_k u^k \in \BC[u]$ with $c_N = 1$. In the Weyl module $W(\Bs,\Bs)$ we have 
$$ x_{i,N}^- \omega = - \sum_{k=0}^{N-1} c_k x_{i,k}^- \omega \in W(\Bs,\Bs)^{\leq N-1}. $$
In the associated grading, the left-hand side becomes $(x_{\alpha_i}^- \otimes t^N) \omega$ and is of degree $N$, while the right-hand side is of degree $N-1$. So both sides vanish and $(x_{\alpha_i}^- \otimes t^N) \omega = 0$ in the classical limit. The first two relations are proved in the same way.
\end{proof}

Let $\mathfrak{a}_{\Bs}$ be the Lie subalgebra of $\Glie[t]$ generated by the elements $x_{\alpha_i}^+ \otimes t^n,\ \alpha_i^{\vee} \otimes t^n$ and $x_{\alpha_i}^- \otimes t^{\langle \varpi^{\vee}(\Bs), \alpha_i \rangle}$ for $(i, n) \in I \times \BN$. Define the $U(\Glie[t])$-module
$$ P_{\Bs} := U(\Glie[t]) \otimes_{U(\mathfrak{a}_{\Bs})} \BC $$
where $\mathfrak{a}_{\Bs}$ acts on $\BC$ as zero. Lemma \ref{lem: classical limit} shows that $\overline{W(\Bs,\Bs)}$ is a quotient of $P_{\Bs}$.
\begin{lem} \label{lem: character classical limit}
The action of $\Hlie \subset \Glie[t]$ on $P_{\Bs}$ is semi-simple with character
$$ \chi(P_{\Bs}) = \prod_{\gamma \in R} \left( \frac{1}{1-e^{-\gamma}}\right)^{\langle\varpi^{\vee}(\Bs), \gamma\rangle}. $$
\end{lem}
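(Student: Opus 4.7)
The plan is to identify $\mathfrak{a}_{\Bs}$ explicitly as a Lie subalgebra of $\Glie[t]$, exhibit a natural $\Hlie$-stable vector-space complement, and apply the PBW theorem to read off the character.

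Write $N_\gamma := \langle\varpi^\vee(\Bs),\gamma\rangle$ for $\gamma \in R$; these are non-negative integers since $\varpi^\vee(\Bs)$ is a sum of fundamental coweights. I would claim that
$$\mathfrak{a}_{\Bs} \;=\; (\Hlie \otimes \BC[t]) \;\oplus\; \bigoplus_{\gamma \in R}\bigl(\Glie_\gamma \otimes \BC[t]\bigr) \;\oplus\; \bigoplus_{\gamma \in R}\bigl(\Glie_{-\gamma} \otimes t^{N_\gamma}\BC[t]\bigr).$$
Denote the right-hand side by $\mathfrak{a}'_{\Bs}$. Closure of $\mathfrak{a}'_{\Bs}$ under the bracket reduces to a handful of cases, the decisive ones being $[\Glie_{-\gamma} \otimes t^n,\,\Glie_{-\delta} \otimes t^m]$ when $\gamma+\delta \in R$ (using the additivity $N_{\gamma+\delta} = N_\gamma + N_\delta$) and $[\Glie_\delta \otimes t^m,\,\Glie_{-\gamma} \otimes t^n]$ when $\gamma-\delta \in R$ (using $N_\gamma \geq N_{\gamma-\delta}$, which again follows from dominance of $\varpi^\vee(\Bs)$). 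For the inclusion $\mathfrak{a}_{\Bs} \supseteq \mathfrak{a}'_{\Bs}$, each $x_\gamma^- \otimes t^{N_\gamma}$ is produced as a nonzero iterated commutator of the generators $x_{\alpha_{i_k}}^- \otimes t^{N_{i_k}}$, reading off the $i_k$'s from any bracketing that expresses $x_\gamma^-$ as an iterated commutator of simple root vectors; then $x_\gamma^- \otimes t^n$ for $n > N_\gamma$ is obtained by bracketing with $\alpha_j^\vee \otimes t^{n-N_\gamma}$ for any $j$ with $\langle\alpha_j^\vee,\gamma\rangle \neq 0$.

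With this identification, the $\Hlie$-stable complement
$$\mathfrak{b}_{\Bs} \;:=\; \bigoplus_{\gamma \in R}\bigoplus_{n=0}^{N_\gamma - 1}\BC\,(x_\gamma^- \otimes t^n)$$
satisfies $\Glie[t] = \mathfrak{a}_{\Bs} \oplus \mathfrak{b}_{\Bs}$. By the PBW theorem, after totally ordering a basis of $\Glie[t]$ so that basis vectors of $\mathfrak{b}_{\Bs}$ precede those of $\mathfrak{a}_{\Bs}$, the ordered monomials involving only $\mathfrak{b}_{\Bs}$-basis vectors descend to a vector-space basis of $P_{\Bs} = U(\Glie[t]) \otimes_{U(\mathfrak{a}_{\Bs})} \BC$. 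This basis consists of $\Hlie$-weight vectors, so $P_{\Bs}$ is semisimple as an $\Hlie$-module with the same character as the symmetric algebra $S(\mathfrak{b}_{\Bs})$: each basis vector $x_\gamma^- \otimes t^n$ contributes a factor $(1-e^{-\gamma})^{-1}$, and there are $N_\gamma$ choices of $n$ for each $\gamma \in R$, yielding the announced product formula.

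The main obstacle is the equality $\mathfrak{a}_{\Bs} = \mathfrak{a}'_{\Bs}$, and more specifically the inclusion $\mathfrak{a}_{\Bs} \supseteq \mathfrak{a}'_{\Bs}$: one must verify that the iterated-commutator construction of $x_\gamma^-$ from simple root vectors can be carried out so that the accumulated powers of $t$ sum to exactly $N_\gamma$ and so that the resulting scalar is nonzero. This is essentially standard for simple $\Glie$, but the bookkeeping with $t$-powers warrants care.
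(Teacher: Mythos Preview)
Your proposal is correct and follows essentially the same approach as the paper: identify $\mathfrak{a}_{\Bs}$ explicitly with the subspace spanned by $x_\gamma^+\otimes t^n$, $\alpha_i^\vee\otimes t^n$, and $x_\gamma^-\otimes t^m$ for $m\ge N_\gamma$, take the obvious $\Hlie$-stable complement, and apply PBW to read off the character as that of a symmetric algebra. The paper checks closure only for the bracket $[x_\delta^+\otimes t^n,\,x_\gamma^-\otimes t^m]$ (the other cases being deemed trivial), and for the inclusion $\mathfrak{a}'_{\Bs}\subset\mathfrak{a}_{\Bs}$ it uses exactly your iterated-commutator argument; your worry about the nonvanishing of the scalar is unfounded, since the same commutator that produces $x_\gamma^-$ in $\Glie$ produces $x_\gamma^-\otimes t^{N_\gamma}$ in $\Glie[t]$ with the identical nonzero coefficient.
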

\begin{proof}
We claim that the subalgebra $\mathfrak{a}_{\Bs}$ of $\Glie[t]$ is spanned by 
$$\mathrm{(B)}:\quad \quad  x_{\gamma}^+ \otimes t^n,\quad \alpha_i^{\vee} \otimes t^n, \quad x_{\gamma}^- \otimes t^m $$
where $(i,\gamma, m, n) \in I \times R \times \BN^2$ and $m \geq \langle\varpi^{\vee}(\Bs), \gamma\rangle$. Assume the claim. Take $V$ to be the subspace of $\Glie[t]$ with basis $x_{\gamma}^- \otimes t^k$ where $(\gamma, k) \in R \times \BN$ and $k < \langle\varpi^{\vee}(\Bs), \gamma\rangle$. Then $\Glie[t] = \mathfrak{a}_{\Bs} \oplus V$ is a direct sum of semsimple $\Hlie$-modules. By the PBW theorem for $U(\Glie[t])$, the $\Hlie$-module $P_{\Bs}$ is isomorphic to the symmetric algebra $\mathrm{Sym}(V)$ whose $\Hlie$-module structure is induced from that of $V$. Each basis vector $x_{\gamma}^- \otimes t^k$ of $V$ is of weight $-\gamma$ and gives rise to a factor $\frac{1}{1-e^{-\gamma}}$ in the character $\chi(\mathrm{Sym} (V)) = \chi(P_{\Bs})$. Taking their products gives the desired product character formula.

First we show that each vector in (B) belongs to $\mathfrak{a}_{\Bs}$. This is clear for the first two families of vectors because $x_{\gamma}^+ \otimes t^n$ is proportional to a commutator of the $x_{\alpha_i}^+ \otimes t^k$ for $(i,k) \in I \times \BN$. For the third family, one may assume $m = \langle\varpi^{\vee}(\Bs), \gamma\rangle$, as the other cases can be deduced from adjoint actions of the $\alpha_i^{\vee} \otimes t$. Write $\gamma = \sum_{i\in I} m_i \alpha_i$. Then $x_{\gamma}^-$ is proportional to a commutator of the $x_{\alpha_i}^-$ where each $x_{\alpha_i}^-$ appears exactly $m_i$ times. Replacing in the commutator formula of $x_{\gamma}^-$ each $x_{\alpha_i}^-$ with $x_{\alpha_i}^- \otimes t^{\langle \varpi^{\vee}(\Bs), \alpha_i \rangle}$, we obtain $x_{\gamma}^- \otimes t^m$ in the Lie subalgebra $\mathfrak{a}_{\Bs}$.

It remains to show that the subspace of $\Glie[t]$ spanned by (B) is a Lie subalgebra. The only non-trivial case is to check that the following vector belongs to this subspace for $(\delta, \gamma, n, m) \in R^2 \times \BN^2$ with $m \geq \langle\varpi^{\vee}(\Bs), \gamma\rangle$:
$$[x_{\delta}^+ \otimes t^n, x_{\gamma}^- \otimes t^m] = [x_{\delta}^+, x_{\gamma}^-] \otimes t^{m+n}.$$
If $\gamma-\delta \notin R$, then $[x_{\delta}^+, x_{\gamma}^-]$ is spanned by the $x_{\beta}^+$ and $\alpha_i^{\vee}$ for $(i,\beta) \in I \times R$, so we get the first two families of vectors in (B). If $\gamma -\delta \in R$, then $[x_{\delta}^+, x_{\gamma}^-] \in \BC x_{\gamma-\delta}^-$ and
$$ m+n \geq m \geq \langle\varpi^{\vee}(\Bs), \gamma\rangle > \langle\varpi^{\vee}(\Bs), \gamma-\delta\rangle. $$
So the commutator does belong to the third family of vectors in (B). 
 \end{proof}

\begin{theorem}  \label{thm: Weyl standard}
Assume that $\Glie$ is not of type $E_8$. Then for $\Br, \Bs \in \CD$, the surjective morphism $W(\Br,\Bs) \longrightarrow \CW(\Br,\Bs)$ of Proposition \ref{prop: Weyl vs standard} (i) is an isomorphism. Furthermore, the ordered monomials in the $x_{\gamma,n}^-$ for $(\gamma,n) \in R \times \BN$ and $n < \langle \varpi^{\vee}(\Bs),\gamma\rangle$ applied to $\omega_{\Bs^{-1}\Br}$ form a basis of the Weyl module $W(\Br,\Bs)$.
\end{theorem}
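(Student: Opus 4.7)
The plan is a reduction to $\Br=1$ followed by a character comparison that combines Lee's product formula with the classical-limit argument of Lemmas~\ref{lem: classical limit}--\ref{lem: character classical limit}.

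By Proposition~\ref{prop: Weyl vs standard}~(ii), applied there with $\Bm=\Br$ and $\Br=1$, one has $L(\Br)\otimes W(1,\Bs)\cong W(\Br,\Bs)$, while Corollary~\ref{cor: tensor product - pre} together with Definition~\ref{defi: Weyl standard modules} gives $L(\Br)\otimes L(\Bs^{-1})=\CW(\Br,\Bs)$. Since $L(\Br)$ is one-dimensional, the surjection $W(\Br,\Bs)\twoheadrightarrow\CW(\Br,\Bs)$ is an isomorphism if and only if $W(1,\Bs)\twoheadrightarrow L(\Bs^{-1})$ is, and a PBW basis of the latter transports through this tensor factor to the former. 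It therefore suffices to treat $\Br=1$.

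Writing $\Bs=\prod_{k=1}^N\Psi_{i_k,a_k}$, Corollary~\ref{cor: tensor product - pre} gives $L(\Bs^{-1})\cong L_{i_1,a_1}^-\otimes\cdots\otimes L_{i_N,a_N}^-$. Multiplying the characters from Theorem~\ref{thm: character formula}---the one place the assumption $\Glie\neq E_8$ enters---and using additivity of $\varpi$ and $\varpi^\vee$ on $\CD$, one gets
\begin{equation*}
\chi(L(\Bs^{-1})) = e^{-\varpi(\Bs)}\prod_{\gamma\in R}\bigl(1-e^{-\gamma}\bigr)^{-\langle\varpi^\vee(\Bs),\gamma\rangle}.
\end{equation*}
For the matching upper bound on $\chi(W(1,\Bs))$, I would apply Proposition~\ref{prop: Weyl vs standard}~(ii) again, this time with $\Bm=\Bs$, to write $W(\Bs,\Bs)\cong L(\Bs)\otimes W(1,\Bs)$, whence $\chi(W(\Bs,\Bs))=e^{\varpi(\Bs)}\chi(W(1,\Bs))$. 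Lemma~\ref{lem: classical limit} realises the classical limit $\overline{W(\Bs,\Bs)}$ as a graded $\Glie[t]$-quotient of $P_\Bs$, and Lemma~\ref{lem: character classical limit} identifies $\chi(P_\Bs)$ with the product above. Since the $\BN$-filtration is compatible with the weight grading, $\chi(\overline{W(\Bs,\Bs)})=\chi(W(\Bs,\Bs))$, and combining yields the coefficient-wise inequality
\begin{equation*}
\chi(W(1,\Bs)) \;\leq\; e^{-\varpi(\Bs)}\chi(P_\Bs) \;=\; \chi(L(\Bs^{-1})).
\end{equation*}
The reverse inequality is immediate from the surjection of Proposition~\ref{prop: Weyl vs standard}~(i), so both characters coincide and, each weight space being finite-dimensional, the surjection $W(1,\Bs)\twoheadrightarrow L(\Bs^{-1})$ is an isomorphism.

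For the basis assertion, the character equality forces the surjection $P_\Bs\twoheadrightarrow\overline{W(\Bs,\Bs)}$ to be an isomorphism of graded $\Glie[t]$-modules. The proof of Lemma~\ref{lem: character classical limit} exhibits a direct-sum decomposition $\Glie[t]=\mathfrak{a}_\Bs\oplus V$ with $V$ spanned by $x_\gamma^-\otimes t^n$ for $(\gamma,n)\in R\times\BN$ and $n<\langle\varpi^\vee(\Bs),\gamma\rangle$, so by PBW for $U(\Glie[t])$ the ordered monomials in these vectors, applied to the cyclic generator, form a basis of $P_\Bs$. Via the identification~\eqref{iso: filtration} these classes are precisely the symbols of the PBW variables $x_{\gamma,n}^-$ from Remark~\ref{rem: PBW}, so the standard filtered-to-graded argument upgrades this to a basis of $W(\Bs,\Bs)$: the corresponding ordered monomials in the $x_{\gamma,n}^-$ applied to $\omega_{\Bs^{-1}\Bs}$ span because they do modulo lower filtered pieces, and are linearly independent because their leading symbols are. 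Tensoring with the one-dimensional $L(\Br)$ as in the first paragraph transports this basis to $W(\Br,\Bs)$. The chief obstacle is the reliance on Theorem~\ref{thm: character formula}, which is the sole source of the $E_8$ restriction; the remainder is a routine filtration argument once both characters are in hand.
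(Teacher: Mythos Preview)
Your argument is correct and follows essentially the same route as the paper: both bound $\chi(W(\Bs,\Bs))$ above by $\chi(P_{\Bs})$ via the classical limit (Lemmas~\ref{lem: classical limit}--\ref{lem: character classical limit}), match this with $\chi(\CW(\Bs,\Bs))$ via Lee's product formula (Theorem~\ref{thm: character formula}), and then deduce the PBW basis from the resulting isomorphism $P_{\Bs}\cong\overline{W(\Bs,\Bs)}$ by a filtered-to-graded lift. The only organizational difference is that you reduce to $\Br=1$ at the outset and compare ordinary characters, whereas the paper keeps a general $\Br$ and phrases the comparison in terms of normalized $q$-characters; this is cosmetic. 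One small wording point: in your final step you obtain the PBW basis for $W(\Bs,\Bs)$ and then say you transport it ``as in the first paragraph'', but the first paragraph set up $L(\Br)\otimes W(1,\Bs)\cong W(\Br,\Bs)$, so strictly speaking you pass through $W(1,\Bs)$ (using that $\iota_1^{\Bs}$ and $\iota_1^{\Br}$ both fix $x_i^-(u)$, hence the $Y^<(\Glie)$-action on all three Weyl modules is identified); the paper makes exactly this observation explicit.
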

\begin{proof}
By Proposition \ref{prop: Weyl vs standard} (i), $\chi(\CW(\Bs,\Bs))$ is bounded above by $\chi(W(\Bs,\Bs))$. 
Lemma \ref{lem: character classical limit} implies that $\chi(W(\Bs,\Bs)) = \chi(\overline{W(\Bs,\Bs)})$ is bounded above by $\chi(P_{\Bs})$. In view of Corollary \ref{cor: tensor product - pre} and Theorem \ref{thm: character formula}, if we write $\Bs = \Psi_{i_1,a_1} \Psi_{i_2,a_2} \cdots \Psi_{i_N,a_N}$, then 
$$ \chi(\CW(\Bs,\Bs)) = \prod_{k=1}^N \chi(\CW(\Psi_{i_k,a_k}, \Psi_{i_k,a_k})) = \prod_{k=1}^N \prod_{\gamma \in R}  \left( \frac{1}{1-e^{-\gamma}}\right)^{\langle\varpi_{i_k}^{\vee}, \gamma\rangle}  = \chi(P_{\Bs}). $$
So all the characters are equal and the quotient map $P_{\Bs} \longrightarrow \overline{W(\Bs,\Bs)}$ is bijective.  From the tensor product factorizations of Proposition \ref{prop: Weyl vs standard} (ii) we get 
$$\nqc(W(\Br,\Bs)) = \nqc(W(\Bs,\Bs)) = \nqc(\CW(\Bs,\Bs)) = \nqc(L(\Bs^{-1})) = \nqc(\CW(\Br,\Bs)). $$
This implies $W(\Br,\Bs) \cong \CW(\Br,\Bs)$.

For the second part, the case $\Br = \Bs$ follows from the PBW basis of the classical limit $\overline{W(\Bs,\Bs)} \cong P_{\Bs}$ obtained in the proof of Lemma \ref{lem: character classical limit}. The rest follows from the fact that the isomorphism $L(\Bm) \otimes W(\Bn,\Bs) \cong W(\Bm\Bn,\Bs)$ of Proposition \ref{prop: Weyl vs standard} (ii) for $\Bm, \Bn \in \CD$ identifies the actions of $Y^<(\Glie)$ on the Weyl modules $W(\Bn,\Bs)$ and $W(\Bm\Bn, \Bs)$.
\end{proof}

 \begin{rem}
 Our proof of Theorem \ref{thm: Weyl standard} in category $\BGG^{sh}$ is simpler than the finite-dimensional case \cite[Theorem 7.5]{CM}. The reason is that we take the quotient of $U(\Glie[t])$ by a left ideal generated by elements of the Lie algebra $\Glie[t]$. In the finite-dimensional case, depending on a dominant integral weight $\sum_{i\in I} k_i \varpi_i$, the left ideal of $U(\Glie[t])$ to define the classical limit is generated by \cite[\S 2]{CP3}:
 $$ x_{\alpha_i}^+ \otimes t^n,\quad \alpha_i^{\vee} \otimes t^n - \delta_{n0} k_i,\quad (x_{\alpha_i}^- \otimes 1)^{k_i + 1}  \quad \mathrm{for}\ (i,n) \in I \times \BN. $$
The third family of generators is not in $\Glie[t]$. This makes it highly nontrivial to find a basis for the quotient, even in type A \cite{CL}.  
 \end{rem}
 
\section{Properties of R-matrices}  \label{sec: one-dim R-matrix}
In this section we study R-matrices, which are module morphisms
$$V \otimes W \longrightarrow W \otimes V$$ where $V$ and $W$ are suitably chosen highest $\ell$-weight modules. 
 We establish first properties of the R-matrices: existence, uniqueness, factorization and polynomiality (Theorem \ref{thm: R-matrix}, Propositions \ref{prop: polynomiality R-} and \ref{prop: one-dim R+}). 
We also compute the eigenvalues of certain of these $R$-matrices in Proposition  \ref{prop: l-weight R-matrix+}.

\begin{defi}
A module in category $\BGG^{sh}$ is called {\it negative} if it is irreducible and its highest $\ell$-weight is of the form $\Bs^{-1}$ with $\Bs \in \CD$. 
\end{defi}

\begin{theorem} \label{thm: R-matrix}
Let $V$ be a $Y_{\nu}(\Glie)$-module generated by a highest $\ell$-weight vector $v_0$ and $W$ be a $Y_{\mu}(\Glie)$-module generated by a highest $\ell$-weight vector $\omega$. Then the assignment $v_0 \otimes \omega \mapsto \omega \otimes v_0$ extends uniquely to a $Y_{\mu+\nu}(\Glie)$-module morphism 
$$\check{R}_{V,W}: V \otimes W \longrightarrow W \otimes V$$
under one of the following conditions:
\begin{itemize}
\item[(i)] The module is irreducible, and $W$ is negative.
\item[(ii)] The module $V$ is one-dimensional, and $W$ is either a Verma module, or a highest $\ell$-weight irreducible module, or a Weyl module.
\end{itemize}
\end{theorem}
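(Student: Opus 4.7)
The plan is to show that in each case $V \otimes W$ is cyclically generated by $v_0 \otimes \omega$ (yielding uniqueness uniformly), and then to construct the morphism case by case, mostly via a sandwich between a cyclic quotient and a cocyclic sub. Uniqueness follows directly from earlier results. In case (i), $V$ is of highest $\ell$-weight, so by the last assertion of Theorem \ref{thm: cyclicity  cocyclicity}, $V \otimes W = V \otimes L(\Bs^{-1})$ is of highest $\ell$-weight with cyclic vector $v_0 \otimes \omega$. In case (ii) with $V = L(\Bs)$: if $W = M(\Be)$ then $V \otimes W \cong M(\Bs\Be)$ by Theorem \ref{thm: cyclicity  cocyclicity}(ii); if $W = L(\Be)$ the last assertion of Theorem \ref{thm: cyclicity  cocyclicity} again shows $V \otimes W$ is of highest $\ell$-weight; if $W = W(\Br, \Bm)$ then $V \otimes W \cong W(\Bs\Br, \Bm)$ by Proposition \ref{prop: Weyl vs standard}(ii). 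In all sub-cases $v_0 \otimes \omega$ is a cyclic generator, so any morphism $V \otimes W \to W \otimes V$ is determined by its image there.

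For existence, the sandwich argument handles case (i) and the irreducible sub-case of (ii). Here the irreducible factor ($V$ in case (i), $W$ in the sub-case of (ii)) is also of co-highest $\ell$-weight, so Theorem \ref{thm: cyclicity  cocyclicity}(i), applied with the appropriate tensor positions, shows that $W \otimes V$ is of co-highest $\ell$-weight; its top weight space is one-dimensional and spanned by $\omega \otimes v_0$. Denoting by $\Be, \Be'$ the highest $\ell$-weights of $V, W$, the submodule of $W \otimes V$ generated by $\omega \otimes v_0$ is therefore irreducible and isomorphic to $L(\Be \Be')$: any nonzero submodule it contains must, by co-cyclicity, contain $\omega \otimes v_0$ and hence the whole submodule. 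Composing the surjection $V \otimes W \twoheadrightarrow L(\Be \Be')$ (from cyclicity) with the inclusion $L(\Be \Be') \hookrightarrow W \otimes V$ produces a morphism sending $v_0 \otimes \omega$ to a nonzero scalar multiple of $\omega \otimes v_0$; rescaling yields $\check{R}_{V,W}$.

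The two remaining sub-cases of (ii) are handled by direct verification. If $W = M(\Be)$, Lemma \ref{lem: coproduct estimation} shows that $\omega \otimes v_0 \in W \otimes V$ is a highest $\ell$-weight vector of highest $\ell$-weight $\Bs\Be$; the universal property of $M(\Bs\Be)$ produces a morphism $M(\Bs\Be) \to W \otimes V$ which we precompose with $V \otimes W \cong M(\Bs\Be)$. If $W = W(\Br, \Bm)$, universality similarly gives a morphism from the Verma module $M(\Bm^{-1}\Bs\Br)$, and to descend to the Weyl quotient $W(\Bs\Br, \Bm) \cong V \otimes W$ it suffices to check the defining relations $\langle \Bm_i(u) x_i^-(u)\rangle_+ (\omega_W \otimes v_0) = 0$ for $i \in I$. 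Using formula \eqref{equ: tensor product one-dim} for $\iota_2^{\Bs}$ and the associativity \eqref{asso: truncation} of the principal-part operator, the left-hand side rewrites as
$$\langle \Bm_i(u) \langle \Bs_i(u) x_i^-(u)\rangle_+ \omega_W\rangle_+ \otimes v_0 = \langle \Bs_i(u) \langle \Bm_i(u) x_i^-(u)\rangle_+ \omega_W\rangle_+ \otimes v_0,$$
which vanishes thanks to the Weyl relation $\langle \Bm_i(u) x_i^-(u)\rangle_+ \omega_W = 0$ holding in $W(\Br, \Bm)$. The only real technical point is this last principal-part manipulation; everything else is a clean combination of the cyclicity/co-cyclicity theorem of Section \ref{secquatre} with the tensor isomorphisms for one-dimensional modules and universality of Verma modules.
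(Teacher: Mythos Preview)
Your proof is correct and follows essentially the same approach as the paper. The paper packages your ``sandwich argument'' for case (i) and the irreducible sub-case of (ii) into a citation of Remark~\ref{rem: cyclicity to cocyclicty}, and for the Verma and Weyl sub-cases it performs the same universality-plus-relation check you give, with the identical principal-part manipulation via Eq.~\eqref{asso: truncation}.
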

\begin{proof}
{\bf Part (i).} By Theorem \ref{thm: cyclicity  cocyclicity},  $V \otimes W$ is of highest $\ell$-weight, and $W \otimes V$ is of co-highest $\ell$-weight. Their highest $\ell$-weights coincide by Example \ref{example: tensor product highest weight}. The existence and uniqueness of $\check{R}_{V,W}$ follows from Remark \ref{rem: cyclicity to cocyclicty}.

\medskip

\noindent {\bf Part (ii).} The same arguments as above work when $\dim V = 1$ and $W$ is irreducible.

Suppose $V = L(\Bs)$ and $W = M(\Be)$ with $\Bs \in \CD$ and $\Be \in \CL$. Then $V \otimes W$ is isomorphic to the Verma module $M(\Bs\Be)$ by Theorem \ref{thm: cyclicity  cocyclicity} (ii). Since $\omega \otimes v_0 \in W \otimes V$ is of highest $\ell$-weight $\Bs\Be$, the existence and uniqueness of $\check{R}_{V,W}$ follows.

Suppose $V = L(\Bm)$ and $W = W(\Br,\Bs)$ with $\Bm, \Br, \Bs \in \CD$. Then $V \otimes W$ is isomorphic to the Weyl module $W(\Bm\Br,\Bs)$ by Proposition \ref{prop: Weyl vs standard} (ii). The uniqueness and existence of $\check{R}_{V,W}$ follows if the highest $\ell$-weight vector $\omega \otimes v_0 \in W \otimes V$ satisfies the defining relations of the Weyl module $W(\Bm\Br,\Bs)$. 
We have $\langle \Bs_i(u) x_i^-(u) \omega \rangle_+ = 0$ in the Weyl module $W(\Br,\Bs) = W$ and $x_i^-(u) (\omega \otimes v_0) = \langle \Bm_i(u) x_i^-(u) \omega \rangle_+ \otimes v_0$ in the tensor product module $W \otimes L(\Bm) = W \otimes V$ by Eq.\eqref{equ: tensor product one-dim}. So
\begin{align*}
&\langle \Bs_i(u) x_i^-(u)  (\omega \otimes v_0) \rangle_+ = \langle \Bs_i(u) \langle \Bm_i(u) x_i^-(u) \omega \rangle_+ \rangle_+ \otimes v_0  \\
&= \langle \Bm_i(u) \langle \Bs_i(u) x_i^-(u) \omega \rangle_+ \rangle_+ \otimes v_0 = 0.
\end{align*}
The second equality used twice Eq.\eqref{asso: truncation}. 
\end{proof}
$\check{R}_{V,W}$ is independent of the choice of the highest $\ell$-weight vectors $v_0$ and $\omega$ because both of them span a one-dimensional weight space. It is normalized as in Theorem \ref{thm: normalized R fin dim}.

As a first application, we consider the situation of Theorem \ref{thm: R-matrix} (i). For $a \in \BC$ there exists a unique $Y_{\mu+\nu}(\Glie)$-module morphism 
$$ \check{R}_{V,W}(a): V(a) \otimes W \longrightarrow W \otimes V(a),\quad v_0 \otimes \omega \mapsto \omega \otimes v_0. $$

\begin{prop} \label{prop: polynomiality R-}
Let $V$ be a highest $\ell$-weight irreducible $Y_{\nu}(\Glie)$-module and let $W$ be a negative module. Then there exists a unique linear map 
$$ \check{R}_{V,W}(u): V \otimes W \longrightarrow W \otimes V \otimes \BC[u] $$
whose evaluation at $u = a$ is $\check{R}_{V,W}(a)$ for any $a\in \BC$.
\begin{itemize}
\item[(i)] Assume $\nu$ is antidominant. For $\Br, \Bs \in \CD$, we have 
\begin{equation*} 
\check{R}_{V, L(\Br^{-1}\Bs^{-1})} = (1 \otimes \check{R}_{V,L(\Bs^{-1})}) (\check{R}_{V,L(\Br^{-1})}  \otimes 1)
\end{equation*}
where we have identified $L(\Br^{-1}) \otimes L(\Bs^{-1})$ with $L(\Br^{-1}\Bs^{-1})$ as in Corollary \ref{cor: tensor product - pre}.
\item[(ii)] Let $U$ and $V$ be finite-dimensional irreducible $Y(\Glie)$-modules. Then we have the following {\it quantum Yang--Baxter equation} 
\begin{equation} \label{YBE} 
 \check{R}_{U,V}^{23}(u-v) \check{R}_{U,W}^{12}(u) \check{R}_{V,W}^{23}(v) = \check{R}_{V,W}^{12}(v) \check{R}_{U,W}^{23}(u) \check{R}_{U,V}^{12}(u-v).
\end{equation}
It is an equality of linear maps from $U \otimes V \otimes W$ to $W \otimes V \otimes U \otimes \BC(u,v)$, where $R^{23}  = 1 \otimes R,\  R^{12} = R \otimes 1$, and $\check{R}_{U,V}(u-v)$ is the normalized R-matrix of Theorem \ref{thm: normalized R fin dim}.
\end{itemize}
\end{prop}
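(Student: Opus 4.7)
I treat the three assertions in order, since the polynomiality statement is needed to phrase the other two as identities of polynomial maps.

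\emph{Polynomiality.} The central input is Theorem \ref{thm: cyclicity  cocyclicity}(iii): the $Y_{\nu+\mu}(\Glie)\otimes\BC[u]$-module $V\otimes\BC[u]\otimes W$, with $Y_{\nu+\mu}(\Glie)$ acting via $(\tau_u\otimes 1)\Delta_{\nu,\mu}$, is cyclic with generator $v_0\otimes 1\otimes\omega$; the cited statement is for Verma modules and descends to any quotient, in particular to the irreducible $V$. Given $x\in V\otimes W$ I write $x\otimes 1=\sum_i Z_i\cdot(v_0\otimes 1\otimes\omega)$ with $Z_i\in Y_{\nu+\mu}(\Glie)\otimes\BC[u]$ and \emph{define}
\[
\check{R}_{V,W}(u)(x)\ :=\ \sum_i Z_i\cdot(\omega\otimes v_0\otimes 1)\ \in\ W\otimes V\otimes\BC[u],
\]
viewing the right-hand side as the $Y_{\nu+\mu}(\Glie)\otimes\BC[u]$-module $W\otimes V(u)$. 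Evaluating the defining relation at $u=a$ (Remark \ref{rem: polynomiality Yangian}) gives $x=\sum_i Z_i(a)\cdot(v_0\otimes\omega)$ in $V(a)\otimes W$, and applying the $Y_{\nu+\mu}(\Glie)$-morphism $\check{R}_{V,W}(a)$ from Theorem \ref{thm: R-matrix}(i) yields $\check{R}_{V,W}(a)(x)=\sum_i Z_i(a)\cdot(\omega\otimes v_0)$. Thus the polynomial defined above evaluates to $\check{R}_{V,W}(a)(x)$ at every $a\in\BC$; Zariski density both rules out dependence on the chosen presentation and yields uniqueness of $\check{R}_{V,W}(u)$.

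\emph{Part (i).} The antidominance of $\nu$ enters exactly once, through the coassociativity diagram \eqref{rel: coproduct coass} with $\nu$ as middle coweight, to identify the two $Y_{\nu+\mu+\rho}(\Glie)$-module structures on $L(\Br^{-1})\otimes V\otimes L(\Bs^{-1})$ (write $-\mu,-\rho$ for the coweights of $L(\Br^{-1}),L(\Bs^{-1})$) arising by grouping the first or last two factors. The symmetric associativity questions for $V\otimes L(\Br^{-1})\otimes L(\Bs^{-1})$ and $L(\Br^{-1})\otimes L(\Bs^{-1})\otimes V$ invoke coassociativity with antidominant middle coweight $-\mu$ or $-\rho$, and hold automatically. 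Both sides of the identity are then polynomial-in-$u$ linear maps whose evaluations at $u=a$ are $Y_{\nu+\mu+\rho}(\Glie)$-morphisms $V(a)\otimes L(\Br^{-1}\Bs^{-1})\to L(\Br^{-1}\Bs^{-1})\otimes V(a)$ (using Corollary \ref{cor: tensor product - pre}) sending $v_0\otimes\omega_{\Br^{-1}}\otimes\omega_{\Bs^{-1}}$ to $\omega_{\Br^{-1}}\otimes\omega_{\Bs^{-1}}\otimes v_0$. The uniqueness clause of Theorem \ref{thm: R-matrix}(i) forces them to coincide at every $a$, and the polynomial uniqueness above upgrades this to an equality of polynomial maps.

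\emph{Part (ii).} Over $\BC(u,v)$, both sides of \eqref{YBE} are $Y(\Glie)\otimes\BC(u,v)$-module morphisms $U(u)\otimes V(v)\otimes W\to W\otimes V(v)\otimes U(u)$ obtained as compositions of three R-matrices in the two admissible orders; coassociativity for the ordinary coproduct $\Delta_{0,0}$ is unconditional. By Theorem \ref{thm: normalized R fin dim}(ii) there is a finite $X\subset\BC$ off which $U(u)\otimes V(v)$ is irreducible, and then Theorem \ref{thm: cyclicity  cocyclicity} shows $U(u)\otimes V(v)\otimes W$ is of highest $\ell$-weight, generated by $v_0^U\otimes v_0^V\otimes\omega$, whenever $u-v\notin X$. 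A direct check, tracking the action of each individual R-matrix on highest weight vectors, shows both sides of \eqref{YBE} send this generator to $\omega\otimes v_0^V\otimes v_0^U$, so the two morphisms coincide at every $(u,v)\in\BC^2$ with $u-v\notin X$; rationality in $(u,v)$ extends the equality to $\BC(u,v)$. The delicate step throughout is the coproduct bookkeeping: part (i) genuinely needs the antidominance hypothesis to validate the middle coassociativity, and part (ii) requires combining generic irreducibility with the unconditional cyclicity of Theorem \ref{thm: cyclicity  cocyclicity} to secure cyclicity of the triple tensor product. Once that is in place, each identity reduces to the uniqueness of R-matrices from Theorem \ref{thm: R-matrix}.
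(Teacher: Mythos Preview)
Your argument is correct and follows the same route as the paper: polynomiality via the $\BC[u]$-cyclicity of Theorem \ref{thm: cyclicity cocyclicity}(iii), and parts (i)--(ii) via uniqueness of the R-matrix from Theorem \ref{thm: R-matrix} once both sides are identified as module morphisms agreeing on the tensor product of highest $\ell$-weight vectors. One small imprecision: in part (ii) you justify associativity by saying ``coassociativity for the ordinary coproduct $\Delta_{0,0}$ is unconditional,'' but the composites in \eqref{YBE} pass through triple products such as $U(a)\otimes W\otimes V(b)$ whose middle coweight is $\mu$, not $0$, so you are also invoking \eqref{rel: coproduct coass} with middle coweight $\mu$; the correct blanket statement (as the paper puts it) is that all three tensor factors lie over antidominantly shifted Yangians, so every instance of \eqref{rel: coproduct coass} that arises has antidominant middle coweight.
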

\begin{proof} 
Suppose that the negative module $W$ is defined over the shifted Yangian $Y_{\mu}(\Glie)$ and fix highest $\ell$-weight vectors $\omega$ in $W$ and $v_0$ in $V$. We need to show that for any vectors $v \in V$ and $w \in W$, the function $a \mapsto \check{R}_{V,W}(a) (v\otimes w)$ is polynomial, in the sense that it is the evaluation at $z = a$ of polynomial of $z$ taking values in $W\otimes V$. 
By Remark \ref{rem: polynomiality Yangian}, the $Y_{\mu+\nu}(\Glie)$-module $V(a) \otimes W$ is the evaluation at $z=a$ of the $Y_{\mu+\nu}(\Glie) \otimes \BC[z]$-module $V \otimes \BC[z] \otimes W$. By Theorem \ref{thm: cyclicity  cocyclicity} (iii), there exists a polynomial $\sum_{s=0}^N X_s z^s$ with coefficients $X_ s \in Y_{\mu+\nu}(\Glie)$  such that in the module $V(a) \otimes W$:
$$ v \otimes w = \sum_{s=0}^N a^s X_s (v_0 \otimes \omega) \in V(a) \otimes W. $$ 
Applying $\check{R}_{V,W}(a)$ to the relation yields
$$\check{R}_{V,W}(a) (v\otimes w) = \sum_{s=0}^N a^s X_s (\omega \otimes v_0) \in W \otimes V(a).  $$
Conclude from the polynomial actions of the $X_s$ on $W \otimes V(a)$.

\medskip

\noindent {\bf Part (i).} Both sides are module morphisms because all the tensor factors $V, L(\Br^{-1})$ and $L(\Bs^{-1})$ are modules over antidominantly shifted Yangians and belong to the monoidal category $\BGG_-^{sh}$ of Remark \ref{rem: monoidal}. Both sides send $v_0 \otimes \omega_{\Br^{-1}} \otimes \omega_{\Bs^{-1}}$ to  $\omega_{\Br^{-1}} \otimes \omega_{\Bs^{-1}} \otimes v_0$. They have to be equal by uniqueness of R-matrix in Theorem \ref{thm: R-matrix}.

\medskip

\noindent{\bf Part (ii).}  Let $\omega_1, \omega_2$ and $\omega_3$ denote highest $\ell$-weight vectors of $U, V$ and $W$. By the rationality of $\check{R}_{U,V}(u)$ and polynomiality of $\check{R}_{U,W}(u)$ and $\check{R}_{V,W}(u)$,  it suffices to prove Eq.\eqref{YBE} evaluated at $u = a$ and $v = b$ for $a, b \in \BC$ satisfying the conditions of Theorem \ref{thm: normalized R fin dim} (ii)--(iii). Both sides are module morphisms from $U(a) \otimes V(b) \otimes W$ to $W \otimes V(b) \otimes U(a)$ since the tensor factors are modules over antidominantly shifted Yangians and belong to the monoidal category $\BGG_-^{sh}$ of Remark \ref{rem: monoidal}. The source module is generated by $\omega_1 \otimes \omega_2 \otimes \omega_3$ by Theorem \ref{thm: cyclicity  cocyclicity} and the irreducibility of $U(a) \otimes V(b)$. Both sides send the generator to $\omega_3 \otimes \omega_2 \otimes \omega_1$, so they must coincide.
\end{proof}
For $v$ and $w$ weight vectors in the modules $V$ and $W$ respectively, the vector $v \otimes w$ in the module $V(a) \otimes W$ is of weight $\wt(v)+\wt(w)-a\tilde{\nu}$ by Remark \ref{rem: spectral weight}; recall that $V$ is defined over $Y_{\nu}(\Glie)$. So $\check{R}_{V,W}(a)(v\otimes w)$ belongs to the weight space of the module $W\otimes V(a)$ of the same weight, which by Remark \ref{rem: spectral weight} coincides with the finite-dimensional weight space $(W\otimes V)_{\wt(v)+\wt(w)}$ of the module $W\otimes V$. It follows that $\check{R}_{V,W}(u)$ restricts to polynomials taking values in finite-dimensional vector spaces:
$$ \check{R}_{V,W}(u) \in \mathrm{Hom}((V\otimes W)_{\gamma}, (W\otimes V)_{\gamma}) \otimes \BC[u] \quad \mathrm{for}\ \gamma \in \wt(V\otimes W). $$

\begin{defi} \label{def: diagonal entries}
In the situation of Theorem \ref{thm: R-matrix} (i), the {\it highest diagonal entry} of $\check{R}_{V,W}(u)$ is the linear operator $s_{V,W}(u) \in \mathrm{Hom}(V, V \otimes \BC[u])$ such that for $v \in V$:
\begin{equation}  \label{matrix coefficient highest}
\check{R}_{V,W}(u) (v \otimes \omega) \equiv \omega \otimes s_{V,W}(u) v \ \mathrm{mod.}\ \sum_{\wt(\omega) \neq \gamma \in \wt(W)} W_{\gamma} \otimes V \otimes \BC[u].
\end{equation}
If we assume furthermore that $V$ is finite-dimensional with $v_-$ being a lowest $\ell$-weight vector, then the {\it lowest diagonal entry} of $\check{R}_{V,W}(u)$ is defined to be the linear operator $t_{V,W}(u) \in \mathrm{Hom}(W, W \otimes \BC[u])$ such that for $w \in W$:
\begin{equation}  \label{matrix coefficient lowest}
\check{R}_{V,W}(u) (v_- \otimes w) \equiv  t_{V,W}(u) w \otimes v_- \ \mathrm{mod.}\ \sum_{\wt(v_-) \neq \gamma \in \wt(V)} W \otimes  V_{\gamma} \otimes \BC[u].
\end{equation}
Define the polynomial $\lambda_{V,W}(u) \in \BC[u]$ to be the eigenvalue of $t_{V,W}(u)$ associated to $\omega$. 
\end{defi}
The above definition makes sense because both of the vectors $\omega$ and $v_-$ span a one-dimensional weight space. 

\begin{rem}  \label{rem: factorization}
Let $W$ be a highest $\ell$-weight module over the quantum affine algebra $U_q(\hat{\Glie})$ and $V$ be a lowest $\ell$-weight module over the Borel subalgebra. We specialize the universal R-matrix $\mathcal{R}(z)$ of $U_q(\hat{\Glie})$ to obtain $\check{R}_{V,W}(z): V \otimes W \longrightarrow (W \otimes V)[[z]]$. Then we attach the highest diagonal entry $s_{V,W}(z): V \longrightarrow V[[z]]$ to a highest $\ell$-weight vector $\omega$ of $W$ and the lowest diagonal entry $t_{V,W}(z): W \longrightarrow W[[z]]$ to a lowest $\ell$-weight vector $v_-$ of $V$ as above. The proofs of \cite[Lemma 2.6]{FM} and \cite[Prop.5.5]{FH} indicate that
$$ s_{V,W}(z) = \mathcal{R}^0(z) \mathcal{R}^{\infty}|_{V \otimes \omega},\quad t_{V,W}(z) = \mathcal{R}^0(z)\mathcal{R}^{\infty}|_{v_- \otimes W}.  $$
Here the abelian part $\mathcal{R}^0(z)$ and the Cartan part $\mathcal{R}^{\infty}$ of $\mathcal{R}(z)$ are written in terms of Drinfeld--Cartan generators. Therefore both diagonal entries are determined by the action of the Drinfeld--Cartan generators. Our computation of the diagonal entries in the Yangian situation (Proposition \ref{prop: highest diagonal entry} and Theorem \ref{thm: TQ pre R}) is guided by this principle, although we do not have a universal R-matrix for shifted Yangians.
\end{rem}

\begin{example} \label{example: polynomiality sl2}
Fix $i\in I$. Let $V = N_{i,0}$ and $v_0$ be the highest $\ell$-weight vector $e_1$ in Example \ref{ex: two-dim}. Write $\Bs_i(u) = \sum_{k=0}^m c_k u^k$ with $c_m =1$. Apply $\check{R}_{V,W}(a)$ to Eq.\eqref{rel: key} with $v' = e_1$ and $z = a$ and then use Example \ref{example: V W} to compute $x_i^-(u)(\omega \otimes e_1)$. We obtain
\begin{align*}
 \frac{d_i}{u-a} \check{R}_{V,W}(a)( e_2 \otimes \omega)	&= \check{R}_{V,W}(a) (x_i^-(u) e_1 \otimes \omega) = \langle \Bs_i(u) x_i^-(u) (\omega \otimes e_1) \rangle_+ \\
&=\langle \Bs_i(u) x_i^-(u) \omega \otimes \xi_i(u) e_1 \rangle_+ + \langle \Bs_i(u) \omega \otimes x_i^-(u) e_1 \rangle_+\\
&= \left\langle \Bs_i(u) \frac{u-a+d_i}{u-a} x_i^-(u)\right\rangle_+ \omega \otimes e_1 + \left\langle \Bs_i(u) \frac{d_i}{u-a} \right\rangle_+ \omega \otimes e_2, \\
\check{R}_{V,W}(a)( e_2 \otimes \omega) &= \Bs_i(a) \omega \otimes e_2 + \sum_{n=0}^{m-1} \sum_{k=n+1}^m c_k a^{k-n-1} x_{i,n}^-\omega \otimes e_1.
\end{align*}
Since $e_2$ is a lowest $\ell$-weight vector, $\lambda_{V,W}(u) = \Bs_i(u)$.
\end{example}

Next consider Theorem \ref{thm: R-matrix} (ii). Let $\Bs \in \CD$ denote the highest $\ell$-weight of $V = \BC v_0$. We have a unique linear operator $R_{\Bs}^W$ on $W$ such that  $R_{\Bs}^W(\omega) = \omega$ and 
 $$ \check{R}_{V, W}(v_0 \otimes w) = R_{\Bs}^W(w) \otimes v_0\quad \mathrm{for}\ w \in W. $$

\begin{prop}  \label{prop: one-dim R+}
Let $W$ be either a Verma module, or its irreducible quotient, or a Weyl module, of highest $\ell$-weight $\Be$.
\begin{itemize}
\item[(i)] We have $R_{\Br}^W \circ R_{\Bs}^W = R_{\Br\Bs}^W$ for $\Br, \Bs \in \CD$.  In particular, the $R_{\Bs}^W$ for $\Bs \in \CD$ form a commuting family of linear endomorphisms on $W$. 
\item[(ii)] For $i \in I$, there exists a unique linear map 
$$ R_i^W(u): W \longrightarrow W \otimes \BC[u] $$
whose evaluation at $u = a$, for $a \in \BC$, is $R_{\Psi_{i,a}}^W$. Furthermore, for $\beta \in \varpi(\Be) + \BQ_-$, restricted to the weight space $W_{\beta}$, the operator $R_i^W(-u)$ is an $\mathrm{End}(W_{\beta})$-valued monic polynomial of degree $ \langle \varpi_i^{\vee}, \varpi(\Be)-\beta\rangle$.
\end{itemize}
\end{prop}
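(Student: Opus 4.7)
The plan is to derive both parts from the explicit formulas \eqref{equ: tensor product one-dim} for the pullback algebra homomorphisms $\iota_1^{\Bm}, \iota_2^{\Bm}$ and from the uniqueness in Theorem \ref{thm: R-matrix}. Let $\mu_{\Bm}$ denote the coweight of $\Bm \in \CD$, $\mu_W$ that of $\Be$, and write $\pi$ for the action of $Y_{\mu_W}(\Glie)$ on $W$; the operator $R_{\Bm}^W$ is characterized by $R_{\Bm}^W(\omega)=\omega$ and the intertwining $R_{\Bm}^W\circ\pi(\iota_1^{\Bm}(y)) = \pi(\iota_2^{\Bm}(y))\circ R_{\Bm}^W$ for $y \in Y_{\mu_{\Bm}+\mu_W}(\Glie)$.

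For Part (i), a direct check on Drinfeld generators using \eqref{equ: tensor product one-dim} and \eqref{asso: truncation} yields, for $\Br,\Bs\in\CD$, the identities of algebra homomorphisms $Y_{\mu_{\Br}+\mu_{\Bs}+\mu_W}(\Glie) \to Y_{\mu_W}(\Glie)$
\begin{equation*}
\iota_1^{\Br\Bs} = \iota_1^{\Bs}\circ\iota_1^{\Br},\qquad \iota_2^{\Br\Bs} = \iota_2^{\Bs}\circ\iota_2^{\Br},\qquad \iota_1^{\Bs}\circ\iota_2^{\Br} = \iota_2^{\Br}\circ\iota_1^{\Bs}.
\end{equation*}
Applying the intertwining for $R_{\Br}^W$ with argument $\iota_1^{\Bs}(x)$ and then the intertwining for $R_{\Bs}^W$, one computes
\begin{equation*}
(R_{\Bs}^W R_{\Br}^W)\circ\pi(\iota_1^{\Br\Bs}(x)) = \pi(\iota_2^{\Br\Bs}(x))\circ(R_{\Bs}^W R_{\Br}^W),\qquad x\in Y_{\mu_{\Br}+\mu_{\Bs}+\mu_W}(\Glie).
\end{equation*}
Since $R_{\Bs}^W R_{\Br}^W$ fixes $\omega$, the uniqueness in Theorem \ref{thm: R-matrix} applied to $V=L(\Br\Bs)$ forces $R_{\Bs}^W R_{\Br}^W = R_{\Br\Bs}^W$. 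Commutativity then follows from $R_{\Br\Bs}^W = R_{\Bs\Br}^W$.

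For Part (ii), specializing \eqref{equ: tensor product one-dim} to $\Bs = \Psi_{i,a}$ and taking principal parts gives
\begin{equation*}
\iota_1^{\Psi_{i,a}}(x_{j,m}^-) = x_{j,m}^-,\qquad \iota_2^{\Psi_{i,a}}(x_{j,m}^-) = x_{j,m+\delta_{ij}}^- - a\,\delta_{ij}\,x_{j,m}^-,
\end{equation*}
so $\iota_1^{\Psi_{i,a}}$ restricts to the identity on $Y^<(\Glie)$ via \eqref{equ: isomorphism}, while $\iota_2^{\Psi_{i,a}}$ restricts to an algebra endomorphism of $Y^<(\Glie)$ depending polynomially on $a$. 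Evaluating the intertwining relation on $\omega$, for $y \in Y^<(\Glie)$ one obtains
\begin{equation*}
R_{\Psi_{i,a}}^W(y\omega) = \iota_2^{\Psi_{i,a}}(y)\,\omega.
\end{equation*}
Since $Y^<(\Glie)\omega = W$, the right-hand side defines the required polynomial family $R_i^W(u)$ specializing to $R_{\Psi_{i,a}}^W$ at $u=a$. For $w\in W_\beta$, expanding $w$ as a linear combination of monomials $x_{j_1,n_1}^-\cdots x_{j_l,n_l}^-\omega$, the formula shows that each $\iota_2^{\Psi_{i,a}}(x_{j_k,n_k}^-)$ has leading $a$-term $-a\,x_{i,n_k}^-$ when $j_k=i$ and is constant in $a$ otherwise; the total $a$-degree is therefore $N_i := \#\{k:j_k=i\} = \langle\varpi_i^\vee,\varpi(\Be)-\beta\rangle$ with leading coefficient $(-a)^{N_i}w$, so that $R_i^W(-u)|_{W_\beta}$ is monic of degree $N_i$.

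The main technical obstacle is the well-definedness of $R_i^W(u)$ when $Y^<(\Glie)$ does not act freely on $\omega$, i.e.\ when $W$ is a Weyl module or an irreducible quotient: one needs $y_1\omega = y_2\omega$ to imply the identity of polynomial families $\iota_2^{\Psi_{i,a}}(y_1)\omega = \iota_2^{\Psi_{i,a}}(y_2)\omega$ in $a$. This is guaranteed pointwise by the existence of $R_{\Psi_{i,a}}^W$ from Theorem \ref{thm: R-matrix}, and hence holds as an identity of polynomials. Uniqueness of $R_i^W(u)$ is immediate since $\BC$ is infinite.
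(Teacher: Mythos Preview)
Your proof is correct and follows essentially the same strategy as the paper's. Both arguments reduce to the recursion $R_{\Bs}^W x_i^-(u) = \langle \Bs_i(u) x_i^-(u)\rangle_+ R_{\Bs}^W$ together with the normalization $R_{\Bs}^W(\omega)=\omega$: you package this as the intertwining relation for the explicit homomorphisms $\iota_1^{\Bs},\iota_2^{\Bs}$, while the paper writes out the recursion directly on the $x_{j,n}^-$. For the degree/monicness claim in (ii) the computations are identical. The only organisational difference is in handling the non-Verma case: the paper first works with $M(\Be)$ and then descends via the quotient map $\pi:M(\Be)\to W$ using a commutative square, whereas you argue well-definedness pointwise (for each $a\in\BC$) via the existence of $R_{\Psi_{i,a}}^W$ from Theorem~\ref{thm: R-matrix} and then conclude by polynomiality. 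Both routes are valid and equally short.
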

\begin{proof}
Suppose $W = M(\Be)$ is a Verma module and write $\omega = \omega_{\Be}$. 

\medskip

\noindent {\bf Part (i).} Comparing the actions of the shifted Yangian on the two tensor products, we see that the linear map $R_{\Bs}^{W}: W \longrightarrow W$ for $\Bs \in \CD$ is uniquely characterized by the equations $R_{\Bs}^W (\omega) = \omega$ and $R_{\Bs}^{W} \circ \iota_1^{\Bs} = \iota_2^{\Bs} \circ R_{\Bs}^{W}$, namely:
\begin{gather}
R_{\Bs}^{W} (\omega) = \omega,\quad R_{\Bs}^{W} x_i^-(u) = \langle \Bs_i(u) x_i^-(u) \rangle_+ R_{\Bs}^{W}, \label{Baxter: recursion}  \\
R_{\Bs}^{W} \xi_i(u) = \xi_i(u) R_{\Bs}^{W},\quad R_{\Bs}^{W} \langle \Bs_i(u) x_i^+(u)\rangle_+ = x_i^+(u) R_{\Bs}^{W}. \label{Baxter: affine Cartan}
\end{gather}
Eq.\eqref{Baxter: recursion} already determines $R_{\Bs}^{W}$, because $W$ is obtained from $\omega$ by applying repeatedly the $x_{i,n}^-$. We need to check Eq.\eqref{Baxter: recursion} for $R_{\Br}^{W} \circ R_{\Bs}^{W}$ with $\Bs$ replaced by $\Br\Bs$. The first half is evident. For the second half, 
\begin{align*}
& R_{\Br}^{W} \circ R_{\Bs}^{W} x_i^-(u) = R_{\Br}^{W}  \langle \Bs_i(u) x_i^-(u) \rangle_+ R_{\Bs}^{W} \\
 =& \langle \Bs_i(u) \langle \Br_i(u) x_i^-(u) \rangle_+ \rangle_+ R_{\Br}^{W} \circ R_{\Bs}^{W} = \langle \Bs_i(u)\Br_i(u) x_i^-(u) \rangle_+ R_{\Br}^{W} \circ R_{\Bs}^{W}.
\end{align*}
The second half of part (i) follows from the commutativity of $\CD$. 

\medskip

\noindent {\bf Part (ii).} For $(j,n) \in I \times \BN$ we have by Eq.\eqref{Baxter: recursion}: 
$$ R_{\Psi_{i,-a}}^{W} x_{i,n}^- = (x_{i,n+1}^- + a x_{i,n}^-) R_{\Psi_{i,-a}}^W,\quad   R_{\Psi_{i,-a}}^{W} x_{j,n}^- = x_{j,n}^- R_{\Psi_{i,a}}^{W}\quad \mathrm{if}\ j \neq i.  $$
Write $\varpi(\Be) - \beta = \sum_{j \in I} h_j \alpha_j$ so that $h_j =  \langle \varpi_j^{\vee}, \varpi(\Be)-\beta\rangle$. By the triangular decomposition $W_{\beta}$ is spanned by the vectors of the form $ x_{j_1,n_1}^- \cdots x_{j_K,n_K}^- \omega$ where each $j \in I$ appears exactly $h_j$ times. Applying $R_{\Psi_{i,-a}}^{W}$ to such a vector gives
$$ R_{\Psi_{i,-a}}^{W}( x_{j_1,n_1}^- \cdots x_{j_K,n_K}^- \omega) =  \prod_{s=1}^K (\delta_{j_si} x_{i,n_s+1}^- + a^{\delta_{j_si}} x_{j_s,n_s}^-) \times \omega. $$
The right-hand side is the evaluation at $u=a$ of an $W_{\beta}$-valued polynomial whose dominant term is $u^{h_i} x_{j_1,n_1}^- \cdots x_{j_K,n_K}^- \omega$. We have therefore proved for each $v \in W_{\beta}$ the existence of $h_i$ vectors $v_0, v_1, \cdots, v_{h_i-1} \in W_{\beta}$ such that 
$$ R_{\Psi_{i,-a}}^W (v) = a^{h_i} v + a^{h_i-1} v_{h_i-1} + \cdots + a v_1 + v_0 \quad \mathrm{for}\ a \in \BC.  $$
By standard argument of Vandermonde determinant,  each $v \mapsto v_s$ defines a linear operator $Q_s$ on $W_{\beta}$ for $0\leq s < h_i$. The $\mathrm{End}(W_{\beta})$-valued monic polynomial 
$$u^{h_i} \mathrm{Id} + u^{h_i-1} Q_{h_i-1} + \cdots + u Q_1 + Q_0$$
defines the restriction of $R_i^W(-u)$ to $W_{\beta}$.

\medskip

Suppose $W$ is either the irreducible quotient of $M(\Be)$, or a Weyl module. Let $\pi: M(\Be) \longrightarrow W$ denote the quotient map. The following diagram
\begin{gather*} 
  \xymatrixcolsep{6pc} \xymatrix{
L(\Bs) \otimes M(\Be) \ar[d]^{1\otimes \pi} \ar[r]^{\check{R}_{L(\Bs), M(\Be)}} & M(\Be) \otimes L(\Bs) \ar[d]^{\pi \otimes 1}  \\
L(\Bs) \otimes W \ar[r]^{\check{R}_{L(\Bs), W}} & W\otimes L(\Bs)
 } 
\end{gather*}
is commutative because the top-left module is generated by $\omega_{\Bs} \otimes \omega_{\Be}$ which is sent to $\omega_{\Be} \otimes \omega_{\Bs}$ by both paths. So (i)--(ii) for the Verma module descend to $W$.
\end{proof}

In the rest of this section, we compute the eigenvalues of $R_i^W(u)$ for $W$ a Weyl module or its irreducible quotient. From Eq.\eqref{Baxter: affine Cartan} we get $R_i^W(u) \xi_{j,p} = \xi_{j,p} R_i^W(u)$. So $R_i^W(u)$ restricts to an $\mathrm{End} (W_{\Bf})$-valued polynomial for each $\ell$-weight $\Bf$ of $W$. 

\begin{prop}  \label{prop: l-weight R-matrix+}
Let  $(\mu, \Br) \in \BP^{\vee} \times \CL$ be a truncatable pair and $A_i(u)$ for $i \in I$ be the GKLO series in $Y_{\mu}(\Glie)$.  Let
$W$ be either a Weyl module or an irreducible module, generated by a vector $\omega$ of highest $\ell$-weight $\Be \in \CR_{\mu}$. Let $g_i(u) \in \BC((u^{-1}))^{\times}$ be the eigenvalue of $A_i(u)$ associated to $\omega$ and normalize $\overline{A}_i(u) := g_i(u)^{-1} A_i(u)$.
\begin{itemize}
\item[(i)] We have an additive difference equation
\begin{equation} \label{equ: difference}
R_i^W(u+d_i) = R_i^W(u) \overline{A}_i(u) \in \mathrm{Hom}(W, W \otimes \BC[u]). 
\end{equation}
\item[(ii)] Each $\ell$-weight $\Bf$ of $W$ has a unique decomposition 
$$\Bf = \Be \prod_{j\in I} \prod_{s=1}^{h_j} A_{j,a_{j,s}}^{-1}$$
where $h_j = \langle \varpi_j^{\vee}, \varpi(\Bf^{-1}\Be) \rangle$ and $a_{j,s} \in \BC$ for $1\leq s \leq h_j$. Furthermore, both of the operators $R_i^W(u)$ and $A_i(u)$ acting on $W_{\Bf}$ have a unique eigenvalue, respectively 
$$\prod_{s=1}^{h_i} (a_{i,s}-u)\quad \text{ and }\quad g_i(u) \prod_{s=1}^{h_i} \frac{u-a_{i,s}+d_i}{u-a_{i,s}}.$$
\end{itemize}
As a consequence, the normalized q-character of an arbitrary highest $\ell$-weight module in category $\BGG^{sh}$ is a power series in $\BN[[A_{i,a}^{-1}]]_{i\in I, a \in \BC}$ with leading term $1$.
\end{prop}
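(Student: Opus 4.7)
The plan is to prove (i), establish the $\ell$-weight decomposition in (ii), then the eigenvalue uniqueness and matching, and finally deduce the consequence. For (i), I would first reduce to the Verma module $M(\Be)$: since $W$ is a quotient of $M(\Be)$ in all three cases and $R_i^W(u)$ descends to quotients by the commutative-diagram argument in the proof of Proposition \ref{prop: one-dim R+}, it suffices to prove the identity on $M(\Be)$. Both $R_i^W(u+d_i)$ and $R_i^W(u)\overline{A}_i(u)$ fix $\omega$ (by the defining property of $R_i^W$ on highest weight vectors and the normalization $\overline{A}_i(u)\omega = g_i(u)^{-1} g_i(u) \omega = \omega$), and both commute with every $\xi_{j,p}$ by Eq.\eqref{Baxter: affine Cartan} and the fact that the coefficients of $A_i(u)$ lie in the commutative subalgebra $Y_\mu^=(\Glie)$. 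The identity thus reduces to matching commutation with each $x_{j,n}^-$, which is trivial for $j \neq i$. For $j = i$, Lemma \ref{lem: affine coweight} yields $A_i(u) x_{i,n}^- = (x_{i,n}^- + d_i \sum_{k\geq 0} x_{i,n+k}^- u^{-k-1}) A_i(u)$, and composing with $R_i^W(u) x_{i,n}^- = (x_{i,n+1}^- - u x_{i,n}^-) R_i^W(u)$, which one extracts from Eq.\eqref{Baxter: recursion} with $\Bs = \Psi_{i,u}$, the correction sum telescopes to $-d_i x_{i,n}^-$, producing exactly the shift-by-$d_i$ version $R_i^W(u)\overline{A}_i(u) x_{i,n}^- = (x_{i,n+1}^- - (u+d_i) x_{i,n}^-)R_i^W(u)\overline{A}_i(u)$. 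Since $M(\Be) = Y^<(\Glie)\omega$ by triangular decomposition, agreement on $\omega$ together with matching commutation relations forces agreement everywhere.

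For (ii), the analogue of Proposition \ref{prop: generalized simple root} for shifted Yangians holds with an identical proof, the Cartan--Drinfeld relations \eqref{rel: Cartan-Drinfeld} being unchanged under shifting; iterating the action of $Y_\mu^-(\Glie)$ on $\omega$ then shows that each $\ell$-weight of $W$ has the form $\Bf = \Be \prod_{j,s} A_{j,a_{j,s}}^{-1}$. The abelian subgroup of $\CR$ generated by $\{A_{i,a}\}$ being free, this decomposition is unique up to reordering within each $j$; a direct computation from \eqref{def: simple root} together with $\alpha_j = \sum_k c_{kj} \varpi_k$ yields $\varpi(A_{j,a}) = \alpha_j$, so matching $\varpi(\Bf\Be^{-1}) = -\sum_j h_j \alpha_j$ determines $h_j = \langle \varpi_j^\vee, \varpi(\Bf^{-1}\Be)\rangle$. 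For the eigenvalues, the coefficients of $A_i(u)$ lie in the commutative $Y_\mu^=(\Glie)$ and $W_\Bf$ is a generalized simultaneous eigenspace for its generators $\{\xi_{j,p}\}$, so $A_i(u)|_{W_\Bf}$ has a unique generalized eigenvalue $a_{i,\Bf}(u)$. To deduce uniqueness for $R_i^W(u)|_{W_\Bf}$, I would simultaneously triangularize the commuting family $\{R_i^W(a) : a \in \BC\}$ (which commutes by Proposition \ref{prop: one-dim R+}(i)) on the finite-dimensional space $W_\Bf$; in such a basis $A_i(u)$ is upper triangular with all diagonal entries equal to $a_{i,\Bf}(u)$, and part (i) forces each diagonal entry $p^{(k)}(u)$ of $R_i^W(u)$ to satisfy $p^{(k)}(u+d_i) = p^{(k)}(u)\, \overline{a}_{i,\Bf}(u)$. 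By Proposition \ref{prop: one-dim R+}(ii), $p^{(k)}(-u)$ is monic of degree $h_i$, and the monic polynomial of fixed degree solving such a first-order difference equation is unique (any ratio of two solutions is rational and $d_i$-periodic, hence constant, and equal to $1$ by leading coefficients). So all $p^{(k)}$ agree with a single $p_{i,\Bf}(u) = \prod_s (a_{i,s}-u)$, and the difference equation then yields the stated forms of $\overline{a}_{i,\Bf}(u)$ and $a_{i,\Bf}(u)$.

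To match these $a_{i,s}$ with those appearing in the $\ell$-weight decomposition, substitute the eigenvalue expression for $a_{j,\Bf}(u)$ into the GKLO identity \eqref{equ: truncation} and divide by the analogous identity for $\omega$ (which produces $\Be_i(u)$ via the $g_j$'s). The product $\prod_{t=1}^{-c_{ji}}$ telescopes via $c_{ji} d_j = 2d_{ij}$ to give $\Bf_i(u)/\Be_i(u) = \prod_{j,s'} \frac{u - d_{ij} - a_{j,s'}}{u + d_{ij} - a_{j,s'}}$, which by definition \eqref{def: simple root} is precisely the $i$-th component of $\prod_{j,s} A_{j,a_{j,s}}^{-1}$; uniqueness of the decomposition identifies the two sets of scalars. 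The consequence is immediate: any highest $\ell$-weight module $V$ in $\BGG^{sh}$ is a quotient of a Weyl module by Proposition \ref{prop: Weyl vs standard}(iii), so its $\ell$-weights form a subset of those treated in (ii), and $\nqc(V)$ expands as $1$ plus a nonnegative integer combination of monomials in the $A_{i,a}^{-1}$, indexed by the dimensions of $\ell$-weight spaces.

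The main obstacle I anticipate is the uniqueness of eigenvalue for $R_i^W(u)|_{W_\Bf}$: the simultaneous triangularization requires that the $R_j^W(a)$ for varying $(j,a)$ commute mutually and with every $\xi_{k,p}$, which must be extracted from Eqs.\eqref{Baxter: affine Cartan}--\eqref{Baxter: recursion} and Proposition \ref{prop: one-dim R+}(i), and then uniqueness of the monic polynomial solution to the first-order difference equation has to be handled with attention to leading terms. The GKLO telescoping step also demands careful bookkeeping of the factors $d_{ij}$, $d_j$, $c_{ji}$ so that the product over $t = 1,\ldots,-c_{ji}$ collapses correctly.
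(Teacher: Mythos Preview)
Your proof of part (i) is essentially the paper's: both sides fix $\omega$, both have the same commutation with each $x_{j,n}^-$ (trivially for $j\neq i$, and via the telescoping of the Lemma~\ref{lem: affine coweight} sum for $j=i$), so they agree on $W = Y^<(\Glie)\omega$. Your reduction to the Verma module is harmless but unnecessary.

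In part (ii) your logic differs from the paper's. You first establish the decomposition $\Bf = \Be\prod A_{j,a_{j,s}}^{-1}$ by invoking a shifted analogue of Proposition~\ref{prop: generalized simple root}; then you argue eigenvalue uniqueness for $R_i^W(u)|_{W_\Bf}$ by observing that $A_i(u)$, being built from the commutative $Y_\mu^=(\Glie)$, has a single generalized eigenvalue on $W_\Bf$, so the diagonal entries of $R_i^W(u)$ all satisfy the \emph{same} first-order difference equation and hence coincide by uniqueness of the monic polynomial solution; finally you match the roots of this common polynomial with the scalars in the decomposition via the GKLO telescoping. The paper instead does everything in one pass: it triangularizes on $W_\Bf$, reads off the roots $a_{j,s}$ of each diagonal entry of $R_j^W(u)$ (a priori depending on the basis vector $b$), feeds them through \eqref{equ: truncation} to obtain $\Bf = \Be\prod A_{j,a_{j,s}}^{-1}$, and then invokes freeness of the subgroup generated by the $A_{i,a}$ to conclude that the $a_{j,s}$ are determined by $\Bf$ alone, hence independent of $b$. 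Your appeal to a shifted Proposition~\ref{prop: generalized simple root} is thus redundant (the GKLO step already yields the decomposition), and your difference-equation uniqueness argument for $R_i^W$ is replaced in the paper by the cleaner freeness observation; conversely, your route makes the uniqueness of the $A_i$-eigenvalue transparent from the outset rather than emerging as a byproduct. Both arguments are valid and the GKLO telescoping is the common core.
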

\begin{proof} {\bf Part (i).} The left-hand side of Eq.\eqref{equ: difference} sends $W$ to $W \otimes \BC[u]$ by Proposition \ref{prop: one-dim R+}, and the right-hand side sends each finite-dimensional weight space $W_{\beta}$ of $W$ to $W_{\beta} \otimes \BC((u^{-1}))$. Since $W$ is obtained from $\omega$ by repeatedly applying the $x_{j,n}^-$ for $j \in I$ and $n \in \BN$, and since both sides send $\omega$ to $\omega$, it suffices to show that both sides have the same commutation relations with the $x_{j,n}^-$. 

By Lemma \ref{lem: affine coweight} and Eq.\eqref{Baxter: recursion}, if $j \neq i$, then $R_i^W(u), A_i(u)$ and both sides of Eq.\eqref{equ: difference} commute with $x_{j,n}^-$. If $j = i$, then 
\begin{align*}
& R_i^W(u+d_i) x_{i,n}^- = (x_{i,n+1}^- - u x_{i,n}^- - d_i x_{i,n}^-) R_i^W(u+d_i),  \\
& R_i^W(u) \overline{A}_i(u) x_{i,n}^- = R_i^W(u) (x_{i,n}^- + d_i \sum_{k\geq 0} x_{i,n+k}^- u^{-k-1})\overline{A}_i(u)  \\
& \qquad =  (x_{i,n+1}^- - u x_{i,n}^- + d_i \sum_{k\geq 0} (x_{i,n+k+1}^- - u x_{i,n+k}^-) u^{-k-1}) R_i^W(u) \overline{A}_i(u)  \\
& \qquad = (x_{i,n+1}^- - u x_{i,n}^- - d_i x_{i,n}^-)R_i^W(u) \overline{A}_i(u).
\end{align*}
This proves Eq.\eqref{equ: difference}.

\medskip

\noindent {\bf Part (ii).} The finite-dimensional $\ell$-weight space $W_{\Bf}$ admits mutually commuting actions of the series $R_j^W(u), \xi_j(u),\overline{A}_i(u)$ for $j \in I$. One can choose a basis $B$ of $W_{\Bf}$ with respect to which the matrices of these series are upper triangular. Fix a basis vector $b \in B$. For $X(u)$ any of these series, let $[X(u)]_b$ denote the $b$-th diagonal of the matrix of $X(u)$. Then $[\xi_j(u)]_b = \Bf_j(u)$ by definition of the $\ell$-weight space. 

By Proposition \ref{prop: one-dim R+}, the linear map $R_j^W(-u): W_{\varpi(\Bf)} \longrightarrow W_{\varpi(\Bf)} \otimes \BC[u]$, viewed as an $\mathrm{End}(W_{\varpi(\Bf)})$-valued polynomial in $u$ is monic of degree $h_j$. Its restriction to the $\ell$-weight space $W_{\Bf}$ as an $\mathrm{End}(W_{\Bf})$-valued polynomial is also monic of degree $h_j$, so is its diagonal entry $[R_j^W(-u)]_b \in \BC[u]$ associated to the basis vector $b$. Let $-a_{j,s}$ for $1\leq s \leq h_j$ denote the roots of the eigenvalue, which may depend on $b$. Then $[\overline{A}_j(u)]_b$ can be computed from the difference equation \eqref{equ: difference}:
$$ [R_j^W(u)]_b = \prod_{s=1}^{h_j} (a_{j,s}-u),\quad  [\overline{A}_j(u)]_b = \frac{[R_j^W(u+d_j)]_b}{[R_j^W(u)]_b} = \prod_{s=1}^{h_j} \frac{u-a_{j,s}+d_j}{u-a_{j,s}}. $$
By definition $\overline{A}_i(u)$ is the normalization of $A_i(u)$ by its eigenvalue associated to $\omega$. Applying \eqref{equ: truncation} to $\omega$ and then to $b \in B$ we get
\begin{align*}
\frac{\Bf_i(u)}{\Be_i(u)}  &= \frac{1}{[\overline{A}_i(u)]_b[\overline{A}_i(u-d_i)]_b} \prod_{j: c_{ji} < 0} \prod_{t=1}^{-c_{ji}} [\overline{A}_j(u-d_{ij}-t d_j)]_b \\
&= \prod_{s=1}^{h_i} \frac{(u-a_{i,s})(u-a_{i,s}-d_i)}{(u-a_{i,s}+d_i)(u-a_{i,s})} \times  \prod_{j: c_{ji} < 0}  \prod_{s'=1}^{h_j} \prod_{t=1}^{-c_{ji}}  \frac{u-d_{ij}-td_j-a_{j,s'}+d_j}{u-d_{ij}-td_j - a_{j,s'}}  \\
&= \prod_{s=1}^{h_i} \frac{u-a_{i,s}-d_i}{u-a_{i,s}+d_i} \times \prod_{j: c_{ji} < 0} \prod_{s'=1}^{h_j}  \frac{u-d_{ij}-a_{j,s'}}{u-d_{ij}+c_{ji}d_j - a_{j,s'}} \\
&= \prod_{j\in I} \prod_{s=1}^{h_j} \frac{u-a_{j,s}-d_{ij}}{u-a_{j,s}+d_{ij}} \quad (\mathrm{because}\ c_{ji}d_j = 2 d_{ij}). 
\end{align*}
From Eq.\eqref{def: simple root} we get $\Bf = \Be \prod_{j\in I}  \prod_{s=1}^{h_j} A_{j,a_{j,s}}^{-1}$.
Since the generalized simple roots generate a free abelian subgroup of $\CR$, the $a_{i,s}$ for $i \in I$ and $1\leq s \leq h_i$ are uniquely determined by $\Bf^{-1}\Be$ and they are independent of the basis vector $b \in B$. This shows that both $R_i^W(u)$ and $A_i(u)$ have a single eigenvalue of the desired form.
\end{proof}
\begin{rem}  \label{rem: transfer matrix}
 (i) Recall from \eqref{equ: isomorphism} that $Y^-_{\mu}(\Glie) \cong Y_0^-(\Glie)$. As in \cite[\S 2.6]{GTL0}, there are {\it shift operators} $\sigma_i$ for $i \in I$, which are algebra endomorphisms defined by
$$\sigma_i: Y_{\mu}^-(\Glie) \longrightarrow Y_{\mu}^-(\Glie),\quad \xi_{j,p} \mapsto \xi_{j,p},\quad x_{j,n}^- \mapsto x_{j,n+\delta_{ij}}^-. $$
It follows from Lemma \ref{lem: affine coweight} and Eq.\eqref{Baxter: recursion} that:
\begin{gather}  
A_i(u) x_{j,n}^-  =  \frac{u-\sigma_i + d_i \delta_{ij}}{u-\sigma_i}  x_{j,n}^- A_i(u),     \label{rel: A x-}     \\
R_i^W(u) x_{j,n}^- = (-u+\sigma_i)^{\delta_{ij}} x_{j,n}^- R_i^W(u). \label{rel: T x-}
\end{gather}
This also illustrates the additive difference equation \eqref{equ: difference}.

(ii)  In the Borel situation, there is a universal solution to Eq.\eqref{rel: T x-}, denoted by $T_i(z)$ in \cite[Prop.5.5]{FH}, whose action on a $U_q(\hat{\Glie})$-module $W$ is the lowest diagonal entry of the specialization $\mathcal{R}(z)|_{L \otimes W}$, where $L$ is a lowest $\ell$-weight module over the Borel subalgebra obtained as the graded Hopf dual of a negative prefundamental module; see \cite[\S 3.4, \S 7.2]{FH} and Remark \ref{rem: factorization}. The polynomiality of $T_i(z)$ follows from the stronger one for the transfer matrix; see \cite[Theorems 5.9, 5.17]{FH}. Our difference equation \eqref{equ: difference} corresponds to \cite[(9.18), (10.20)]{H0} for shifted quantum affine algebras.

(iii) The ordinary Yangian $Y(\Glie)$ processes the abelian part $\mathcal{R}^0(u)$ of the universal R-matrix. While it is divergent as a formal infinite product, its specialization on a tensor product of finite-dimensional $Y(\Glie)$-modules makes sense by viewing it as a solution to a difference equation \cite[\S 5.8]{GTL1}. Proposition \ref{prop: l-weight R-matrix+} is close to this approach. We expect that a suitably shifted version of $\mathcal{R}^0(u)$ can be specialized to a tensor product of modules in category $\BGG^{sh}$ and that it recovers the highest/lowest diagonal entries as in the case of quantum affine algebras in Remark \ref{rem: factorization}.
\end{rem}
\begin{cor}  \label{cor: irre one }
Let $$\Bs = \Psi_{i_1,a_1} \Psi_{i_2,a_2} \cdots \Psi_{i_N,a_N} \in \CD$$ and $W$ be an irreducible module in category $\BGG^{sh}$. The module $L(\Bs) \otimes W$ is irreducible if and only if: for all $1\leq s \leq N$ and $\Bf \in \lwt(W)$ we must have $A_{i_s,a_s}^{-1} \Bf \notin \lwt(W)$.
\end{cor}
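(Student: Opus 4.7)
The plan is to reformulate irreducibility as invertibility of the operator $R_{\Bs}^W \in \End(W)$ from Proposition \ref{prop: one-dim R+}, and then translate this into a condition on $\ell$-weights via Proposition \ref{prop: l-weight R-matrix+} combined with a connectivity argument on the $\ell$-weight set of $W$.

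First I would show that $L(\Bs) \otimes W$ is irreducible if and only if $R_{\Bs}^W$ is invertible on $W$. By Theorem \ref{thm: cyclicity  cocyclicity} the module $L(\Bs) \otimes W$ is of highest $\ell$-weight $\Bs\Be$, where $\Be$ denotes the highest $\ell$-weight of $W$, while $W \otimes L(\Bs)$ is of co-highest $\ell$-weight $\Bs\Be$. The R-matrix $\check{R}_{L(\Bs),W}$ of Theorem \ref{thm: R-matrix} (ii) reads $v_0 \otimes w \mapsto R_{\Bs}^W(w) \otimes v_0$, so it is an isomorphism of vector spaces if and only if $R_{\Bs}^W$ is invertible. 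If $L(\Bs) \otimes W$ is simple then $\check{R}_{L(\Bs),W}$ is injective and its image, being a nonzero submodule of $W \otimes L(\Bs)$ that contains the top $\ell$-weight vector $\omega \otimes v_0$, must equal $W \otimes L(\Bs)$ by the co-highest-weight property. Conversely, such an isomorphism transports the co-highest-weight property back to $L(\Bs) \otimes W$; a top-graded module that is both of highest and of co-highest $\ell$-weight is irreducible.

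Next, Proposition \ref{prop: one-dim R+} (i) factorizes $R_{\Bs}^W = \prod_{k=1}^{N} R_{\Psi_{i_k,a_k}}^W$ into pairwise commuting operators, so invertibility of $R_{\Bs}^W$ reduces to invertibility of each factor. By Equation \eqref{Baxter: affine Cartan} each $R_{\Psi_{i_k,a_k}}^W$ commutes with every $\xi_j(u)$, hence restricts to every $\ell$-weight space $W_{\Bf}$. By Proposition \ref{prop: l-weight R-matrix+} (ii) its action on $W_{\Bf}$ has the unique eigenvalue $\prod_{t=1}^{h_{i_k}(\Bf)} (a_{i_k,t}(\Bf) - a_k)$, where $\Bf = \Be \prod_{j,t} A_{j, a_{j,t}(\Bf)}^{-1}$ is the canonical decomposition. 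Consequently, $R_{\Psi_{i_s,a_s}}^W$ is non-invertible precisely when the factor $A_{i_s,a_s}^{-1}$ appears in the decomposition of some $\Bf \in \lwt(W)$. The easy half of the corollary follows at once: if $\Bf_1 \in \lwt(W)$ satisfies $\Bf_2 := A_{i_s,a_s}^{-1}\Bf_1 \in \lwt(W)$, then $\Bf_2$ has one more $A_{i_s,a_s}^{-1}$-factor than $\Bf_1$ (the $A_{j,c}$ being free generators of a subgroup of $\CR$), giving non-invertibility.

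For the converse I would use a path argument: assume $\Bf_0 \in \lwt(W)$ contains $A_{i_s,a_s}^{-1}$ in its decomposition, pick a nonzero $v_0 \in W_{\Bf_0}$, and use that $W$ is generated by $\omega$ to extract a single monomial $y = x_{j_1,n_1}^- \cdots x_{j_k,n_k}^- \omega$ whose projection to $W_{\Bf_0}$ is nonzero. Iterating (the shifted-Yangian analog of) Proposition \ref{prop: generalized simple root} along a non-vanishing branch of the successive $\ell$-weight decompositions of $x_{j_\ell,n_\ell}^-\cdots x_{j_k,n_k}^-\omega$ produces a chain $\Be = \Bg_0, \Bg_1, \ldots, \Bg_k = \Bf_0$ in $\lwt(W)$ with $\Bg_{\ell+1} = A_{j_{k-\ell}, b_{k-\ell}}^{-1}\Bg_{\ell}$. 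Since $\Be$ contains no $A_{i_s,a_s}^{-1}$-factor but $\Bf_0$ does, some step $\ell$ must realize $(j_{k-\ell}, b_{k-\ell}) = (i_s, a_s)$, yielding $\Bg_\ell, \Bg_{\ell+1} = A_{i_s,a_s}^{-1}\Bg_\ell \in \lwt(W)$, which contradicts the condition of the corollary. The main obstacle is this path construction: the shifted-Yangian analog of Proposition \ref{prop: generalized simple root} holds because the relation \eqref{rel: Cartan-Drinfeld} and the $\xi$-generalized-eigenspace definition of $\ell$-weight spaces are identical in every $Y_\mu(\Glie)$; propagating non-vanishing along the iterated branch reduces to the fact that each weight space is a finite direct sum of $\ell$-weight spaces, so at each stage one can select a branch whose terminal projection to $W_{\Bf_0}$ remains nonzero.
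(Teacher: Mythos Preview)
Your argument is correct and follows the same route as the paper: reduce irreducibility to invertibility of $R_{\Bs}^W$, factor it as a product of the commuting operators $R_{i_k}^W(a_k)$, and read off each eigenvalue on $W_{\Bf}$ from Proposition \ref{prop: l-weight R-matrix+} (ii). The paper's proof stops there, writing only ``the rest follows from Proposition \ref{prop: l-weight R-matrix+} (ii)'', which literally yields the condition that no $A_{i_s,a_s}^{-1}$ occurs in the decomposition of any $\Bf\in\lwt(W)$; your path/connectivity argument is what bridges this to the condition actually stated in the corollary (existence of two $\ell$-weights of $W$ differing by $A_{i_s,a_s}^{-1}$). So you have supplied a step the paper treats as implicit, and your justification via the shifted analogue of Proposition \ref{prop: generalized simple root} is sound since the relevant commutation relations \eqref{rel: Cartan-Drinfeld} are independent of the shift.
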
 
\begin{proof}
From the proof of Theorem \ref{thm: R-matrix} we get a module morphism $\check{R}_{L(\Bs),W}$ from the highest $\ell$-weight module $L(\Bs) \otimes W$ to the co-highest $\ell$-weight module $W \otimes L(\Bs)$. It is injective if and only if $L(\Bs) \otimes W$ is irreducible. We have $$R_{\Bs}^{W} = R_{i_1}^W(a_1) R_{i_2}^W(a_2) \cdots R_{i_N}^W(a_N),$$ which is a product of mutually commuting operators on $W$. $R_{\Bs}^W$ is injective if and only if $0$ is not an eigenvalue of any of the operators $R_{i_s}^W(a_s)$. The rest follows from Proposition \ref{prop: l-weight R-matrix+} (ii). 
\end{proof}
The \lq\lq if\rq\rq\ part of the corollary was known \cite[Lemma 5.9]{Jimbo} for $L \otimes V$ where $L$ is a tensor product of positive prefundamental modules over the Borel algebra and $V$ is an irreducible $U_q(\hat{\Glie})$-module in category $\BGG$ of \cite[\S 4.3]{H4}.

\begin{example}  \label{example: short exact sequence}
Let $W = N_{i,a}$ as in Example \ref{ex: two-dim} and write $L_{i,a}^+ =  \BC \mathbf{1}$. Then $e_1$ and $e_2$ are eigenvectors of $R_i^W(u)$ of eigenvalues $1$ and $a-u$ respectively. Consider the module morphism $\check{R}_{L_{i,a}^+,W}$ from $L_{i,a}^+ \otimes W$ to $W \otimes L_{i,a}^+$: its image is spanned by the vector $e_1 \otimes \mathbf{1}$ of $\ell$-weight $\prod_{j: c_{ij} \neq 0} \Psi_{j,a-d_{ij}}$; its kernel is spanned by $\mathbf{1} \otimes e_2$ of $\ell$-weight $\prod_{j: c_{ij} \neq 0} \Psi_{j,a+d_{ij}}$. We obtain a short exact sequence of modules in category $\BGG^{sh}$:
\begin{equation*} 
0 \longrightarrow \bigotimes_{j: c_{ij} \neq 0} L_{i,a+d_{ij}}^+ \longrightarrow L_{i,a}^+ \otimes N_{i,a} \longrightarrow \bigotimes_{j: c_{ij} \neq 0} L_{j,a-d_{ij}}^+ \longrightarrow 0.
\end{equation*}
Similar short exact sequence appeared in the category $\BGG$ of the Borel algebra \cite[Theorem 5.16]{H1}, whose proof also made use of R-matrices.
\end{example}

\section{Tensor product factorization in the $sl_2$-case}   \label{sec: sl2}
In this section $\Glie$ is fixed to be $sl_2$. We prove existence and uniqueness of factorization for {\it all} irreducible modules in category $\BGG^{sh}$ into tensor products of prefundamental modules and KR modules (Theorem \ref{rem: uniqueness}). This result will be crucial in the proof of Jordan--H\"older property in Section \ref{sec: truncation}.

In our situation, $\CR$ is the subgroup of the multiplicative group of the field $\BC(u)$ generated by the $u-a$ for $a \in \BC$. Recall from Subsection \ref{ss: sl2} the subset $\Delta_b^a \subset \BN$ and the irreducible modules $L_b^a$ and $L_a^{\pm}$ for $a, b \in \BC$.  

\begin{defi} \label{def: rational factorisation}
Let $\Be \in \CR$. A {\it standard factorization} of $\Be$ is 
$$ \Be = \prod_{r=1}^m (u-x_r) \times \prod_{s=1}^n \frac{u-y_s}{u-z_s} \times \prod_{t=1}^k \frac{1}{u-w_t}  $$
where for $1\leq r \leq m,\ 1\leq s, l \leq n$ and $1\leq t \leq k$:
\begin{gather*}
0 \neq z_s - y_s \in \BN,\quad z_s - y_l \notin \Delta_{z_s}^{y_s} \cap \Delta_{z_l}^{y_l}, \\
z_s - x_r  \notin \Delta_{z_s}^{y_s}, \quad w_t - x_r \notin \BN, \quad w_t - y_s \notin \Delta_{z_s}^{y_s}.
\end{gather*}
\end{defi}
For example, $(u-3) (u-9) \times \frac{u-5}{u-6} \times \frac{1}{u}  \frac{1}{u-2}$ is a standard factorization, while this is false for $(u-5) (u-9) \times \frac{u-3}{u-6} \times \frac{1}{u}  \frac{1}{u-2}$. 
\begin{prop} \label{prop: factorization sl2}
Consider the following factorization and tensor product:
\begin{align*}
&(F): \qquad \Be =  \prod_{r=1}^m (u-x_r) \times \prod_{s=1}^n \frac{u-y_s}{u-z_s} \times \prod_{t=1}^k \frac{1}{u-w_t}, \\
&T= (L_{x_1}^+\otimes \cdots \otimes L_{x_m}^+) \otimes (L_{z_1}^{y_1} \otimes \cdots \otimes L_{z_n}^{y_n}) \otimes (L_{w_1}^- \otimes \cdots \otimes L_{w_k}^-).
\end{align*}
Suppose $0 \neq z_s - y_s \in \BN$ for all $1\leq s \leq n$. Then $(F)$ is a standard factorization of $\Be$ if and only if $T$ is an irreducible module isomorphic to $L(\Be)$.
\end{prop}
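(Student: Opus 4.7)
The strategy is to reduce to Tarasov's irreducibility criterion (Theorem~\ref{thm: Tarasov}) via an asymptotic argument, together with Corollary~\ref{cor: irre one } for the positive prefundamental factors. First, $T$ is of highest $\ell$-weight $\Be$ by iterating Theorem~\ref{thm: cyclicity  cocyclicity}, so there is a canonical surjection $T\twoheadrightarrow L(\Be)$ and the proposition reduces to showing that $T$ is irreducible if and only if (F) is standard. I would treat the three types of factors in separate layers.

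Layer 1 observes that the subtensor product $N:=L_{w_1}^-\otimes\cdots\otimes L_{w_k}^-$ is always irreducible, isomorphic to $L(\prod_t (u-w_t)^{-1})$, by iterating Corollary~\ref{cor: tensor product - pre}. This is why no pairwise condition on the $w_t$'s appears in the definition of a standard factorization. Layer 2 analyzes $K\otimes N$ with $K=L_{z_1}^{y_1}\otimes\cdots\otimes L_{z_n}^{y_n}$ by approximating each $L_{w_t}^-$ by $\mathscr{L}_{w_t}^{a_t}$ with generic large parameters $a_t$, in the spirit of Example~\ref{ex: - prefund sl2}. For generic large $a_t$ one has $\mathscr{L}_{w_t}^{a_t}=L_{w_t}^{a_t}$ (since $w_t-a_t\notin\BN$), and Theorem~\ref{thm: Tarasov} applies to $K\otimes L_{w_1}^{a_1}\otimes\cdots\otimes L_{w_k}^{a_k}$. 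In the limit, pairwise conditions involving two $L_{w_t}^{a_t}$'s become automatic (generic $w_t-a_{t'}\notin\BN$), those between $L_{w_t}^{a_t}$ and $L_{z_s}^{y_s}$ reduce to $w_t-y_s\notin\Delta_{z_s}^{y_s}$ (since $\Delta_{z_s}^{y_s}\cap\BN=\Delta_{z_s}^{y_s}$), and intra-$K$ conditions read $z_s-y_l\notin\Delta_{z_s}^{y_s}\cap\Delta_{z_l}^{y_l}$. Continuity of $q$-characters combined with the canonical surjection onto $L(\Be')$, where $\Be':=\prod_s\tfrac{u-y_s}{u-z_s}\prod_t\tfrac{1}{u-w_t}$, then identifies $K\otimes N$ with $L(\Be')$ precisely when these conditions hold, and produces a proper submodule otherwise.

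Layer 3 incorporates the one-dimensional $P=L(\prod_r(u-x_r))$ by a direct application of Corollary~\ref{cor: irre one }: $P\otimes(K\otimes N)$ is irreducible if and only if $A_{x_r}^{-1}\Bf\notin\lwt(K\otimes N)$ for every $r$ and every $\Bf\in\lwt(K\otimes N)$. By Proposition~\ref{prop: l-weight R-matrix+}(ii) the $A_a$'s generate a free abelian group, and combined with the explicit $\ell$-weights of $L_{z_s}^{y_s}$ from Example~\ref{ex: asym sl2} and of $L_{w_t}^-$ from Example~\ref{ex: - prefund sl2}, the condition $A_{x_r}^{-1}\Bf\in\lwt(K\otimes N)$ reduces to saying $A_{x_r}^{-1}$ is absorbed into a single tensor factor. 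A short bookkeeping calculation with the factorizations $\Bf_i=\Bf_0\cdot A_{z_s}^{-1}A_{z_s-1}^{-1}\cdots A_{z_s-i+1}^{-1}$ shows this occurs in $L_{z_s}^{y_s}$ exactly when $z_s-x_r\in\Delta_{z_s}^{y_s}$ and in $L_{w_t}^-$ exactly when $w_t-x_r\in\BN$. Negating gives the remaining two standard conditions $z_s-x_r\notin\Delta_{z_s}^{y_s}$ and $w_t-x_r\notin\BN$.

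The main obstacle lies in Layer 2: upgrading $q$-character equality in the limit $a_t\to\infty$ to an actual module isomorphism $K\otimes N\cong L(\Be')$, and conversely exhibiting an explicit proper submodule when one of the standard conditions fails. Adapting the asymptotic machinery of \cite{Z}, which was developed for a single negative prefundamental, to tensor products seems natural via a careful basis and rescaling so that a Tarasov-type submodule of $K\otimes L_{w_1}^{a_1}\otimes\cdots\otimes L_{w_k}^{a_k}$ at a special parameter value specializes to a nonzero proper submodule of $K\otimes N$ after taking the limit.
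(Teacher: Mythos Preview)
Your Layers 1 and 3 match the paper's argument. The gap is Layer 2, as you yourself flag: passing to a limit $a_t \to \infty$ and invoking ``continuity of $q$-characters'' does not by itself transfer irreducibility or reducibility between $K \otimes \bigotimes_t L_{w_t}^{a_t}$ and $K \otimes N$, and the proposed specialization of a Tarasov submodule through the limit is speculative (submodules can collapse or spread under degeneration).

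The paper closes this gap without any limit, by a short algebraic trick built from tools you already invoked elsewhere. Choose \emph{finite} generic parameters $w_t'$ with $w_t' - w_l,\, w_t' - z_s \notin \BZ$ and set $\Bs = \prod_t (u - w_t')$. Then:
\begin{itemize}
\item[(a)] $T_2 := K \otimes N$ is irreducible if and only if $T_2 \otimes L(\Bs)$ is. One direction is exactness of tensoring with the one-dimensional $L(\Bs)$; the other is Corollary~\ref{cor: irre one }, since by genericity no $A_{w_t'}^{-1}$ occurs as a factor in $\nqc(T_2)$.
\item[(b)] $T_2 \otimes L(\Bs)$ and $T_2' := K \otimes L_{w_1}^{w_1'} \otimes \cdots \otimes L_{w_k}^{w_k'}$ have the same $q$-character, because $\nqc(L_{w_t}^-) = \nqc(L_{w_t}^{w_t'})$ whenever $w_t - w_t' \notin \BN$ (Examples~\ref{ex: asym sl2} and~\ref{ex: - prefund sl2}). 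By injectivity of $\qc$ on $K_0(\BGG^{sh})$ (Theorem~\ref{thm: q-char ring morphism}) they have the same composition factors, so one is irreducible if and only if the other is.
\end{itemize}
Now $T_2'$ is a tensor product of modules $L_b^a$ over the ordinary Yangian $Y(sl_2)$, and Theorem~\ref{thm: Tarasov} applies directly; the genericity of the $w_t'$ makes every Tarasov condition involving a $w_t'$ vacuous, leaving precisely $z_l - y_s \notin \Delta_{z_s}^{y_s} \cap \Delta_{z_l}^{y_l}$ and $w_t - y_s \notin \Delta_{z_s}^{y_s}$. In short, your auxiliary parameters $a_t$ should not be sent to infinity but fixed at generic finite values and then absorbed via a one-dimensional tensor factor, turning the analytic limit into an exact Grothendieck-group identity.
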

\begin{proof}
Note that the irreducibility of $T$ would force it to be isomorphic to $L(\Be)$, by comparing highest $\ell$-weights. Write $T = T_1 \otimes T_2$ where  $T_1$ denotes the tensor product of the first $m$ factors, and $T_2$ the remaining part. First, notice that $T_1$, being one-dimensional, is isomorphic to $L((u-x_1)\cdots (u-x_m))$. 

 For $1\leq t \leq k$ let us choose $w_t' \in \BC$ in such a way that $w_t' - w_l \notin \BZ$ and $w_t' - z_s \notin \BZ$ for all $1\leq l \leq k$ and $1\leq s \leq n$. Set $\Bs := (u-w_1') \cdots (u-w_k')$. 
 
 \medskip

\noindent {\it Claim 1.} The module $T_2$ is irreducible if and only if $T_2 \otimes L(\Bs)$ is irreducible. 
 
 The $\Leftarrow$ part is trivial since tensor product is exact. For the $\Rightarrow$ part, assume the irreducibility of $T_2$. By Corollary \ref{cor: irre one }, it suffices to prove that none of the $A_{w_t'}^{-1}$ for $1\leq t \leq k$ appears as a factor of any $\ell$-weight in the normalized q-character of $T_2$: 
 \begin{align*}
 \nqc(T_2) &= \prod_{s=1}^n \nqc(L_{z_s}^{y_s}) \prod_{l=1}^k\nqc(L_{w_l}^-).
 \end{align*}
 $\nqc(L_{z_s}^{y_s})$ only admits the $A_{z_s-c}^{-1}$ for $c \in \Delta_{z_s}^{y_s}$ as factors. $\nqc(L_{w_l}^-)$ only admits the $A_{w_l-c}^{-1}$ for $c \in \BN$ as factors. The assumption $w_l - w_t'  \notin \BZ$ and $z_s - w_t' \notin \BZ$ guarantees the condition of Corollary \ref{cor: irre one }, hence the irreducibility of $T_2 \otimes L(\Bs)$.
 
 \medskip
 
\noindent {\it Claim 2.} The module $T_2 \otimes L(\Bs)$ is irreducible if and only if the tensor product 
 $$ T_2' := (L_{z_1}^{y_1} \otimes \cdots \otimes L_{z_n}^{y_n}) \otimes (L_{w_1}^{w_1'} \otimes \cdots \otimes L_{w_k}^{w_k'})  $$
 is irreducible. This is because the two modules have the same q-character.
 
 Applying Theorem \ref{thm: Tarasov} to $T_2'$, in view of our choice of the additional parameters $w_t'$, we get that $T_2$ is irreducible if and only if
 $$ z_l - y_s  \notin \Delta_{z_s}^{y_s} \cap \Delta_{z_l}^{y_l}, \quad w_t - y_s \notin \Delta_{z_s}^{y_s}\quad \mathrm{for}\ 1\leq s, l \leq n,\ 1\leq t \leq k.  $$
Note that the irreducibility of $T$ is equivalent to the irreducibility of the module $T_2$ and the tensor product $L((u-x_1)\cdots (u-x_m)) \otimes T_2$.  The latter is again in the situation of Corollary \ref{cor: irre one }. It is irreducible if and only if for $1\leq r \leq m, 1\leq s \leq n$ and $1\leq t \leq k$:
\begin{itemize}
\item $A_{x_r}^{-1}$ does not appear in $\nqc(L_{z_s}^{y_s})$, which means $z_s - x_r  \notin \Delta_{z_s}^{y_s}$;
\item $A_{x_r}^{-1}$ does not appear in $\nqc(L_{w_t}^-)$, which means $w_t - x_r  \notin \BN$.
\end{itemize}
Therefore, $T$ is irreducible if and only if all the conditions from Definition \ref{def: rational factorisation} on the $x_r, y_s, z_s, w_t$ are satisfied, meaning that $(F)$ is a standard factorization.
\end{proof}

If $k \leq m$ in the standard factorization then the irreducibility of the tensor product follows from \cite[Theorem 7.7]{BK}, by first identifying $L_{x_t}^+ \otimes L_{w_t}^-$ with $L_{w_t}^{x_t}$ for $1\leq t \leq k$ (so that there is no negative prefundamental module), and then replacing $L_b^a$ and $L_b^+$ with the modules $L(_b^a)$ and $L(_b)$ respectively in {\it loc.cit}.

If $\Be$ is a product of the $\frac{u-a+1}{u-a}$ for $a \in \BC$, then a standard factorization is equivalent to writing a finite set of complex numbers with multiplicities as a union of pairwise non-interacting strings \cite[Prop.3.5]{CP1}. 

\begin{lem} \label{lem: uniqueness factorization}
Let $\Be \in \CR$. Standard factorizations of $\Be$ as in Definition \ref{def: rational factorisation} exist, and they are unique in the sense that the two polynomials $(u-x_1)\cdots (u-x_m)$ and $(u-w_1) \cdots (u-w_k)$, and the pairs $(y_s,z_s)$ for $1\leq s \leq n$ up to $\mathfrak{S}_n$-permutations are completely determined by $\Be$.
\end{lem}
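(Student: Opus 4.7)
The statement is a purely combinatorial assertion about the free abelian group $\CR$ on the basis $\{\Psi_a\}_{a \in \BC}$. Because every condition in Definition \ref{def: rational factorisation} involves only integer shifts, the factorization problem decouples over the cosets of $\BC/\BZ$: after fixing a coset $a_0 + \BZ$, identifying its elements with $\BZ$, and writing $\Be = \prod_k \Psi_{a_0+k}^{n_k}$ with $n_k \in \BZ$ of finite support, it suffices to treat one coset at a time.

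For existence, I would exhibit an explicit last-in-first-out matching algorithm. Scan positions $k$ in increasing order while maintaining a stack $\mathcal{S}$: at a zero position ($n_k > 0$) push $k$ onto $\mathcal{S}$ exactly $n_k$ times; at a pole position ($n_k < 0$) pop from $\mathcal{S}$ up to $|n_k|$ times, each successful pop $y$ declaring a KR pair $(y,k)$ and each failed pop declaring $w_t = k$; the surviving stack contents are the $x_r$'s. Each of the five conditions of Definition \ref{def: rational factorisation} follows directly from LIFO invariants: sign uniformity at each position prevents $y_s = z_l$ coincidences; LIFO order forbids the crossing pattern $y_s < y_l \le z_s < z_l$; any $x_r \in (y_s, z_s]$ would have been pushed above $y_s$ in $\mathcal{S}$ and thus popped before it, contradicting its survival; and any $w_t$ with $w_t \ge x_r$ or $y_s \le w_t < z_s$ would have encountered a non-empty stack at time $w_t$, contradicting the failed-pop condition.

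For uniqueness, I would induct on the total complexity $N(\Be) := \sum_k |n_k|$. The key preliminary, verified directly from the five conditions, is that in any standard factorization the contributions at each position share a common sign: the coexistence at a single $k$ of $x$ and $w$, $x$ and $z_s$, $y_s$ and $w$, or $y_s$ and $z_l$ produces respectively $w - x = 0$, $z_s - x = 0 \in \Delta_{z_s}^{y_s}$, $w - y_s = 0 \in \Delta_{z_s}^{y_s}$, or the forbidden KR-KR crossing $y_l < y_s \le z_l < z_s$. Then at the rightmost position $K$ with $n_K > 0$, no $y_s = K$ is possible (would require $z_s > K$), so $p_K = n_K$ is forced, and one reduces to $\Be \cdot \Psi_K^{-n_K}$ of smaller complexity; symmetrically, if the leftmost nontrivial $L$ has $n_L < 0$, all contributions there are $w$'s and one reduces by removing them.

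The main obstacle is the remaining case where the rightmost position is a pole and the leftmost is a zero, so that the $|n_K|$ poles at $K$ must split as $\mu_K + q_K = |n_K|$. To pin down the split, I would exploit the global inequality $w_t < x_r$: it localizes all $w$'s strictly to the left of all $x$'s, so the function $K' \mapsto Q(K') - P(K')$, where $Q(K') = |\{t : w_t \le K'\}|$ and $P(K') = |\{r : x_r \le K'\}|$, attains its maximum value $Q$ on the interval between $\max_t w_t$ and $\min_r x_r$. Combined with the accounting identity $P(K') - Q(K') = \sum_{j < K'} n_j - U(K')$, in which $U(K') \ge 0$ counts the open KR pairs at time $K'$, this yields $Q \ge -\min_{K'} \sum_{j < K'} n_j$. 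A dual extremal argument, obtained by swapping the roles of zeros and poles together with those of $y_s$ and $z_s$, gives the matching upper bound, so that $Q$ and hence $P$, $n$, and the $\{w_t\}$, $\{x_r\}$ multisets are uniquely determined by $\Be$; the remaining zeros and poles then admit a unique non-crossing pairing, coinciding with the LIFO output and completing the induction.
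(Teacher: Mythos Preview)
Your LIFO existence argument is correct and more explicit than the paper's inductive construction; the sign-consistency observation at each position is also correct and useful.

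The uniqueness argument has a real gap. Writing $S(K') = \sum_{j \le K'} n_j$ and $U(K')$ for the number of open KR intervals $\{s: y_s \le K' < z_s\}$, your identity $S(K') = P(K') + U(K') - Q(K')$ with $P,U \ge 0$ indeed gives $Q \ge \max\bigl(0, -\min_{K'} S(K')\bigr)$. But the ``dual extremal argument'' does not produce the matching upper bound: the only involution preserving Definition~\ref{def: rational factorisation} is order reversal together with $(x,y,z,w)\leftrightarrow(w,z,y,x)$, and under it the lower bound on $Q$ becomes a lower bound on $P$; since $P-Q=\deg\Be$ this is again a lower bound on $Q$, not an upper one. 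The upper bound actually comes directly from evaluating the identity at $K'=\max_t w_t$, where condition~4 forces $P(K')=0$ and condition~5 forces $U(K')=0$, so $S(\max_t w_t)=-Q$. More seriously, even granting $Q=-\min S$, your sentence ``so that $Q$ and hence \ldots the $\{w_t\},\{x_r\}$ multisets are uniquely determined'' is unjustified: a total count does not locate positions. The fix is to localize the same computation and prove $Q(K')=\max\bigl(0,-\min_{j\le K'}S(j)\bigr)$ for every $K'$ (apply the identity at the largest $w_t\le K'$, using conditions~4 and~5 there); then the $w$'s sit exactly where $S$ hits new running minima, the $x$'s are handled dually, and removing both leaves a Dyck path on which the non-crossing matching is indeed unique.

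The paper takes a different route: it inducts on $d(\Be)$ by locating a zero--pole pair $(y_0,z_0)$ with $z_0-y_0\in\BZ_{>0}$ minimal, proves that any standard factorization of $\Be$ must already contain $(y_0,z_0)$ among its KR pairs (a short case analysis using minimality), and applies the induction hypothesis to $\Be\cdot\frac{u-z_0}{u-y_0}$. It never isolates $Q$ or $P$ at all. Your route, once patched, is a legitimate alternative that determines the $x$'s and $w$'s globally before touching any KR pair.
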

\begin{proof}
We begin with some easy observations.

\noindent {\it Observation 1.} A standard factorization gives rise to the reduced form of the rational function $\Be$: its numerator and denominator as monic polynomials are 
$$ (u-x_1)\cdots (u-x_m) (u-y_1) \cdots (u-y_n),\quad (u-z_1) \cdots (u-z_n) (u-w_1) \cdots (u-w_n). $$
This is because the two polynomials are coprime by Definition \ref{def: rational factorisation}.

\noindent {\it Observation 2.} If for all zero $a$ and pole $b$ of $\Be$ we have $b-a \notin \BN$, then the reduced form of $\Be$ is the unique standard factorization.

\noindent {\it Observation 3.} In Definition \ref{def: rational factorisation}, deleting a factor of the form $u-x_r, \frac{u-y_s}{u-z_s}$ or $\frac{1}{u-w_t}$, one gets another standard factorization.

\medskip

We prove the existence and uniqueness of standard factorization by induction on the number, denoted by $d(\Be)$, of zeroes and poles of $\Be$ counted with multiplicities. Namely, $d(\Be)$ is the degree of the numerator plus that of the denominator. The initial case $d(\Be) = 0$ is trivial since $\Be = 1$. 

Suppose $d(\Be) > 0$. If $\Be$ satisfies the hypothesis of Observation 2, then we conclude. Assume that there exist a zero $y_0$ and a pole $z_0$ of $\Be$ such that $z_0 - y_0 \in \BN$. Since there are finitely many such pairs, we assume further:
\begin{itemize}
\item[(H1)] If $y$ is a zero of $\Be$ and $z$ a pole, then $z-y \in \BN$ implies $z-y \geq z_0 - y_0$.
\end{itemize}
Set $\Bf := \frac{u-z_0}{u-y_0} \Be$. Since $\frac{u-y_0}{u-z_0}$ appears in the reduced form of $\Be$, it cancels with the factor $\frac{u-z_0}{u-y_0}$ and we get $d(\Bf) = d(\Be) - 2$. By induction hypothesis, the existence and uniqueness of standard factorization holds for $\Bf$. Fix such a factorization:
$$(F1):\qquad \Bf = \prod_{r=1}^m (u-x_r) \times \prod_{s=1}^n \frac{u-y_s}{u-z_s} \times \prod_{t=1}^k \frac{1}{u-w_t}. $$

\medskip

\noindent {\bf Step 1: existence.} We show that the following is a standard factorization:
$$ (F2): \qquad \Be =  \prod_{r=1}^m (u-x_r) \times \prod_{s=0}^n \frac{u-y_s}{u-z_s} \times \prod_{t=1}^k \frac{1}{u-w_t}. $$
In view of Definition \ref{def: rational factorisation}, it suffices to show that none of the following complex numbers belongs to $\Delta_{z_0}^{y_0}$ for $1\leq r \leq m,\ 1\leq s \leq n$ and $1\leq t \leq k$:
$$ z_0 - x_r,\quad z_0 - y_s,\quad  z_s - y_0 , \quad w_t - y_0.  $$
Let us prove it for the first number, the other three being parallel. Applying Observation 1 to the standard factorization of $\Be$, and noting that $d(\Be) = d(\Bf) + 2$, we see that the above factorization of $\Be$ is reduced. In particular, $x_r$ is a zero of $\Be$. If $z_0 - x_r  \in \Delta_{z_0}^{y_0}$, then by definition of $\Delta_{z_0}^{y_0} \subset \BN$ we have $z_0 - x_r < z_0 - y_0$, in contradiction with hypothesis (H1) which forces $z_0 - x_r \geq z_0 - y_0$.

\medskip

\noindent {\bf Step 2: uniqueness.} Let the following be a standard factorization:
$$(F3):\qquad \Be =  \prod_{r=1}^{m'} (u-x_r') \times \prod_{s=0}^{n'} \frac{u-y_s'}{u-z_s'} \times \prod_{t=1}^{k'} \frac{1}{u-w_t'}. $$
We claim that $(y_0, z_0) = (y_l', z_l')$ for certain $0\leq l \leq n'$. When this is the case, by Observation 3 we have another standard factorization of $\Bf$:
$$(F4):\qquad \Bf = \prod_{r=1}^{m'} (u-x_r') \times \prod_{0\leq s\leq n', s\neq l'}\frac{u-y_s'}{u-z_s'} \times \prod_{t=1}^{k'} \frac{1}{u-w_t'}.  $$
Applying the induction hypothesis to $(F1)$ and $(F4)$, we get that 
$$ \prod_{r=1}^{m'} (u-x_r') = \prod_{r=1}^m (u-x_r),\quad \prod_{t=1}^k (u-w_t) = \prod_{t=1}^{k'} (u-w_t') $$
and the pairs $(y_s', z_s')$ for $0\leq s \leq n', s \neq l'$ are in one-to-one correspondence with the $(y_s, z_s)$ for $1\leq s \leq n$. In other words, 
the standard factorizations $(F2)$ and $(F3)$ are the same after permutation.

To prove the claim, notice by Observation 1 that
$$ y_0 \in \{x_1',\cdots, x_{m'}', y_0', y_1', \cdots, y_{n'}' \},\quad z_0 \in \{z_0', z_1', \cdots, z_{n'}', w_1', \cdots, w_{k'}' \}. $$
Applying (H1) to the zero $y_s'$ and pole $z_s'$ we get:
\begin{itemize}
\item[(H2)] For $0 \leq s \leq n'$ we have $z_0 - y_0 \leq z_s' - y_s'$.
\end{itemize} 
In the standard factorization $(F3)$ we have $w_t' - x_r' \notin \BN$. So $z_0 - y_0 \in \BN$ forces $(y_0, z_0) \neq (x_r', w_t')$ for $1\leq r \leq m'$ and $1\leq t \leq k'$. There remain three cases.

Case 1: we have $(y_0, z_0) = (x_r', z_s')$ for certain $1\leq r \leq m'$ and $0 \leq s \leq n'$. Again in the standard factorization $(F3)$ we have  $z_s' - x_r' \notin \Delta_{z_s'}^{y_s'}$. Together with the assumption $z_s' - x_r'  = z_0 - y_0 > 0$ and the definition of $\Delta_{z_s'}^{y_s'}$ we have $z_s' - x_r'  = z_0 - y_0 \geq z_s' - y_s'$.  In view of (H2), equality holds. Now $z_0 = z_s'$ forces $y_0 = y_s'$.

Case 2: we have $(y_0, z_0) = (y_s', w_t')$ for certain $0 \leq s \leq n'$ and $1\leq t \leq k'$. Similar arguments as in Case 1 show $(y_0, z_0) = (y_s', z_s')$.

Case 3: we have $(y_0, z_0) = (y_s', z_l')$ for certain $0 \leq s, l \leq n'$. From the condition $z_0 - y_0 = z_l' - y_s' \notin \Delta_{z_s'}^{y_s'} \cap \Delta_{z_l'}^{y_l'}$ imposed by $(F3)$ we get either $z_0 - y_0\geq z_s' - y_s'$ or $z_0 - y_0  \geq z_l' - y_l'$. In both situations, equality holds by (H2). Applying $y_0 = y_s'$ to the first situation and $z_0 = z_l'$ to the second situation, we get either $(y_0, z_0) = (y_s', z_s')$ or $(y_0, z_0) = (y_l', z_l')$.
\end{proof}
We point out that the existence arguments follow closely \cite[Prop.3.6]{M}. It is the uniqueness that is the key point of Lemma \ref{lem: uniqueness factorization}.

Proposition \ref{prop: factorization sl2} together with Lemma \ref{lem: uniqueness factorization} 
implies the following.

\begin{theorem}\label{rem: uniqueness} All simple module $L(\Be)$ in category $\BGG^{sh}$ factorizes uniquely as a tensor product of prefundamental modules $L_{a}^{\pm}$ ($a\in \BC$) and of Kirillov--Reshetikhin modules $L_b^a$ ($a,b\in \BC$, $0 < a-b \in \BN$). 
\end{theorem}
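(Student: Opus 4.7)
The plan is to deduce Theorem \ref{rem: uniqueness} directly from the two preparatory results Proposition \ref{prop: factorization sl2} and Lemma \ref{lem: uniqueness factorization}. By Theorem \ref{thm: classification}, an irreducible module in $\BGG^{sh}$ for $\Glie = sl_2$ is of the form $L(\Be)$ for a unique $\Be \in \CR$, so the problem reduces to questions about the rational function $\Be$ itself.

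For existence, I would start from a standard factorization of $\Be$ produced by Lemma \ref{lem: uniqueness factorization}, and then apply the ``if'' direction of Proposition \ref{prop: factorization sl2} to the corresponding ordered tensor product $T$ of positive prefundamentals, KR-modules $L_{z_s}^{y_s}$, and negative prefundamentals. That proposition gives $T \cong L(\Be)$ and displays $L(\Be)$ as an ordered tensor product of modules of the types allowed by the theorem.

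For uniqueness, suppose $L(\Be) \cong T' = T'_1 \otimes \cdots \otimes T'_N$ with each $T'_j$ one of the allowed modules. Matching highest $\ell$-weights, the product of the highest $\ell$-weights of the $T'_j$ recovers $\Be$ as a product of factors $u - x_r$, $\frac{u-y_s}{u-z_s}$ (with $0 < z_s - y_s \in \BN$), and $\frac{1}{u - w_t}$. Let $T$ denote the \emph{reordered} tensor product with the canonical grouping (positives, then KR, then negatives). By multiplicativity of $\nqc$ under tensor product (Theorem \ref{thm: q-char ring morphism}), we have $\nqc(T) = \nqc(T') = \nqc(L(\Be))$; since $T$ is a highest $\ell$-weight module with top $\ell$-weight $\Be$ and its only possible irreducible subquotient matching this $\ell$-weight is $L(\Be)$, the injectivity of $\qc$ on $K_0(\BGG^{sh})$ forces $T \cong L(\Be)$, so $T$ is irreducible. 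The ``only if'' direction of Proposition \ref{prop: factorization sl2} then asserts that the factorization of $\Be$ extracted from $T$ is standard, and Lemma \ref{lem: uniqueness factorization} concludes that the multisets $\{x_r\}$, $\{w_t\}$, and $\{(y_s, z_s)\}$ are uniquely determined by $\Be$. Hence the multiset of tensor factors appearing in any such decomposition of $L(\Be)$ is unique.

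The only subtle point, and thus the main (mild) obstacle, is the passage from a tensor factorization in an arbitrary order to the canonical ordering handled by Proposition \ref{prop: factorization sl2}; this is bridged by the q-character/injectivity argument above. Everything else is a mechanical combination of the existence/uniqueness statement for standard factorizations (Lemma \ref{lem: uniqueness factorization}) and the irreducibility criterion for the corresponding tensor product (Proposition \ref{prop: factorization sl2}).
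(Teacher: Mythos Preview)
Your proposal is correct and follows the same route as the paper, which simply records that Proposition \ref{prop: factorization sl2} together with Lemma \ref{lem: uniqueness factorization} immediately give the result. You have supplied the details the paper omits, including the reordering step: your q-character argument (multiplicativity from Theorem \ref{thm: q-char ring morphism} plus injectivity of $\qc$) correctly shows that the canonically ordered tensor product $T$ has $\qc(T)=\qc(L(\Be))$, and since the tensor of top vectors is always a highest $\ell$-weight vector of $\ell$-weight $\Be$ (Example \ref{example: tensor product highest weight}), the submodule it generates surjects onto $L(\Be)$ and a weight-space dimension count forces $T\cong L(\Be)$; this is the small step you glossed over when writing ``forces $T\cong L(\Be)$'', but it is straightforward.
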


\begin{example} Let us revisit the example after Theorem \ref{thm: Tarasov},
$$L_0^9 \otimes L_2^3  \cong L_2^9 \otimes L_0^3  \cong L_9^+ \otimes L_3^+ \otimes L_0^- \otimes L_2^-. $$ 
In the subcategory $\BGG_0$ of $\BGG^{sh}$, the irreducible module $L_b^a$ for $b-a\notin \BN$ is prime in the sense that if $L_b^a \cong V \otimes W$ in category $\BGG_0$ then either $V$ or $W$ is the one-dimensional trivial module. The first isomorphism forms two non-equivalent factorizations into primes of the same irreducible module. The issue of non-uniqueness is resolved in category $\BGG^{sh}$ by further factorizing $L_b^a \cong L_a^+ \otimes L_b^-$. 
\end{example}

\section{Computation of diagonal entries}  \label{sec: pre R}
In this section we compute the diagonal entries introduced in Definition \ref{def: diagonal entries} for the R-matrix $\check{R}_{V,W}(u)$, where $V$ is a finite-dimensional irreducible module and $W$ is a negative module (Proposition \ref{prop: highest diagonal entry} and Theorem \ref{thm: TQ pre R}). 
A technical point in the proofs is a refined estimation of the coproduct that we establish in Lemma \ref{lem: coproduct estimation refine}. 

\subsection{Second coproduct estimation} As a preparatory step, we refine the coproduct estimation of Lemma \ref{lem: coproduct estimation} for the Drinfeld--Cartan series $\xi_i(u)$.
In the ordinary Yangian $Y(\Glie)$ hold the relations:
\begin{align}
[x_j^+(u), x_{j,0}^-] &= [x_{j,0}^+, x_j^-(u)] = \langle\xi_j(u)\rangle_+ = \xi_j(u) - 1,\label{GTL: x+ et x-} \\
 [\xi_i(u), x_{j,0}^-] &= - 2 d_{ij} \xi_i(u) x_j^-(u-d_{ij}) = - 2 d_{ij} x_j^-(u+d_{ij}) \xi_i(u). \label{GTL: xi et x-}
\end{align}
The first relation follows from \eqref{rel: Cartan}. The second is obtained by taking specializations $v = u + d_{ij}$ and $v = u - d_{ij}$ of the relation \cite[\S 2.4]{GTL}: 
$$(u-v+d_{ij}) \xi_i(u) x_j^-(v) - (u-v- d_{ij}) x_j^-(v) \xi_i(u) = - [\xi_i(u), x_{j,0}^-].$$

\begin{lem}  \label{lem: coproduct estimation refine}
For all coweights $\mu$ and $\nu$, the coproduct $\Delta_{\mu,\nu}$ satisfies:
\begin{align*}
\Delta_{\mu,\nu}(\xi_i(u)) \equiv & \ \xi_i(u) \otimes \xi_i(u) - \sum_{j\in I} 2d_{ij}x_j^-(u+d_{ij}) \xi_i(u) \otimes \xi_i(u) x_j^+(u+d_{ij}) \\
& \quad \mathrm{mod}.\ \sum_{h(\beta) \geq 2} Y_{\mu}^-(\Glie)_{-\beta} \otimes Y_{\nu}^+(\Glie)_{\beta}.
\end{align*}
\end{lem}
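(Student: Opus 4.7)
The plan is to proceed in three steps. First, I would reduce to the case $\mu = \nu = 0$ (ordinary Yangian) by the same zigzag argument as in the proof of Lemma \ref{lem: coproduct estimation}: choose antidominant $\zeta, \eta$ with $\mu+\zeta$ and $\nu+\eta$ antidominant, apply $\iota_{\mu,\zeta,0} \otimes \iota_{\nu,0,\eta}$ to the commutative diagram \eqref{rel: coproduct vs shift}, and use injectivity of the shift homomorphisms together with the fact that these homomorphisms act on the series $x_i^{\pm}(u), \xi_i(u)$ by multiplication by monomials of $u$ (up to taking principal parts). Since every series appearing in the target formula transforms uniformly, the identity is preserved under this reduction.

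Second, in the ordinary Yangian, Lemma \ref{lem: coproduct estimation} together with the $\BQ$-weight grading yields the ansatz
$$\Delta(\xi_i(u)) \equiv \xi_i(u) \otimes \xi_i(u) + \sum_{k \in I} T_k(u) \mod \sum_{h(\beta)\geq 2} Y^-_{-\beta}((u^{-1})) \otimes Y^+_{\beta}((u^{-1})),$$
with each $T_k(u) \in Y^-_{-\alpha_k}((u^{-1})) \otimes Y^+_{\alpha_k}((u^{-1}))$ to be determined. I would identify $T_j(u)$ by applying the coproduct to the Drinfeld-Cartan relation \eqref{GTL: xi et x-}, $[\xi_i(u), x_{j,0}^-] = -2d_{ij}\, x_j^-(u+d_{ij})\xi_i(u)$, using the primitivity $\Delta(x_{j,0}^-) = x_{j,0}^- \otimes 1 + 1 \otimes x_{j,0}^-$ on the LHS and Lemma \ref{lem: coproduct estimation} for $\Delta(x_j^-(u+d_{ij}))$ on the RHS. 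Projecting onto the left-weight $-\alpha_j$ component modulo left-height $\geq 2$, the terms
$[\xi_i(u)\otimes\xi_i(u),\,x_{j,0}^-\otimes 1]$ from the LHS
and $-2d_{ij}(x_j^-(u+d_{ij})\otimes\xi_j(u+d_{ij}))(\xi_i(u)\otimes\xi_i(u))$
from the RHS combine to leave an equation
$$[T_j(u),\,1 \otimes x_{j,0}^-] \equiv -2d_{ij}\, x_j^-(u+d_{ij})\xi_i(u) \otimes \xi_i(u)\bigl(\xi_j(u+d_{ij}) - 1\bigr) \mod (h \geq 2).$$

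Third, I would invoke \eqref{GTL: x+ et x-}, $\xi_j(v) - 1 = [x_j^+(v), x_{j,0}^-]$, to recognize the right-hand side as the commutator $[-2d_{ij}\, x_j^-(u+d_{ij})\xi_i(u) \otimes \xi_i(u)\, x_j^+(u+d_{ij}),\, 1 \otimes x_{j,0}^-]$ modulo height $\geq 2$. This matches the claimed $T_j(u)$ and, together with the commutativity of the $\xi_k$'s, closes the identification. Uniqueness of $T_j(u)$ in the relevant quotient is then addressed via the triangular decomposition.

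The main obstacle will be the bookkeeping in the projection step: stray terms such as $(1 \otimes x_j^-(u+d_{ij}))T_k(u)$ have right factors of the wrong weight and must be re-expressed in the decomposition $\sum_\beta Y^-_{-\beta} \otimes Y^+_\beta$, and the kernel of $[\cdot,\,1 \otimes x_{j,0}^-]$ on $Y^+_{\alpha_j}((u^{-1}))$ is non-trivial, so pinning down $T_j(u)$ uniquely will likely require either a careful triangular-decomposition analysis or an additional constraint coming from the dual relation $[\xi_i(u), x_{j,0}^+] = 2d_{ij}\,\xi_i(u)\,x_j^+(u+d_{ij})$ with $\Delta(x_{j,0}^+)$ primitive.
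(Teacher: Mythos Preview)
Your overall strategy matches the paper's: reduce to $\mu=\nu=0$ by zigzag, apply $\Delta$ to the relation $[\xi_i(u),x_{j,0}^-]=-2d_{ij}\,x_j^-(u+d_{ij})\xi_i(u)$ with $\Delta(x_{j,0}^-)$ primitive, project, and identify the unknown term. Two points need correction.

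First, your displayed equation in Step~2 is incomplete. The paper projects onto the \emph{bi-weight} $(-\alpha_j,0)$ (total weight on each tensor factor), not onto $Y^-_{-\alpha_j}\otimes Y^+_{\alpha_j}$ via the triangular decomposition; with that projection the term you flagged as a ``stray'', namely $(1\otimes x_j^-(u+d_{ij}))T_j(u)$, survives because it lies in $Y(\Glie)_{-\alpha_j}\otimes Y(\Glie)_0$. Working with bi-weights avoids your re-expression problem entirely. Writing $A(u)=(\pi_{-\alpha_j}\otimes\pi_{\alpha_j})\Delta(\xi_i(u))$, the correct identity is
\[
[A(u),\,1\otimes x_{j,0}^-]\;+\;2d_{ij}\,(1\otimes x_j^-(u+d_{ij}))A(u)\;=\;-2d_{ij}\,x_j^-(u+d_{ij})\xi_i(u)\otimes\langle\xi_j(u+d_{ij})\rangle_+\xi_i(u).
\]
The extra term is not a nuisance but the crux: expanding $A(u)=\sum_{p\ge -1}A_pu^{-p-1}$, it makes the system \emph{recursive}, expressing $[A_p,\,1\otimes x_{j,0}^-]$ in terms of $A_m$ with $m<p$, together with the initial values $A_{-1}=A_0=0$ read off from \eqref{eq: coproduct Yangian}.

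Second, your uniqueness plan is vague; the dual relation with $x_{j,0}^+$ would only give a symmetric recursion with the same ambiguity. The paper's key input is that $\mathrm{ad}_{x_{j,0}^-}\colon Y(\Glie)_{\alpha_j}\to Y(\Glie)_0$ is \emph{injective}: the triple $(x_{j,0}^-,\,d_j^{-1}\xi_{j,0},\,d_j^{-1}x_{j,0}^+)$ spans an $sl_2$ acting integrably on $Y(\Glie)$ via the adjoint action, and a nonzero kernel element would be a lowest-weight vector of $sl_2$-weight $2$, impossible in an integrable module. This injectivity plus the recursion determines $A(u)$ uniquely, and one then checks directly that the claimed expression $-2d_{ij}\,x_j^-(u+d_{ij})\xi_i(u)\otimes\xi_i(u)x_j^+(u+d_{ij})$ satisfies the recursion using \eqref{GTL: x+ et x-}--\eqref{GTL: xi et x-}.
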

\begin{proof}
One adapts the zigzag arguments of \cite[Theorem 4.12]{coproduct} to reduce to the case $\mu = \nu = 0$, as in the proof of Lemma \ref{lem: coproduct estimation}. For $\alpha \in \BQ$, let $\pi_{\alpha}$ denote the projection of $Y(\Glie)$ onto the weight space $Y(\Glie)_{\alpha}$. It suffices to prove for $i, j \in I$:
\begin{equation} \label{equ: coproduct xi} 
(\pi_{-\alpha_j} \otimes \pi_{\alpha_j}) \circ \Delta (\xi_i(u)) = - 2d_{ij} x_j^-(u+d_{ij}) \xi_i(u) \otimes \xi_i(u) x_j^+(u+d_{ij}).
\end{equation}
The strategy is to produce a system of linear equations which will have a unique solution, given by both sides of this equation. Let 
$$A(u) := \sum_{p\geq -1} A_p u^{-p-1}$$ 
denote the power series at the left-hand side. We view the coefficients $A_p$ as elements in $ Y(\Glie)_{-\alpha_j} \otimes Y(\Glie)_{\alpha_j}$. From Eq.\eqref{eq: coproduct Yangian} we get $A_{-1} = A_0 = 0$.

Next, applying $\Delta$ to the second formula of $[\xi_i(u), x_{j,0}^-]$ from Eq.\eqref{GTL: xi et x-}, taking into account $\Delta(x_{j,0}^-) = 1 \otimes x_{j,0}^- + x_{j,0}^- \otimes 1$ and Lemma \ref{lem: coproduct estimation}:
\begin{align*}
\Delta(x_j^-(u)) \equiv 1 \otimes x_j^-(u) + x_j^-(u) \otimes \xi_j(u) \ \mathrm{mod}.\ \sum_{h(\beta) > 0} Y(\Glie)_{-\beta-\alpha_j} \otimes Y(\Glie)_{\beta},
\end{align*}
after projection onto the weight space of bi-weight $(-\alpha_j,0)$ we obtain:
\begin{align*}
& [\xi_i(u)\otimes \xi_i(u), x_{j,0}^- \otimes 1] + [A(u), 1 \otimes x_{j,0}^-] \\
& = -2d_{ij} (1 \otimes x_j^-(u+d_{ij})) A(u) -2d_{ij} x_j^-(u+d_{ij}) \xi_i(u) \otimes \xi_j(u+d_{ij}) \xi_i(u).
\end{align*}
Making use of Eqs.\eqref{GTL: x+ et x-}--\eqref{GTL: xi et x-} we simplify the equality as follows:
\begin{multline}  \label{recursion}
\quad \quad [A(u), 1 \otimes x_{j,0}^-] + (2d_{ij}\otimes x_j^-(u+d_{ij})) A(u) \\
= -2d_{ij} x_j^-(u+d_{ij}) \xi_i(u) \otimes \langle\xi_j(u+d_{ij})\rangle_+ \xi_i(u).
\end{multline}
Eq.\eqref{recursion} forms a linear system whose unknown variables are the $A_p$ for $p \geq -1$. It expresses $[A_p, 1 \otimes x_{j,0}^-]$ in terms of the $A_m$ for $m < p$.  Therefore, the system has a unique solution provided that the following linear map is injective:
$$Y(\Glie)_{-\alpha_j} \otimes Y(\Glie)_{\alpha_j} \longrightarrow Y(\Glie)_{-\alpha_j} \otimes Y(\Glie)_0, \quad a \mapsto [a, 1 \otimes x_{j,0}^-]. $$
This map is the restriction of $-\mathrm{Id} \otimes \mathrm{ad}_{x_{j,0}^-}$. It suffices to establish the injectivity of $\mathrm{ad}_{x_{j,0}^-}$ restricted to $Y(\Glie)_{\alpha_j}$. Note that $x_{j,0}^{-}, \frac{1}{d_j} \xi_{j,0}, \frac{1}{d_j} x_{j,0}^+$ span a sub-Lie-algebra of $Y(\Glie)$ isomorphic to $sl_2$. The adjoint action of $sl_2$ on $Y(\Glie)$ is integrable by the Serre relation \eqref{rel: Serre}. If $w \in \ker (\mathrm{ad}_{x_{j,0}^-}) \cap Y(\Glie)_{\alpha_j}$ is nonzero, then $w$ is a vector of lowest weight $\frac{1}{d_j} (\alpha_j,\alpha_j) = 2$, contradicting the integrable representation theory of $sl_2$. 

It remains to show that the right-hand side is a solution to Eq.\eqref{recursion}. This follows from Eqs.\eqref{GTL: x+ et x-}--\eqref{GTL: xi et x-} and commutativity of the $\xi_i(u)$.
\end{proof}
When $\Glie = sl_2$, Lemma \ref{lem: coproduct estimation refine} agrees  with the term $k = 1$ of the coproduct formula of $\Delta(h(u))$ in \cite[Definition 2.24]{Molev} and \cite[(6.9)]{coproduct}.

\subsection{Highest diagonal entry} 
Let $V$ be an irreducible module in category $\BGG^{sh}$ and $W$ be a negative module as in Definition \ref{def: diagonal entries}. We identify the highest diagonal entry $s_{V,W}(u)$ with R-matrices of Proposition \ref{prop: one-dim R+}. When $V$ is finite-dimensional, this leads to a formula for the polynomial $\lambda_{V,W}(u)$ in terms of $\ell$-weights of $V$ and $W$.

\begin{prop} \label{prop: highest diagonal entry}
Let $\Br = \Psi_{i_1,a_1} \Psi_{i_2,a_2} \cdots \Psi_{i_N,a_N} \in \CD$ and $V$ be a highest $\ell$-weight irreducible module over $Y_{\nu}(\Glie)$.  Then the highest diagonal entry of $\check{R}_{V,L(\Br^{-1})}(u)$ is 
$$ s_{V,L(\Br^{-1})}(u) = R_{i_1}^V(a_1-u) R_{i_2}^V(a_2-u) \cdots R_{i_N}^V(a_N-u). $$
\end{prop}
\begin{proof}
Write $W = L(\Br^{-1})$ and $\omega = \omega_{\Br^{-1}}$. Chose a highest $\ell$-weight vector $v_0$ of $V$. 
Since both sides are linear maps from $V$ to $V \otimes \BC[u]$, it suffices to prove the equality specialized at an arbitrary complex number $a \in \BC$. 

Set $v = v_0$ in Eq.\eqref{matrix coefficient highest}. Since $\check{R}_{V,W}(a)$ sends $v_0 \otimes \omega$ to $\omega \otimes v_0$, we get 
$$ s_{V,W}(a) v_0 = v_0. $$
Next, for $i \in I$ and $v' \in V$, from Eq.\eqref{rel: key} we obtain the following relation in the module $V(a) \otimes W$:
$$ x_i^-(u) v' \otimes \omega =  \langle \Br_i(u) x_i^-(u)\rangle_+ (v' \otimes \omega).  $$ 
Applying the module morphism $\check{R}_{V,W}(a): V(a) \otimes W \longrightarrow W \otimes V(a)$ gives 
\begin{equation*} 
\check{R}_{V,W}(a) (x_i^-(u) v' \otimes \omega) =  \langle \Br_i(u) x_i^-(u)\rangle_+ \check{R}_{V,W}(a) (v' \otimes \omega).
\end{equation*}
We compute the components of $\omega \otimes V(a)$ at both sides. By definition the left-hand side is $\omega \otimes s_{V,W}(a) x_i^-(u) v'$. At the right-hand side, $\check{R}_{V,W}(a)(v' \otimes \omega)$ is $\omega \otimes s_{V,W}(a)v'$ plus a linear combination of vectors in $W_{\gamma} \otimes V(a)$ where $\gamma \in \wt(\omega) + \BQ_-$ and $\gamma \neq \wt(\omega)$. By Lemma \ref{lem: coproduct estimation}, the coproduct of $x_i^-(u)$ is $1 \otimes x_i^-(u)$ plus a linear combinations tensor products of elements of shifted Yangians such that the weight of each first tensor factor is in $\BQ_-\setminus \{0\}$. For weight reason the desired component is 
$$ \omega \otimes \langle \Br_i(u) x_i^-(u)  \rangle_+ s_{V,W}(a) v'.  $$
So we have the following equality in the module $V(a)$:
$$ s_{V,W}(a) x_i^-(u) v' = \langle \Br_i(u) x_i^-(u) \rangle_+ s_{V,W}(a) v'.  $$
In the module $V$ the equality becomes 
$$ s_{V,W}(a) x_i^-(u-a) v' = \langle \Br_i(u) x_i^-(u-a) \rangle_+ s_{V,W}(a) v'. $$
Replacing $u$ by $u+a$, we obtain the following commutation relation in the module $V$:
$$ s_{V,W}(a) x_i^-(u) = \langle \Br_i(u+a) x_i^-(u)\rangle_+ s_{V,W}(a) \quad \mathrm{for}\ i \in I. $$
Combining with $s_{V,W}(a) v_0 = v_0$ we obtain all the defining properties of the operator $R_{\tau_{-a}(\Br)}^V$ in Eq.\eqref{Baxter: recursion}. Here we recall from Remark \ref{rem: spectral weight} the one-parameter family of group automorphisms $\tau_b: \CL \longrightarrow \CL$ for $b \in \BC$. By uniqueness $s_{V,W}(a) = R_{\tau_{-a}(\Br)}^V$ and 
$$ s_{V,W}(a) = R_{\Psi_{i_1,a_1-a}\Psi_{i_2,a_2-a} \cdots \Psi_{i_N,a_N-a}}^V = R_{i_1}^V(a_1-a) R_{i_2}^V(a_2-a) \cdots R_{i_N}^V(a_N-a). $$ 
This is exactly the equality of the proposition evaluated at $u = a$.
\end{proof}
Proposition \ref{prop: highest diagonal entry} implies that $R_i^V(-u)$ is the highest diagonal entry of $\check{R}_{V, L_{i,0}^-}(u)$. This is dual to the Borel situation of Remark \ref{rem: transfer matrix} (ii).
\begin{theorem}  \label{thm: truncation polynomial}
Let $\Bs \in \CD$ and $V$ be a finite-dimensional irreducible module in category $\BGG^{sh}$. Write the ratio of the highest $\ell$-weight to the lowest $\ell$-weight of $V$ as a monomial of the $A_{j,b}$, and replace each $A_{j,b}$ with the polynomial $\Bs_j(u+b)$, then we obtain $\lambda_{V,L(\Bs^{-1})}(u)$. In particular, the polynomial $\lambda_{V,L(\Bs^{-1})}(u) \in \BC[u]$ is monic.
\end{theorem}
\begin{proof}
Setting $v$ in Eq.\eqref{matrix coefficient highest} to be a lowest $\ell$-weight vector $v_-$ of $V$, and $w = \omega_{\Bs^{-1}}$ in Eq.\eqref{matrix coefficient lowest}, we see that $\lambda_{V,L(\Bs^{-1})}(u)$ is also the eigenvalue of the highest diagonal entry $s_{V,L(\Bs^{-1})}(u): V \longrightarrow V \otimes \BC[u]$ associated to the lowest $\ell$-weight vector $v_-$ of $V$.

Let $\prod_{i\in I} \prod_{t=1}^{h_i} A_{i,a_{i,t}}$ be the ratio of highest $\ell$-weight to lowest $\ell$-weight of $V$. By Proposition \ref{prop: highest diagonal entry}, each factor $u-b$ of the polynomial $\Bs_i(u)$ gives rise to a factor $R_i^V(b-u)$ of $s_{V,L(\Bs^{-1})}(u)$. The eigenvalue of $R_i^V(b-u)$ associated to $v_-$ is $\prod_{t=1}^{h_i}(a_{i,t}-b+u)$ by Proposition \ref{prop: l-weight R-matrix+} (ii). Each component $\Bs_i(u)$ of $\Bs$ gives rise to a factor $\prod_{t=1}^{h_i} \Bs_i(u+a_{i,t})$ of the eigenvalue of $s_{V,L(\Bs^{-1})}(u)$ . After taking product over all $i \in I$, we get 
$$ \lambda_{V,L(\Bs^{-1})}(u) = \prod_{i\in I} \prod_{t=1}^{h_i} \Bs_i(u+a_{i,t}). $$
\end{proof}

\subsection{Lowest diagonal entry}
We express the lowest diagonal entry $t_{V,W}(u)$ from Definition \ref{def: diagonal entries} in terms of one-dimensional R-matrices from Proposition \ref{prop: one-dim R+}, assuming that $V$ is a finite-dimensional irreducible  module over the ordinary Yangian. The idea is to reduce to the case of a fundamental module by a fusion construction of R-matrices, which appeared for example in \cite[Corollary 5.5]{H2}.

\begin{theorem} \label{thm: TQ pre R}
Let $W$ be a negative module, and $V$ be a finite-dimensional irreducible $Y(\Glie)$-module whose lowest $\ell$-weight is 
$$ Y_{i_1,a_1+\frac{1}{2}d_{i_1}}^{-1} Y_{i_2,a_2+\frac{1}{2}d_{i_2}}^{-1} \cdots Y_{i_m,a_m+\frac{1}{2}d_{i_m}}^{-1}.  $$
Then we have the following equality in $\mathrm{Hom}(W, W \otimes \BC[u])$:
\begin{equation}  \label{TQ pre R}
 t_{V,W}(u) \prod_{s=1}^m R_{i_s}^W(u+a_s) = \lambda_{V,W}(u) \prod_{s=1}^m R_{i_s}^W(u+a_s+d_{i_s}).
\end{equation}
Moreover, $t_{V,W}(u)$ is an $\mathrm{End} (W)$-valued monic polynomial of degree $\deg \lambda_{V,W}(u)$.
\end{theorem}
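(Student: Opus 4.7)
My plan is to reduce to the case where $V$ is a single fundamental module, by a fusion argument for R-matrices, identify the lowest diagonal entry with a normalized GKLO series in that case, and then combine with the additive difference equation from Proposition~\ref{prop: l-weight R-matrix+}(i). As a first step, I would establish the multiplicativity
$$\check R_{U_1 \otimes U_2, W}(u) = (\check R_{U_1, W}(u) \otimes 1_{U_2}) \circ (1_{U_1} \otimes \check R_{U_2, W}(u))$$
of the R-matrix on the left factor, valid for finite-dimensional $Y(\Glie)$-modules $U_1, U_2$ and the negative module $W$. Both sides are module morphisms sending $\omega_1 \otimes \omega_2 \otimes \omega_W$ to $\omega_W \otimes \omega_1 \otimes \omega_2$ (associativity holds since $U_1, U_2$ come from the unshifted Yangian, by Theorem~\ref{thm: coproduct}), so they agree by uniqueness, Theorem~\ref{thm: R-matrix} extended via Theorem~\ref{thm: cyclicity cocyclicity}.

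By Theorem~\ref{thm: normalized R fin dim}(i) and Lemma~\ref{lem: lowest weight fund}, the prescribed $\lwt(V)$ forces an embedding $V \hookrightarrow T := V_{i_1}(a_1) \otimes \cdots \otimes V_{i_m}(a_m)$ sending the lowest $\ell$-weight vector $v_-^V$ to $v_{-,1} \otimes \cdots \otimes v_{-,m}$. Functoriality of $\check R_{-,W}(u)$ under this embedding, combined with iterated multiplicativity on the tensor product of lowest vectors, yields
$$t_{V,W}(u) = t_{V_{i_1}(a_1), W}(u)\, t_{V_{i_2}(a_2), W}(u) \cdots t_{V_{i_m}(a_m), W}(u),$$
and similarly $\lambda_{V,W}(u) = \prod_s \lambda_{V_{i_s}(a_s), W}(u)$ by the multiplicativity of highest-to-lowest $\ell$-weight ratios and Theorem~\ref{thm: truncation polynomial}.

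The main obstacle is to prove, for each fundamental $V_i$, the identification
$$t_{V_i, W}(u) = \lambda_{V_i, W}(u)\, \overline A_i(u),$$
which is point (iii) of the introduction. My approach is to characterize both operators via (a) their common scalar action on the highest $\ell$-weight vector $\omega$ of $W$, which equals $\lambda_{V_i, W}(u)$ on both sides using Theorem~\ref{thm: truncation polynomial} and the normalization $\overline A_i(u) = g_i(u)^{-1} A_i(u)$, and (b) their commutation relations with the generators $x_{j,n}^-$ acting on $W$; on the $\overline A_i$-side these come from Lemma~\ref{lem: affine coweight}, and for $t_{V_i, W}$ they are extracted from the intertwining property of $\check R_{V_i, W}(u)$ applied to $v_-^{V_i} \otimes w$, with Example~\ref{example: V W} handling the $x_j^-(u)$-action on the lowest vector and the refined coproduct estimation of Lemma~\ref{lem: coproduct estimation refine} controlling the correction terms from $\xi_j(u)$ in the coproduct. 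Matching these forces both operators to coincide.

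Substituting the fundamental-case identification with spectral shifts $t_{V_i(a), W}(u) = \lambda_{V_i(a), W}(u)\, \overline A_i(u+a)$ into the factorization above, then commuting unlike-index $\overline A_j$'s past $R_i^W$'s (verified by their explicit commutations with $x_{k,n}^-$ from Lemma~\ref{lem: affine coweight} and \eqref{rel: T x-}) and applying $\overline A_i(u) R_i^W(u) = R_i^W(u+d_i)$ from Proposition~\ref{prop: l-weight R-matrix+}(i), yields the stated functional equation. For polynomiality and the degree claim, Proposition~\ref{prop: polynomiality R-} gives that $t_{V,W}(u)$ is polynomial; on each weight space $W_\beta$ (with $\Be$ the highest $\ell$-weight of $W$), Proposition~\ref{prop: one-dim R+}(ii) shows each $R_{i_s}^W(u+c)|_{W_\beta}$ is a polynomial of degree $h_s := \langle \varpi_{i_s}^\vee, \varpi(\Be)-\beta\rangle$ with leading term $(-1)^{h_s} u^{h_s}\, \mathrm{Id}$ independent of $c$, so that $\prod_s R_{i_s}^W(u+a_s)|_{W_\beta}$ and $\prod_s R_{i_s}^W(u+a_s+d_{i_s})|_{W_\beta}$ are polynomials of equal degree with matching leading coefficient and their ratio tends to $\mathrm{Id}$ as $u \to \infty$. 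Combined with monicness of $\lambda_{V,W}(u)$ from Theorem~\ref{thm: truncation polynomial}, this forces $t_{V,W}(u)|_{W_\beta}$ to be monic of degree $\deg \lambda_{V,W}(u)$.
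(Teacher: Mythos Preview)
Your plan follows the paper's proof closely: fusion reduction to fundamentals via an embedding $V\hookrightarrow V_{i_1}(a_1)\otimes\cdots\otimes V_{i_m}(a_m)$, factorization $t_{V,W}(a)=\prod_s t_{V_{i_s},W}(a+a_s)$, and then in the fundamental case an identification $t_{V_i,W}=\lambda_{V_i,W}\,\overline A_i$ by matching the action on $\omega$ and the commutation with the $x_{j,n}^-$.

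The one step you underspecify is precisely the most delicate one. Applying the intertwining property of $\check R_{V_i,W}(a)$ to $x_j^-(u)(v_-\otimes w)$ and projecting to $W\otimes v_-$ does \emph{not} give a closed commutation relation for $D_a:=t_{V_i,W}(a)$ alone: one finds
\[
D_a\,x_i^-(u)=\frac{u-a-d_i}{u-a}\,x_i^-(u)\,D_a+\frac{d_i}{u-a}\,C_a,
\]
where $C_a=t_{v_+,v_-}^W(a)$ is the off-diagonal entry associated to the weight vector $v_+=x_{i,0}^+v_-$. To eliminate $C_a$, the paper passes to $\tilde D_a:=R_i^W(a)D_a$ and uses Lemma~\ref{lem: coproduct estimation refine} (applied to the $\xi_j$-intertwining relation, projected onto $W\otimes v_+$) to show that the combination $\tilde E_a:=R_i^W(a)C_a+x_{i,0}^-\tilde D_a$ shifts $\ell$-weights by $A_{i,a}^{-1}$. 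A genericity argument---restricting to $a$ outside the countable set $\Gamma$ such that $A_{i,a}^{-1}$ never appears in $\nqc(W)$---then forces $\tilde E_a=0$, after which $\tilde D_a$ satisfies exactly the recursion \eqref{Baxter: recursion} characterizing $\lambda_a R_i^W(a+d_i)$. Polynomiality in $a$ extends the identity to all $a$. Your outline has all the right ingredients (Example~\ref{example: V W}, Lemma~\ref{lem: coproduct estimation refine}, Proposition~\ref{prop: l-weight R-matrix+}), but the appearance of $C_a$ and the genericity step killing it should be made explicit: without them the commutation relations of $t_{V_i,W}$ and $\lambda_{V_i,W}\,\overline A_i$ do not match on the nose.
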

\begin{proof}
Let $\omega$ denote a highest $\ell$-weight vector of the module $W$ defined over $Y_{\mu}(\Glie)$. Since $W$ is an irreducible module in category $\BGG_{\mu}$, by Proposition \ref{prop: l-weight R-matrix+} (ii) there exists a countable subset $\Gamma \subset \BC$ such that the normalized q-character of $W$ is power series in the $A_{j,b}^{-1}$ with $(j,b) \in I \times \Gamma$. Notably, $a \in \BC \setminus \Gamma$ and $\Bf \in \lwt(W)$ imply $ A_{i,a}^{-1} \Bf \notin \lwt(W)$. 

Restricted to each weight space $W_{\beta}$ of $W$, by Propositions \ref{prop: polynomiality R-} and \ref{prop: one-dim R+}, $R_i^W(-u)$ is a monic polynomial of $u$ and $t_{V,W}(u)$ is a polynomial. Eq.\eqref{TQ pre R} implies that the degree of $t_{V,W}(u)|_{W_{\beta}}$ is $\deg \lambda_{V,W}(u)$, which is independent of the weight $\beta$. This proves the second part of the theorem assuming Eq.\eqref{TQ pre R}.
By polynomiality, it suffices to prove Eq.\eqref{TQ pre R} evaluated at $u = a$ for $a \in \BC$  such that $a+a_s \notin \Gamma$ for all $1\leq s \leq m$. 
 
 \medskip

\noindent {\it Step 1: reduction to the fundamental case}. For $1\leq s \leq m$, let $v_s$ and $v_-^s$ denote a highest $\ell$-weight vector and a lowest $\ell$-weight vector of the fundamental module $V_{i_s}$ of Eq.\eqref{def: fund module}. By Theorem \ref{thm: normalized R fin dim} (i), one may assume, after a permutation of the pairs $(i_s,a_s)$, that $V$ is the irreducible submodule of the following tensor product
$$ T := V_{i_1}(a_1) \otimes V_{i_2}(a_2) \otimes \cdots \otimes V_{i_m}(a_m) $$
generated by $v_0 := v_1 \otimes v_2 \otimes \cdots \otimes v_m$ and $v_- := v^1_- \otimes v_-^2  \otimes \cdots \otimes v_-^m$. In particular, $v_0$ and $v_-$ are highest $\ell$-weight vector and lowest $\ell$-weight vector of $V$.

From the fact that $\tau_b$ is a Hopf algebra automorphism of $Y(\Glie)$ and from the equation $\tau_b \tau_c  = \tau_{b+c}$, for $b, c \in \BC$, we get an identification of modules
$$ T(a) = V_{i_1}(a_1+a) \otimes V_{i_2}(a_2+a) \otimes \cdots \otimes V_{i_m}(a_m+a). $$
Consider the following composite map
\begin{align*}
R(a) =  \prod_{s=1}^m (1^{\otimes s-1} \otimes \check{R}_{V_{i_s},W}(a+a_s) \otimes 1^{\otimes m-s}): T(a) \otimes W \longrightarrow W \otimes T(a).
\end{align*}
Since all modules are defined over antidominantly shifted Yangians, it follows from the monoidality of category $\BGG_-^{sh}$ in Remark \ref{rem: monoidal} that $R(a)$ is a module morphism from $T(a) \otimes W$ to $W \otimes T(a)$. It restricts to a module morphism from $V(a) \otimes W$ to $W \otimes V(a)$ because the former is generated by the highest $\ell$-weight vector $v_0 \otimes \omega$ by Theorem \ref{thm: cyclicity cocyclicity} (iii) and $R(a)$ sends $ v_0 \otimes \omega$ to $\omega \otimes v_0$ by definition of the $\check{R}_{V_{i_s},W}(a+a_s)$. From uniqueness of R-matrix in Theorem \ref{thm: R-matrix}, we obtain 
$$R(a)|_{V(a) \otimes W} = \check{R}_{V,W}(a). $$ 
For $w \in W$, by definition $t_{V,W}(a) w \otimes v_-$ is the projection to $W \otimes v_-^1 \otimes v_-^2 \otimes \cdots \otimes v_-^m$ of the vector $R(a) (v_-^1 \otimes v_-^2 \otimes \cdots \otimes v_-^m \otimes w)$, which by the factorization of $R(a)$ is 
$$t_{V_{i_1},W}(a+a_1)  t_{V_{i_2},W}(a+a_2)  \cdots t_{V_{i_m}, W}(a+a_m) w \otimes v_-^1 \otimes v_-^2 \otimes \cdots \otimes v_-^m. $$
We obtain therefore a factorization:
$$t_{V,W}(a) = t_{V_{i_1},W}(a+a_1)  t_{V_{i_2},W}(a+a_2)  \cdots  t_{V_{i_m}, W}(a+a_m) \in \mathrm{End} (W).  $$
We are reduced to compute the lowest diagonal entry $t_{V_i,W}(u)$ for a fixed $i \in I$. Assume from now on that $m = 1,\ (i_1,a_1) = (i,0)$ and $a \in \BC \setminus \Gamma$. 

 By Lemma \ref{lem: lowest weight fund}, the finite-dimensional $Y(\Glie)$-module $V$ contains a $\Glie$-submodule of lowest weight $-\varpi_i$ and $V_{-\varpi_i} = \BC v_-$.  By Weyl group symmetry $v_+ := x_{i,0}^+ v_-$ spans the weight space $V_{\alpha_i-\varpi_i}$. Complete $v_{\pm}$ to a weight basis $\mathcal{B}_V$ of $V$. If $v \in \mathcal{B}_V \setminus \{v_-, v_+\}$, then $0 \neq \wt(v) - \alpha_i+\varpi_i \in \BQ_+$. Since $V$ is defined over the ordinary Yangian, by Remark \ref{rem: spectral weight} it has the same weight grading as $V(a)$. We have the following relations in the module $V(a)$ similar to Example \ref{ex: two-dim}:
\begin{gather}  \label{fund module action}
\begin{split}
x_j^+(u) v_- &= \frac{\delta_{ij}}{u-a} v_+,\quad \xi_j(u) v_+ = \frac{u-a-d_i \delta_{ij}}{u-a} \frac{u-a+d_{ij}}{u-a-d_{ij}} v_+, \\
x_j^-(u) v_+ &= \frac{d_i \delta_{ij}}{u-a} v_-, \quad \xi_j(u) v_- = \frac{u-a-d_i \delta_{ij}}{u-a} v_-.
\end{split}
\end{gather} 
From the paragraph above Definition \ref{def: diagonal entries}, we get a family of linear operators $f_v(u): W \longrightarrow W\otimes \BC[u]$ for $v \in \mathcal{B}_V$ such that for $a \in \BC$ and $w \in W$ we have
$$ \check{R}_{V,W}(a) (v_- \otimes w) = \sum_{v \in \mathcal{B}_V} f_v(a) w \otimes v $$ 
In particular $f_{v_-}(u) = t_{V,W}(u)$ by Eq.\eqref{matrix coefficient highest}. To simplify notations, write
\begin{gather*}
R_a := \check{R}_{V,W}(a), \quad \lambda_a  := \lambda_{V,W}(a), \quad  C_a := f_{v_+}(a), \quad D_a := f_{v_-}(a), \\
\tilde{C}_a := R_{\Psi_{i,a}}^W C_a,\quad \tilde{D}_a := R_{\Psi_{i,a}}^W D_a,\quad \tilde{E}_a := \tilde{C}_a + x_{i,0}^- \tilde{D}_a. 
\end{gather*}
Our goal is to show that for $a \in \BC \setminus \Gamma$ we have $\tilde{D}_a = \lambda_a R_{\Psi_{i,a+d_i}}^W$ as linear operators on $W$. By polynomiality, this would imply that $R_i^W(u) t_{V,W}(u) = \lambda_{V,W}(u) R_i^W(u+d_i)$. From Proposition \ref{prop: one-dim R+} we get that the operators $t_{V,W}(u), R_i^W(u)$ for $i \in I$ acting on $W$ mutually commute, so that the order of the products in Eq.\eqref{TQ pre R} does not matter.

\medskip

\noindent {\it Step 2: projection formulas}. The $Y(\Glie)$-modules $V$ and $V(a)$ have the same weight grading. With respect to the weight basis $\mathcal{B}_V$ of $V(a)$, let 
$$\phi: W \otimes V(a) \longrightarrow W\text{ and }\psi: W \otimes V(a) \longrightarrow W$$ denote linear maps which send $\sum_{v\in \mathcal{B}_V} g_v \otimes v$ to $g_{v_-}$ and to $g_{v_+}$ respectively.
We apply these maps to the following relations in $W\otimes V(a)$ for $j \in I$ and $w\in W$:
\begin{align*}
   R_a x_j^-(u) (v_- \otimes w) &= x_j^-(u) R_a(v_- \otimes w), \quad  R_a \xi_j(u) (v_- \otimes w)  = \xi_j(u) R_a(v_- \otimes w).
\end{align*}
By Example \ref{example: V W}, the left-hand sides of the above equations are $R_a(v_- \otimes x_j^-(u) w)$ and $R_a(\xi_j(u) v_- \otimes \xi_j(u) w)$ respectively.
Based on the coproduct estimations of $\Delta_{\mu,0}(x_j^-(u))$ from Lemma \ref{lem: coproduct estimation} and of $\Delta_{\mu,0}(\xi_j(u))$ from Lemma \ref{lem: coproduct estimation refine}, we have
\begin{align*}
   \phi(R_a x_j^-(u) (v_- \otimes w)) &= \phi(R_a(v_- \otimes x_j^-(u) w)) = D_a x_j^-(u) w, \\
    \phi( x_j^-(u) R_a(v_- \otimes w)) &= \phi(x_j^-(u)(\sum_{v\in \mathcal{B}_V} f_v(a)w \otimes v)) \\
    &= \phi(x_j^-(u) D_a(w) \otimes \xi_j(u) v_-) + \phi(C_a(w) \otimes x_j^-(u) v_-) \\
    &= \frac{u-a-d_i \delta_{ij}}{u-a} x_j^-(u) D_a(w) + \frac{d_i \delta_{ij}}{u-a} C_a(w), \\ 
    \phi(R_a \xi_j(u) (v_- \otimes w)) &= \phi(R_a(\xi_j(u) v_- \otimes \xi_j(u) w)) = \frac{u-a-d_i\delta_{ij}}{u-a} D_a \xi_j(u) w, \\
    \phi(\xi_j(u) R_a (v_- \otimes w)) &= \phi(\xi_j(u)(\sum_{v\in \mathcal{B}_V} f_v(a)w \otimes v)) \\
    &= \phi(\xi_j(u) D_a(w) \otimes \xi_j(u)v_-) = \frac{u-a-d_i\delta_{ij}}{u-a} \xi_j(u) D_a (w), \\
    \psi(R_a \xi_j(u) (v_- \otimes w)) &=  \psi(R_a(\xi_j(u) v_- \otimes \xi_j(u) w)) = \frac{u-a-d_i\delta_{ij}}{u-a} C_a \xi_j(u) w,\\
    \psi(\xi_j(u) R_a (v_- \otimes w)) &= \psi(\xi_j(u)(\sum_{v\in \mathcal{B}_V} f_v(a)w \otimes v)) = \psi(\xi_j(u)C_a(w) \otimes \xi_j(u) v_+) \\
    &\qquad - 2d_{ij} \psi( x_i^-(u+d_{ij}) \xi_j(u) D_a(w) \otimes \xi_j(u) x_i^+(u+d_{ij}) v_-) \\
    &= \frac{u-a-d_i\delta_{ij}}{u-a}  \frac{u-a+d_{ij}}{u-a-d_{ij}}\xi_j(u) C_a(w) \\
    &\quad -\frac{u-a-d_i\delta_{ij}}{u-a} \frac{2d_{ij}}{u-a-d_{ij}} x_i^-(u+d_{ij}) \xi_j(u) D_a(w). 
\end{align*}

\medskip

\noindent {\it Step 3: commutation relations}. It follows from the projection formulas that 
\begin{align}  
D_a x_j^-(u) &=  \frac{u-a-d_i\delta_{ij}}{u-a} x_j^-(u) D_a + \frac{d_i\delta_{ij}}{u-a} C_a,  \label{1} \\
D_a \xi_j(u) &= \xi_j(u) D_a, \label{2} \\
C_a \xi_j(u) &= \frac{u-a+d_{ij}}{u-a-d_{ij}}\xi_j(u) C_a - \frac{2d_{ij}}{u-a-d_{ij}} x_i^-(u+d_{ij}) \xi_j(u) D_a. \label{3}
\end{align}
Notice that $x_i^-(u+d_{ij}) \xi_j(u) = \xi_j(u) x_i^-(u-d_{ij})$ as a rewriting of the second half Eq.\eqref{GTL: xi et x-}. Left multiplying Eq.\eqref{3} by $\xi_j(u)^{-1}$, we obtain
\begin{equation*}
    \xi_j(u)^{-1} C_a \xi_j(u) = \frac{u-a+d_{ij}}{u-a-d_{ij}}C_a - \frac{2d_{ij}}{u-a-d_{ij}} x_i^-(u-d_{ij})D_a.
\end{equation*}
Left multiplying the above equation by $R_{\Psi_{i,a}}^W$, which commutes with $\xi_j(u)$, we get 
\begin{align*}
\xi_j(u)^{-1} \tilde{C}_a \xi_j(u) &=  \frac{u-a+d_{ij}}{u-a-d_{ij}} \tilde{C}_a - \frac{2d_{ij}}{u-a-d_{ij}} \langle (u-a-d_{ij}) x_i^-(u-d_{ij}) \rangle_+ \tilde{D}_a \\
&= \frac{u-a+d_{ij}}{u-a-d_{ij}} (\tilde{C}_a + x_{i,0}^- \tilde{D}_a) - 2 d_{ij} x_i^-(u-d_{ij}) \tilde{D}_a - x_{i,0}^- \tilde{D}_a.
\end{align*}
Combining with Eq.\eqref{2} and the  relation $\xi_j(u)^{-1} x_{i,0}^- \xi_j(u) = x_{i,0}^- + 2 d_{ij} x_i^-(u-d_{ij})$,
which rewrites the first half of Eq.\eqref{GTL: xi et x-}, we obtain
\begin{equation} \label{CD auxiliary}
\xi_j(u)^{-1} \tilde{E}_a \xi_j(u) = \frac{u-a+d_{ij}}{u-a-d_{ij}} \tilde{E}_a.
\end{equation}

Recall that $a \in \BC \setminus \Gamma$. Let $w \in W$ be a vector of $\ell$-weight $\Bf$.  If the vector $\tilde{E}_a(w)$ is nonzero, then it is of $\ell$-weight $ A_{i,a}^{-1}\Bf$ by Eqs.\eqref{CD auxiliary} and \eqref{def: simple root}, contradicting our choice of $\Gamma$. So $\tilde{E}_a(w) = 0$ for all $\ell$-weight vectors $w \in W$ and $\tilde{E}_a  = 0$.

Let us multiply Eq.\eqref{1} by $R_{\Psi_{i,a}}^W$. Making use of the defining properties Eq.\eqref{Baxter: recursion}, we recover all the defining properties of $\lambda_a R_{\Psi_{i,a+d_i}}^W$:
\begin{gather*}
 \tilde{D}_a(\omega) = \lambda_a\omega,\quad  \tilde{D}_a x_j^-(u) = x_j^-(u) \tilde{D}_a \quad \mathrm{for}\ j \neq i, \\
\tilde{D}_a x_i^-(u) = \langle (u-a-d_i) x_i^-(u) \rangle_+ \tilde{D}_a + \frac{d_i}{u-a}\tilde{E}_a = \langle (u-a-d_i) x_i^-(u) \rangle_+ \tilde{D}_a. 
\end{gather*}
This proves the desired identity $\tilde{D}_a = \lambda_a R_{\Psi_{i,a+d_i}}^W$ for $a \in \BC \setminus \Gamma$. 
\end{proof}

\begin{example}  \label{ex: R pre sl2}
Let $\Glie = sl_2$. Consider $\check{R}_{V,W}(a)$ with $V = N_0$ and $W = L_0^-$. Example \ref{example: polynomiality sl2} gives $\lambda_{V,W}(u) = u$. Furthermore, $v_i$ is an eigenvector of $R_1^W(u)$ of eigenvalue $ -u(-u-1)(-u-2) \cdots (-u-i+1)$ by Proposition \ref{prop: l-weight R-matrix+} (ii) and Example \ref{ex: - prefund sl2}. From Eq.\eqref{TQ pre R} and its proof we obtain that 
\begin{align*}
 D_a(v_i) &= a \frac{R_1^W(a+1)}{R_1^W(a)} v_i = a \frac{(a+1)(a+2)\cdots (a+i)}{a(a+1) \cdots (a+i-1)} v_i = (a+i) v_i, \\
C_a(v_i) &= - R_1^W(a)^{-1} x_0^-  R_1^W(a) t_{e_2,e_2}^W(a)  v_i = (i+1) v_{i+1}.
\end{align*}
This gives $\check{R}_{V,W}(a) (e_2 \otimes v_i) = (a+i) v_i \otimes e_2 + (i+1) v_{i+1} \otimes e_1$. Apply $x_0^+$ to the equality and notice that $\Delta_{0,-1}(x_0^+) = x_0^+ \otimes 1+1 \otimes x_0^+$ and $\Delta_{-1,0}(x_0^+) = x_0^+ \otimes 1$:
\begin{gather*}
 x_0^+ (e_2 \otimes v_i) = x_0^+ e_2 \otimes v_i +  e_2 \otimes x_0^+  v_i = e_1 \otimes v_i + e_2 \otimes v_{i-1}, \\
 x_0^+ (v_i \otimes e_2) = x_0^+ v_i \otimes e_2 = v_{i-1} \otimes e_2,  \quad x_0^+ (v_{i+1} \otimes e_1) = x_0^+v_{i+1} \otimes e_1 =v_i \otimes e_1.
\end{gather*}
We obtain from the commutativity of $\check{R}_{V,W}(a)$ with $x_0^+$ that 
\begin{align*}
&  \check{R}_{V,W}(a) (v_i \otimes e_1 + e_2 \otimes v_{i-1})  = (a+i) v_{i-1} \otimes e_2 + (i+1) v_i \otimes e_1, \\
& \check{R}_{V,W}(a)(v_i \otimes e_1) = e_1 \otimes v_i + e_2 \otimes v_{i-1}.
\end{align*}
With respect to the basis $(e_1, e_2)$ of $V$ we get 
$$ \check{R}_{V,W}(u) = \begin{pmatrix}
1 & \mathbf{a}^+ \\
\mathbf{a}^- &  u + \mathbf{a}^+ \mathbf{a}^-
\end{pmatrix} \quad \mathrm{where}\ \begin{cases} 
\mathbf{a}^+(v_i) = (i+1)v_{i+1}, \\
\mathbf{a}^-(v_i) = v_{i-1}.
\end{cases} $$
This is a monodromy matrix of Baxter's Q-operator for $Y(gl_2)$; see \cite[(3.38)]{B}. The Yang--Baxter equation \cite[(3.1)]{B} is a particular case of Eq.\eqref{YBE}.
\end{example}
In Definition \ref{def: diagonal entries}, we think of $\check{R}_{V,W}(u)$ as a monodromy matrix. Taking a suitable trace over $V$ gives a transfer matrix acting on $W$, and $t_{V,W}(u)$ is a leading term of the transfer matrix. We expect Eq.\eqref{TQ pre R} to be a leading term of generalized Baxter's relations for transfer matrices \cite[Theorem 5.11]{FH}.   
\section{Truncations of standard modules}\label{sectrunc}
In this section, we prove that any standard module (and so any irreducible module) in category $\BGG^{sh}$ factorizes through a truncated shifted Yangian (Theorem \ref{thm: truncation standard}). Our proof is uniform for all finite types (see Introduction for a discussion of known results).

Recall from Eq.\eqref{def: fund module} and the paragraph above the involution $i \mapsto \overline{i}$ on $I$, the rational number $\kappa$ and the fundamental module $V_i$. Recall from Definition \ref{def: diagonal entries} and Theorem \ref{thm: truncation polynomial} the monic polynomial $\lambda_{V,W}(u)$.
\begin{defi}  \label{def: truncation pref}
Let $\Br \mapsto \tilde{\Br}$ denote the group automorphism on $\CR$  which sends each generator $\Psi_{i,a}$ to $\Psi_{\overline{i}, a+\kappa}$.
For $\Bs \in \CD$, define the polynomial $\ell$-weight $(g_i^{\Bs}(u))_{i\in I} \in \CD$ and the rational $\ell$-weight $\overline{\Bs} = (\overline{\Bs}_i(u))_{i\in I} \in \CR$ as follows
\begin{gather*}
g_i^{\Bs}(u) := \lambda_{V_i,L(\Bs^{-1})}(u), \quad
 \overline{\Bs}_i(u) :=  \frac{ g_i^{\Bs}(u) g_i^{\Bs}(u-d_i) }{\Bs_i(u)} \prod_{j: c_{ji} < 0} \prod_{t=1}^{-c_{ji}} \frac{1}{g_j^{\Bs}(u-d_{ij}-t d_j)}.
\end{gather*}
\end{defi}

\begin{example} \label{Example: B2}
We describe the map $\Bs \mapsto \overline{\Bs}$ of Definition \ref{def: truncation pref} for $\Glie$ of type $B_2$. Let $\alpha_1$ be the long root and $\alpha_2$ the short root so that $d_1 = 2,\ d_2 = 1$ and $d_{12} = -1$. The dual Coxeter number is 3 and so $\kappa = 3$. The Dynkin diagram automorphism $i \mapsto \overline{i}$ is identity. We have $V_1 = L(Y_{1,-2})$ and $V_2 = L(Y_{2,-\frac{5}{2}})$. The ratios of highest to lowest $\ell$-weights for $V_1$ and $V_2$ are given by:
\begin{gather*} 
A_{1,0}A_{1,-1}A_{2,0}A_{2,-1}, \quad A_{1,-1}A_{2,0} A_{2,-2}.
\end{gather*}
We obtain from Theorem \ref{thm: truncation polynomial} and Definition \ref{def: truncation pref} that for $\Bs \in \CD$:
\begin{gather*}
g_1^{\Bs}(u) = \Bs_1(u-1) \Bs_1(u) \Bs_2(u-1) \Bs_2(u), \quad g_2^{\Bs}(u) = \Bs_1(u-1) \Bs_2(u-2) \Bs_2(u), \\
\overline{\Bs}_1(u) =  \frac{g_1^{\Bs}(u)g_1^{\Bs}(u-2)}{\Bs_1(u)g_2^{\Bs}(u)g_2^{\Bs}(u-1)} =  \Bs_1(u-3), \quad
\overline{\Bs}_2(u) = \frac{g_2^{\Bs}(u)g_2^{\Bs}(u-1)}{\Bs_2(u)g_1^{\Bs}(u-1)} =  \Bs_2(u-3).
\end{gather*}
Therefore, in type $B_2$ we have $\overline{\Bs} = \tilde{\Bs} \in \CD$.
\end{example}
\begin{example} \label{Example: G2}
Assume $\Glie$ is of type $G_2$. Let $\alpha_1$ be the long root and $\alpha_2$ the short root, so that $d_1 = 3,\ d_2 = 1,$ and $d_{12} = -\frac{3}{2}$. The dual Coxeter number is 4 and so $\kappa = 6$. The Dynkin diagram automorphism is identity. We have $V_1 = L(Y_{1,-\frac{9}{2}})$ and $V_2 = L(Y_{2,-\frac{11}{2}})$. The ratios of highest to lowest $\ell$-weights for $V_1$ and $V_2$ are given by:
$$A_{1,0}A_{1,-1}A_{1,-2} A_{1,-3} A_{2,\frac{1}{2}}A_{2,-\frac{1}{2}}A_{2,-\frac{3}{2}}^2A_{2,-\frac{5}{2}}A_{2,-\frac{7}{2}}, \quad A_{1,-\frac{3}{2}} A_{1,-\frac{7}{2}}  A_{2,0}A_{2,-2} A_{2,-3} A_{2,-5}. $$
As in the previous example, we have for $\Bs \in \CD$:
\begin{align*}
g_1^{\Bs}(u) &= \Bs_1(u) \Bs_1(u-1)\Bs_1(u-2) \Bs_1(u-3) \\
&\quad \times \Bs_2(u+\frac{1}{2}) \Bs_2(u-\frac{1}{2}) \Bs_2(u-\frac{3}{2})^2 \Bs_2(u-\frac{5}{2}) \Bs_2(u-\frac{7}{2}), \\
g_2^{\Bs}(u) &=  \Bs_1(u-\frac{3}{2}) \Bs_1(u-\frac{7}{2}) \Bs_2(u) \Bs_2(u-2) \Bs_2(u-3) \Bs_2(u-5), \\
\overline{\Bs}_1(u) &= \frac{g_1^{\Bs}(u)g_1^{\Bs}(u-3)}{\Bs_1(u) g_2^{\Bs}(u+\frac{1}{2})g_2^{\Bs}(u-\frac{1}{2}) g_2^{\Bs}(u-\frac{3}{2})} = \Bs_1(u-6), \\
\overline{\Bs}_2(u) &= \frac{g_2^{\Bs}(u)g_2^{\Bs}(u-1)}{\Bs_2(u) g_1^{\Bs}(u-\frac{3}{2})} = \Bs_2(u-6).
\end{align*}
Therefore, in type $G_2$ we have $\overline{\Bs} = \tilde{\Bs} \in \CD$.
\end{example}

 
\begin{theorem} \label{thm: truncation standard}
For $\Br, \Bs \in \CD$, the standard module $\CW(\Br,\Bs)$  factorizes through the truncated shifted Yangian $Y_{\varpi^{\vee}(\Bs^{-1}\Br)}^{\Br \overline{\Bs}}(\Glie)$. In particular, any irreducible module in category $\BGG^{sh}$ factorizes through a truncated shifted Yangian.
\end{theorem}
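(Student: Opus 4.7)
The strategy is to reduce to the case $\Br = 1$ and then identify the GKLO series $A_i(u)$ acting on $W := L(\Bs^{-1})$ with the lowest diagonal entry $t_{V_i,W}(u)$, which by Theorem \ref{thm: TQ pre R} is a polynomial operator. Since $L(\Br)$ is one-dimensional of highest $\ell$-weight $\Br \in \CD$, Remark \ref{rem: truncation one-dim} (applied with the one-dimensional module equal to $L(\Br)$ and the auxiliary module equal to $L(\Bs^{-1})$) reduces the problem to showing that $W$ factorizes through $Y_{-\varpi^{\vee}(\Bs)}^{\overline{\Bs}}(\Glie)$.

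For this reduced problem, I would first verify that $(-\varpi^{\vee}(\Bs), \overline{\Bs})$ is truncatable. Using $\alpha_j^{\vee} = \sum_i c_{ji}\varpi_i^{\vee}$ together with a direct degree computation from the formula in Definition \ref{def: truncation pref}, one obtains $\varpi^{\vee}(\overline{\Bs}) + \varpi^{\vee}(\Bs) = \sum_{i\in I} m_i \alpha_i^{\vee}$ with $m_i = \deg g_i^{\Bs}(u) \geq 0$. Substituting $\xi_i(u)\omega = \Bs_i(u)^{-1}\omega$ into \eqref{equ: truncation} and comparing with the formula in Definition \ref{def: truncation pref}, uniqueness of the GKLO series then shows that the eigenvalue of $A_i(u)$ on the highest $\ell$-weight vector $\omega$ is exactly $g_i^{\Bs}(u)$. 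Next, I would apply Theorem \ref{thm: TQ pre R} to the fundamental module $V = V_i$; by Lemma \ref{lem: lowest weight fund} only one factor appears, so \eqref{TQ pre R} reduces to
$$ t_{V_i, W}(u)\, R_i^W(u) \;=\; g_i^{\Bs}(u)\, R_i^W(u + d_i). $$
Combining with the difference equation $R_i^W(u + d_i) = R_i^W(u)\overline{A}_i(u)$ of Proposition \ref{prop: l-weight R-matrix+}(i), and noting that $R_i^W(u)$ commutes with every $\xi_j(v)$ by \eqref{Baxter: affine Cartan} and hence with every coefficient of $A_i(u) \in Y_{-\varpi^{\vee}(\Bs)}^=(\Glie)((u^{-1}))$, this yields
$$ \bigl( t_{V_i, W}(u) - A_i(u) \bigr)\, R_i^W(u) \;=\; 0 $$
as an identity of operators on $W$.

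The crux of the argument, and the main technical obstacle, is the cancellation of $R_i^W(u)$ on the right. Restricted to any weight space $W_\beta$, Proposition \ref{prop: one-dim R+}(ii) asserts that $R_i^W(u)|_{W_\beta}$ is a polynomial operator whose leading term is the scalar $(-u)^{h_i}\mathrm{Id}_{W_\beta}$; its determinant is therefore a nonzero scalar polynomial, so $R_i^W(u)|_{W_\beta}$ is invertible in $\mathrm{End}(W_\beta) \otimes \BC((u^{-1}))$. Right-cancelling gives $t_{V_i, W}(u)|_{W_\beta} = A_i(u)|_{W_\beta}$, hence $t_{V_i, W}(u) = A_i(u)$ on $W$. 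Since $t_{V_i, W}(u)$ is a polynomial operator by Theorem \ref{thm: TQ pre R}, this forces $\langle A_i(u) \rangle_+ = 0$ on $W$ for every $i$, so by Definition \ref{def: truncated shifted Yangians} the module $W$ factorizes through $Y_{-\varpi^{\vee}(\Bs)}^{\overline{\Bs}}(\Glie)$. Remark \ref{rem: truncation one-dim} then upgrades this to the claimed factorization of $\CW(\Br, \Bs)$ through $Y_{\varpi^{\vee}(\Bs^{-1}\Br)}^{\Br\overline{\Bs}}(\Glie)$. Finally, the assertion for an arbitrary irreducible module $L(\Be)$ is obtained by writing $\Be = \Br\Bs^{-1}$ with $\Br, \Bs \in \CD$ (which is always possible, coordinate-wise), realizing $L(\Be)$ as a subquotient of $\CW(\Br, \Bs)$ via Example \ref{example: tensor product highest weight}, and noting that factorization through a truncation is inherited by subquotients.
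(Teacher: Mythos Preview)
Your proposal is correct and follows essentially the same route as the paper's proof: reduce to $\Br=1$ via Remark \ref{rem: truncation one-dim}, verify truncatability of $(-\varpi^{\vee}(\Bs),\overline{\Bs})$, then combine the case $V=V_i$ of Theorem \ref{thm: TQ pre R} with the difference equation \eqref{equ: difference} to identify $A_i(u)$ with the polynomial operator $t_{V_i,W}(u)$, giving $\langle A_i(u)\rangle_+ = 0$ on $W$. Your explicit handling of the cancellation of $R_i^W(u)$ (via commutation with the Cartan and invertibility from the monic leading term) merely spells out what the paper compresses into the phrase ``compare with Eq.\eqref{equ: difference}''; for the final irreducible-module statement the paper invokes Theorem \ref{thm: cyclicity cocyclicity} to realize $L(\Be)$ as a \emph{quotient} of a standard module, whereas you use Example \ref{example: tensor product highest weight} to obtain it as a subquotient, but either suffices since factorization through a quotient algebra is inherited by subquotients.
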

\begin{proof}
We have $\CW(\Br, \Bs) = L(\Br) \otimes L(\Bs^{-1})$ with $L(\Br)$ being one-dimensional. For an irreducible module $L(\Be)$ in category $\BGG^{sh}$, one can write $\Be = \Bn^{-1} \Bm$ with $\Bm, \Bn \in \CD$. Then Theorem \ref{thm: cyclicity  cocyclicity} shows that $L(\Be)$ is a quotient of the standard module $\CW(\Bm,\Bn)$. By Remark \ref{rem: truncation one-dim},  it suffices to show that  $L(\Bs^{-1})$ factorizes through $Y_{-\varpi^{\vee}(\Bs)}^{\overline{\Bs}}(\Glie)$. 

Note that $(-\varpi^{\vee}(\Bs), \overline{\Bs}) \in \BP^{\vee} \times \CR$ is truncatable: $$\varpi^{\vee}(\overline{\Bs}) + \varpi^{\vee}(\Bs) = \sum_{i\in I} \deg(g_i^{\Bs}(u)) \alpha_i^{\vee}.$$
In the situation of Proposition \ref{prop: l-weight R-matrix+} with $W = L(\Bs^{-1})$, we have $g_i(u) = g_i^{\Bs}(u)$ by Definition \ref{def: truncation pref}.  Take $V = V_i$ in Theorem \ref{thm: TQ pre R} and compare with Eq.\eqref{equ: difference}. We get the following equality of $\mathrm{End}(L(\Bs^{-1}))$-valued Laurent series in $u^{-1}$:
\begin{align} \label{equ: GKLO vs lowest diagonal entry}
A_i(u)|_{L(\Bs^{-1})} = t_{V_i,L(\Bs^{-1})}(u) \quad \mathrm{for}\ i \in I. 
\end{align}
 The polynomiality of the lowest diagonal entries in Theorem \ref{thm: TQ pre R} implies in the module $L(\Bs^{-1})$ the defining relation $\langle A_i(u)\rangle_+ = 0$ of the truncated shifted Yangian.
\end{proof}

\begin{rem}   \label{rem: GKLO R-matrix}
A similar identification of GKLO series with matrix entries as in Eq.\eqref{equ: GKLO vs lowest diagonal entry} was given in \cite[Corollary 5.9]{KTWWY0}. As commented in \cite[Remark 5.10]{KTWWY0}, these should be specializations of RTT realizations of shifted Yangians \cite{W}. Some particular cases of such a realization appeared in \cite{BK1,FPT}. Notice from Eq.\eqref{YBE} that our R-matrix $\check{R}_{V,W}(u)$ satisfies an RTT relation when $V$ is a finite-dimensional irreducible module over the ordinary Yangian and $W$ is a negative module.
\end{rem}

\begin{rem}  \label{rem: ADE truncation}
(i) Fix $\Br, \Bs \in \CD$ and set $\mu$ and $\nu$ to be the coweights of $\Bs^{-1}\Br$ and $\Br\tilde{\Bs}$ respectively. Recall from Remark \ref{rem: truncated shifted Yangian} the quotient map $\tilde{Y}_{\mu}^{\nu}(\Br\tilde{\Bs}) \longrightarrow Y_{\mu}^{\nu}(\Br\tilde{\Bs})$. 
If $\Glie$ is simply-laced, then there is a classification of irreducible highest $\ell$-weight modules for the original truncated shifted Yangian $Y_{\mu}^{\nu}(\Br\tilde{\Bs})$ in terms of monomial crystals \cite{KTWWY0,KTWWY}, which translated in the language of q-characters by \cite[Theorem 3.3]{Nak1} implies that the $Y_{\mu}(\Glie)$-module $\CW(\Br,\Bs)$ factorizes through 
 $$Y_{\mu}(\Glie) \longrightarrow Y_{\mu}^{\Br\tilde{\Bs}}(\Glie) =  \tilde{Y}_{\mu}^{\nu}(\Br\tilde{\Bs}) \longrightarrow Y_{\mu}^{\nu}(\Br\tilde{\Bs})$$ 
and is a module over  $Y_{\mu}^{\Br\tilde{\Bs}}(\Glie)$. Theorem \ref{thm: truncation standard} shows that $\CW(\Br,\Bs)$ is a module over $Y_{\mu}^{\Br \overline{\Bs}}(\Glie)$. So we expect that $\tilde{\Bs} = \overline{\Bs}$ for $\Glie$ simply-laced. By Examples \ref{Example: B2}--\ref{Example: G2} such an equality seems to hold for $\Glie$ of arbitrary type. This would imply that $\overline{\Bs} \in \CD$ in Definition \ref{def: truncation pref}.

(ii) For $\Glie$ of non simply-laced types, the classification of irreducible highest $\ell$-weight modules over the original truncated shifted Yangians of Remark \ref{rem: truncated shifted Yangian} can be reduced to the known classification in simply-laced types \cite{KTWWY0,KTWWY} via geometric arguments \cite{NW, Nak2}. Theorem \ref{thm: truncation standard} in non simply-laced types follows from this analysis.

(iii) For shifted quantum affine algebras, the second part of Theorem \ref{thm: truncation standard} was known for finite-dimensional irreducible modules \cite[Theorem 12.9]{H0} (see also the discussion in the Introduction of \cite{H0}). More generally, a conjectural description of highest $\ell$-weights of irreducible modules over truncated shifted quantum affine algebras in terms of Langlands dual q-characters was formulated in \cite[Conjecture 12.3]{H0}.
\end{rem} 

\section{Jordan--H\"older property}  \label{sec: truncation}

In this section, as an application of our study of R-matrices in Sections \ref{sec: one-dim R-matrix}--\ref{sec: pre R}, we establish a property in category $\BGG^{sh}$: finite-length modules are stable under tensor product (Theorem \ref{thm: Jordan-Holder}). 
This is referred to as  Jordan--H\"older property.

We also obtain 
a uniform proof of the following known result (see Remark \ref{reftrunc}) : a truncated Yangian has only a finite number of irreducible representations in the category $\mathcal{O}^{sh}$ (Theorem \ref{thm: finite truncation}).

The following result is well-known in the non-shifted case. We omit its proof as it is a direct consequence of the definition of shifted Yangians.
\begin{lem}  \label{lem: sl2 triple}
Let $\mu = \sum_{j\in I} k_j \varpi_j^{\vee}$ be a coweight of $\Glie$. Then for $i \in I$ there exists a unique algebra homomorphism $f_{\mu,i}: Y_{k_i}(sl_2) \longrightarrow Y_{\mu}(\Glie)$ such that
$$ x^+(u) \mapsto d_i^{-k_i} x_i^+(u d_i),\quad x^-(u) \mapsto d_i x_i^-(ud_i),\quad  \xi(u) \mapsto d_i^{-k_i} \xi_i(ud_i). $$
\end{lem}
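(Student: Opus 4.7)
The plan is to verify that the defining relations of $Y_{k_i}(sl_2)$ from \eqref{rel: Cartan}--\eqref{rel: shift} are preserved under the proposed assignment. Because the target Lie algebra $sl_2$ has a single Dynkin node, no Serre relation \eqref{rel: Serre} needs to be checked; only the Cartan relations \eqref{rel: Cartan}, the Cartan--Drinfeld relations \eqref{rel: Cartan-Drinfeld}, the Drinfeld relations \eqref{rel: Drinfeld} (all with $i=j$), and the shift relation \eqref{rel: shift} remain. Uniqueness is automatic since the images of all Drinfeld generators are prescribed.

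Unpacking the generating-series definitions by comparing coefficients of $u^{-n-1}$ and $u^{-p-1}$, the assignment reads
\begin{gather*}
x_n^+ \longmapsto d_i^{-k_i-n-1} x_{i,n}^+, \quad
x_n^- \longmapsto d_i^{-n} x_{i,n}^-, \quad
\xi_p \longmapsto d_i^{-k_i-p-1} \xi_{i,p}.
\end{gather*}
I would begin with the shift relation \eqref{rel: shift}. Since $\langle\mu,\alpha_i\rangle = k_i$ by the defining equation $\langle \varpi_j^{\vee},\alpha_i\rangle = \delta_{ji}$, the generator $\xi_{-k_i-1}=1$ of $Y_{k_i}(sl_2)$ is sent to $d_i^{\,0}\,\xi_{i,-k_i-1} = 1$, and the vanishing of $\xi_p$ for $p < -k_i-1$ is preserved trivially.

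For the remaining relations, the key observation is that the $sl_2$-side uses the normalisation $d = 1$, while the image in $Y_\mu(\Glie)$ uses $d_{ii} = d_i$; the rescaling above has precisely the right degree of homogeneity in $d_i$ to absorb this discrepancy. For instance, \eqref{rel: Cartan} reads $[x_m^+, x_n^-] = \xi_{m+n}$, and both sides map to $d_i^{-2k_i-m-n-1}$ times the analogous relation in $Y_\mu(\Glie)$. For the Cartan--Drinfeld and Drinfeld relations, both the commutator bracket at the left-hand side and the symmetric expression $\xi_p x_n^\pm + x_n^\pm \xi_p$ (respectively $x_m^\pm x_n^\pm + x_n^\pm x_m^\pm$) at the right-hand side are homogeneous of the same total degree in the rescaling factors; the extra factor $d_i$ that appears on the right-hand side in $Y_\mu(\Glie)$ cancels against the shift $u \mapsto u d_i$ applied to the currents.

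I expect no serious obstacle: the whole argument is a routine bookkeeping of powers of $d_i$, which is why the authors omit it. The only subtle point is to track that the substitution $u \mapsto u d_i$ in the currents produces the factor $d_i^{-n-1}$ (or $d_i^{-p-1}$) per Laurent coefficient, and to check that this combines with the explicit prefactors $d_i^{-k_i}$ and $d_i$ to match the expected normalisation on both sides of each relation.
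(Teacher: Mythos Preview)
Your approach is correct and is exactly what the paper intends: it explicitly omits the proof, noting only that it is ``a direct consequence of the definition of shifted Yangians,'' i.e., a direct verification of relations \eqref{rel: Cartan}--\eqref{rel: shift}. One minor slip: in the Cartan relation $[x_m^+,x_n^-]=\xi_{m+n}$ the common scaling factor is $d_i^{-k_i-m-n-1}$, not $d_i^{-2k_i-m-n-1}$, but this does not affect the argument since both sides still carry the same power of $d_i$.
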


For $f(u) \in \BC(u)$ a rational function, by the denominator of $f(u)$ we mean the monic polynomial $q(u)$ of $u$ of smallest degree such that $q(u) f(u) \in \BC[u]$. The numerator of $f(u)$ is the denominator of $f(u)^{-1}$.

\begin{lem} \label{lem: l-weight denominator}
Let $\Be \in \CR$ and $(i, a, m) \in I \times \BC \times \BZ_{>0}$. Then $(u-a)^m$ divides the denominator of $\Be_i(u)$ if and only if $A_{i,a}^{-m} \Be$ is an $\ell$-weight of $L(\Be)$.
\end{lem}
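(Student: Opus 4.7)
The plan is to reduce the lemma to the explicit $\ell$-weight structure of irreducible negative modules $L(\Bs^{-1})$ for $\Bs \in \CD$ via the tensor decomposition of Corollary \ref{cor: tensor product - pre}, combined with an $sl_2$-reduction at node $i$ through $f_{\mu,i}$ of Lemma \ref{lem: sl2 triple}. Write $\Be = \Br_0 \Bs_0^{-1}$ with $\Br_0, \Bs_0 \in \CD$ coprime; then $Q_i(u) = \Bs_{0,i}(u)$, and the divisibility $(u-a)^m \mid Q_i(u)$ is equivalent to $\Psi_{i,a}^m \mid \Bs_0$.

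For the backward direction, by Theorem \ref{thm: cyclicity cocyclicity} the module $L(\Be)$ is a quotient of $L(\Br_0) \otimes L(\Bs_0^{-1})$; since $L(\Br_0)$ is one-dimensional, Theorem \ref{thm: q-char ring morphism} yields $\lwt(L(\Br_0) \otimes L(\Bs_0^{-1})) = \Br_0 \cdot \lwt(L(\Bs_0^{-1}))$, so $A_{i,a}^{-m}\Be \in \lwt(L(\Be))$ forces $A_{i,a}^{-m}\Bs_0^{-1} \in \lwt(L(\Bs_0^{-1}))$. Factoring $L(\Bs_0^{-1}) = \bigotimes_r L_{j_r, c_r}^-$ via Corollary \ref{cor: tensor product - pre} and using multiplicativity of the $q$-character, this $\ell$-weight must be realized as a product of contributions from the individual $L_{j_r, c_r}^-$ whose ``$A$-parts'' multiply to $A_{i,a}^{-m}$ with no other generalized simple root appearing. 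Proposition \ref{prop: l-weight R-matrix+}(ii), combined with the $sl_2$-subalgebra analysis at node $i$ (via Example \ref{ex: - prefund sl2}), forces the unique $\ell$-weight at weight $\wt(\omega) - \alpha_i$ of $L_{i,c_r}^-$ to be $A_{i,c_r}^{-1}\Psi_{i,c_r}^{-1}$, while the defining relation of highest $\ell$-weight modules established in the proof of Theorem \ref{thm: classification} yields $x_{i,n}^-\omega = 0$ in $L_{j_r,c_r}^-$ whenever $j_r \neq i$. Hence only factors $L_{i,a}^-$ can contribute a nontrivial pure $A_{i,a}^{-1}$-part, and each at most once, forcing at least $m$ such factors and therefore $\Psi_{i,a}^m \mid \Bs_0$.

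The forward direction is the main obstacle and proceeds by an $sl_2$-reduction inside $L(\Be)$ itself. Set $\mu := \varpi^\vee(\Be)$, $k_i := \langle \mu, \alpha_i\rangle$, and consider $M := f_{\mu,i}(Y_{k_i}(sl_2))\omega \subseteq L(\Be)$, which is a highest $\ell$-weight $Y_{k_i}(sl_2)$-module whose top $\ell$-weight $\Bn$ satisfies $\Bn(u) = d_i^{-k_i}\Be_i(ud_i)$. By the triangular decomposition, the weight space $L(\Be)_{\wt(\Be) - k\alpha_i}$ is contained in $M$, and by Proposition \ref{prop: l-weight R-matrix+}(ii) every $Y_\mu(\Glie)$-$\ell$-weight $\Bf$ in this weight space has the form $\Be \prod_s A_{i,b_s}^{-1}$ and is uniquely determined by its $i$-th component; hence the $Y_\mu(\Glie)$-$\ell$-weight decomposition of $L(\Be)_{\wt(\Be) - k\alpha_i}$ coincides with the $sl_2$-$\ell$-weight decomposition of the corresponding $sl_2$-weight space of $M$, and the condition $A_{i,a}^{-m}\Be \in \lwt(L(\Be))$ is equivalent to $A_{a/d_i}^{-m}\Bn \in \lwt(M)$. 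Since $\ell$-weights of the irreducible $sl_2$-quotient $L(\Bn)$ of $M$ lift to $\lwt(M)$, it suffices to show $A_{a/d_i}^{-m}\Bn \in \lwt(L(\Bn))$: under the hypothesis $(u-a)^m \mid Q_i(u)$, $\Bn$ has a pole of order at least $m$ at $u = a/d_i$, and Theorem \ref{rem: uniqueness} combined with Proposition \ref{prop: factorization sl2} writes $L(\Bn)$ as a tensor product of $sl_2$-prefundamental and Kirillov--Reshetikhin modules containing at least $m$ distinguished factors (of the form $L_{a/d_i}^-$ or $L_{a/d_i}^y$) producing the pole; each such factor admits an $\ell$-weight differing from its top by $A_{a/d_i}^{-1}$, and multiplicativity of the $q$-character (Theorem \ref{thm: q-char ring morphism}) then yields $A_{a/d_i}^{-m}\Bn \in \lwt(L(\Bn))$, completing the forward direction.
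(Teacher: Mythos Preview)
Your forward direction is essentially the paper's argument: reduce to $sl_2$ via $M = f_{\mu,i}(Y_{k_i}(sl_2))\omega_{\Be}$, identify the $Y_{\mu}(\Glie)$-$\ell$-weight decomposition on the weight spaces $L(\Be)_{\varpi(\Be) - k\alpha_i}$ with the $sl_2$-$\ell$-weight decomposition of $M$ (the paper's claim~(ii)), and then invoke the $sl_2$ tensor factorization. Your observation that irreducibility of $M$ is not needed for this direction is correct, since $\ell$-weights of the irreducible quotient $L(\Bn)$ already lift to $M$.

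Your backward direction, however, has a gap. The assertion ``each at most once'' requires knowing the $\ell$-weights of $L_{i,c}^-$ at \emph{every} weight space $\wt(\omega) - k\alpha_i$, not only at $k=1$: a~priori a single factor $L_{i,a}^-$ could carry an $\ell$-weight $\Psi_{i,a}^{-1} A_{i,a}^{-k}$ with $k \geq 2$, or a factor $L_{i,c}^-$ with $c \neq a$ could carry an $\ell$-weight $\Psi_{i,c}^{-1} A_{i,a}^{-k}$ at some deeper weight space. Your fact about the unique $\ell$-weight at $\wt(\omega)-\alpha_i$ only controls the first step, and invoking Example~\ref{ex: - prefund sl2} to go further presupposes that the $sl_2$-submodule $f_{-\varpi_i^{\vee},i}(Y_{-1}(sl_2))\omega \subset L_{i,c}^-$ is isomorphic to the irreducible $sl_2$-module $L_{c/d_i}^-$. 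This is precisely the paper's claim~(i) specialized to $\Be = \Psi_{i,c}^{-1}$, and it needs justification (the paper appeals to \cite[Lemma~4.3]{CP0}). Once that irreducibility is in hand for general $\Be$, the paper's route dispatches both directions at once via claim~(ii) and Proposition~\ref{prop: factorization sl2}; your detour through the tensor decomposition $L(\Bs_0^{-1}) = \bigotimes_r L_{j_r,c_r}^-$ of Corollary~\ref{cor: tensor product - pre} then becomes unnecessary.
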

\begin{proof}
Write $\mu = \varpi^{\vee}(\Be) = \sum_{j\in I} k_j \varpi_j^{\vee}$. Set $L := f_{\mu,i}(Y_{k_i}(sl_2)) \omega_{\Be} \subset L(\Be)$; it is a $Y_{k_i}(sl_2)$-module via the pullback by $f_{\mu,i}$. We claim that:
\begin{itemize}
\item[(i)] The $Y_{k_i}(sl_2)$-module $L$ is isomorphic to $L(d_i^{-k_i} \Be_i(ud_i))$. 
\item[(ii)] As subspaces of $L(\Be)$, the $\ell$-weight space of the $Y_{k_i}(sl_2)$-module $L$ of $\ell$-weight $d_i^{-k_i} \Be_i(ud_i) A_{a_1}^{-1} A_{a_2}^{-1} \cdots A_{a_N}^{-1}$ is equal to the $\ell$-weight space of the $Y_{\mu}(\Glie)$-module $L(\Be)$ of $\ell$-weight $\Be A_{i,a_1 d_i}^{-1} A_{i,a_2 d_i}^{-1} \cdots A_{i,a_Nd_i}^{-1}$. 
\end{itemize}
(i) can be proved as in \cite[Lemma 4.3]{CP0} by restriction to diagram subalgebras.

For (ii), write $\varpi(\Be) = \sum_{j\in I} m_j \varpi_j$. Then $\omega_{\Be}$ is of weight $m_i$ in the $Y_{k_i}(sl_2)$-module $L$. From $L(\Be) = Y_{\mu}^<(\Glie) \omega_{\Be}$ and the $\BQ_-$-grading on $Y_{\mu}^<(\Glie)$ we get an identification of weight spaces for $N \in \BN$:
$$L_{m_i - 2N} = L(\Be)_{\varpi(\Be)- N \alpha_i} = \mathrm{Vect}(x_{i,m_1}^-x_{i,m_2}^- \cdots x_{i,m_N}^- \omega_{\Be}\ |\ m_1, m_2, \cdots, m_N \in \BN ). $$
Since a weight space is a direct sum of $\ell$-weight spaces, it suffices to prove that the right-hand side of the equality of (ii) is contained in the left-hand side. This is obvious from $f_{\mu,i}(\xi(u)) = d_i^{-k_i} \xi_i(ud_i)$.

It follows that $ A_{i,a}^{-m} \Be$ is an $\ell$-weight of $L(\Be)$ if and only if $d_i^{-k_i} \Be_i(ud_i) A_{ad_i^{-1}}^{-1}$ is an $\ell$-weight of the $Y_{k_i}(sl_2)$-module $L(d_i^{-k_i} \Be_i(ud_i))$. We are reduced to the case $\Glie = sl_2$. Then this follows from the tensor product factorization of Proposition \ref{prop: factorization sl2} because by Definition \ref{def: rational factorisation} the standard factorization of a rational function fixes its denominator and each factor $u-a$ in the denominator contributes to a factor $A_a^{-1}$ of an $\ell$-weight. 
\end{proof}

For $(\mu, \Br) \in \BP^{\vee} \times \CR$ a truncatable pair define $\CR_{\mu}^{\Br}$ to be the set of $\Be \in \CR_{\mu}$ such that the $Y_{\mu}(\Glie)$-module $L(\Be)$ factorizes through the truncated shifted Yangian $Y_{\mu}^{\Br}(\Glie)$. 

\begin{theorem} \label{thm: finite truncation}
The set $\CR_{\mu}^{\Br}$ is finite for any truncatable pair $(\mu, \Br) \in \BP^{\vee} \times \CR$.
\end{theorem}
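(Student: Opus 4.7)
The plan is to injectively parametrize $\CR_{\mu}^{\Br}$ by tuples of polynomials of bounded degree whose roots lie in a finite set, and thereby conclude finiteness. Let $\Be \in \CR_{\mu}^{\Br}$, and set $m_i := \langle \varpi^{\vee}(\Br) - \mu, \alpha_i\rangle \in \BN$. Applying the defining relation $\langle A_i(u)\rangle_+ = 0$ of $Y_{\mu}^{\Br}(\Glie)$ to a highest $\ell$-weight vector $\omega_{\Be} \in L(\Be)$ forces the eigenvalue $g_i^{\Be}(u) \in \BC((u^{-1}))$ of $A_i(u)$ on $\omega_{\Be}$ to lie in $\BC[u]$; since the leading term of $A_i(u)$ is $u^{m_i}$, each $g_i^{\Be}(u)$ is monic of degree $m_i$. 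By Equation \eqref{equ: truncation}, $\Be_i(u)$ is rational in $\Br_i(u)$ and the $g_j^{\Be}(u)$, so $\Be$ is uniquely recovered from the tuple $(g_i^{\Be}(u))_{i\in I}$. This yields an injection
$$\CR_{\mu}^{\Br} \hookrightarrow \prod_{i\in I}\{\text{monic polynomials in } \BC[u] \text{ of degree } m_i\},$$
and it suffices to show that the roots of every $g_i^{\Be}(u)$ lie in a common finite set $\Sigma \subset \BC$ depending only on $(\mu, \Br)$.

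To constrain these roots, combine Lemma \ref{lem: l-weight denominator} with Proposition \ref{prop: l-weight R-matrix+}(ii): a pole of $\Be_i(u)$ of order $k$ at $a$ gives an $\ell$-weight $A_{i,a}^{-k}\Be$ of $L(\Be)$ on which $A_i(u)$ has eigenvalue $g_i^{\Be}(u)\bigl(\tfrac{u-a+d_i}{u-a}\bigr)^k$, whose polynomiality forces $(u-a)^k \mid g_i^{\Be}(u)$. Hence the denominator $D_{\Be,i}(u)$ of $\Be_i(u)$ in lowest terms divides $g_i^{\Be}(u) = \prod_s (u - b_{i,s})$. Cross-multiplying Eq.\eqref{equ: truncation} into a polynomial identity and matching multisets of roots then shows that for each $b_{i,s}$, the shifted value $b_{i,s} + d_i$ must equal one of: (a) a zero of $\Br_i(u)$; (b) $b_{j,s'} + d_{ij} + td_j$ for some $j$ with $c_{ji} < 0$ and $1 \leq t \leq -c_{ji}$; or (c) another $b_{i,s''}$ that is a pole of $\Be_i(u)$.

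This sets up a functional graph on the finite multiset $B := \bigsqcup_{i}\{b_{i,s}\}$ of size $\sum_i m_i$, with each vertex pointing either to another vertex of $B$ (via a rational shift drawn from a finite set determined by the Cartan data) or to a sink $\star$ corresponding to case (a). A case-by-case inspection across all finite types, using $d_i > 0$, $d_{ij} \leq 0$ for $i \neq j$, and $1 \leq t \leq -c_{ji}$, verifies that the cumulative shift around any closed cycle is strictly positive; even when individual shifts can vanish (as in $B_2$) or become negative (as in $G_2$), alternating with the positive shifts from the reverse direction produces a strictly positive sum. Since cycles are impossible, every vertex reaches $\star$ in at most $\sum_i m_i$ steps, so tracing backward expresses each root $b_{i,s}$ as a zero of some $\Br_j(u)$ modified by a bounded-length sequence of shifts from a finite set; all roots thus lie in a common finite $\Sigma$, proving $\CR_{\mu}^{\Br}$ is finite. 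The main obstacle will be the careful multiset bookkeeping in the cross-multiplied polynomial identity, particularly how $D_{\Br,i}(u)$ interacts with the $g_j^{\Be}$-factors on both sides and the correct treatment of case (c), together with the uniform cycle-exclusion verification across all finite types.
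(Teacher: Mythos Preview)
Your proposal is correct and is essentially the paper's own argument: inject $\CR_{\mu}^{\Br}$ into tuples $(g_i^{\Be})_{i\in I}$ of monic polynomials of fixed degree, use Lemma~\ref{lem: l-weight denominator} with Proposition~\ref{prop: l-weight R-matrix+}(ii) to force the denominator of $\Be_i$ to divide $g_i^{\Be}$, build a directed graph on the roots, and exclude cycles by a type-by-type positivity check on the shifts (including the $B_2$ zero-shift and the $G_2$ negative-shift subtleties you anticipate).

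The one place the paper is cleaner is exactly the step you flag as the main obstacle. Rather than cross-multiplying the full identity and carrying your case (c), the paper observes that $g_i^{\Be}(u)\,\Be_i(u)$ is already a polynomial (since the denominator of $\Be_i$ divides $g_i^{\Be}$); after the substitution $u\mapsto u+d_i$ this yields the divisibility
\[
g_i^{\Be}(u)\,q_i^{\Br}(u+d_i)\ \Big|\ p_i^{\Br}(u+d_i)\prod_{j:\,c_{ji}<0}\ \prod_{t=1}^{-c_{ji}} g_j^{\Be}(u+d_i-d_{ij}-td_j).
\]
This eliminates case (c) entirely: every root of $g_i^{\Be}$ is either a shifted zero of $p_i^{\Br}$ (a sink) or connected by a case-(b) arrow to a root of some $g_j^{\Be}$ with $c_{ji}<0$, so the multiset bookkeeping you were worried about never arises.
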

\begin{rem}\label{reftrunc}
When $\Br \in \CD$, there is a geometric proof of the finiteness, as explained in \cite[Corollary 3.13]{KTWWY0}, by viewing the truncated shifted Yangian $Y_{\mu}^{\Br}(\Glie)$  as a quantization of a scheme supported on a generalized affine Grassmannian slice $\overline{\mathcal{W}}_{\mu}^{\lambda}$ with $\lambda :=\varpi^{\vee}(\Br)$ (see \cite[Proposition 4.10]{KPW}), and then applying the general result \cite[Proposition 5.1]{BLPW}. If $\Br \notin \CD$, choose $\Bs \in \CD$ such that $\Br\Bs \in \CD$. Remark \ref{rem: truncation one-dim} shows that $\CR_{\mu}^{\Br} \subset \Bs^{-1} \CR_{\mu+\varpi^{\vee}(\Bs)}^{\Br\Bs}$. The finiteness of $\CR_{\mu}^{\Br}$ follows from the known case. Our proof is algebraic and close to the situation of truncated shifted quantum affine algebras \cite[Theorem 11.15]{H0}.
\end{rem}
\begin{proof}
For $\Be \in \CR$ and $i \in I$, let $p_i^{\Be}(u)$ and $q_i^{\Be}(u)$ denote the numerator and denominator of $\Be_i(u)$. Recall from Eq.\eqref{equ: integrality} the coefficient $m_i \in \BN$ of $\alpha_i^{\vee}$ in $\varpi^{\vee}(\Br)-\mu$.

\medskip

\noindent {\bf Step 1.} We shall give a necessary condition for $\Be \in \CR_{\mu}^{\Br}$. As in Proposition \ref{prop: l-weight R-matrix+}, let $g_i(u) \in \BC[u]$ be the eigenvalue of $A_i(u) \in Y_{\mu}^{\Br}(\Glie)[u]$ associated to the eigenvector $\omega_{\Be}$ of $L(\Be)$. Then $g_i(u)$ is a monic polynomial of degree $m_i$. We claim that $g_i(u) \Be_i(u)$ is a polynomial, which by Eq.\eqref{equ: truncation} is equivalent to divisibility of polynomials:
 \begin{equation} \label{truncation necessary}
 g_i(u) q_i^{\Br}(u+d_i) \mid p_i^{\Br}(u+d_i) \times \prod_{j: c_{ji} < 0} \prod_{t=1}^{-c_{ji}} g_j(u+d_i-d_{ij}-t d_j).
 \end{equation}
 We need to prove $q_i^{\Be}(u) \mid g_i(u)$. Namely, for any $(a, m) \in \BC \times \BZ_{>0}$ such that $(u-a)^m \mid q_i^{\Be}(u)$, we must have $(u-a)^m \mid g_i(u)$. By Lemma \ref{lem: l-weight denominator}, $\Be A_{i,a}^{-m}$ is an $\ell$-weight of $L(\Be)$. From Proposition \ref{prop: l-weight R-matrix+} (ii) we see that the eigenvalue of $A_i(u)$ on $L(\Be)_{\Be A_{i,a}^{-m}}$ is $g_i(u) (\frac{u-a+d_i}{u-a})^m$, which must be a polynomial. Since $(u-a+d_i)^m$ and $(u-a)^m$ are coprime, we have $(u-a)^m \mid g_i(u)$, as desired.

\medskip 
 
\noindent {\bf Step 2.} Introduce the following finite subsets of $\BC$ for $s \in \BZ_{>0}$:
\begin{align*}
D_{\Glie} &:=  \{d_i - d_{ij} - td_j \ |\ i,j \in I,\ t \in \BZ, \ c_{ji} < 0,\ 1\leq t \leq -c_{ji} \}, \\
X^0_{\Br} &:= \{ a \in \BC\ |\ p_i^{\Br}(a+d_i) = 0 \ \mathrm{for\ certain}\ i \in I \}, \\
X_{\Br}^s &:=  \{ a-c_1 -c_2 - \cdots - c_s \in  \BC\ |\ a \in X_{\Br}^0,\ c_k \in D_{\Glie}\ \mathrm{for}\ 1\leq k \leq s  \}.
\end{align*}
Clearly, $D_{\Glie}$ depends on $\Glie$, and $X_{\Br}^s$ on the triple $(\Br, s, \Glie)$. Set $m := \sum_{i\in I} m_i$ and $X := \cup_{s=0}^{m-1} X_{\Br}^s$. Then $X$ is a finite set depending on the triple $(\Br, \mu, \Glie)$.

Each $\Be \in \CR_{\mu}^{\Br}$ is uniquely determined by the monic polynomials $g_i(u)$ for $i \in I$ from Step 1. Let us attach a quiver $\Gamma_{\Be}$ to $\Be$ as follows:
\begin{itemize}
\item the set of vertices is $V_{\Be} := \{ (i, a) \in I \times \BC\ |\ g_i(a) = 0\}$;
\item draw an arrow $(i,a) \rightarrow (j,b)$ if $c_{ji} < 0$ and there exists $1\leq t \leq -c_{ji}$ such that $b = a + d_i - d_{ij} - t d_j$ and $p^{\Br}_i(a+d_i) \neq 0$. In particular, $b-a \in D_{\Glie}$.
\end{itemize}
We prove that $V_{\Be} \subset I \times X$. Since $g_i(u)$ is of fixed degree $m_i$, this will imply that there are finitely many choices of $g_i(u)$, from which comes the finiteness of $\CR_{\mu}^{\Br}$. 

Let $V_{\Be}^0$ be the set of sink vertices (namely, vertices with no outgoing arrows) of $\Gamma_{\Be}$. If $(i,a)$ is sink, by Relation \eqref{truncation necessary} we must have $p^{\Br}_i(a+d_i) = 0$, since the product $\prod_{j}$ is nonzero. This means $ V_{\Be}^0 \subset I \times  X_{\Br}^0. $

For $s \in \BZ_{>0}$, define $V_{\Be}^s$ to be the set of vertices $(i,a)$ of $\Gamma_{\Be}$ which can be joint to a sink vertex with $s$ arrows. Namely, $(i,a) \in V_{\Be}^s$ if there exist $s$ vertices $$(i_1, a_1), (i_2,  a_2), \cdots, (i_s, a_s)$$ 
such that $(i_s, a_s)$ is sink and there are arrows  
$$(i,a) \rightarrow (i_1,a_1) \rightarrow (i_2,a_2) \rightarrow \cdots (i_{s-1}, a_{s-1}) \rightarrow (i_s, a_s).$$
From our definition of arrows it follows that $a_{k+1}-a_k \in D_{\Glie},\ a_s \in X_{\Br}^0$ and  
$$ a = a_s - (a_s-a_{s-1}) - \cdots - (a_2 - a_1) - (a_1 - a_0) \in X_{\Br}^s. $$
This gives $V_{\Be}^s \subset I \times X_{\Br}^s$. 

By Claim 1 below, the quiver $\Gamma_{\Be}$ is acyclic and every vertex is connected to a sink vertex by at most $m-1$ arrows. We obtain the desired relation
$$V_{\Be} = \bigcup_{s=0}^{m-1} V_{\Be}^s \subset \bigcup_{s=0}^{m-1} (I \times X_{\Br}^s) = I \times X.  $$ 

\medskip

\noindent {\it Claim 1.} In the quiver $\Gamma_{\Be}$ there does not exist any sequence of vertices $(i_k, a_k)$ for $0 \leq k \leq m$ with arrows $(i_l, a_l) \rightarrow (i_{l+1},a_{l+1})$ for $0 \leq l < m$.

\medskip 

Assume the contrary and fix such a sequence
$$ S: \quad (i_0,a_0) \rightarrow (i_1,a_1) \rightarrow (i_2,a_2) \rightarrow \cdots \rightarrow (i_m,a_m). $$
Since $m+1 > \sum_{j\in I} m_j$, there must exist $j \in I$ which appears in the sequence $i_0i_1i_2\cdots i_m$ at least $m_j+1$ times. Each appearance of $j$, say $i_k = j$, gives a root $a_k$ of $g_j(u)$. If $i_k = i_l$ and $k < l$ then necessarily $l-k > 1$ and from Claim 2 below we get $a_k \neq a_l$. So the polynomial $\prod_{0\leq k \leq m, i_k = j}(u- a_k)$ divides $g_j(u)$; the former is of degree at least $m_j+1$, while the latter of degree $m_j$, contradiction.

\medskip

\noindent {\it Claim 2.} In the sequence $S$, if $0 \leq k < k+1 < l \leq m$ then $a_l - a_k \in \frac{1}{2} \BZ_{>0}$. 

\medskip

If $\Glie$ is not of type $G_2$, then $c_{ji} \geq -2$ for all $i,j \in I$. By Eq.\eqref{equ: truncation}, we have $D_{\Glie} \subset \frac{1}{2} \BN$; moreover, if $0 \in D_{\Glie}$, then it must arise from $0 = d_i  - \frac{1}{2} d_j$ so that $d_i = 1$ and $d_j = 2$. This means that for an arrow $(i,a) \rightarrow (j,b)$ of the quiver $\Gamma_{\Be}$: either $b-a \in \frac{1}{2} \BZ_{>0}$; or $b = a$ and $(d_i, d_j) = (1, 2)$. Apply this to the sequence $S$:
$$ a_1 - a_0,\quad a_2 - a_1,\quad \cdots, a_m - a_{m-1} \in \frac{1}{2} \BN.  $$
So $a_l - a_k \in \frac{1}{2} \BN$. If $a_l = a_k$,  then $a_k = a_{k+1} = a_{k+2}$. From $a_k = a_{k+1}$ we obtain $d_{i_{k+1}} = 2$, while from $a_{k+1} = a_{k+2}$ we obtain $d_{i_{k+1}} = 1$, contradiction.

If $\Glie$ is of type $G_2$ with $d_1 = 3$ and $d_2 = 1$, we check Claim 2 directly. By Eq.\eqref{equ: truncation}, the arrows in the quiver $\Gamma_{\Be}$ are of the form
$$ (1, a) \rightarrow (2, a+  \frac{3}{2}), \quad (1, a) \rightarrow (2, a+  \frac{5}{2}), \quad (1, a) \rightarrow (2, a+  \frac{7}{2}),\quad (2, a) \rightarrow (1, a-  \frac{1}{2}). $$
The sequence $i_0i_1\cdots i_{m-1}i_m$ is alternating of the form  $1212\cdots$ or $2121\cdots$. For an arrow $(1,a) \rightarrow (2,b)$, we have $b-a \geq \frac{3}{2}$, while for an arrow $(2,a) \rightarrow (1,b)$ we have $b - a = -\frac{1}{2}$. It follows that $a_l - a_k = \sum_{t=k}^{l-1} (a_{t+1}-a_t)$ as an half integer is at least $S$, where $S$ is an alternating sum of $l-k > 1$ terms of the following form: 
$$\frac{3}{2} - \frac{1}{2} + \frac{3}{2} - \frac{1}{2} + \cdots, \quad -\frac{1}{2} + \frac{3}{2} - \frac{1}{2} + \frac{3}{2} - \cdots.  $$ 
Clearly $a_l-a_k \geq S > 0$.
\end{proof}

Define $\BGG_{\mu}^{fin}$ to be the full subcategory of $\BGG_{\mu}$ consisting of modules with a finite Jordan--H\"older filtration.
In other words, a module $V$ in category $\BGG_{\mu}$ is in $\BGG_{\mu}^{fin}$ if and only if $V$ admits a finite number of irreducible subquotients, if and only if $[V]$ is a finite sum of irreducible isomorphism classes $[L(\Be)]$ for $\Be \in \CR_{\mu}$. Let $\BGG^{sh}_{fin}$ be the direct sum of the categories $\BGG_{\mu}^{fin}$ over all coweights $\mu$. 

\begin{theorem}  \label{thm: Jordan-Holder}
Category $\BGG^{sh}_{fin}$ is closed under tensor product. 
\end{theorem}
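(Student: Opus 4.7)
The plan is to reduce to the tensor product of two simple modules, transfer the problem to a single standard module via the Grothendieck identity \eqref{rel: standard multiplicative}, and finish by combining Theorems \ref{thm: truncation standard} and \ref{thm: finite truncation}.

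Since the tensor product on $\BGG^{sh}$ is exact and Jordan--H\"older lengths add in short exact sequences, it suffices to show that $L(\Be) \otimes L(\Bf)$ has finite length for any two irreducibles $L(\Be), L(\Bf)$ in $\BGG^{sh}$. Writing $\Be = \Bn^{-1}\Bm$ and $\Bf = \Bq^{-1}\Bp$ with $\Bm, \Bn, \Bp, \Bq \in \CD$, Theorem \ref{thm: cyclicity  cocyclicity} realizes $L(\Be)$ and $L(\Bf)$ as quotients of the standard modules $\CW(\Bm, \Bn)$ and $\CW(\Bp, \Bq)$ respectively. By exactness $L(\Be) \otimes L(\Bf)$ is a quotient of $\CW(\Bm, \Bn) \otimes \CW(\Bp, \Bq)$, reducing the problem to finite length of this tensor product of standard modules. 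The multiplicative identity \eqref{rel: standard multiplicative} in $K_0(\BGG^{sh})$ then gives the equality $[\CW(\Bm, \Bn) \otimes \CW(\Bp, \Bq)] = [\CW(\Bm\Bp, \Bn\Bq)]$, and since a class in $K_0(\BGG^{sh})$ encodes precisely the Jordan--H\"older multiplicities of a module, two modules with equal Grothendieck class have the same (possibly infinite) length. So the problem reduces to proving finite length of the single standard module $V := \CW(\Bm\Bp, \Bn\Bq)$.

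By Theorem \ref{thm: truncation standard}, $V$ factorizes through the truncated shifted Yangian $Y_\mu^{\Bt}(\Glie)$ with $\mu := \varpi^{\vee}((\Bn\Bq)^{-1}\Bm\Bp)$ and $\Bt := \Bm\Bp \cdot \overline{\Bn\Bq}$. Every composition factor $L(\Bg)$ of $V$ inherits this factorization, so $\Bg$ lies in the set $\CR_\mu^{\Bt}$, which is finite by Theorem \ref{thm: finite truncation}. For the multiplicity bound, I extract from the character identity $\chi(V) = \sum_{\Bg} m_{L(\Bg), V} \chi(L(\Bg))$ the coefficient of $e^{\varpi(\Bg)}$: since the top weight space of $L(\Bg)$ is one-dimensional and all the other contributing terms $m_{L(\Bh), V} \dim L(\Bh)_{\varpi(\Bg)}$ are non-negative, this yields $m_{L(\Bg), V} \leq \dim V_{\varpi(\Bg)} < \infty$ by axiom (O1) of $\BGG^{sh}$. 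Combined with the finiteness of $\CR_\mu^{\Bt}$, this produces a finite total length for $V$, and hence for $L(\Be) \otimes L(\Bf)$. The crux of the argument lies in marrying the Grothendieck-group identity \eqref{rel: standard multiplicative} with the truncation theorem of Section \ref{sectrunc} and the preceding finiteness theorem of this section: these ingredients jointly enforce that the composition factor types form a finite set, while finite-dimensional weight spaces cap their multiplicities.
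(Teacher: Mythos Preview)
Your proof is correct and follows essentially the same route as the paper: reduce to a tensor product of irreducibles, realize it as a quotient of a tensor product of standard modules, use the Grothendieck identity \eqref{rel: standard multiplicative} to pass to a single standard module, and then invoke Theorems \ref{thm: truncation standard} and \ref{thm: finite truncation}. The paper's version is terser, leaving the multiplicity bound implicit, whereas you spell it out via the character inequality $m_{L(\Bg),V} \leq \dim V_{\varpi(\Bg)}$; this is a welcome clarification but not a different argument.
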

\begin{proof}
We need to show that the tensor product $T$ of two arbitrary irreducible modules belongs to category $\BGG_{fin}^{sh}$, namely, $T$ admits a finite number of irreducible subquotients. From Theorem \ref{thm: cyclicity  cocyclicity} we see that $T$ can be realized as a quotient of a tensor product $T'$ of two standard modules. By Eq.\eqref{rel: standard multiplicative} the module $T'$ has the same isomorphism class as a third standard module $\CW$. By Theorem \ref{thm: truncation standard}, the standard module $\CW$ factorizes through a truncated shifted Yangian $Y_{\nu}^{\Bs}(\Glie)$. Since $\CR_{\nu}^{\Bs}$ is finite, $\CW$ admits a finite number of irreducible subquotients, so do the tensor products $T'$ and $T$. 
\end{proof}
The Grothendieck group of category $K_0(\BGG^{sh}_{fin})$, which is the abelian subgroup of $K_0(\BGG^{sh})$ freely generated by the $[L(\Be)]$ for $\Be \in \CR$, is a therefore a subring.  

\begin{rem} (i) Assume $\Glie$ is not of type $E_8$. Combining Proposition \ref{prop: Weyl vs standard} (iii), Theorem \ref{thm: Weyl standard} and Theorem \ref{thm: truncation standard}, we obtain that any highest $\ell$-weight module in category $\BGG^{sh}$ factorizes through a truncated shifted Yangian and belongs to category $\BGG_{fin}^{sh}$.

(ii) The Jordan--H\"older property is known to be true \cite[Theorem 5.27]{H3} for certain category of integrable modules over a quantum affinization. It fails in the original category $\BGG$ of $\Glie$-modules (counterexample: the tensor product of two irreducible Verma modules over $sl_2$ has infinitely many irreducible subquotients) and in the category $\BGG$ of modules over the Borel algebra \cite{HJ} as observed in \cite[Remark 5.12]{HL}.

(iii) For $\Glie = sl_{r+1}$ there is another proof of Theorem \ref{thm: Jordan-Holder} by extending $Y(sl_{r+1})$ to the Yangian $Y(gl_{r+1})$ and using the big center of $Y(gl_{r+1})$; see \cite[Lemmas 6.13, 7.16]{BK}. This is close to the proof of classical fact in the original category $\BGG$ of $\Glie$-modules that each Verma module admits a finite Jordan--H\"older filtration.
\end{rem}

\medskip

\noindent {\bf Acknowledgments:} We are grateful to Hiraku Nakajima and Alex Weekes for useful correspondences, and to the referee for valuable comments and suggestions. The first author is supported by the ERC under the European Union's Framework Programme H2020 with ERC Grant Agreement number 647353 Qaffine. The second author is supported by the Labex CEMPI (ANR-11-LABX-0007-01).

\end{document}